\newcommand{\urltilde}{\kern -.15em\lower .7ex\hbox{~}\kern .04em}
\renewcommand{\abovecaptionskip}{0pt}
\renewcommand{\belowcaptionskip}{6pt}
\renewcommand{\@makecaption}[2]{
\vspace{\abovecaptionskip}%
\sbox{\@tempboxa}{#1. #2}%
\global\@minipagefalse \hbox to \hsize {{\scshape \hfil #1.
#2\hfil}} \vspace{\belowcaptionskip}}
\newcommand{\rk}{\operatorname{rk}}
\newcommand{\Lie}{\operatorname{Lie}}
\newcommand{\GL}{\operatorname{GL}}
\newcommand{\SL}{\operatorname{SL}}
\newcommand{\Sp}{\operatorname{Sp}}
\newcommand{\Spin}{\operatorname{Spin}}
\newcommand{\SO}{\operatorname{SO}}
\newcommand{\Aut}{\operatorname{Aut}}
\newcommand{\SpFl}{\operatorname{SpFl}}
\newcommand{\SpGr}{\operatorname{SpGr}}
\newcommand{\SOFl}{\operatorname{SOFl}}
\newcommand{\SOGr}{\operatorname{SOGr}}
\newcommand{\Fl}{\operatorname{Fl}}
\newcommand{\ZZ}{\mathbb Z}
\newcommand{\FF}{\mathbb F}
\newcommand{\PP}{\mathbb P}
\newtheorem{theorem}{Theorem}
\newtheorem{proposition}[theorem]{Proposition}
\newtheorem{lemma}[theorem]{Lemma}
\newtheorem{corollary}[theorem]{Corollary}
\newtheorem*{question*}{Question}
\theoremstyle{definition}
\theoremstyle{remark}
\newtheorem{remark}[theorem]{Remark}
\numberwithin{equation}{section}
\newcommand{\bl}{\mbox{$[\hspace{-0.35ex}[$}}
\newcommand{\br}{\mbox{$]\hspace{-0.35ex}]$}}
\numberwithin{equation}{section}
\newcounter{num}[table]
\newcounter{subnum}[num]
\newcommand{\newcase}{\refstepcounter{num}\arabic{num}}
\newcommand{\no}{\refstepcounter{subnum}\arabic{num}.\arabic{subnum}}
\newcounter{case}
\newcommand{\caseno}{\refstepcounter{case}\arabic{case}}
\newcounter{casea}
\newcommand{\casenoa}{\refstepcounter{casea}\arabic{casea}}
\newcounter{caseb}
\newcommand{\casenob}{\refstepcounter{caseb}\arabic{caseb}}
\newcounter{casec}
\newcommand{\casenoc}{\refstepcounter{casec}\arabic{casec}}
\newcounter{cased}
\newcommand{\casenod}{\refstepcounter{cased}\arabic{cased}}
\newcounter{casee}
\newcommand{\casenoe}{\refstepcounter{casee}\arabic{casee}}
\newcounter{casef}
\newcommand{\casenof}{\refstepcounter{casef}\arabic{casef}}
\newcounter{caseg}
\newcommand{\casenog}{\refstepcounter{caseg}\arabic{caseg}}
\newcounter{caseh}
\newcommand{\casenoh}{\refstepcounter{caseh}\arabic{caseh}}
\newcounter{casei}
\newcommand{\casenoi}{\refstepcounter{casei}\arabic{casei}}
\newcounter{casej}
\newcommand{\casenoj}{\refstepcounter{casej}\arabic{casej}}
\begin{document}

\renewcommand{\proofname}{Proof}
\renewcommand{\abstractname}{Abstract}
\renewcommand{\refname}{References}
\renewcommand{\figurename}{Figure}
\renewcommand{\tablename}{Table}

\title[Spherical actions on isotropic flag varieties]
{Spherical actions on isotropic flag varieties\\ and related branching rules}

\author{Roman Avdeev and Alexey Petukhov}

%\thanks{}

\address{%
{\bfseries Roman Avdeev} \newline\indent National Research University ``Higher School of Economics'', Moscow, Russia}

\email{suselr@yandex.ru}

\address{%
{\bfseries Alexey Petukhov}
\newline\indent Institute for Information Transmission Problems, Moscow, Russia}

\email{alex{-}{-}2@yandex.ru}

%\date{\today}

\subjclass[2010]{14M15, 14M27, 20G05}

\keywords{Algebraic group, representation, flag variety, spherical variety, nilpotent orbit}

\begin{abstract}
Let $G$ be a symplectic or special orthogonal group, let $H$ be a connected reductive subgroup of~$G$, and let $X$ be a flag variety of~$G$.
We classify all triples $(G,H,X)$ such that the natural action of~$H$ on~$X$ is spherical.
For each of these triples, we determine the restrictions to~$H$ of all irreducible representations of~$G$ realized in spaces of sections of homogeneous line bundles on~$X$.
\end{abstract}

\maketitle

\section{Introduction}

Throughout this paper, we work over an algebraically closed field $\FF$ of characteristic zero.
The notation $\FF^\times$ stands for the multiplicative group of~$\FF$.

Let $H$ be a connected reductive algebraic group and let $X$ be an $H$-variety (that is, an algebraic variety equipped with a regular action of~$H$).
The action of $H$ on~$X$, as well as $X$ itself, is said to be \textit{spherical} (or \textit{$H$-spherical} if one needs to emphasize the acting group) if $X$ possesses a dense open orbit with respect to the induced action of a Borel subgroup of~$H$.
When $X$ is a flag variety of a connected semisimple group $G$ containing~$H$ as a subgroup, a result of Vinberg and Kimelfeld \cite{VK78} asserts that $X$ is $H$-spherical if and only if, for every irreducible representation $R$ of~$G$ realized in the space of sections of a homogeneous line bundle on~$X$, the restriction of~$R$ to~$H$ is multiplicity free.
In view of the importance of the latter representation-theoretic property, this result naturally raises the following two problems:
\begin{enumerate}[label=\textup{(P\arabic*)},ref=\textup{P\arabic*}]
\item \label{P1}
classify all triples $(G,H,X)$ such that $X$ is $H$-spherical;

\item \label{P2}
for each such triple $(G,H,X)$ determine the restrictions to $H$ of all irreducible representations of~$G$ realized in spaces of sections of homogeneous line bundles on~$X$.
\end{enumerate}

By now, problem~(\ref{P1}) has been solved in the following particular cases (in all of them $G$ is assumed to be simple):
\begin{enumerate}[label=\textup{(C\arabic*)},ref=\textup{C\arabic*}]
\item \label{case_C1}
$H$ is a Levi subgroup of~$G$ (with contributions of \cite{Lit, MWZ1, MWZ2, Stem}, see also~\cite{Pon13});

\item \label{case_C2}
$H$ is a symmetric subgroup of~$G$ (see~\cite{HNOO});

\item \label{case_C3}
$G = \SL_n$ (see~\cite{AvP1});

\item \label{case_C4}
$G$ is an exceptional simple group, $H$ is a maximal reductive subgroup of~$G$, and $X = G/P$ with $P$ a maximal parabolic subgroup of~$G$ (see the preprint~\cite{Nie}).
\end{enumerate}

\noindent In all these cases, problem~(\ref{P2}) has also been already solved.
Namely, in case~(\ref{case_C1}) a solution follows from results of the papers~\cite{Pon15}, \cite{Pon17} (see details in~\cite{AvP2}), cases~(\ref{case_C2}) and~(\ref{case_C3}) were completed in~\cite{AvP2}, and case~(\ref{case_C4}) was settled in~\cite{Nie}.

In this paper, which may be regarded as a continuation of~\cite{AvP1} and~\cite{AvP2}, we solve problems~(\ref{P1}) and~(\ref{P2}) in the cases $G = \Sp_{2n}$ and $G = \SO_n$; our results are stated in \S\S\,\ref{subsec_sp_results},\,\ref{subsec_so_results}.
In particular, we complete solutions of problems~(\ref{P1}) and~(\ref{P2}) for all the classical simple algebraic groups.
Below we outline the main ideas of the employed approaches and obtained results.

To solve problem~(\ref{P1}), we apply a general strategy developed in~\cite{AvP1} (see \S\,\ref{subsec_NE-relation} for details).
Let $\mathscr F(G)$ be the set of all nontrivial flag varieties for~$G$ and fix an arbitrary connected reductive subgroup $H \subset G$.
Given $X \in \mathscr F(G)$ such that $X = G/P$ for a parabolic subgroup $P \subset G$, let $\mathscr N(X)$ denote the Richardson nilpotent orbit in $\mathfrak g = \Lie G$ defined by~$P$.
We say that two varieties $X_1,X_2 \in \mathscr F(G)$ are \textit{nil-equivalent} (notation $X_1 \sim X_2$) if $\mathscr N(X_1) = \mathscr N(X_2)$.
Then it turns out that for a given flag variety the condition of being $H$-spherical depends only on its nil-equivalence class: if $X_1,X_2 \in \mathscr F(G)$ and $X_1 \sim X_2$ then $X_1$ is $H$-spherical if and only if $X_2$ is so.
Next, for every $X \in \mathscr F(G)$ let $\bl X \br$ denote the image of $X$ in the quotient set~$\mathscr F(G) / \sim$.
The inclusion relation on the set of nilpotent orbits in~$\mathfrak g$ determines a partial order $\preccurlyeq$ on $\mathscr F(G) / \sim$ as follows: we write $\bl X_1 \br \preccurlyeq \bl X_2 \br$ if and only if $\mathscr N(X_1)$ is contained in the closure of $\mathscr N(X_2)$.
This partial order has the following remarkable property: if $X_1,X_2 \in \mathscr F(G)$, $\bl X_1 \bl \preccurlyeq \bl X_2 \br$, and $X_2$ is $H$-spherical then $X_1$ is also $H$-spherical.
In other words, the property of being $H$-spherical ``spreads'' to smaller nil-equivalence classes in~$\mathscr F(G) / \sim$.
It follows that a natural starting point for solving problem~(\ref{P1}) for a given group~$G$ is to determine all the minimal nil-equivalence classes in $\mathscr F(G) / \sim$ and classify all spherical actions on the corresponding flag varieties.

If $G$ is one of the classical groups $\SL_n$, $\Sp_{2n}$, or $\SO_n$ then the nilpotent orbits in~$\mathfrak g$, the inclusion relation between their closures, and the map $X \mapsto \mathscr N(X)$ admit an effective combinatorial description in terms of partitions (see, for instance, \cite{CM}; a summary of these results is presented in~\cite[\S\,3]{AvP1} for $G = \SL_n$ and in~\S\,\ref{subsec_NO_in_sp_and_so} for $G = \Sp_{2n}$ and $\SO_n$).
Using this description, it is easy to determine the minimal elements of $\mathscr F(G)/ \sim$; see details in \S\S\,\ref{subsec_po_sp2n}--\ref{subsec_po_so2n}.
The results are presented in Table~\ref{table_min_elements} where $\SOGr_1(\FF^{n}) \subset \PP(\FF^n)$ is the variety of isotropic lines in~$\FF^n$ and $\SOGr_{\max}^\pm(\FF^{2n})$ are the two connected components of the variety of isotropic subspaces in~$\FF^{2n}$ of (maximal possible) dimension~$n$.
(Isotropic subspaces are taken with respect to the bilinear form defining the orthogonal group.)

\begin{table}[h]

\caption{}
\label{table_min_elements}

\begin{tabular}{|c|c|c|}
\hline

$G$ &
\renewcommand{\tabcolsep}{0pt}%
\begin{tabular}{c}Number of\\[-3pt]min. elements\end{tabular}
& Minimal elements \\

\hline

$\SL_n$ & 1 & $\bl \PP(\FF^n) \br$ \\

\hline

$\Sp_{2n}$ & 1 & $\bl \PP(\FF^{2n}) \br$ \\

\hline

$\SO_{2k+1}$ & 1 & $\bl \SOGr_1(\FF^{2k+1}) \br$ \\

\hline

$\SO_{4k+2}$ & 2 & $\bl \SOGr_1(\FF^{4k+2}) \br, \ \bl \SOGr_{\max}^+(\FF^{4k+2}) \br = \bl \SOGr_{\max}^-(\FF^{4k+2}) \br$ \\

\hline

$\SO_{4k}$ & 3 & $\bl \SOGr_1(\FF^{4k}) \br, \ \bl \SOGr_{\max}^+(\FF^{4k}) \br, \ \bl \SOGr_{\max}^-(\FF^{4k}) \br$ \\

\hline
\end{tabular}
\end{table}

For a finite-dimensional $H$-module~$U$, it is easy to see that $H$ acts spherically on the projective space $\PP(U)$ if and only if $H \times \FF^\times$ acts spherically on~$U$ (where the action of~$\FF^\times$ is by scalar transformations).
More generally, given a connected reductive group~$K$, a finite-dimensional $K$-module~$V$ is said to be \textit{spherical} if $V$ is spherical as a $K$-variety.
There is a complete classification of all spherical modules obtained in~\cite{Kac}, \cite{BR}, and~\cite{Lea} (see \S\,\ref{subsec_spherical_modules} for more details).
According to the above discussion, this classification provides a description of all connected reductive subgroups of $\SL_n$ that act spherically on $\PP(\FF^n)$, which was a starting point for solving problem~(\ref{P1}) for $G = \SL_n$ in~\cite{AvP1}.
Likewise, in the present paper we apply the classification of spherical modules to determine all connected reductive subgroups of $\Sp_{2n}$ acting spherically on~$\PP(\FF^{2n})$, which provides a starting point for solving problem~(\ref{P1}) in the case $G = \Sp_{2n}$.
We note that the list of such subgroups turns out to be very short (see Theorem~\ref{thm_sp_SA_P(V)}), and because of this our classification for $G = \Sp_{2n}$ turns out to be much easier than that for $G = \SO_n$.

In the case $G = \SO_n$ it is a much more complicated task to describe all spherical actions on flag varieties whose nil-equivalent classes are minimal elements of $\mathscr F(G) / \sim$.
For an action of $H$ on $X \in \mathscr F(G)$ to be spherical, a necessary condition is that $H$ has an open orbit in~$X$.
If $X$ is a variety of isotropic subspaces of $\FF^n$ of a fixed dimension (such varieties are often called isotropic Grassmannians or Grassmannians of isotropic subspaces), then the work of Kimelfeld~\cite{Kim} provides a classification of connected reductive subgroups $H \subset \SO_n$ having an open orbit in~$X$ and admitting no proper $H$-stable subspaces in~$\FF^n$ that are nondegenerate with respect to the bilinear form defining the group~$\SO_n$.
Although this classification deals with a rather particular situation, starting from it and using various reductions we ultimately manage to deduce classifications of all spherical actions on the varieties $\SOGr_1(\FF^n)$ and $\SOGr_{\max}^{(\pm)}(\FF^n)$ and then complete the whole classification in the case $G = \SO_n$.

Our results in solving problem~(\ref{P1}) for $G = \Sp_{2n}$ or $\SO_n$ show that the overwhelming majority of spherical actions on flag varieties occurs in the case where the variety acted on is a Grassmannian of isotropic subspaces of dimension~1, or~2, or maximal possible.
In particular, this means that the major part of our classification is concentrated in the analysis of flag varieties whose nil-equivalence classes are either minimal elements of $\mathscr F(G) / \sim$ or ``close'' to minimal.
It is also worth mentioning that, if $G = \Sp_{2n}$ or $G = \SO_{2k+1}$ and $X$ is not the Grassmannian of isotropic lines, then there are only a few cases of spherical actions on~$X$ where $H$ is neither intermediate between a Levi subgroup of $G$ and its derived subgroup nor a symmetric subgroup of~$G$, see Theorems~\ref{thm_result_sp} and~\ref{thm_result_so_odd}.

Although the general strategy for solving problem~(\ref{P1}) used in this paper is the same as in \cite{AvP1}, for checking $H$-sphericity of a given flag variety $X$ of~$G$ we use an effective criterion that is a consequence of a general result of Panyushev~\cite{Pan}.
Namely, from $H$ and $X$ one computes explicitly a Levi subgroup $M$ of $H$ together with a finite-dimensional $M$-module~$U$, and it turns out that $X$ is $H$-spherical if and only if $U$ is a spherical $M$-module (see Proposition~\ref{prop_sph_criterion_eff}).

For solving problem~(\ref{P2}) we use techniques described in~\cite[\S\,4]{AvP2}.
Given an $H$-spherical flag variety $X$ of~$G$, the restrictions to $H$ of all irreducible representations of~$G$ realized in spaces of sections of homogeneous line bundles on~$X$ are encoded in a free monoid of finite rank, which we call the \textit{restricted branching monoid}.
In the cases $G = \Sp_{2n}$, $X = \PP(\FF^{2n})$ and $G = \SO_n$, $X = \SOGr_1(\FF^n)$, the description of this monoid follows from well-known facts.
For the remaining cases, the restricted branching monoids are determined as follows.
First, the rank of the monoid is easily computed from the spherical $M$-module $U$ mentioned in the previous paragraph.
Second, to find all the indecomposable elements of the monoid, it suffices to explicitly compute the restrictions to~$H$ of several irreducible representations of~$G$ with ``small'' highest weights~$\lambda$, where $\lambda$ is usually a fundamental weight or the sum of two (not necessarily distinct) fundamental weights of~$G$.
Luckily, in the cases that appear in our paper, the computation of such restrictions is rather straightforward because it goes through a chain of successive restrictions to intermediate subgroups and each intermediate restriction is to a Levi subgroup, or to a symmetric subgroup, or a restriction from $\SO_7$ to~$\mathsf G_2$.
In the Levi subgroup and symmetric subgroup cases, the restrictions are computed using the tables in~\cite[\S\,5.5]{AvP2}.
The restrictions from $\SO_7$ to $\mathsf G_2$ can be computed via~\cite[Theorem~8, part~3]{AkP} or directly by using the program LiE~\cite{LiE1}.

As a final remark, we would like to mention that it would be interesting to characterize the spherical actions on flag varieties in terms of the existing combinatorial description of arbitrary spherical varieties (see, for instance, \cite[\S\S\,15.1,\,30.11]{Tim}) and/or compute the combinatorial data corresponding to all classified cases of such actions.

This paper is organized as follows.
In \S\,\ref{sect_notation&conventions} we set up some notation and conventions used throughout the paper.
In \S\,\ref{sect_preliminaries} we introduce basic notions and recall some facts needed to state our main results.
In turn, the main results of this paper are presented in~\S\,\ref{sect_main_results}.
In~\S\,\ref{sect_main_tools} we discuss the general strategy for classifying spherical actions on flag varieties of a given group~$G$ and analyze in more detail the cases of a symplectic and orthogonal group.
The classification itself is carried out in \S\,\ref{sect_sympl_case} for the symplectic case and in \S\,\ref{sect_orth_case} for the orthogonal case.
In \S\,\ref{sect_RBM} we explain how to compute the restricted branching monoids in all cases classified in our paper.
Finally, Appendix~\ref{sect_g2&spin7} contains explicit realizations of the algebra $\mathfrak g_2$ as a subalgebra of $\mathfrak{so}_7$ and the algebra $\mathfrak{spin}_7$ as a subalgebra of $\mathfrak{so}_8$ that are needed in some computations in~\S\,\ref{sect_orth_case}.

\subsection*{Acknowledgements}

A part of this work was done while the first author was visiting the Institut Fourier in Grenoble, France, in October 2018.
He thanks this institution for hospitality and excellent working conditions and also expresses his gratitude to Michel Brion for support and useful discussions.
Both authors are grateful to the referees for their valuable comments and suggestions on a previous version of this paper.

The results of \S\S\,\ref{subsec_sympl_case_end},\,%
\ref{subsec_SOGr_max_r=2}--\ref{subsec_orth_case_end} are obtained by the first author supported by the grant RSF--DFG 16-41-01013.
The results of \S\S\,\ref{subsec_P(V)}--\ref{subsec_SpGr_2},\,%
\ref{subsec_SOGr_1},\,\ref{subsec_SOGr_max_r=1} are obtained by the second author supported by  the RFBR grant no.~16-01-00818.

\section{Notation and conventions}
\label{sect_notation&conventions}

Throughout the paper, all topological terms refer to the Zariski topology.
All groups are assumed to be algebraic unless they explicitly appear as character groups.
All subgroups of algebraic groups are assumed to be closed.
The Lie algebras of groups denoted by capital Latin letters are denoted by the corresponding small Gothic letters.
Given a group~$K$, a \textit{$K$-variety} is an algebraic variety equipped with a regular action of~$K$.

Notation:

$\ZZ^+ = \lbrace z \in \ZZ \mid z \ge 0 \rbrace$;

$|X|$ is the cardinality of a finite set~$X$;

$\langle v_1,\ldots, v_k \rangle$ is the linear span of vectors $v_1,\ldots, v_k$ of a vector space~$V$;

$V^*$ is the vector space of linear functions on a vector space~$V$;

$\operatorname{S}^d V$ is the $d$th symmetric power of a vector space~$V$;

$\wedge^d V$ is the $d$th exterior power of a vector space~$V$;

$\mathfrak X(K)$ is the character group of a group~$K$ (in additive notation);

$K_x$ is the stabilizer of a point~$x$ of a $K$-variety $X$;

$\overline Y$ is the closure of a subset $Y$ of a variety~$X$;

$\FF[X]$ is the algebra of regular functions on an algebraic variety~$X$;

$\FF(X)$ is the field of rational functions on an irreducible algebraic variety~$X$;

$T_xX$ is the tangent space of an algebraic variety $X$ at a point $x \in X$;

$V^{(K)}_\chi$ is the space of semi-invariants of weight $\chi \in \mathfrak X(K)$ for an action of a group $K$ on a vector space~$V$.

The simple roots and fundamental weights of simple groups and their Lie algebras are numbered as in~\cite{Bou1}.

Given two groups $F \subset K$ and a $K$-module~$V$, the restriction of $V$ to~$F$ is denoted by $\left. V \right|_F$.

For every connected reductive group~$K$, we choose a Borel subgroup $B_K$ and a maximal torus~$T_K \subset B_K$.
Let $B_K^-$ be the Borel subgroup of~$K$ opposite to~$B_K$ with respect to~$T_K$, so that $B_K \cap B_K^- = T_K$.
The groups $\mathfrak X(B_K)$ and $\mathfrak X(B_K^-)$ are identified with $\mathfrak X(T_K)$ via restricting characters to~$T_K$.
Let $\Lambda^+(K) \subset \mathfrak X(T_K)$ be the set of dominant weights of $T_K$ with respect to~$B_K$.
For every $\lambda \in \Lambda^+(K)$, we denote by $R_K(\lambda)$ the simple $K$-module with highest weight~$\lambda$.
When the group $K$ is clear from the context, we write just~$R(\lambda)$.

Given a connected reductive group~$K$ and a finite-dimensional $K$-module~$V$, by abuse of language the pair $(K,V)$ itself is often referred to as a module.

Throughout the paper (except for the introduction), $G$ denotes a simply connected semisimple group.
Let $\pi_1, \ldots, \pi_s \in \Lambda^+(G)$ be all the fundamental weights of~$G$ and consider the index set $S = \lbrace 1, \ldots, s \rbrace$.

For every subset $I \subset S$, we consider the monoid $\Lambda^+_I(G) = \ZZ^+ \lbrace \pi_i \mid i \in I \rbrace \subset \Lambda^+(G)$.
Put $\lambda_I = \sum \limits_{i \in I} \pi_i$ and let $P_I^-$ be the stabilizer in $G$ of the line spanned by a lowest weight vector (with respect to~$B_G$ and~$T_G$) in~$R_G(\lambda_I)^*$.
Then $P_I^-$ is a parabolic subgroup of~$G$ containing~$B_G^-$.
Note that the character group $\mathfrak X(P_I^-)$ is canonically identified with $\ZZ \Lambda^+_I(G) = \ZZ \lbrace \pi_i \mid i \in I \rbrace$ via restricting characters from $P_I^-$ to~$T_G$.
At last, we let $X_I = G/P_I^-$ be the flag variety of~$G$ corresponding to~$I$.

A flag variety $X$ of the group $G$ is said to be \textit{trivial} if $X$ is a point and \textit{nontrivial} otherwise.
For a subset $I \subset S$, the flag variety $X_I$ is nontrivial if and only if $I \ne \varnothing$.

When explicitly describing modules for connected reductive groups, we always use the following conventions:

\begin{itemize}
\item
the groups $\GL_n$, $\SL_n$, $\Sp_n$ (for $n = 2m$), $\SO_n$ act on~$\FF^n$ via their tautological representations; the actions on $(\FF^n)^*$ as well as on symmetric and exterior powers of~$\FF^n$ are induced from this action on~$\FF^n$;

\item
for the group $\Sp_{2m}$ the notation $\wedge^2_0 \FF^{2m}$ stands for the module~$R(\pi_2)$ (which is realized as a codimension~1 submodule of $\wedge^2 \FF^{2m}$);

\item
the groups $\Spin_7$ and $\Spin_9$ act on $\FF^8$ and $\FF^{16}$, respectively, via the spinor representation;

\item
the group $\Spin_{10}$ acts on $\FF^{16}$ via a (either of two) half-spin representation;

\item
the group $\mathsf G_2$ acts on $\FF^7$ via a faithful representation of minimal dimension.

\end{itemize}

In this paper, we often fall into a situation where a connected reductive group~$K$ acts on a finite-dimensional module~$V$ written as a direct sum of several submodules each being a tensor product of several components acted on by different factors of~$K$ and we need to specify precisely the action of~$K$ on~$V$.
In such situations, our notation follow the following conventions:
\begin{enumerate}
\item
the group $K$ is written as
$K = K_1 \times \ldots \times K_p \times \underbrace{\FF^\times \times \ldots \times \FF^\times}_q$
where each factor $K_i$ is visually different from $\FF^\times$ (for example, some $K_i$'s may be written as $\GL_1$ or~$\SO_2$); for short, the product $\underbrace{\FF^\times \times \ldots \times \FF^\times}_q$ is denoted by~$T$ below;

\item
for every $i = 1, \ldots, p$ we write the number $i$ right below each component of $V$ on which $K_i$ acts nontrivially (exceptions: this notation is omitted when $p=1$ or $V$ is a simple $K$-module);

\item
for every $j = 1,\ldots, q$, we denote by $\chi_j$ a basis character of the $j$th copy of $\FF^\times$ in~$T$ (if $q = 1$ we write just $\chi$ instead of~$\chi_1$);

\item
if $T$ nontrivially acts on a simple summand $U$ of $V$ via a character~$\psi$, we write $[U]_\psi$ instead of~$U$ (exception: if $U = \FF^1$ then we write simply $\FF^1_\psi$).
\end{enumerate}
An example of a pair $(K,V)$ written using the above conventions is given by
\[
(\SL_2 \times \SL_2 \times \FF^\times \times \FF^\times , [\mathop{\vphantom|\FF^2} \limits_1{\!} \otimes \mathop{\vphantom|\FF^2} \limits_2{\!}]_{\chi_1+\chi_2} \oplus \mathop{[\vphantom|\FF^2} \limits_1{\!}]_{2\chi_1} \oplus \mathop{\vphantom|\mathrm{S}^2 \FF^2} \limits_2{\!} \oplus \FF^1_{2\chi_2}).
\]

For explicit calculations, we use the following realizations of the symplectic and orthogonal groups (where $A$ is the $(n \times n)$-matrix with ones on the antidiagonal and zeros elsewhere):
\begin{itemize}
\item
$\Sp_{2n}$ is the subgroup of $\GL_{2n}$ preserving the skew-symmetric bilinear form with matrix
$
\begin{pmatrix}
0 & A \\ -A & 0
\end{pmatrix};
$

\item
$\SO_n$ is the subgroup of $\SL_n$ preserving the symmetric bilinear form with matrix~$A$.
\end{itemize}
With these realizations, for $K = \Sp_{2n}$ and $K = \SO_n$ we choose $B_K$ (resp. $B_K^-$, $T_K$) to be the group of all upper-triangular (resp. lower-triangular, diagonal) matrices in~$K$.
Then the root system of $K$ is identified with a subset of $\mathfrak X(T_K)$, in which we choose the set of simple roots corresponding to~$B_K$.
In the case $K=\SO_{2m}$, due to the symmetry of the Dynkin diagram, we put by convention that the $(m-1)$th (resp.~$m$th) simple root takes the value $t_{m-1}t_m^{-1}$ (resp.~$t_{m-1}t_m$) on every diagonal matrix $t \in\nobreak T_K$ with diagonal entries $t_1,\ldots,t_m, t_m^{-1},\ldots, t_1^{-1}$.

The vectors of the standard basis of $\FF^n$ are denoted by $e_1,\ldots, e_n$.

\section{Preliminaries}
\label{sect_preliminaries}

\subsection{Homogeneous line bundles on flag varieties}

Let $K$ be a subgroup of~$G$ and consider the homogeneous space $G/K$.

Given $\chi \in \mathfrak X(K)$, consider the one-dimensional $K$-module $\FF^1_{\chi}$ on which $K$ acts via~$\chi$.
Let $K$ act on $G$ by right multiplication and let $L(\chi)$ be the quotient $(G \times \FF^1_{\chi})/K$ with respect to the diagonal action of~$K$.
Then the natural map $L(\chi) \to G/K$ turns $L(\chi)$ into a line bundle on $G/K$.
As this map is $G$-equivariant, $L(\chi)$ is called a \textit{homogeneous line bundle} on~$G/K$.
Note that the space of global sections $H^0(G/K, L(\chi))$ is a $G$-module.

When $K = P_I^-$ for some $I \subset S$, one has $G/K = X_I$.
In this case, a version of the Borel--Weil theorem states that for every $\lambda \in \mathfrak X(P_I^-)$ there is a $G$-module isomorphism
\begin{equation} \label{eqn_H^0_for_XI}
H^0(X_I, L(\lambda)) \simeq
\begin{cases} R_G(\lambda) & \text{if} \ \lambda \in \Lambda^+_I(G);\\
0 & \text{otherwise};
\end{cases}
\end{equation}
it follows, for instance, from general results discussed in~\cite[Part~I, \S\S\,5.12--5.16 and Part~II, \S\,2.2]{Jan}; see also~\cite[\S\,4.1]{AvP2} for an explanation in this particular situation.
Formula~(\ref{eqn_H^0_for_XI}) shows how irreducible representations $R_G(\lambda)$ with $\lambda \in \Lambda_I^+(G)$ are realized as spaces of sections of homogeneous line bundles on~$X_I$.

\subsection{Finite-dimensional modules with invariant bilinear forms}

Let $K$ be a connected reductive group and let $V$ be a finite-dimensional $K$-module.
Suppose that $K$ preserves a nondegenerate bilinear form $\omega$ on $V$ that is either symmetric or skew-symmetric.

A $K$-submodule $W \subset V$ is said to be \textit{nondegenerate} if the restriction of $\omega$ to $W$ is nondegenerate.
Clearly, in this situation there is a $K$-module decomposition $V = W \oplus W^\perp$ where $W^\perp$ is the orthogonal complement of $W$ in~$V$ with respect to the form~$\omega$.

Given a simple $K$-submodule $W \subset V$, the kernel of the restriction of $\omega$ to~$W$ is $K$-stable and hence equal to either $\lbrace 0 \rbrace$ or the whole~$W$.
It follows that $W$ is either nondegenerate or isotropic.

The following fact is well known; see, for instance, \cite[Theorem~4]{Mal} or~\cite[Theorem~2.2]{Kim} for a proof.

\begin{proposition} \label{prop_no_nondegen}
Suppose that $V$ contains no proper nondegenerate $K$-submodules.
Then one of the following two cases occurs:
\begin{enumerate}[label=\textup{(\arabic*)},ref=\textup{\arabic*}]
\item
$V$ is irreducible;

\item \label{prop_no_nondegen_2}
there are simple $K$-submodules $W_1,W_2 \subset V$ such that $V = W_1 \oplus W_2$, both $W_1,W_2$ are isotropic, and $W_2 \simeq W_1^*$ as $K$-modules.
\end{enumerate}
\end{proposition}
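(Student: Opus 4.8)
The plan is to reduce everything to a single pair of mutually dual simple submodules, exploiting the $K$-module isomorphism $V \cong V^*$ furnished by~$\omega$. Since $K$ is reductive and $V$ is finite-dimensional, $V$ is completely reducible, so it contains a simple $K$-submodule $W_1$. If $W_1 = V$, then $V$ is irreducible and we are in case~(1); otherwise $W_1$ is a \emph{proper} simple submodule, and by the dichotomy recorded just before the proposition it is either nondegenerate or isotropic. A proper nondegenerate submodule is ruled out by hypothesis, so $W_1$ must be isotropic. This is the starting point.

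Next I would produce a partner submodule. Consider the $K$-equivariant map $\phi \colon V \to W_1^*$ sending $v$ to the restriction to $W_1$ of the linear form $\omega(v, \cdot)$; equivariance follows from the $K$-invariance of~$\omega$, and nondegeneracy of $\omega$ makes $\phi$ surjective with kernel exactly $W_1^\perp$, so that $V / W_1^\perp \cong W_1^*$. Using complete reducibility once more, I would choose a $K$-stable complement $W_2$ to $W_1^\perp$ in $V$; then $\phi$ restricts to an isomorphism of $W_2$ onto $W_1^*$ (injective because $W_2 \cap W_1^\perp = 0$, surjective because $\phi$ is), so $W_2$ is simple and $W_2 \cong W_1^*$. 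Being a proper simple submodule, $W_2$ is again forced to be isotropic, since a nondegenerate $W_2$ would contradict the hypothesis. Moreover $W_1 \cap W_2 = 0$, because $W_1 \subseteq W_1^\perp$ while $W_2 \cap W_1^\perp = 0$, so the sum $W_1 \oplus W_2$ is direct.

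It then remains to check that $W_1 \oplus W_2$ is nondegenerate, whence the hypothesis forbidding proper nondegenerate submodules forces $W_1 \oplus W_2 = V$ and places us in case~(2). This is the crux of the argument. Since both $W_1$ and $W_2$ are isotropic, the restriction of $\omega$ to $W_1 \oplus W_2$ is governed entirely by the pairing $W_2 \times W_1 \to \FF$ given by $(w_2, w_1) \mapsto \phi(w_2)(w_1)$, which is perfect precisely because $\phi|_{W_2}$ is an isomorphism onto $W_1^*$; a short computation of the radical then shows the restricted form is nondegenerate. The only point requiring care is that $\omega$ may be either symmetric or skew-symmetric, but in both cases $v \mapsto \omega(v, \cdot)$ is a $K$-isomorphism $V \cong V^*$ and the radical computation goes through unchanged up to a harmless sign, so no separate treatment of the parity of $\omega$ is needed.
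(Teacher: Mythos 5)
Your proof is correct: the choice of an isotropic simple $W_1$, the equivariant surjection $\phi\colon V\to W_1^*$ with kernel $W_1^\perp$, the $K$-stable complement $W_2$ mapping isomorphically onto $W_1^*$, and the radical computation showing $W_1\oplus W_2$ is nondegenerate (hence equal to $V$ by hypothesis) all go through, including the sign point for $\omega$ skew-symmetric. The paper does not prove this proposition itself but only cites Malcev and Kimelfeld, and your argument is essentially the standard one given there, so there is nothing further to compare.
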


Following the terminology of Kimelfeld~\cite{Kim}, we say that $V$ is \textit{weakly reducible} (with respect to~$\omega$) if it falls into case~(\ref{prop_no_nondegen_2}) of Proposition~\ref{prop_no_nondegen}.

For every $K$-module~$W$, we introduce the notation $\Omega(W) = W \oplus W^*$.
In what follows, we shall regard $\Omega(W)$ as a $K$-module equipped with a $K$-invariant nondegenerate symmetric or skew-symmetric bilinear form such that $\Omega(W)$ is a direct sum of two isotropic subspaces isomorphic to $W$ and~$W^*$ as $K$-modules.

\subsection{Equivalence and BF-equivalence on finite-dimensional modules}
\label{subsec_eq&BF-eq}

Given two connected reductive groups $K_1,K_2$, for $i=1,2$ let $V_i$ be a finite-dimensional $K_i$-module and consider the corresponding representation $\rho_i \colon K_i \to \GL(V_i)$.
We say that the pairs $(K_1, V_1)$ and $(K_2, V_2)$ are \textit{equivalent} if there exists an isomorphism $V_1 \xrightarrow{\sim} V_2$ identifying the groups $\rho_1(K_1) \subset \GL(V_1)$ and $\rho_2(K_2) \subset \GL(V_2)$.
In other words, the pairs $(K_1, V_1)$ and $(K_2, V_2)$ are equivalent if and only if they define the same linear group.

As an important example, every pair $(K,V)$ is equivalent to the pair $(K,V^*)$.

In Table~\ref{table_equiv_modules} we list several equivalences for pairs $(\SO_n, \FF^n)$ with small values of~$n$, these equivalences are widely used throughout this paper.

\begin{table}[h]
\caption{} \label{table_equiv_modules}

\begin{tabular}{|c|c|c|c|c|}
\hline

$(\SO_2, \FF^2)$ & $(\SO_3, \FF^3)$ & $(\SO_4, \FF^4)$ & $(\SO_5, \FF^5)$ & $(\SO_6, \FF^6)$ \\

\hline

$(\GL_1, \Omega(\FF^1))$ & $(\SL_2, \mathrm{S}^2 \FF^2)$ & $(\SL_2 \times \SL_2, \FF^2 \otimes \FF^2)$ & $(\Sp_4, \wedge^2_0 \FF^4)$ & $(\SL_4, \wedge^2 \FF^4)$ \\

\hline

\end{tabular}

\end{table}

Now let $K_1, K_2, V_1, V_2, \rho_1, \rho_2$ be as above and suppose that for $i = 1,2$ the space $V_i$ carries a $K_i$-invariant bilinear form~$\omega_i$.
We say that the pairs $(K_1,V_1)$ and $(K_2, V_2)$ are \textit{BF-equivalent} if there exists an isomorphism $V_1 \xrightarrow{\sim} V_2$ identifying the group $\rho_1(K_1) \subset \GL(V_1)$ with $\rho_2(K_2) \subset \GL(V_2)$ and taking the form $\omega_1$ to~$\omega_2$.
In particular, if the pairs $(K_1,V_1)$ and $(K_2,V_2)$ are BF-equivalent then they are equivalent.

Given a connected reductive group $K$ and a finite-dimensional $K$-module~$V$, in this paper we shall often need to specify the pair $(K,V)$ up to BF-equivalence.
To this end, we always assume that the corresponding $K$-invariant bilinear form~$\omega$ on~$V$ is nondegenerate and either symmetric or skew-symmetric (the choice between symmetric and skew-symmetric will always be clear from the context).
Further, when $V$ is explicitly written as
\begin{equation} \label{eqn_decomposition}
V_1 \oplus \ldots \oplus V_p \oplus \Omega(W_1) \oplus \ldots \oplus \Omega(W_q)
\end{equation}
we also assume the following properties:
\begin{enumerate}
\item \label{conv_E_1}
all direct summands in~(\ref{eqn_decomposition}) are pairwise orthogonal with respect to the form~$\omega$;

\item \label{conv_E_2}
for each $j = 1,\ldots,q$ the summand $\Omega(W_j)$ is a direct sum of two $K$-stable isotropic subspaces isomorphic to $W_j$ and $W_j^*$ as $K$-modules.
\end{enumerate}
Throughout this paper, the above conventions will always be enough to uniquely determine the BF-equivalence class of the pair $(K,V)$.

\subsection{Spherical varieties}

Let $K$ be a connected reductive group.
We recall from the introduction that a $K$-variety $X$ is said to be \textit{spherical} (or \textit{$K$-spherical}) if the Borel subgroup $B_K$ has a dense open orbit in~$X$.

Given a spherical $K$-variety~$X$, we put
\[
\Lambda_X = \lbrace \lambda \in \mathfrak X(T_K) \mid \FF(X)^{(B)}_\lambda \ne \lbrace 0 \rbrace \rbrace.
\]
It is easy to see that $\Lambda_X$ is a sublattice of~$\mathfrak X(T_K)$, it is called the \textit{weight lattice} of~$X$.
The rank of this lattice is said to be the \textit{rank} of~$X$; we denote it by $\rk_K X$.

In this paper, we shall need the following general fact on spherical varieties, which follows, for instance, from~\cite[Theorem~1]{Vin86}.

\begin{theorem} \label{thm_subvar_sph}
Suppose that $X$ is a spherical $K$-variety.
Then any $K$-stable irreducible subvariety of $X$ is also spherical.
\end{theorem}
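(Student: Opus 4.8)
The plan is to reformulate $K$-sphericity in terms of the \emph{complexity} of the action and then invoke a monotonicity property of this invariant. For an irreducible $K$-variety $Z$, define the complexity as $c_K(Z) = \operatorname{tr.deg}_\FF \FF(Z)^{B_K}$, the transcendence degree of the field of $B_K$-invariant rational functions. By Rosenlicht's theorem on rational quotients, this number equals the codimension of a generic $B_K$-orbit in~$Z$; in particular $Z$ is $K$-spherical if and only if $c_K(Z) = 0$. Under this dictionary, the theorem becomes the inequality $c_K(Y) \le c_K(X)$ for every $K$-stable irreducible subvariety $Y \subset X$, since applying it with $c_K(X) = 0$ forces $c_K(Y) = 0$ and hence the existence of a dense $B_K$-orbit in~$Y$. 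This inequality is precisely the monotonicity of complexity under passage to $K$-stable subvarieties, which is the content of~\cite[Theorem~1]{Vin86}.

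To make the deduction concrete (rather than merely citing the inequality), I would instead argue through the fact that a spherical variety carries only \emph{finitely many} $B_K$-orbits. Granting this, the conclusion is immediate: a $K$-stable subvariety $Y$ is automatically $B_K$-stable because $B_K \subset K$, so $Y$ is a union of $B_K$-orbits; as there are only finitely many such orbits, the irreducibility of $Y$ forces the one of maximal dimension to be open and dense in~$Y$, which is exactly the assertion that $Y$ is $K$-spherical. Thus the whole statement reduces to the finiteness of $B_K$-orbits on a spherical variety, together with the elementary observation that an irreducible variety covered by finitely many orbits of a connected group has a dense orbit.

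The genuinely substantial step, and the main obstacle, is therefore establishing either the complexity inequality $c_K(Y)\le c_K(X)$ or the finiteness of $B_K$-orbits; these are the essential geometric inputs supplied by~\cite{Vin86}. Two technical points would also need attention. First, $X$ is not assumed normal, whereas the cleanest formulations of finiteness of Borel orbits are stated for normal varieties; I would handle this by passing to the $K$-equivariant normalization $\widetilde X \to X$, noting that complexity is a birational invariant (so $\widetilde X$ is again spherical), and pushing the finitely many $B_K$-orbits of $\widetilde X$ forward to cover those of~$X$. Second, if one prefers to allow $Y$ to be only locally closed, I would apply the argument to its closure $\overline Y$, which is again $K$-stable and irreducible, and observe that a dense $B_K$-orbit in $\overline Y$ restricts to a dense $B_K$-orbit in~$Y$. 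With these reductions in place, the proof consists of citing the monotonicity of complexity from~\cite{Vin86} and running the short orbit-counting argument above.
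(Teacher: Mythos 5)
Your proposal is correct and follows essentially the same route as the paper: the paper gives no argument beyond citing \cite[Theorem~1]{Vin86}, i.e.\ exactly the monotonicity of complexity $c_K(Y)\le c_K(X)$ that you invoke, with sphericity being the case of complexity zero. Your additional elaborations (finiteness of $B_K$-orbits, normalization, closures of locally closed subvarieties) are sound but go beyond what the paper records.
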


\subsection{Spherical modules}
\label{subsec_spherical_modules}

Let $K$ be a connected reductive group and let $V$ be a finite-dimensional $K$-module.

As was already mentioned in the introduction, $V$ is said to be a \textit{spherical $K$-module} if $V$ is spherical as a $K$-variety.
In this case, it is easy to see that every $K$-submodule of~$V$ is also spherical. (This also follows from Theorem~\ref{thm_subvar_sph}.)
According to \cite[Theorem~2]{VK78}, the condition of $V$ being a spherical $K$-module is equivalent to the fact that the $K$-module $\FF[V]$ is multiplicity free.
Given a spherical $K$-module $V$, the highest weights of all simple $K$-modules that occur in~$\FF[V]$ form a submonoid $\mathrm E_K(V)$ of~$\Lambda^+(K)$, called the \textit{weight monoid} of~$V$.
It is well known that $\mathrm E_K(V)$ is free (see, for instance,~\cite[Theorem~3.2]{Kn}) and $\rk \mathrm E_K(V) = \rk_K V$ (see, for instance,~\cite[Proposition~5.14]{Tim}).

The terminology introduced below follows Knop, see~\cite[\S\,5]{Kn}.

Let $\rho \colon K \to \GL(V)$ be the representation defining the $K$-module structure on~$V$.
We say that $V$ is \textit{saturated} if the dimension of the center of $\rho(K)$ equals the number of irreducible summands of~$V$.

We say that $V$ is \textit{decomposable} if for $i = 1,2$ there exist a connected reductive group $K_i$ and a finite-dimensional $K_i$-module $V_i$ such that the pair $(K,V)$ is equivalent to $(K_1 \times K_2, V_1 \oplus V_2)$.
Evidently, in this situation $(K,V)$ is a spherical module if and only if so are both $(K_1,V_1)$ and $(K_2, V_2)$, in which case $\rk_K V = \rk_{K_1} V_1 + \rk_{K_2} V_2$.
We say that $V$ is \textit{indecomposable} if $V$ is not decomposable.

There is a complete classification of spherical modules.
In the case where $V$ is simple the classification was obtained in~\cite{Kac}.
The case of arbitrary $V$ was settled in the two independent papers~\cite{BR} and~\cite{Lea}, it reduces essentially to classifying all indecomposable saturated spherical modules. (In fact, all such modules for which the derived subgroup of the acting group is simple were classified earlier in~\cite{Bri}.)
The weight monoids of all spherical modules are also known thanks to the works~\cite{HU} (the case of simple~$V$) and~\cite{Lea} (the general case).
A complete list (up to equivalence) of all indecomposable saturated spherical modules can be found in~\cite[\S\,5]{Kn} along with various additional data, including the rank and indecomposable elements of the weight monoids.
In this paper, for checking sphericity of a given module we find it convenient to use~\cite[Theorem~5.3]{AvP1}, which is a reformulation of~\cite[Theorem~7]{BR} and~\cite[Theorem~2.6]{Lea}.

Now suppose that $V$ is a spherical $K$-module and fix a decomposition $V = V_1 \oplus \ldots \oplus V_k$ where each direct summand is a simple $K$-module.
Let $K'$ be the derived subgroup of~$K$ and let $Z$ be the subgroup of $\GL(V)$ consisting of all elements that act by scalar transformations on each~$V_i$, $i = 1,\ldots, k$.
Then $V$ is a saturated spherical ($K' \times Z$)-module, hence the pair $(K,V)$ is equivalent to $(K_1 \times \ldots \times K_m, W_1 \oplus \ldots \oplus W_m)$ where $K_i$ is a connected reductive group and $W_i$ is an indecomposable saturated spherical $K_i$-module for each $i = 1,\ldots, m$.
In this situation, it is easy to see that $\rk_K V = \rk_{K' \times Z} V = \rk_{K_1} W_1 + \ldots + \rk_{K_m} W_m$.
The latter observation will be always used in this paper for computing the ranks of spherical modules.

\subsection{Spherical modules with invariant bilinear forms}

Retain the notation of \S\,\ref{subsec_spherical_modules}.
Combining the well-known description of invariant bilinear forms on spaces of irreducible representations of semisimple groups (see~\cite[\S\,2]{Mal} or~\cite[Ch.~VIII, \S\,7.5, Proposition~12]{Bou2}) with the classification of spherical modules (see the references in~\S\,\ref{subsec_spherical_modules}) and Proposition~\ref{prop_no_nondegen} one obtains the following results.

\begin{proposition} \label{prop_spherical+sympl}
Suppose that $K$ preserves a nondegenerate skew-symmetric bilinear form $\omega$ on~$V$ and $V$ contains no proper $K$-submodules that are nondegenerate with respect to~$\omega$.
Then $V$ is a spherical $(K \times \FF^\times)$-module \textup(with $\FF^\times$ acting by scalar transformations\textup) if and only if the pair $(K,V)$ is equivalent to a pair in Table~\textup{\ref{table_sympl_sph_mod}}.
\end{proposition}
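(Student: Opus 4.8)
The plan is to intersect the known classification of spherical modules with the two structural hypotheses, namely the existence of an invariant nondegenerate skew-symmetric form and the absence of proper nondegenerate $K$-submodules. First I would apply Proposition~\ref{prop_no_nondegen} to reduce to two cases: either $V$ is a simple $K$-module (case~(1)), or $V = \Omega(W) = W \oplus W^*$ for a simple $K$-module~$W$, the two summands being isotropic (case~(2)). In case~(1) the skew-symmetric form forces $V$ to be self-dual of symplectic type, whereas in case~(2) a nondegenerate skew-symmetric form on $\Omega(W)$ exists by construction. Thus the two hypotheses single out, within the list of all spherical modules, precisely the simple symplectic ones and those of the shape $W \oplus W^*$.

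For the forward implication, suppose $V$ is $(K \times \FF^\times)$-spherical. I would pass to the saturation described in~\S\,\ref{subsec_spherical_modules}: writing $K'$ for the derived subgroup of~$K$ and $Z$ for the torus of independent scalars on the simple summands (so that $\FF^\times \subset Z$), the module $V$ becomes a saturated spherical $(K' \times Z)$-module and hence decomposes into indecomposable saturated spherical blocks drawn from Knop's list~\cite[\S\,5]{Kn}. The condition that $V$ carry no proper nondegenerate $K$-submodule forces this decomposition to consist of a single self-dual block (case~(1)) or of a dual pair fused into one block $W \oplus W^*$ (case~(2)): any proper subcollection of blocks stable under passing to the dual would span a proper nondegenerate submodule, contradicting the hypothesis. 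It then remains to run through the relevant entries of the classification, in case~(1) retaining those simple modules that admit an invariant skew-symmetric form, as determined by the standard criterion for the type of the invariant form on an irreducible representation (\cite[\S\,2]{Mal} or~\cite[Ch.~VIII, \S\,7.5, Proposition~12]{Bou2}), and in case~(2) retaining those whose underlying module is a dual pair. This should produce exactly the entries of Table~\ref{table_sympl_sph_mod}.

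For the backward implication I would verify directly that each pair listed in Table~\ref{table_sympl_sph_mod} is $(K \times \FF^\times)$-spherical, most conveniently by applying the effective criterion~\cite[Theorem~5.3]{AvP1}. This step is genuinely necessary rather than automatic, because the acting torus here is a single copy of $\FF^\times$ scaling all of $V$ at once, which in case~(2) is strictly smaller than the saturating torus~$Z$ of independent scalars on $W$ and~$W^*$. Since sphericity ascends to larger groups but need not descend to smaller ones, the single-scalar statement cannot simply be read off from the saturated classification and must be checked on the nose.

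The main obstacle I anticipate is exactly this interplay between the single scalar $\FF^\times$ and the saturating torus in the weakly reducible case~(2): one must confirm, entry by entry, that fusing the two scalars into one preserves sphericity for the pairs that belong in the table, while correctly excluding those dual pairs $W \oplus W^*$ that are spherical only for the full torus~$Z$ (the discrepancy being a single dimension in the Borel-orbit count, hence genuinely decisive for $K$ with small center such as $\SL_n$). The remaining work---matching the simple spherical modules against the symplectic-type criterion and transcribing the surviving cases into the equivalence classes recorded in the table---is then essentially careful bookkeeping against the published classification.
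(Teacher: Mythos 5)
Your proposal is correct and follows essentially the same route the paper indicates: the paper gives no written proof but states that the result is obtained by combining the form-type criterion for irreducible representations, the classification of spherical modules, and Proposition~\ref{prop_no_nondegen}, which is precisely your plan. Your additional caution about the single scaling $\FF^\times$ versus the full saturating torus in the weakly reducible case is well placed and is exactly the point that makes the entry-by-entry verification (e.g.\ excluding $\SL_2$ and forcing $n\ge 3$ for $\SL_n$) necessary.
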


\begin{table}[h]
\caption{} \label{table_sympl_sph_mod}

\begin{tabular}{|c|c|c|c|}
\hline
No. & $K$ & $V$ &  Note \\

\hline

\multicolumn{4}{|c|}{$V$ irreducible} \\

\hline

\newcase & $\Sp_{2n}$ & $\FF^{2n}$ & $n \ge 1$ \\

\hline

\multicolumn{4}{|c|}{$V$ weakly reducible} \\

\hline

\newcase &  $\GL_n$ & $\Omega(\FF^n)$ & $n \ge 1$ \\

\hline

\newcase & $\SL_n$ & $\Omega(\FF^n)$ & $n \ge 3$ \\

\hline

\newcase & $\Sp_{2n} \times \FF^\times$ & $\Omega([\FF^{2n}]_\chi)$ & $n \ge 2$ \\

\hline

\end{tabular}

\end{table}

\begin{proposition} \label{prop_spherical+orth}
Suppose that $K$ preserves a nondegenerate symmetric bilinear form $\omega$ on~$V$ and $V$ contains no proper $K$-submodules that are nondegenerate with respect to~$\omega$.
Then $V$ is a spherical $(K \times \FF^\times)$-module \textup(with $\FF^\times$ acting by scalar transformations\textup) if and only if the pair $(K,V)$ is equivalent to a pair in Table~\textup{\ref{table_orth_sph_mod}}.
\end{proposition}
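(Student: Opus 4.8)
The plan is to prove this statement exactly as one would prove its symplectic counterpart, Proposition~\ref{prop_spherical+sympl}, by feeding Proposition~\ref{prop_no_nondegen} into the classification of spherical modules and the standard description of invariant bilinear forms on simple modules. The ``if'' direction is routine: for each pair in Table~\ref{table_orth_sph_mod} I would exhibit the symmetric form $\omega$ (the tautological form in the irreducible rows, the canonical pairing of the two summands in the $\Omega(W)$ rows, arranged to be symmetric via the convention fixed after Proposition~\ref{prop_no_nondegen}), observe that $V$ has no proper nondegenerate $K$-submodule, and verify $(K\times\FF^\times)$-sphericity by locating $V$ in the classification of spherical modules, e.g.\ through \cite[Theorem~5.3]{AvP1}. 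The substance is the ``only if'' direction, and I would organise it around the dichotomy of Proposition~\ref{prop_no_nondegen}: under the hypotheses $V$ is either irreducible or weakly reducible, i.e.\ $V\simeq\Omega(W)$ for a simple $K$-module~$W$.

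In the irreducible case, adjoining the scalar $\FF^\times$ turns $V$ into a \emph{saturated} spherical $(K\times\FF^\times)$-module (the center of the image is just the scalars, matching the single simple summand), so by the classification of simple spherical modules \cite{Kac} the pair $(K,V)$ occurs, up to equivalence, in Kac's list. The extra input is that $V$ carries a nondegenerate \emph{symmetric} invariant form; by the criterion of \cite[\S\,2]{Mal} (equivalently \cite[Ch.~VIII, \S\,7.5, Proposition~12]{Bou2}) this forces $V$ to be self-dual of orthogonal type. I would therefore scan Kac's list, discard the entries that are not self-dual and the self-dual ones of symplectic type (the latter being precisely the entries that feed Proposition~\ref{prop_spherical+sympl} instead), and keep the orthogonal ones; these produce the ``$V$ irreducible'' rows of Table~\ref{table_orth_sph_mod}.

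In the weakly reducible case $V=\Omega(W)=W\oplus W^*$, I would instead invoke the general reduction of \S\,\ref{subsec_spherical_modules}: replacing $K$ by $K'\times Z$, where $Z$ is the full torus of scalars on the two summands, makes $V$ saturated, so $(K'\times Z,V)$ appears in the master list of indecomposable saturated spherical modules, read off from \cite{BR}, \cite{Lea}, or \cite[Theorem~5.3]{AvP1}. The main obstacle is then bookkeeping rather than conceptual, and it has two delicate strands. First, one must decide for each candidate which \emph{intermediate} subgroup $K$ (lying between $K'$ enlarged by the single scalar $\FF^\times$ and the full saturating torus) still makes $V$ a $(K\times\FF^\times)$-spherical module; this is exactly the phenomenon separating the $\GL_n$ and $\SL_n$ rows in the symplectic table. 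Second, for each retained candidate one must simultaneously check that the invariant form can be chosen symmetric rather than skew-symmetric and that $W\not\simeq W^*$, so that $V$ genuinely contains no proper nondegenerate summand; the borderline subcase $W\simeq W^*$ has to be excluded explicitly. Sorting the self-dual modules correctly by type while tracking these intermediate subgroups is the crux of the argument and the point at which the orthogonal and symplectic statements diverge.
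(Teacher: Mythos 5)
Your proposal follows exactly the route the paper itself indicates: it derives the statement by combining Proposition~\ref{prop_no_nondegen} (irreducible versus weakly reducible dichotomy) with the classification of spherical modules and the Malcev--Bourbaki description of invariant bilinear forms to sort self-dual modules by orthogonal versus symplectic type. The paper gives no further written proof beyond this recipe, and your fleshed-out version of it, including the care taken over intermediate subgroups and the exclusion of the $W\simeq W^*$ subcase, is correct.
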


\begin{table}[h]
\caption{} \label{table_orth_sph_mod}

\begin{tabular}{|c|c|c|c|}
\hline
No. & $K$ & $V$ &  Note \\

\hline

\multicolumn{4}{|c|}{$V$ irreducible} \\

\hline

\newcase & $\SO_{n}$ & $\FF^{n}$ & $n \ge 1$, $n \ne 2$ \\

\hline

\newcase & $\Sp_{2n} \times \SL_2$ & $\FF^{2n} \otimes \FF^2$ & $n \ge 2$ \\

\hline

\newcase & $\Spin_7$ & $\FF^8$ & \\

\hline

\newcase & $\Spin_9$ & $\FF^{16}$ & \\

\hline

\newcase & $\mathsf G_2$ & $\FF^7$ & \\

\hline

\multicolumn{4}{|c|}{$V$ weakly reducible} \\

\hline

\newcase &  $\GL_n$ & $\Omega(\FF^n)$ & $n \ge 1$ \\

\hline

\newcase & $\SL_n$ & $\Omega(\FF^n)$ & $n \ge 3$ \\

\hline

\newcase & $\Sp_{2n} \times \FF^\times$ & $\Omega([\FF^{2n}]_\chi)$ & $n \ge 2$ \\

\hline

\end{tabular}

\end{table}

\subsection{Branching monoids and restricted branching monoids}

A more detailed discussion of the notions introduced in this subsection can be found in \cite[\S\,3.4]{AvP2}.

Let $H \subset G$ be a connected reductive subgroup.

We put
\[
\Gamma(G, H) = \lbrace (\lambda; \mu) \in \Lambda^+(G) \times \Lambda^+(H) \mid R_H(\mu) \ \text{is a submodule of} \left. R_G(\lambda) \right|_H \rbrace.
\]
Then $\Gamma(G, H)$ is a submonoid of $\Lambda^+(G) \times \Lambda^+(H)$, it is called the \textit{branching monoid} for the pair $(G,H)$.

Given any subset $I \subset S$, the monoid
\[
\Gamma_I(G,H) = \lbrace (\lambda; \mu) \in \Gamma(G,H) \mid \lambda \in \Lambda^+_I(G) \rbrace
\]
is called the \textit{restricted branching monoid} corresponding to the subset~$I$.

\subsection{Spherical actions on flag varieties and the corresponding restricted branching monoids}

Let $H \subset G$ be a connected reductive subgroup and let $I \subset S$ be an arbitrary subset.

The next theorem is a particular case of~\cite[Corollary~1]{VK78}.

\begin{theorem} \label{thm_criterion_spherical}
The following conditions are equivalent:
\begin{enumerate}[label=\textup{(\arabic*)},ref=\textup{\arabic*}]
\item
For every $\lambda \in \Lambda^+_I(G)$, the $H$-module $\left. R_G(\lambda) \right|_H$ is multiplicity free.

\item
The flag variety $X_I$ is $H$-spherical.
\end{enumerate}
\end{theorem}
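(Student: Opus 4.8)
The plan is to reduce the statement to the affine sphericity criterion of Vinberg and Kimelfeld, namely \cite[Theorem~2]{VK78}, which asserts that an affine variety is spherical for a connected reductive group $K$ if and only if its algebra of regular functions is a multiplicity-free $K$-module. The bridge between the projective variety $X_I$ and an affine one is the multicone over~$X_I$, and the point separating the two conditions in the theorem is the grading of its coordinate ring by the ``scaling'' torus.

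First I would introduce the multicone $\widehat X_I$, the affine cone over the image of $X_I$ under the multi-Pl\"ucker embedding $X_I \hookrightarrow \prod_{i \in I} \PP(R_G(\pi_i))$; its coordinate ring is the total section ring $A = \bigoplus_{\lambda \in \Lambda^+_I(G)} H^0(X_I, L(\lambda))$, the multiplication being given by Cartan multiplication of sections. By the Borel--Weil isomorphism~(\ref{eqn_H^0_for_XI}) there is a $G$-module identification $A \simeq \bigoplus_{\lambda \in \Lambda^+_I(G)} R_G(\lambda)$. The variety $\widehat X_I$ carries an action of $G \times (\FF^\times)^{|I|}$ in which the $i$th copy of $\FF^\times$ scales the factor $R_G(\pi_i)$; this torus grades $A$ so that the homogeneous component of multidegree $(a_i)_{i \in I}$ is precisely the summand $R_G(\sum_{i \in I} a_i \pi_i)$.

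Next I would establish the key reduction: $X_I$ is $H$-spherical if and only if $\widehat X_I$ is $(H \times (\FF^\times)^{|I|})$-spherical. This is the multigraded analogue of the elementary fact, used already in the introduction, that $\PP(U)$ is $H$-spherical if and only if $U$ is $(H \times \FF^\times)$-spherical. Concretely, over the open locus where all Pl\"ucker coordinates are nonzero the projection $\widehat X_I \to X_I$ is a principal $(\FF^\times)^{|I|}$-bundle, whence $\FF(X_I) = \FF(\widehat X_I)^{(\FF^\times)^{|I|}}$; passing to $B_H$-invariants gives $\FF(X_I)^{B_H} = \FF(\widehat X_I)^{B_H \times (\FF^\times)^{|I|}}$, and since an irreducible $K$-variety is $K$-spherical exactly when its field of $B_K$-invariant rational functions is trivial, the two sphericity conditions coincide. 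Combining this with \cite[Theorem~2]{VK78}, $X_I$ is $H$-spherical if and only if $A$ is a multiplicity-free $(H \times (\FF^\times)^{|I|})$-module. Because distinct weights $\lambda \in \Lambda^+_I(G)$ carry distinct $(\FF^\times)^{|I|}$-characters, the torus separates the graded components, so $A$ is multiplicity free for $H \times (\FF^\times)^{|I|}$ if and only if each graded piece $R_G(\lambda)$ is multiplicity free for~$H$, which is condition~(1). Chaining these equivalences proves the theorem.

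I expect the main obstacle to be the careful justification of the two facts used above as black boxes: that the total section ring $A$ is a finitely generated normal algebra equal to $\FF[\widehat X_I]$, so that~(\ref{eqn_H^0_for_XI}) genuinely computes $\FF[\widehat X_I]$; and that the cone projection restricts to a principal torus bundle over a dense open subset, allowing the transfer of sphericity between $X_I$ and $\widehat X_I$. Both are standard for flag varieties and the associated globally generated line bundles, but they are the places where precision is required; once they are in place, the weight-separation argument for the grading torus is purely formal.
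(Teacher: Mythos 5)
Your argument is correct, but note that the paper does not prove this statement at all: it is quoted as a particular case of \cite[Corollary~1]{VK78}, so there is no internal proof to compare against. What you have written is essentially the standard derivation of that corollary from the affine criterion \cite[Theorem~2]{VK78}: pass to the multicone $\widehat X_I$, identify $\FF[\widehat X_I]$ with $\bigoplus_{\lambda \in \Lambda^+_I(G)} R_G(\lambda)$ via~(\ref{eqn_H^0_for_XI}), transfer sphericity between $X_I$ and $\widehat X_I$ through the principal $(\FF^\times)^{|I|}$-bundle over the locus where the Pl\"ucker coordinates are nonzero, and let the grading torus separate the isotypic blocks. The two points you flag as black boxes are indeed the only places needing care, and both are fine here: the identification of $\FF[\widehat X_I]$ with the full section ring is the projective normality of the multi-Pl\"ucker embedding, i.e.\ surjectivity of the Cartan multiplication $R_G(\lambda) \otimes R_G(\mu) \to R_G(\lambda+\mu)$ inside the section ring, which is classical for flag varieties in characteristic zero; and the transfer of sphericity uses only that an irreducible $K$-variety is spherical exactly when $\FF(X)^{B_K} = \FF$ together with $\FF(X_I)^{B_H} = \FF(\widehat X_I)^{B_H \times (\FF^\times)^{|I|}}$, both standard (the relevant affine criterion applies since the multicone is irreducible, being the cone over an irreducible projective variety). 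So your proposal is a complete proof of the cited result rather than an alternative to an argument in the paper.
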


Under the conditions of Theorem~\ref{thm_criterion_spherical}, the restriction to~$H$ of any simple $G$-module $R_G(\lambda)$ with $\lambda \in \Lambda^+_I(G)$ is uniquely determined by the monoid $\Gamma_I(G,H)$ as follows:
\begin{equation} \label{eqn_restriction}
\left. R_G(\lambda) \right|_H \simeq \bigoplus \limits_{\mu \in \Lambda^+(H) \, : \, (\lambda; \mu) \in \Gamma_I(G,H)} R_H(\mu).
\end{equation}

The following result is implied by \cite[Theorem~4.2 and Proposition~4.4]{AvP2}.

\begin{theorem} \label{thm_Gamma_is_free}
Under the conditions of Theorem~\textup{\ref{thm_criterion_spherical}}, the following assertions hold:
\begin{enumerate}[label=\textup{(\alph*)},ref=\textup{\alph*}]
\item
the monoid $\Gamma_I(G,H)$ is free;

\item \label{thm_Gamma_is_free_b}
$\rk \Gamma_I(G,H) = |I| + \rk_H X_I$.
\end{enumerate}
\end{theorem}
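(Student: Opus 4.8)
The theorem claims two things about the restricted branching monoid $\Gamma_I(G,H)$ under the hypothesis that $X_I$ is $H$-spherical: freeness, and a rank formula $\rk \Gamma_I(G,H) = |I| + \rk_H X_I$. The excerpt tells me this follows from [AvP2, Theorem 4.2 and Proposition 4.4], so my job is to reconstruct the conceptual bridge rather than prove everything from scratch.

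The plan is to realize the restricted branching monoid $\Gamma_I(G,H)$ as the weight monoid of an explicit quasi-affine spherical variety, to deduce the rank formula~(b) from a torus-fibration argument on weight lattices, and to obtain freeness~(a) by reducing to the already-known freeness of weight monoids of spherical \emph{modules}. I first pass from $X_I = G/P_I^-$ to its associated multi-cone. Writing $P_I^- = L_I \ltimes (P_I^-)^u$ for the Levi decomposition, let $K_I \subset P_I^-$ be the subgroup generated by $[L_I,L_I]$ and $(P_I^-)^u$; then $T_I' := P_I^-/K_I$ is a torus of rank $|I|$ whose character group is canonically $\mathfrak X(P_I^-) = \ZZ\{\pi_i : i \in I\}$. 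The quasi-affine variety $Y := G/K_I$ carries commuting left $G$- and right $T_I'$-actions, and the projection $Y \to X_I$ is a principal $T_I'$-bundle. Applying \eqref{eqn_H^0_for_XI} fiberwise gives a $G \times T_I'$-module isomorphism
\[
\FF[Y] \;\simeq\; \bigoplus_{\lambda \in \Lambda^+_I(G)} R_G(\lambda)^*,
\]
the summand indexed by $\lambda$ being the $T_I'$-eigenspace of weight~$\lambda$. Restricting the left action to $H$, I view $Y$ as an $(H \times T_I')$-variety. Since $B_H \times T_I'$ has a dense orbit on $Y$ exactly when $B_H$ has a dense orbit on the base $Y/T_I' = X_I$, the hypothesis that $X_I$ is $H$-spherical is equivalent to $Y$ being $(H \times T_I')$-spherical; hence $\FF[Y]$ is multiplicity free as an $(H \times T_I')$-module by the Vinberg--Kimelfeld criterion \cite{VK78}.

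Comparing the displayed decomposition with the branching rule $R_G(\lambda)^*|_H = \bigoplus_\mu R_H(\mu)^*$, the weight monoid $\mathrm E_{H \times T_I'}(Y)$ consists of the pairs $(\mu^*;\lambda)$ with $\lambda \in \Lambda^+_I(G)$ and $R_H(\mu) \subset R_G(\lambda)|_H$. As $\mu \mapsto \mu^*$ is a monoid automorphism of $\Lambda^+(H)$, this gives a monoid isomorphism $\mathrm E_{H \times T_I'}(Y) \cong \Gamma_I(G,H)$. It therefore suffices to prove that $\mathrm E_{H \times T_I'}(Y)$ is free of rank $|I| + \rk_H X_I$.

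For the rank~(b) I use the fibration $Y \to X_I$ at the level of weight lattices; this half is routine and independent of freeness, since the rank of any submonoid equals the rank of the subgroup it generates, namely $\Lambda_Y$. Sending a $B_H \times T_I'$-semi-invariant to its $T_I'$-weight defines a homomorphism $\Lambda_Y \to \mathfrak X(T_I')$; it is surjective because each $R_G(\pi_i) = H^0(X_I, L(\pi_i)) \ne 0$ contains a $B_H$-highest weight vector, producing a $B_H$-semi-invariant on $Y$ of $T_I'$-weight $\pi_i$, and these generate $\mathfrak X(T_I')$. Its kernel consists of the $T_I'$-invariant semi-invariants, which descend to $X_I$ and hence coincide with $\Lambda_{X_I}$. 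Thus $\rk \Gamma_I(G,H) = \rk \Lambda_Y = \rk \mathfrak X(T_I') + \rk_H X_I = |I| + \rk_H X_I$.

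The essential difficulty is freeness~(a): for a general spherical variety the weight monoid need not be free, so one must exploit the special geometry of $Y$. Here I would invoke the local structure theorem for the spherical $(H \times T_I')$-variety $Y$, which furnishes a parabolic $Q \supset B_H$ with Levi $M$ and an $M$-stable affine slice whose tangent space at the base point is a spherical $M$-module~$U$ --- precisely the module produced by the effective criterion of Proposition~\ref{prop_sph_criterion_eff}, enriched by the $T_I'$-grading. Under this identification the weight monoid $\mathrm E_{H \times T_I'}(Y)$ is carried isomorphically onto the weight monoid $\mathrm E_{M \times T_I'}(U)$ of a spherical module, which is free by \cite[Theorem~3.2]{Kn} (and whose rank re-derives~(b)). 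Making this passage from $Y$ to the spherical module $U$ precise --- matching the $(H \times T_I')$-weights on $\FF[Y]$ with the $(M \times T_I')$-weights on $\FF[U]$ --- is the main obstacle, and is exactly the content supplied by \cite[Theorem~4.2 and Proposition~4.4]{AvP2}.
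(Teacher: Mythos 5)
The paper itself gives no argument for this theorem beyond the citation of \cite[Theorem~4.2 and Proposition~4.4]{AvP2}, so the real comparison is with the proof in that reference; your reconstruction follows the same overall route (pass to the multicone $Y=G/K_I$, identify $\Gamma_I(G,H)$ with the weight monoid of $Y$ as an $(H\times T_I')$-variety, compute the rank via the torus fibration $Y\to X_I$, and reduce freeness to the weight monoid of a spherical module). Your treatment of part~(b) is complete and correct: the isomorphism $\mathrm E_{H\times T_I'}(Y)\cong\Gamma_I(G,H)$, the surjection $\Lambda_Y\to\mathfrak X(T_I')$ with kernel $\Lambda_{X_I}$, and the conclusion $\rk\Gamma_I(G,H)=|I|+\rk_H X_I$ all check out, modulo the standard fact that on a quasi-affine spherical variety every $B$-semi-invariant rational function is a ratio of semi-invariant regular functions, so the monoid does generate the lattice $\Lambda_Y$.

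For part~(a) there is a genuine gap exactly where you flag it, and the tool you name is not the right one. The local structure theorem produces an open chart $Q^u\times Z$ around a point of the \emph{open} $B_H$-orbit; it controls the weight \emph{lattice} of $Y$ but not the weight \emph{monoid}, since the monoid records which semi-invariants extend to regular functions on all of $Y$ and an open chart cannot detect this. Moreover the slice $Z$ is in general only an affine spherical $M$-variety, not a vector space, and its weight monoid need not agree with that of its tangent space. Note also that the module $\mathfrak g/(\mathfrak p_I^-+\mathfrak h)$ of Proposition~\ref{prop_sph_criterion_eff} is the normal space at a point of the \emph{closed} $H$-orbit (cf.\ Proposition~\ref{prop_sphericity_criterion}), not a local-structure slice at a generic point, so your identification of the two conflates different base points. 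The actual mechanism in \cite{AvP2}, following Panyushev~\cite{Pan}, is a flat degeneration of $\FF[Y]$ to the coordinate algebra of the normal bundle of the closed orbit, which is what transports $\mathrm E_{H\times T_I'}(Y)$ isomorphically onto the weight monoid of that spherical module and then gives freeness via \cite[Theorem~3.2]{Kn}. Since you explicitly defer this step to the same citation the paper uses, your write-up is an honest expansion of the paper's citation rather than a self-contained proof of~(a).
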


\section{Statement of the main results}
\label{sect_main_results}

\subsection{Reductions}
\label{subsec_reductions}

In this subsection, we describe several reductions that simplify the statement of main theorems in this section.
Fix a connected reductive subgroup $H \subset G$ along with a subset $I \subset S$ and suppose that the variety $X_I$ is $H$-spherical.

\textit{Reduction}~1.
Let $I' \subset I$ be an arbitrary subset.
Then the variety $X_{I'}$ is automatically $H$-spherical and the indecomposable elements of $\Gamma_{I'}(G,H)$ are those of $\Gamma_I(G,H)$ for which the first component belongs to $\Lambda^+_{I'}(G)$.
Therefore, for a given pair $(G,H)$, it is enough to consider subsets $I \subset S$ that are maximal with the property that $X_I$ is $H$-spherical.

\textit{Reduction}~2.
Let $\sigma$ be an automorphism of~$G$.
Then $(\tau\sigma)(B_G) = B_G$ and $(\tau\sigma)(T_G) = T_G$ for an appropriate inner automorphism $\tau$ of~$G$.
By abuse of notation, we shall use the same letter $\sigma$ to denote the following objects:
\begin{itemize}
\item
the bijection of $S$ corresponding to the automorphism of the Dynkin diagram of~$G$ induced by~$\tau\sigma$;

\item
the bijection $\Lambda^+(G) \to \Lambda^+(G)$ induced by $\tau\sigma$;

\item
the induced bijection $\Lambda^+(H) \to \Lambda^+(\sigma(H))$.
\end{itemize}
Now, given $\lambda \in \Lambda^+_I(G)$, after changing the action of $G$ on $R_G(\lambda)$ to $(g,v) \mapsto \sigma^{-1}(g)(v)$ formula~(\ref{eqn_restriction}) takes the form
\[
\left. R_G(\sigma(\lambda)) \right|_{\sigma(H)} \simeq \bigoplus \limits_{\mu \in \Lambda^+(H) \, : \, (\lambda;\mu) \in \Gamma_I(G,H)} R_{\sigma(H)}(\sigma(\mu)).
\]
Then $X_{\sigma(I)}$ is $\sigma(H)$-spherical.
Moreover, $(\lambda; \mu) \in \nobreak \Gamma_I(G,H)$ if and only if $(\sigma(\lambda); \sigma(\mu)) \in \Gamma_{\sigma(I)}(G,\sigma(H))$.
In particular, $(\lambda; \mu)$ is an indecomposable element of $\Gamma_I(G,H)$ if and only if $(\sigma(\lambda); \sigma(\mu))$ is an indecomposable element of $\Gamma_{\sigma(I)}(G,\sigma(H))$.
In this situation, we say that the triple $(G,\sigma(H),\sigma(I))$ is obtained from $(G,H,I)$ by the automorphism~$\sigma$.

\textit{Reduction}~3.
Let $H'$ denote the derived subgroup of~$H$ and suppose $K \subset G$ is a connected reductive subgroup such that $H' \subset K \subset H$.
Then it follows from Theorems~\ref{thm_criterion_spherical} and~\ref{thm_Gamma_is_free} that $K$ acts spherically on $X_I$ if and only if the restrictions to $\Lambda_I^+(G) \oplus \Lambda^+(K)$ of all the indecomposable elements of $\Gamma_I(G,H)$ are linearly independent (in which case these restrictions are all the indecomposable elements of $\Gamma_I(G,K)$).
Therefore it suffices to classify spherical actions on flag varieties only for groups $H$ that are not intermediate between a bigger connected reductive subgroup of $G$ and its derived subgroup.

We remark that, for completeness of the results obtained in this paper, we use Reduction~3 only to exclude subgroups $H$ that are intermediate between a Levi subgroup of $G$ and its derived subgroup.

\subsection{Levi subgroups and symmetric subgroups in \texorpdfstring{$\Sp_{2n}$}{Sp\_2n} and \texorpdfstring{$\SO_n$}{$SO\_n$}}
\label{subsec_Levi&symmetric}

Let $V$ be a finite-dimensional vector space equipped with a nondegenerate bilinear form $\omega$ that is either symmetric or skew-symmetric.
Put $G = \Sp(V)$ if $\omega$ is skew-symmetric and $G = \Spin(V)$ if $\omega$ is symmetric.
Let $H \subset G$ be a connected reductive subgroup.

In the statements of our main theorems in \S\S\,\ref{subsec_sp_results}--\ref{subsec_so_results}, we exclude the cases where $H$ is either intermediate between a Levi subgroup of $G$ and its derived subgroup or a symmetric subgroup of~$G$.
For convenience of the reader, in this subsection we specify explicitly all Levi subgroups and all symmetric subgroups in~$G$.

We recall that a subgroup $K$ of $G$ is said to be \textit{symmetric} if $K$ is the subgroup of fixed points of a nontrivial involutive automorphism of~$G$.
As $G$ is simply connected, in this case $K$ is reductive and connected by~\cite[Theorem~8.1]{Stei}.

\begin{proposition}
Suppose that $\omega$ is skew-symmetric. Then
\begin{enumerate}[label=\textup{(\alph*)},ref=\textup{\alph*}]
\item
$H$ is a Levi subgroup of $G$ if and only if the pair $(H,V)$ is BF-equivalent to
\[
(\GL_{n_1}\times \ldots \times \GL_{n_k} \times \Sp_{2m}, \Omega(\mathop{\FF^{n_1}} \limits_1{\!}) \oplus \Omega(\mathop{\FF^{n_2}} \limits_2{\!}) \oplus \Omega(\mathop{\FF^{n_k}} \limits_k{\!}) \oplus \mathop{\FF^{2m}} \limits_{k+1})
\]
for some $k \ge 0$, $m \ge 0$, and $n_i \ge 1$;

\item
$H$ is a symmetric subgroup of $G$ if and only if the pair $(H,V)$ is BF-equivalent to either $(\Sp_{2n_1} \times \Sp_{2n_2}, \FF^{2n_1} \oplus \FF^{2n_2})$ for some $n_1,n_2 \ge 1$ or $(\GL_n, \Omega(\FF^n))$ for some $n \ge 1$.
\end{enumerate}
\end{proposition}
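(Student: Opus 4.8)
Both parts will follow from the standard structure theory of $\Sp(V)=\Sp_{2n}$ (where $2n=\dim V$), so I will only describe the plan. For part~(a), the plan is to invoke the description of the parabolic subgroups of $\Sp(V)$ as stabilizers of isotropic flags. Up to $G$-conjugacy, a Levi subgroup corresponds to an $\omega$-orthogonal decomposition
\[
V = \Omega(A_1) \oplus \ldots \oplus \Omega(A_k) \oplus W,
\]
in which each summand $\Omega(A_i)=A_i\oplus A_i^*$ is a sum of two $\omega$-isotropic subspaces in perfect mutual $\omega$-pairing and $W$ is a nondegenerate complement; the associated Levi factor then acts as $\GL(A_i)$ on the $i$th block (tautologically on $A_i$, dually on $A_i^*$) and as $\Sp(W)$ on $W$. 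Setting $n_i=\dim A_i$ and $2m=\dim W$ yields the asserted BF-equivalence. Conversely, a pair $(H,V)$ of the stated form displays $V$ as precisely such a decomposition, so $H$ stabilizes the isotropic flag $A_1\subset A_1\oplus A_2\subset\ldots$ and coincides with its Levi factor; since a BF-equivalence is realized by an element of $\Sp(V)$ up to a scalar, $H$ is $G$-conjugate to this Levi, hence is itself a Levi subgroup. The only bookkeeping is to confirm that the subspaces $A_i$ are genuinely isotropic, which is guaranteed by the conventions on $\Omega(\cdot)$ fixed in \S\,\ref{subsec_eq&BF-eq}.

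For part~(b), I would begin by reducing to involutions. As $G$ is simply connected of type $\mathsf C_n$ and the Dynkin diagram of $\mathsf C_n$ has no nontrivial automorphism, every automorphism of $G$ is inner; thus an involutive automorphism is $\operatorname{Int}(s)$ for some $s\in G$ with $s^2\in Z(G)=\{\pm 1\}$, and the symmetric subgroup is the centralizer $Z_G(s)$, which is connected by \cite[Theorem~8.1]{Stei}. If $s^2=1$ and $s\neq\pm 1$, then $s$ is diagonalizable and its $\pm1$-eigenspaces give an $\omega$-orthogonal decomposition $V=V_+\oplus V_-$ into nondegenerate subspaces (because $s$ preserves $\omega$), whence $Z_G(s)=\Sp(V_+)\times\Sp(V_-)$, i.e. the pair $(\Sp_{2n_1}\times\Sp_{2n_2},\FF^{2n_1}\oplus\FF^{2n_2})$. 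If $s^2=-1$, then $s$ has eigenvalues $\pm\alpha$ with $\alpha^2=-1$, and the identity $\omega(sv,sw)=\omega(v,w)$ forces each eigenspace to be $\omega$-isotropic; the two eigenspaces are therefore complementary Lagrangians in perfect $\omega$-pairing, so $V=\Omega(V_+)$ with $\dim V_+=n$. Elements of $Z_G(s)$ preserve both eigenspaces, and the symplectic constraint identifies the action on $V_-\cong V_+^*$ with the inverse adjoint of the action on $V_+$, so $Z_G(s)\cong\GL(V_+)=\GL_n$ acting as $\GL_n$ on $\Omega(\FF^n)$.

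For the converse in part~(b), each module presentation visibly supports such an involution: on $\FF^{2n_1}\oplus\FF^{2n_2}$ the map that is $+1$ on the first summand and $-1$ on the second lies in $\Sp(V)$ with centralizer $\Sp_{2n_1}\times\Sp_{2n_2}$, and on $\Omega(\FF^n)$ a complex structure $s$ with $s^2=-1$ whose eigenspaces are the two Lagrangian summands lies in $\Sp(V)$ with centralizer $\GL_n$. I expect the $s^2=-1$ case to be the main obstacle: one must verify both that $\omega(sv,sw)=\omega(v,w)$ indeed forces the eigenspaces to be isotropic (this is exactly where $\alpha^2=-1$ enters) and that the symplectic condition collapses the naive centralizer $\GL(V_+)\times\GL(V_-)$ down to a single copy of $\GL_n$. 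Everything else is routine linear algebra.
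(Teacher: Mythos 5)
Your argument is correct. Note that the paper does not actually prove this proposition: it is stated in \S\,3.2 as a known structural fact, with only the reference to \cite[Theorem~8.1]{Stei} for connectedness of symmetric subgroups, so there is no proof to compare against. What you give is the standard argument and it is sound: part~(a) via the identification of parabolics with stabilizers of isotropic flags and their Levi factors with stabilizers of the complete decomposition $\Omega(A_1)\oplus\ldots\oplus\Omega(A_k)\oplus W$, and part~(b) via the observation that $\Aut(\Sp_{2n})$ is inner, so an involution is $\operatorname{Int}(s)$ with $s^2\in Z(G)=\lbrace\pm1\rbrace$, the two cases $s^2=1$ and $s^2=-1$ producing exactly the two families (the eigenspace computation $\omega(sv,sw)=\alpha^2\omega(v,w)=-\omega(v,w)$ forcing isotropy in the second case, and the perfect pairing collapsing the centralizer to a single $\GL_n$, are both handled correctly). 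One cosmetic remark: a BF-equivalence preserves the form on the nose, so the realizing isomorphism lies in $\Sp(V)$ itself and the qualifier ``up to a scalar'' is unnecessary.
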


\begin{proposition}
Suppose that $\omega$ is symmetric.
Then
\begin{enumerate}[label=\textup{(\alph*)},ref=\textup{\alph*}]
\item
$H$ is a Levi subgroup of $G$ if and only if the pair $(H,V)$ is BF-equivalent to
\[
(\GL_{n_1}\times \ldots \times \GL_{n_k} \times \SO_{m}, \Omega(\mathop{\FF^{n_1}} \limits_1{\!}) \oplus \Omega(\mathop{\FF^{n_2}} \limits_2{\!}) \oplus \Omega(\mathop{\FF^{n_k}} \limits_k{\!}) \oplus \mathop{\FF^{m}} \limits_{k+1})
\]
for some $k \ge 0$, $m \ge 0$, and $n_i \ge 1$;

\item
$H$ is a symmetric subgroup of $G$ if and only if the pair $(H,V)$ is BF-equivalent to either $(\SO_{n_1} \times \SO_{n_2}, \FF^{n_1} \oplus \FF^{n_2})$ for some $n_1,n_2 \ge 1$ or $(\GL_n, \Omega(\FF^n))$ for some $n \ge 2$.
\end{enumerate}
\end{proposition}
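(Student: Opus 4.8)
The plan is to translate both statements into conditions on the way $H$ acts on $V$ and then to invoke the structure theory of the orthogonal group, using that the tautological representation $V$ factors through the central isogeny $\pi\colon G=\Spin(V)\to\SO(V)$. The reduction I would use throughout is that, in characteristic zero, a connected subgroup of $G$ is determined by its Lie algebra, hence by its image $\bar H=\pi(H)\subset\SO(V)$; since a BF-equivalence of $(H,V)$ with a model pair only sees $\bar H$ together with the form $\omega$, it suffices to pin down $\bar H$ inside $\SO(V,\omega)$ up to conjugacy. I will use freely that, $G$ being simply connected, $\Aut G\cong\Aut\mathfrak g$ with $\mathfrak g=\mathfrak{so}(V)$, and that $G^\theta$ is connected reductive for every involution $\theta$ (the already cited result of Steinberg).

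For part~(a) I would argue as follows. By definition a Levi subgroup $L$ is the Levi factor of a parabolic $P\subset G$, and the parabolic subgroups of the orthogonal group are precisely the stabilizers of isotropic flags $0\subsetneq U_1\subsetneq\cdots\subsetneq U_k$ in $V$. Setting $n_i=\dim U_i-\dim U_{i-1}$ and $m=\dim V-2(n_1+\cdots+n_k)$, the Levi factor preserves a splitting $V=\Omega(\FF^{n_1})\oplus\cdots\oplus\Omega(\FF^{n_k})\oplus V_0$ in which the two isotropic halves of each $\Omega(\FF^{n_i})$ are dual $\GL_{n_i}$-modules and $V_0$ is nondegenerate of dimension $m$ with $L$ acting as $\SO(V_0)$; this yields the asserted BF-equivalence class. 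For the converse, if $(H,V)$ is BF-equivalent to the model pair then $\bar H$ preserves such a splitting, hence the isotropic flag built from the summands $\FF^{n_i}$, so $\bar H$ lies in the corresponding parabolic and, stabilizing the opposite flag as well, in its Levi factor $\bar L$; comparing the two linear groups gives $\bar H=\bar L$, whence $\Lie H=\Lie L$ and therefore $H=L$.

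For part~(b) I would classify the symmetric subgroups through involutions. Writing $H=G^\theta$, the differential $d\theta$ is an involutive automorphism of $\mathfrak g=\mathfrak{so}(V)$ with $\Lie H=\mathfrak g^{d\theta}$, so it suffices to describe the involutive automorphisms of $\mathfrak{so}(V)$, their fixed subalgebras, and the induced action on $V$. The involutions realized inside $\mathrm O(V)$ are the maps $\mathrm{Ad}(s)$ for $s\in\mathrm O(V)$ with $s^2=\pm\mathrm{id}$. When $s^2=\mathrm{id}$, the $(\pm1)$-eigenspaces give an orthogonal decomposition $V=V_+\perp V_-$ into nondegenerate subspaces and $\mathfrak g^{d\theta}=\mathfrak{so}(V_+)\oplus\mathfrak{so}(V_-)$, i.e.\ the family $(\SO_{n_1}\times\SO_{n_2},\FF^{n_1}\oplus\FF^{n_2})$; when $s^2=-\mathrm{id}$ (possible only for $\dim V$ even) the $(\pm i)$-eigenspaces $W,W^*$ are isotropic and dual, $V=\Omega(W)$, and $\mathfrak g^{d\theta}=\mathfrak{gl}(W)$, i.e.\ the family $(\GL_n,\Omega(\FF^n))$ with $n\ge2$ (the case $n=1$ drops out since the corresponding map is inner and acts trivially on $\mathfrak{so}_2$). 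Conversely, each model pair is realized by the explicit $s$ just described: one lifts $\mathrm{Ad}(s)$ to the unique $\theta\in\Aut G$ with prescribed differential, and Steinberg's theorem guarantees that $G^\theta$ is connected of the stated type.

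The hard part will be to verify that these two eigenspace families are exhaustive and to dispose of the low-dimensional coincidences. The delicate case is $\dim V=8$: there the triality automorphisms of $\Spin_8$ yield involutions whose fixed subgroup is again $\Spin_7$ but which act on $V$ through a spin representation rather than through $\SO_7\times\SO_1$. Such involutions are not realized inside $\mathrm O(V)$ and permute $V$ with the two half-spin modules, so they must be reconciled with the listed families by an automorphism of $G$ of this kind, in the spirit of Reduction~2. Concretely, I would treat $\dim V\le 8$ separately, using the equivalences of Table~\ref{table_equiv_modules} to handle the small orthogonal groups and the outer automorphism structure of $D_n$ to confirm that, once such automorphisms are taken into account, no fixed subalgebra falls outside the two families.
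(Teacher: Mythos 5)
The paper states this proposition in \S\,\ref{subsec_Levi&symmetric} without proof, offering it as a known structural fact, so there is no argument of the authors to measure yours against; what you propose is the standard one and is essentially correct. Part~(a) --- Levi factors of stabilizers of isotropic flags, with the converse obtained by reconstructing the flag from the isotropic summands $\FF^{n_i}$ --- is fine, as is the reduction from $H \subset \Spin(V)$ to its image in $\SO(V)$: parabolics, their Levi factors, and fixed-point subgroups of involutions all correspond under the central isogeny, and BF-equivalence only sees the image anyway. In part~(b) your description of the involutions as $\mathrm{Ad}(s)$ with $s \in \mathrm{O}(V)$, $s^2 = \pm\mathrm{id}$, together with the eigenspace analysis, correctly produces the two families, and it is exhaustive precisely when $\Aut(\mathfrak{so}(V)) = \mathrm{O}(V)/\lbrace \pm 1\rbrace$, i.e.\ for $\dim V \ne 8$; the low-dimensional coincidences you worry about need no separate treatment, since the outer automorphisms of $D_n$ for $n \ne 4$ are already realized inside $\mathrm{O}(V)$.

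The one point to sharpen is the $D_4$ case, which you rightly single out. For $G = \Spin_8$ the triality involutions produce symmetric subgroups $H \simeq \Spin_7$ acting on $V$ irreducibly through the spin representation, and such a pair $(H,V)$ is \emph{not} BF-equivalent to either listed family (the module is irreducible while both models are decomposable). So the ``only if'' direction of~(b), read literally, fails for $\dim V = 8$. Your proposed fix --- reconciling these subgroups with $(\SO_7 \times \SO_1, \FF^7 \oplus \FF^1)$ by an outer automorphism of $G$ --- is exactly what the authors themselves do later (see the parenthetical remark in \S\,\ref{subsec_so_prelim_remarks} and the ``up to an automorphism of~$G$'' in Theorem~\ref{thm_result_so_even}), but it establishes the classification only up to $\Aut(G)$, not up to BF-equivalence. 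You should therefore either record $\dim V = 8$ as an explicit exception in~(b) or state that the proposition is to be read modulo automorphisms of~$G$; as written, your final paragraph gestures at the right resolution without committing to which of these two claims you are actually proving.
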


\subsection{The symplectic case}
\label{subsec_sp_results}

Let $V$ be a vector space of dimension $2n$ ($n \ge 2$) equipped with a nondegenerate skew-symmetric bilinear form~$\omega$.
Let $G = \Sp(V) \simeq \Sp_{2n}$ be the subgroup of $\GL(V)$ preserving~$\omega$.

First we consider separately the case $I = \lbrace 1 \rbrace$.
Note that $X_{\lbrace 1 \rbrace} \simeq \PP(V)$ (see \S\,\ref{subsec_FV_via_compositions}).

\begin{theorem} \label{thm_result_sp_PV}
The variety $X_{\lbrace 1 \rbrace}$ is $H$-spherical if and only if $V$ is a spherical $(H \times \FF^\times)$-module where $\FF^\times$ acts by scalar transformations.
Moreover, the above conditions hold if and only if the pair $(H,V)$ is BF-equivalent to one of the pairs in Table~\textup{\ref{table_sph_PV}}.
\end{theorem}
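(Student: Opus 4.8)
The plan is to establish the two asserted equivalences in turn: the first (sphericity of $X_{\lbrace 1 \rbrace}$ versus sphericity of the module $V$) by a general geometric argument, and the second (the explicit classification) by reducing to the classification of spherical modules carrying an invariant skew-symmetric form recorded in Proposition~\ref{prop_spherical+sympl}.

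First I would prove that $X_{\lbrace 1 \rbrace} \simeq \PP(V)$ is $H$-spherical if and only if $V$ is a spherical $(H \times \FF^\times)$-module. Consider the projection $\pi \colon V \setminus \lbrace 0 \rbrace \to \PP(V)$, an $\FF^\times$-bundle that is equivariant for the action of $B_H \times \FF^\times$ on the source and of $B_H$ on the target (the scalar torus acting trivially downstairs). If $B_H \times \FF^\times$ has a dense orbit in $V$, its image is a dense $B_H$-orbit in $\PP(V)$ because $\pi$ is open and dominant. Conversely, if $\mathcal O \subset \PP(V)$ is a dense $B_H$-orbit, pick $v \in \pi^{-1}(\mathcal O)$; then $(B_H \times \FF^\times) \cdot v$ surjects onto $\mathcal O$ and, containing $\FF^\times \cdot v$, meets each fiber of $\pi$ over $\mathcal O$ in a full $\FF^\times$-coset, so it equals $\pi^{-1}(\mathcal O)$ and is dense in $V$. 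This settles the first equivalence.

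For the classification I would start from the decomposition of $V$ into an orthogonal direct sum $V = U_1 \oplus \cdots \oplus U_r$ of $H$-submodules none of which contains a proper nondegenerate $H$-submodule; by Proposition~\ref{prop_no_nondegen} each $U_i$ is irreducible or weakly reducible. Since each $\PP(U_i)$ is an $H$-stable closed subvariety of the spherical variety $\PP(V)$, Theorem~\ref{thm_subvar_sph} shows it is $H$-spherical, so by the first equivalence each $U_i$ is spherical as a module over $H \times \FF^\times$; as the $H$-action on $U_i$ factors through its image $K_i \subset \Sp(U_i)$, Proposition~\ref{prop_spherical+sympl} identifies $(K_i,U_i)$ with one of the four pairs of Table~\ref{table_sympl_sph_mod}. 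It then remains to reassemble: the map $H \to \Sp(U_1) \times \cdots \times \Sp(U_r)$ is injective with $i$th projection onto $K_i$, and I would determine for which collections of these pairs, and which (possibly diagonal) subgroups $H$, the whole module $V$ stays spherical for the single shared scalar $\FF^\times$.

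The reassembly is the heart of the matter and the step I expect to be the main obstacle. The clean way to control it is the reduction criterion of \S\ref{subsec_spherical_modules} (cf.\ Reduction~3): passing to the saturation $(H' \times Z, V)$, where $H'$ is the derived subgroup of $H$ and $Z$ collects the scalars on the simple summands, $V$ is a saturated spherical module that splits as a product of indecomposable saturated spherical ones, and $V$ is spherical for the actual group $H \times \FF^\times$ exactly when the indecomposable generators of the weight monoid $\mathrm E_{H' \times Z}(V)$ restrict to linearly independent characters of the smaller torus $T_H \times \FF^\times$. Running this independence test across all summands pins down how many pieces may require an external scalar (only one copy of $\FF^\times$ is available) and excludes diagonal embeddings such as $\Sp_2$ inside $\Sp_2 \times \Sp_2$, where already $\dim(B_H \times \FF^\times) < \dim V$ forbids a dense orbit. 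Since $B_{\Sp_{2m}}$ has a dense orbit in $\FF^{2m}$ (the module $\FF^{2m}$ being $\Sp_{2m}$-spherical), orthogonal sums of the irreducible pair cost no scalar and may occur with arbitrary multiplicity, whereas the weakly reducible pairs are far more constrained; carrying this bookkeeping to the end yields precisely the entries of Table~\ref{table_sph_PV}, and the converse inclusion follows by checking that each listed pair is spherical via Proposition~\ref{prop_spherical+sympl} and the dense-orbit fact just quoted. Finally, since the form on $V$ is skew-symmetric throughout and the $\Omega(\cdot)$ summands carry the pairing fixed by the conventions of \S\ref{subsec_eq&BF-eq}, the equivalences produced above upgrade automatically to BF-equivalences, completing the proof.
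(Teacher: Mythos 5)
Your proposal is correct and follows essentially the same route as the paper: the first equivalence is the standard fact about the $\FF^\times$-bundle $V\setminus\lbrace 0\rbrace \to \PP(V)$, and the classification proceeds by decomposing $V$ orthogonally into summands with no proper nondegenerate submodules, identifying each summand via Proposition~\ref{prop_spherical+sympl}, and then reassembling by excluding a second weakly reducible summand and diagonal actions of simple factors through the classification of spherical modules. The paper's proof of Theorem~\ref{thm_sp_SA_P(V)} carries out exactly this reassembly (showing $q\le 1$ and that no simple factor of $H$ can act nontrivially on two summands --- note your dimension count only handles small rank, so the general diagonal exclusion does require the appeal to the classification that you invoke), so the ``bookkeeping'' you defer is precisely those two short checks.
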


\begin{table}[h]
\caption{The symplectic case for $I = \lbrace 1 \rbrace$} \label{table_sph_PV}

\renewcommand{\tabcolsep}{3pt}
\begin{tabular}{|c|c|c|c|}
\hline
No. & $H$ & $V$ & Note \\

\hline

\newcase \label{sp_case_1}
& $\Sp_{2n_1} \times \ldots \times \Sp_{2n_k}$ & $\mathop{\FF^{2n_1}}\limits_1{\!} \oplus \ldots \oplus \mathop{\FF^{2n_k}}\limits_k{\!}$ & $k {\ge} 1$, $n_i {\ge} 1$ \\

\hline

\newcase \label{sp_case_2}
& $\Sp_{2n_1} \times \ldots \times \Sp_{2n_k} \times \GL_{m}$ & $\mathop{\FF^{2n_1}}\limits_1{\!} \oplus \ldots \oplus \mathop{\FF^{2n_k}}\limits_k{\!} \oplus \mathrm \Omega(\mathop{\FF^{m}}\limits_{k+1}{\!})$ & $k {\ge} 0$, $n_i {\ge} 1$, $m {\ge} 1$ \\

\hline

\newcase \label{sp_case_3}
& $\Sp_{2n_1} \times \ldots \times \Sp_{2n_k} \times \SL_{m}$ & $\mathop{\FF^{2n_1}}\limits_1{\!} \oplus \ldots \oplus \mathop{\FF^{2n_k}}\limits_k{\!} \oplus \Omega(\mathop{\FF^{m}}\limits_{k+1}{\!})$ & $k {\ge} 0$, $n_i {\ge} 1$, $m {\ge} 3$ \\

\hline

\newcase \label{sp_case_4}
& $\Sp_{2n_1} \times \ldots \times \Sp_{2n_k} \times \Sp_{2m} \times \FF^\times$ & $\mathop{\FF^{2n_1}}\limits_1{\!} \oplus \ldots \oplus \mathop{\FF^{2n_k}}\limits_k{\!} \oplus \Omega([\mathop{\FF^{2m}}\limits_{k+1}{\!}]_\chi)$ & $k {\ge} 0$, $n_i {\ge} 1$, $m {\ge} 2$ \\

\hline
\end{tabular}
\end{table}

\begin{remark}
The first equivalence in Theorem~\ref{thm_result_sp_PV} is trivial.
The second one is proved in Theorem~\ref{thm_sp_SA_P(V)}.
\end{remark}

Recall from \S\,\ref{subsec_spherical_modules} the notion of weight monoid $\mathrm E_K(U)$ of a spherical $K$-module~$U$.

\begin{theorem} \label{thm_RBM_sp_PV}
In the situation of Theorem~\textup{\ref{thm_result_sp_PV}}, let $\delta$ denote the character via which $\FF^\times$ acts on~$V$ and let $\mathrm E^0_{H \times \FF^\times}(V^*)$ be the set of indecomposable elements of $\mathrm E_{H \times \FF^\times}(V^*)$.
Identify $\Lambda^+(H \times \FF^\times)$ with $\Lambda^+(H) \oplus \ZZ \delta$ and consider the map $\Lambda^+(H \times \FF^\times) \to \Lambda^+_{\lbrace 1 \rbrace}(G) \times \Lambda^+(H)$ given by $\lambda = \mu + k\delta \mapsto \overline \lambda = (k\pi_1; \mu)$.
Then the set of indecomposable elements of $\Gamma_{\lbrace 1 \rbrace}(G, H)$ is
$
\lbrace \overline \lambda \mid \lambda \in \mathrm E^0_{H \times \FF^\times}(V^*) \rbrace.
$
\end{theorem}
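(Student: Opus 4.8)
The plan is to exhibit an explicit monoid isomorphism between $\mathrm E_{H\times\FF^\times}(V^*)$ and $\Gamma_{\{1\}}(G,H)$ under which the correspondence $\lambda\mapsto\overline\lambda$ becomes the identification of one free generating set with the other; indecomposable elements then automatically match up. The whole argument is driven by two structural facts, and the only real care needed is in bookkeeping duals and $\FF^\times$-weights.

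First I would record the two inputs. For the symplectic group $G=\Sp(V)$ the symmetric powers of the standard module are irreducible, so $R_G(k\pi_1)\simeq\mathrm S^k V$ for every $k\ge0$: the invariant form is skew, so it lives in $\wedge^2V$ and there is no $G$-invariant to split off from the symmetric square. On the other hand, as a graded algebra $\FF[V^*]=\mathrm S^\bullet V=\bigoplus_{k\ge0}\mathrm S^k V$; since $\FF^\times$ acts on $V$ through $\delta$, it acts on the homogeneous component $\mathrm S^k V$ through $k\delta$. Combining these, the degree-$k$ component of $\FF[V^*]$ is exactly $R_G(k\pi_1)$ carrying the $\FF^\times$-weight $k\delta$. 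This is precisely why the theorem is phrased via $V^*$ rather than $V$: using $\FF[V]=\bigoplus_k\mathrm S^k V^*$ instead would attach the opposite weight $-k\delta$.

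Next I would read off the weight monoid. Since $X_{\{1\}}$ is $H$-spherical, Theorem~\ref{thm_criterion_spherical} shows $\left.R_G(k\pi_1)\right|_H$ is multiplicity free for every $k$; as the homogeneous components of $\FF[V^*]$ occupy distinct $\FF^\times$-weight spaces, this makes $\FF[V^*]$ multiplicity free as an $(H\times\FF^\times)$-module (in particular reconfirming that $V^*$ is spherical, as it must be for $\mathrm E_{H\times\FF^\times}(V^*)$ to be defined). A simple $(H\times\FF^\times)$-module $R_H(\mu)\otimes\FF^1_{j\delta}$ can occur in $\mathrm S^k V$ only when $j=k$, so the highest weights occurring in $\FF[V^*]$ are exactly the $\mu+k\delta$ for which $R_H(\mu)$ is a summand of $\left.R_G(k\pi_1)\right|_H$. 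Under the identification $\Lambda^+(H\times\FF^\times)=\Lambda^+(H)\oplus\ZZ\delta$ this reads
\[
\mathrm E_{H\times\FF^\times}(V^*)=\{\mu+k\delta\mid (k\pi_1;\mu)\in\Gamma_{\{1\}}(G,H)\}.
\]

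Finally I would observe that the map $\mu+k\delta\mapsto\overline{\mu+k\delta}=(k\pi_1;\mu)$ is visibly additive and injective on $\Lambda^+(H)\oplus\ZZ\delta$, and by the displayed equality its restriction to $\mathrm E_{H\times\FF^\times}(V^*)$ is a bijection onto $\Gamma_{\{1\}}(G,H)$; hence it is an isomorphism of monoids. Both monoids are free (the source by the freeness of weight monoids recalled in \S\,\ref{subsec_spherical_modules}, the target by Theorem~\ref{thm_Gamma_is_free}), so each has a unique minimal generating set, and a monoid isomorphism carries one onto the other. Therefore $\overline{(\cdot)}$ sends $\mathrm E^0_{H\times\FF^\times}(V^*)$ bijectively onto the set of indecomposable elements of $\Gamma_{\{1\}}(G,H)$, which is exactly the assertion. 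The main obstacle is nothing deep but the graded identification of step two: once $\FF[V^*]=\bigoplus_k R_G(k\pi_1)$ is set up with the correct $\FF^\times$-weights, everything else is formal.
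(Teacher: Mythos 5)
Your proposal is correct and follows exactly the paper's own route: the paper's proof likewise rests on the isomorphism $R_G(k\pi_1)\simeq\mathrm S^kV$ and then observes that $\lambda\mapsto\overline\lambda$ gives a monoid isomorphism $\mathrm E_{H\times\FF^\times}(V^*)\to\Gamma_{\lbrace1\rbrace}(G,H)$, which forces indecomposables to correspond. You have merely spelled out the bookkeeping (the $\FF^\times$-grading of $\FF[V^*]$ and the matching of generating sets of free monoids) that the paper compresses into ``it follows from definitions.''
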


The proof of Theorem~\ref{thm_RBM_sp_PV} is given in~\S\,\ref{subsec_RBM_I={1}}.

We now turn to the case $I \ne \lbrace 1 \rbrace$.

\begin{theorem} \label{thm_result_sp}
Suppose that $I \subset S$ is a nonempty subset distinct from $\lbrace 1 \rbrace$ and $H \subset G$ is a connected reductive subgroup such that
\begin{itemize}
\item
$H$ is not intermediate between a Levi subgroup of~$G$ and its derived subgroup;

\item
$H$ is not a symmetric subgroup of~$G$.
\end{itemize}
Then the following conditions are equivalent:
\begin{enumerate}[label=\textup{(\arabic*)},ref=\textup{\arabic*}]
\item \label{thm_result_sp_1}
The variety $X_I$ is $H$-spherical and $I$ is maximal with this property.

\item \label{thm_result_sp_2}
the pair $(H,V)$, considered up to BF-equivalence, and the set $I$ fall into the only case in Table~\textup{\ref{table_result_sympl}}.
\end{enumerate}
Moreover, Table~\textup{\ref{table_result_sympl}} lists also the rank and indecomposable elements of the monoid $\Gamma_I(G,H)$ for that only case.
\end{theorem}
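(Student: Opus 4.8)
The plan is to propagate sphericity downward from the minimal nil-equivalence class, exactly as in the general strategy of \S\,\ref{subsec_NE-relation}. For $G = \Sp(V)$ the poset $\mathscr F(G)/{\sim}$ has the single minimal element $\bl X_{\lbrace 1 \rbrace} \br = \bl \PP(V) \br$, and since the minimal nilpotent orbit lies in the closure of every nonzero nilpotent orbit, $\bl X_{\lbrace 1 \rbrace} \br \preccurlyeq \bl X_I \br$ for every~$I$. By the spreading property of $\preccurlyeq$, if $X_I$ is $H$-spherical then so is $X_{\lbrace 1 \rbrace}$, whence Theorem~\ref{thm_result_sp_PV} forces $(H,V)$ to be BF-equivalent to one of the four families of Table~\ref{table_sph_PV}. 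First I would strike from that list every pair in which $H$ is symmetric in~$G$ or intermediate between a Levi subgroup of~$G$ and its derived subgroup: comparing with the explicit descriptions given above, this removes the whole group itself, the symmetric pairs $(\Sp_{2n_1} \times \Sp_{2n_2}, \FF^{2n_1} \oplus \FF^{2n_2})$ and $(\GL_m, \Omega(\FF^m))$, and the Levi pairs $(\Sp_{2n_1} \times \GL_m, \FF^{2n_1} \oplus \Omega(\FF^m))$ together with their intermediate subgroups (in particular the $\SL_m$-versions), leaving a short list of admissible families with explicit parameter ranges.

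For each surviving family I would then decide $H$-sphericity of $X_I$ for all nonempty $I \ne \lbrace 1 \rbrace$ by means of the effective criterion of Proposition~\ref{prop_sph_criterion_eff}: from the data $(H,I)$ one extracts a Levi subgroup $M \subset H$ and a finite-dimensional $M$-module~$U$, and $X_I$ is $H$-spherical if and only if $U$ is a spherical $M$-module, which is read off from the classification of spherical modules recalled in \S\,\ref{subsec_spherical_modules}. Carrying out this test across the admissible families and all $I \ne \lbrace 1 \rbrace$ eliminates every family but one and, for the survivor, determines the unique maximal~$I$; this is precisely the single entry of Table~\ref{table_result_sympl}, which proves \ref{thm_result_sp_1}$\Rightarrow$\ref{thm_result_sp_2}. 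The converse \ref{thm_result_sp_2}$\Rightarrow$\ref{thm_result_sp_1} is obtained from the very same computation read forward: the module $U$ attached to the listed $(H,I)$ is spherical, so $X_I$ is $H$-spherical, while the module attached to each enlargement $(H, I \cup \lbrace j \rbrace)$ with $j \notin I$ is non-spherical, which gives the maximality of~$I$ (using Reduction~1 to pass between comparable index sets).

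It remains to justify the numerical part of Table~\ref{table_result_sympl}. The rank is immediate from part~\ref{thm_Gamma_is_free_b} of Theorem~\ref{thm_Gamma_is_free}, which gives $\rk \Gamma_I(G,H) = |I| + \rk_H X_I$, and the effective criterion identifies $\rk_H X_I$ with $\rk_M U = \rk \mathrm E_M(U)$, a quantity tabulated in the classification of spherical modules. Knowing that $\Gamma_I(G,H)$ is free of this rank~$r$, I would produce the $r$ indecomposable generators by restricting to~$H$ the simple $G$-modules $R_G(\lambda)$ for the smallest admissible weights $\lambda$---the fundamental weights $\pi_i$ with $i \in I$ and, if necessary, a few sums $\pi_i + \pi_j$---and reading the highest weights of the $H$-constituents off formula~(\ref{eqn_restriction}). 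Each such restriction is evaluated through a short chain of successive restrictions to intermediate Levi and symmetric subgroups, using the tables of \cite[\S\,5.5]{AvP2}; the $r$ pairs so obtained are linearly independent and hence, by freeness, exactly the indecomposable elements.

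The heart of the argument, and its main obstacle, is the case analysis of the second paragraph. While the reduction to Table~\ref{table_sph_PV} is clean and the backward direction is a single sphericity check, proving the forward direction requires computing the associated module $(M,U)$ for every admissible family and every $I \ne \lbrace 1 \rbrace$, and---most delicately---verifying the non-sphericity of every one-step enlargement $X_{I \cup \lbrace j \rbrace}$ in order to secure maximality; this is where essentially all of the combinatorial labor is concentrated. By comparison, the determination of the indecomposable generators is routine, though it still calls for careful bookkeeping of the chains of restrictions.
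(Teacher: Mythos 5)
Your overall frame is the right one, and the first reduction (descend from $X_I$ to $\PP(V)$ via Proposition~\ref{prop_P(V)_is_minimal} and Theorem~\ref{thm_sph_descent}, then invoke Theorem~\ref{thm_sp_SA_P(V)} to restrict $(H,V)$ to Table~\ref{table_sph_PV}) matches the paper exactly, as does the treatment of the rank and of the indecomposable generators. The gap is in your second paragraph: you propose to ``decide $H$-sphericity of $X_I$ for all nonempty $I \ne \lbrace 1 \rbrace$'' by running Proposition~\ref{prop_sph_criterion_eff} family by family and subset by subset, and to secure maximality by testing every one-step enlargement $I \cup \lbrace j \rbrace$. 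As stated this is not a finite uniform procedure: the surviving families of Table~\ref{table_sph_PV} still depend on arbitrarily many parameters $k, n_1,\ldots,n_k, m$, and $S$ has $n$ elements, so there are $2^n-1$ subsets $I$ to examine; no bound on this case analysis is given, and the assertion that it ``eliminates every family but one'' is exactly the content that remains to be proved.

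The paper closes this gap by applying the nil-equivalence machinery a second time. Proposition~\ref{prop_SpGr2_partial_order} shows that every $X \in \mathscr F(G)$ other than $\PP(V)$ is $\SpGr_{\max}(V)$, or $\SpGr_2(V)$, or satisfies $\bl X \br \succ \bl \SpGr_2(V) \br$; hence by Theorem~\ref{thm_sph_descent} the entire analysis for $I \ne \lbrace 1 \rbrace$ collapses to the two Grassmannians $\SpGr_{\max}(V)$ and $\SpGr_2(V)$. The computations of \S\,\ref{subsec_SpGr_2} then yield Corollary~\ref{crl_SpGr_2}: if $\SpGr_2(V)$ is $H$-spherical then $H$ is symmetric or Levi. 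For $H$ outside those classes this kills every $X_I$ with $I \ne \lbrace 1 \rbrace, \lbrace n \rbrace$ in one stroke, leaves only $\SpGr_{\max}(V)$ (handled by Corollary~\ref{crl_SpGr_max}, giving the single entry of Table~\ref{table_result_sympl}), and makes the maximality of $I = \lbrace n \rbrace$ automatic, since any $I \supsetneq \lbrace n \rbrace$ corresponds to a symmetric composition with at least four parts and therefore dominates $\bl \SpGr_2(V) \br$. You should replace your open-ended sweep over all $I$ by this two-step reduction; without it, the ``heart of the argument'' you identify is not actually carried out.
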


\begin{table}[h]

\caption{The symplectic case for $I \ne \lbrace 1 \rbrace$}
\label{table_result_sympl}

\begin{tabular}{|c|l|l|l|}
\hline
No. & Conditions & Rank & Indecomposable elements of~$\Gamma_I(G,H)$  \\

\hline
\newcase
& \multicolumn{3}{|c|}{$(\Sp_{2m} \times \SL_2 \times \SL_2, \mathop{\FF^{2m}} \limits_1{\!} \oplus \mathop{\FF^{2}}\limits_2{\!} \oplus \mathop{\FF^{2}} \limits_3{\!})$, $m \ge 1$}\\

\hline
\no \label{sp_case_1.2}
&
$I {=} \lbrace n \rbrace$
&
$5 {-} \delta_{m}^1$
&
\renewcommand{\tabcolsep}{0pt}%
\begin{tabular}{l}
$(\pi_n; \pi_{m} {+} \pi'_1 {+} \pi''_1)$, $(\pi_n; \pi_{m})$, %\\
$(\pi_n; \pi_{m-1} {+} \pi'_1)$,
$(\pi_n; \pi_{m-1} {+} \pi''_1)$, \\[-2pt]
$(\pi_n; \pi_{m-2})$ ($m {\ge} 2$)
\end{tabular}
\\

\hline

\end{tabular}
\end{table}

For notation used in Table~\ref{table_result_sympl}, see \S\,\ref{subsec_notation_in_tables}.

The equivalence of conditions~(\ref{thm_result_sp_1}) and~(\ref{thm_result_sp_2}) in Theorem~\ref{thm_result_sp} follows from results in \S\S\,\ref{subsec_SpGr_max}--\ref{subsec_sympl_case_end}.
The method for computing the monoid $\Gamma_I(G,H)$ for the case in Table~\ref{table_result_sympl} is described in~\ref{subsec_RBM_Ine{1}}.

\subsection{The orthogonal case}
\label{subsec_so_results}

Let $V$ be a vector space of dimension $d \ge 5$ equipped with a nondegenerate symmetric bilinear form~$\omega$.
Let $G = \Spin(V) \simeq \Spin_d$ be the spinor group determined by~$V$ and~$\omega$.
Let $H \subset G$ be a connected reductive subgroup.

We fix a decomposition $V = V_1 \oplus \ldots \oplus V_r$ into a direct sum of $H$-submodules such that the summands are pairwise orthogonal with respect to the form~$\omega$ and each summand is either irreducible or weakly reducible.
We denote by $H_0$ the image of $H$ in $\SO(V)$.
For each $i = 1,\ldots, r$ we let $H_i$ be the image of $H$ in the group $\SO(V_i)$.

First we consider separately the case $I = \lbrace 1 \rbrace$.
Note that $X_{\lbrace 1 \rbrace}$ is isomorphic to the variety $\SOGr_1(V)$ mentioned in the introduction (see \S\,\ref{subsec_FV_via_compositions}).

\begin{theorem} \label{thm_result_so_Q1}
The following assertions hold.
\begin{enumerate}[label=\textup{(\alph*)},ref=\textup{\alph*}]
\item \label{thm_result_so_Q1_a}
Suppose that $r = 1$.
Then $X_{\lbrace 1 \rbrace}$ is $H$-spherical if and only if $V$ is a spherical $(H \times \FF^\times)$-module where $\FF^\times$ acts by scalar transformations.
Moreover, the above conditions hold if and only if the pair $(H,V)$ is BF-equivalent to one of the pairs in Table~\textup{\ref{table_orth_sph_mod}}.

\item \label{thm_result_so_Q1_b}
Suppose that $r = 2$.
Then $X_{\lbrace 1 \rbrace}$ is $H$-spherical if and only if $V$ is a spherical $(H \times \FF^\times \times \FF^\times)$-module where for $i = 1,2$ the $i$th factor $\FF^\times$ acts on~$V_i$ by scalar transformations.
Moreover, the above conditions hold if and only if one of the following cases occurs:
\begin{enumerate}[label=\textup{(\arabic*)},ref=\textup{\arabic*}]
\item
$H_0 = H_1 \times H_2$ and for each $i =1,2$ the pair $(H_i,V_i)$ is BF-equivalent to one of the pairs in Table~\textup{\ref{table_orth_sph_mod}};

\item
the pair $(H,V)$ is BF-equivalent to a pair in Table~\textup{\ref{table_so_Q1}}.
\end{enumerate}

\item
If $r \ge 3$ then $X_{\lbrace 1 \rbrace}$ is not $H$-spherical.
\end{enumerate}
\end{theorem}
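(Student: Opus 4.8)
The plan is to reduce $H$-sphericity of $X_{\lbrace1\rbrace}\simeq\SOGr_1(V)$ to a multiplicity-freeness statement for harmonic polynomials, and then treat the three ranges of $r$ by complementary methods. Write $q(v)=\omega(v,v)$, and for $k\ge0$ let $\mathcal H^k\subset\mathrm S^kV$ be the space of $q$-harmonic (traceless) degree-$k$ polynomials; this is the simple $\SO(V)$-module of highest weight $k\pi_1$, so by the Borel--Weil isomorphism~(\ref{eqn_H^0_for_XI}) one has $H^0(X_{\lbrace1\rbrace},L(k\pi_1))\simeq R_G(k\pi_1)\simeq\mathcal H^k$. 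Hence, by Theorem~\ref{thm_criterion_spherical}, $X_{\lbrace1\rbrace}$ is $H$-spherical if and only if $\mathcal H^k|_H$ is multiplicity free for every $k\ge0$; equivalently the affine null-cone $C=\lbrace v\in V\mid q(v)=0\rbrace$, whose coordinate ring is $\FF[C]=\FF[V]/(q)=\bigoplus_{k\ge0}\mathcal H^k$, is spherical for $H\times\FF^\times$ acting by scalars. By contrast, the classical decomposition $\FF[V]\simeq\FF[q]\otimes\big(\bigoplus_{k\ge0}\mathcal H^k\big)$ shows that $V$ being a spherical module amounts to the stronger requirement that the whole ring $\FF[V]$ be multiplicity free for the appropriate block-torus extension of~$H$; the content of the theorem is to pin down when these two conditions coincide.

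I would first settle part~(c). Nondegeneracy of $\omega$ together with the pairwise orthogonality of the summands forces each $V_i$ to be nondegenerate, since an isotropic summand would lie in the radical of~$\omega$; thus $q_i:=\omega|_{V_i}$ is a nonzero $H$-invariant element of $\mathrm S^2V_i^{*}$. On~$C$ one has the single relation $q_1+\dots+q_r=0$, and the $q_i$ involve disjoint sets of coordinates, so for $r\ge3$ the map $v\mapsto[q_1(v):\dots:q_r(v)]$ is a dominant $H$-invariant rational map from $Q=\SOGr_1(V)$ onto the hyperplane $\lbrace\sum_i z_i=0\rbrace\subset\PP^{r-1}$. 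Consequently $\operatorname{trdeg}\FF(Q)^H\ge r-2\ge1$, so $H$ has no dense orbit on the irreducible variety $Q$, and a fortiori $X_{\lbrace1\rbrace}$ is not $H$-spherical.

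For the sufficiency directions in~(a) and~(b) I would argue geometrically. In~(a), where $r=1$, if $V$ is a spherical $(H\times\FF^\times)$-module then $\PP(V)$ is $H$-spherical, and $Q\subset\PP(V)$ is an $H$-stable irreducible subvariety, so $X_{\lbrace1\rbrace}=Q$ is $H$-spherical by Theorem~\ref{thm_subvar_sph}; the list of such $V$ is exactly Proposition~\ref{prop_spherical+orth} and Table~\ref{table_orth_sph_mod}. In~(b), where $r=2$, the module condition is equivalent to $\PP(V)$ being spherical for $H\times\overline T$, where $\overline T\cong\FF^\times$ is the one-dimensional torus on $\PP(V)$ induced by the torus scaling $V_1$ and $V_2$ independently. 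Here $Q$ is the level set $\lbrace q_1/q_2=-1\rbrace$ of the $H$-invariant rational function $q_1/q_2$, which is a $\overline T$-eigenfunction of nonzero weight. Intersecting the dense $(B_H\times\overline T)$-orbit $\mathcal O\subset\PP(V)$ with $Q$, and using that two points of this intersection differ by an element of $B_H$ times an element of a finite subgroup of $\overline T$, one sees that $\mathcal O\cap Q$ is a finite union of $B_H$-orbits dense in $Q$; its top-dimensional member is a dense $B_H$-orbit, so $X_{\lbrace1\rbrace}$ is $H$-spherical.

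The main obstacle is the necessity direction in~(a) and~(b): one must show that $H$-sphericity of $X_{\lbrace1\rbrace}$ forces $(H,V)$ into Table~\ref{table_orth_sph_mod} (for $r=1$) or into either the product case or Table~\ref{table_so_Q1} (for $r=2$). As the gap recorded in the first paragraph shows, this is not formal: multiplicity-freeness of each $\mathcal H^k|_H$ need not propagate to all of $\FF[V]$. I would start from the necessary condition that $H$ act with an open orbit on $\SOGr_1(V)$ and invoke Kimelfeld's classification~\cite{Kim} of the connected reductive subgroups of $\SO(V)$ that have an open orbit on the variety of isotropic lines and admit no proper nondegenerate stable submodule; this covers the backbone case $r=1$, and after splitting off the decomposable situation $H_0=H_1\times H_2$ it reduces the case $r=2$ to the same input. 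For each candidate on the resulting finite list I would compute the restrictions $\mathcal H^k|_H$ for small~$k$, check multiplicity-freeness for all~$k$, and compare with the tables; the delicate step is to verify that no candidate has all harmonics individually multiplicity free while $V$ itself fails to be a spherical module, which is exactly what makes the two conditions coincide on Kimelfeld's list.
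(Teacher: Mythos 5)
Your geometric reductions are sound and in places genuinely different from the paper's. The argument for part (c) via the $H$-invariant dominant rational map $[q_1:\dots:q_r]$ from $\SOGr_1(V)$ onto the hyperplane $\sum z_i=0$ in $\PP^{r-1}$ is a clean alternative to the paper's route (Corollary~\ref{crl_no_3_summands}, which reduces to the $r=2$ covering argument). Your sufficiency argument in (a) is exactly the paper's (Theorem~\ref{thm_subvar_sph} applied to the quadric inside a spherical $\PP(V)$). In (b), your level-set argument is a workable substitute for the paper's two-fold covering $\langle v_1+v_2\rangle\mapsto(\langle v_1\rangle,\langle v_2\rangle)$ of Proposition~\ref{prop_so_Q1_V_sr_aux}, modulo the unaddressed point that the dense $(B_H\times\overline T)$-orbit actually meets $Q$ (this needs the observation that $Q$ is not $\overline T$-stable, so $\overline T\cdot Q$ is dense); note, however, that the covering argument gives the equivalence with module sphericity in both directions at once, whereas your version only yields the sufficiency direction.

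The genuine gap is the necessity direction, which is where essentially all of the paper's work in \S\,\ref{subsec_SOGr_1} lies. For $r=1$, after invoking Kimelfeld's list you must eliminate the entries that have an open orbit and pass the dimension test but are not spherical --- e.g.\ $(\Sp_6,\wedge^2_0\FF^6)$, $(\mathsf F_4,R(\pi_4))$, $(\Sp_{2n}\times\Sp_4,\FF^{2n}\otimes\FF^4)$ in Table~\ref{table_open_Q1_irr} and $(\GL_n\times\SL_2,\Omega(\FF^n\otimes\FF^2))$ in Table~\ref{table_open_Q1_wr}. Your proposed method, computing $\mathcal H^k|_H$ for small $k$, can only certify failure if a multiplicity happens to appear in low degree, and you neither exhibit such a degree nor give a criterion; you explicitly flag this as ``the delicate step'' and leave it unresolved. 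The paper instead uses a local linearization at a closed orbit (Lemma~\ref{lemma_sogr1_sph}, built on Proposition~\ref{prop_sphericity_criterion}), which turns each exclusion into a finite check on a concrete $M$-module. For $r=2$ the gap is worse: Kimelfeld's classification assumes $V$ has no proper nondegenerate stable submodule, so it simply does not apply to $V=V_1\oplus V_2$ with both summands nondegenerate, and your phrase ``reduces the case $r=2$ to the same input'' is not an argument. The paper's resolution is precisely Proposition~\ref{prop_so_Q1_V_sr_aux}: the covering onto an open subset of $\PP(V_1)\times\PP(V_2)$ converts $H$-sphericity of $\SOGr_1(V)$ into sphericity of the $(H\times\FF^\times\times\FF^\times)$-module $V$, after which Proposition~\ref{prop_so_Q1_V_sr} is a direct application of the classification of spherical modules; some version of this (or of the converse of your level-set argument) is indispensable and missing from your proposal.
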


\begin{table}[h]

\caption{}
\label{table_so_Q1}

\begin{tabular}{|c|c|c|c|}
\hline
No. & $H$ & $V$ & Note \\

\hline

\newcase & $\Sp_{2l} \times \SL_2 \times \Sp_{2m}$ & $\mathop{\FF^{2l}} \limits_1{\!} \otimes \mathop{\FF^2} \limits_2{\!} \oplus \mathop{\FF^{2m}} \limits_3{\!} \otimes \mathop{\FF^2} \limits_2{\!}$ & $l,m\ge 1$ \\

\hline

\newcase \label{spin_8+-}
& $\Spin_8$ & $\FF^8_+ \oplus \FF^8_-$ & \\

\hline

\newcase & $\SL_l \times \SL_m \times \FF^\times$ & $\Omega([\mathop{\vphantom|\FF^l} \limits_1{\!}]_{a\chi}) \oplus \Omega([\mathop{\vphantom|\FF^m} \limits_2{\!}]_{b\chi})$, $a,b \in \ZZ \setminus \lbrace 0 \rbrace$ & $l,m \ge 3$ \\

\hline

\newcase & $\SL_l \times \Sp_{2m} \times \FF^\times$ & $\Omega([\mathop{\vphantom|\FF^l} \limits_1{\!}]_{a\chi}) \oplus \Omega([\mathop{\vphantom|\FF^{2m}} \limits_2{\!}]_{b\chi})$, $a,b \in \ZZ \setminus \lbrace 0 \rbrace$ & $l \ge 3$, $m \ge 2$ \\

\hline

\end{tabular}
\end{table}

In Case~\ref{spin_8+-} of Table~\ref{table_so_Q1} the symbols $\FF^8_{\pm}$ stand for the spaces of the two half-spin representations of~$\Spin_8$.

Theorem~\ref{thm_result_so_Q1} follows from results in~\S\,\ref{subsec_SOGr_1}.

\begin{remark}
In Theorem~\ref{thm_result_so_Q1}(\ref{thm_result_so_Q1_a}), the fact that $X_{\lbrace 1 \rbrace}$ is $H$-spherical if and only if $V$ is a spherical $(H \times \FF^\times)$-module is obtained a posteriori as a result of classification.
On the contrary, in Theorem~\ref{thm_result_so_Q1}(\ref{thm_result_so_Q1_b}) the fact that $X_{\lbrace 1 \rbrace}$ is $H$-spherical if and only if $V$ is a spherical $(H \times \FF^\times \times \FF^\times)$-module is proved by a general argument, see Proposition~\ref{prop_so_Q1_V_sr_aux}.
\end{remark}

Recall from \S\,\ref{subsec_spherical_modules} the notion of weight monoid $\mathrm E_K(U)$ of a spherical $K$-module~$U$.

\begin{theorem} \label{thm_RBM_so_Q1}
The following assertions hold.
\begin{enumerate}[label=\textup{(\alph*)},ref=\textup{\alph*}]
\item \label{thm_RBM_so_Q1_a}
In the situation of Theorem~\textup{\ref{thm_result_so_Q1}(\ref{thm_result_so_Q1_a})}, let $\delta$ denote the character via which $\FF^\times$ acts on~$V$ and let $\mathrm E^0_{H \times \FF^\times}(V^*)$ be the set of indecomposable elements of $\mathrm E_{H \times \FF^\times}(V^*)$.
Identify $\Lambda^+(H \times \FF^\times)$ with $\Lambda^+(H) \oplus \ZZ \delta$ and consider the map
\[
\Lambda^+(H \times \FF^\times) \to \Lambda^+_{\lbrace 1 \rbrace}(G) \times \Lambda^+(H), \quad \lambda = \mu + k\delta \mapsto \overline \lambda = (k\pi_1; \mu).
\]
Then the set of indecomposable elements of $\Gamma_{\lbrace 1 \rbrace}(G, H)$ is
\[
\lbrace \overline \lambda \mid \lambda \in \mathrm E^0_{H \times \FF^\times}(V^*) \setminus \lbrace 2\delta \rbrace \rbrace.
\]

\item \label{thm_RBM_so_Q1_b}
In the situation of Theorem~\textup{\ref{thm_result_so_Q1}(\ref{thm_result_so_Q1_b})}, for $i=1,2$ let $\delta_i$ denote the character via which the $i$th factor $\FF^\times$ acts on $V_i$ and let $\mathrm E^0_{H \times \FF^\times \times \FF^\times}(V^*)$ be the set of indecomposable elements of $\mathrm E_{H \times \FF^\times \times \FF^\times}(V^*)$.
Identify $\Lambda^+(H \times \FF^\times \times \FF^\times)$ with $\Lambda^+(H) \oplus \ZZ \delta_1 \oplus \ZZ \delta_2$ and consider the map
\[
\Lambda^+(H \times \FF^\times \times \FF^\times) \to \Lambda^+_{\lbrace 1 \rbrace} (G) \times \Lambda^+(H), \quad \lambda = \mu + k_1\delta_1 + k_2 \delta_2 \mapsto \overline \lambda = ((k_1+k_2)\pi_1; \mu).
\]
Then the set of indecomposable elements of $\Gamma_{\lbrace 1 \rbrace}(G, H)$ is
\[
\lbrace \overline \lambda \mid \lambda \in \mathrm E^0_{H \times \FF^\times \times \FF^\times}(V^*) \rbrace.
\]
\textup(Note that $2\delta_1,2\delta_2 \in \mathrm E^0_{H \times \FF^\times \times \FF^\times}(V^*)$ and these elements give rise to the same indecomposable element $(2\pi_1;0)$ of $\Gamma_{\lbrace 1 \rbrace}(G,H)$.\textup)
\end{enumerate}
\end{theorem}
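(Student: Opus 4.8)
The plan is to realize the restricted branching monoid $\Gamma := \Gamma_{\lbrace 1\rbrace}(G,H)$ through the classical harmonic decomposition of $\FF[V^*] = \mathrm{S}^\bullet V$ relative to the invariant quadric, and then to read off its indecomposable elements from the free weight monoid of the spherical module $V^*$. I would first record the geometric input common to both parts. As $R_G(\pi_1)$ is the vector module $V$, the variety $X_{\lbrace 1\rbrace}\simeq\SOGr_1(V)$ sits in $\PP(V^*)$ as the quadric of isotropic lines, and its affine cone $\widehat X\subset V^*$ is the zero locus of the $G$-invariant quadratic form $q\in\mathrm{S}^2 V\subset\FF[V^*]$. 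By projective normality of $X_{\lbrace 1\rbrace}$ together with the Borel--Weil isomorphism~(\ref{eqn_H^0_for_XI}), the section ring $\bigoplus_{m\ge 0}R_G(m\pi_1)$ is identified $G$-equivariantly with $\FF[\widehat X] = A/(q)$, where $A := \FF[V^*]$; write $\bar A := A/(q)$. The classical separation of variables for the quadratic space $(V,\omega)$ furnishes a graded decomposition $A = \FF[q]\otimes\mathcal H$ with $\mathcal H_m\simeq R_G(m\pi_1)$, and the projection $A\to\bar A$ restricts to a graded $H$-module isomorphism $\mathcal H\xrightarrow{\sim}\bar A$. I would use throughout that $A$ is multiplicity free for the pertinent torus, because $V$ is a spherical module and hence so is $V^*$, and that $\Gamma$ is free by Theorem~\ref{thm_Gamma_is_free} while $\mathrm E_{\tilde T}(V^*)$ is free by \S\,\ref{subsec_spherical_modules}.

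For part~(a), where $r = 1$, the form $q$ is a semi-invariant of $\tilde T := H\times\FF^\times$ of weight $2\delta$ and $A$ is $\tilde T$-multiplicity free. Writing $A = \bigoplus_{j\ge 0}q^{\,j}\mathcal H$ and invoking multiplicity-freeness, every $\tilde T$-weight of $A$ is uniquely of the form $2j\delta + \nu$ with $\nu$ an $(H\times\FF^\times)$-weight of $\mathcal H\simeq\bar A$; this yields a monoid isomorphism $\mathrm E_{H\times\FF^\times}(V^*)\cong\ZZ^+(2\delta)\times\mathrm E(\bar A)$ in which the weight monoid $\mathrm E(\bar A)$ is carried onto $\Gamma$ by $\mu + m\delta\mapsto(m\pi_1;\mu)$. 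The indecomposable elements of a direct product of pointed monoids are precisely those of the two factors, so $\mathrm E^0_{H\times\FF^\times}(V^*) = \lbrace 2\delta\rbrace\sqcup\mathrm E^0(\bar A)$, whence $\Gamma$ has indecomposable elements $\lbrace\overline\lambda\mid\lambda\in\mathrm E^0_{H\times\FF^\times}(V^*)\setminus\lbrace 2\delta\rbrace\rbrace$, as claimed. Here multiplicity-freeness of $A$ forces $2\delta\notin\mathrm E(\bar A)$, so no collision occurs.

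Part~(b), where $r = 2$, is the genuinely harder case. Now $q = q_1 + q_2$ with $q_i\in\mathrm{S}^2 V_i$ of weight $2\delta_i$, so $q$ is a semi-invariant of the diagonal torus $T' := H\times\FF^\times_{\mathrm{diag}}$ but not of the full torus $\tilde T := H\times\FF^\times\times\FF^\times$, and the clean product decomposition of part~(a) breaks down. Let $\phi\colon\Lambda^+(\tilde T)\to\Lambda^+(T')$ be the surjection identifying $\delta_1,\delta_2$ with $\delta$; under the identification $(m\pi_1;\mu)\leftrightarrow\mu+m\delta$ it agrees with the map $\lambda\mapsto\overline\lambda$ of the theorem. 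First I would compute the $T'$-support of $A$: it is the $\phi$-image of the $\tilde T$-support $\mathrm E_{\tilde T}(V^*)$, and from $A = \bigoplus_j q^{\,j}\mathcal H$ it equals $\ZZ^+(2\delta) + \Gamma$; since the trivial $H$-module occurs in $R_G(2\pi_1) = \mathcal H_2$ (modulo $q$ the two invariants $q_1,q_2$ collapse to one) we have $2\delta\in\Gamma$, so $\phi(\mathrm E_{\tilde T}(V^*)) = \Gamma$. Hence $\phi$ carries the free generating set $\mathrm E^0_{\tilde T}(V^*)$ onto a generating set of $\Gamma$ in which $2\delta_1$ and $2\delta_2$ both map to $2\delta$. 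Second, to show no further collapsing happens, I would count ranks: passing to lattices, $\phi(\ZZ\,\mathrm E_{\tilde T}(V^*)) = \ZZ\Gamma$ with $\ker\phi = \ZZ(\delta_1-\delta_2)$ meeting $\ZZ\,\mathrm E_{\tilde T}(V^*)$ in a rank-one subgroup (it contains $2\delta_1 - 2\delta_2$), so $\rk\Gamma = \rk\mathrm E_{\tilde T}(V^*) - 1$. As $\Gamma$ is free of exactly this rank and is generated by the at most $\rk\mathrm E_{\tilde T}(V^*) - 1$ distinct elements $\phi(\mathrm E^0_{\tilde T}(V^*))$, these images must be pairwise distinct apart from the prescribed coincidence $2\delta_1,2\delta_2\mapsto 2\delta$ and must be exactly the indecomposable elements of $\Gamma$; this is the assertion.

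The main obstacle is precisely this rigidity step in part~(b): ruling out that the collapse $\delta_1,\delta_2\mapsto\delta$ introduces relations beyond $2\delta_1 = 2\delta_2$. The decisive ingredients are the freeness of $\Gamma$ from Theorem~\ref{thm_Gamma_is_free} together with the exact drop of the rank by one, which leaves no room for extra coincidences. It then remains to check the minor points that $2\delta_1,2\delta_2$ are themselves indecomposable in $\mathrm E_{\tilde T}(V^*)$ --- equivalently, that neither $V_1$ nor $V_2$ has a trivial $H$-summand --- which is immediate from the list of admissible pairs in Theorem~\ref{thm_result_so_Q1} and Table~\ref{table_so_Q1}.
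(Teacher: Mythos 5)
Your part~(a) is essentially the paper's own argument in different packaging: the harmonic decomposition $A=\FF[q]\otimes\mathcal H$ is precisely the isomorphism $\mathrm S^kV\simeq\bigoplus_{l}R_G((k-2l)\pi_1)$ on which the paper's proof runs, and your direct-sum decomposition $\mathrm E_{H\times\FF^\times}(V^*)=\ZZ^+(2\delta)\oplus\mathrm E(\bar A)$ is a monoid-theoretic restatement of its two-directional weight chase. Part~(b) is a genuinely different route: the paper argues element by element, choosing for each indecomposable $\lambda$ the minimal $l$ with $R_H(\mu)\subset R_G((k_1+k_2-2l)\pi_1)|_H$ and separately lifting each element of $\Gamma_{\lbrace 1\rbrace}(G,H)$ back to $\mathrm E_{H\times\FF^\times\times\FF^\times}(V^*)$, whereas you prove surjectivity of $\phi$ once and for all and then invoke freeness (Theorem~\ref{thm_Gamma_is_free}) together with the exact rank drop $\rk\Gamma_{\lbrace 1\rbrace}(G,H)=\rk\mathrm E_{H\times\FF^\times\times\FF^\times}(V^*)-1$ to force $\phi(\mathrm E^0)$ to be the free basis. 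This is cleaner and actually supplies something the paper's write-up leaves implicit, namely that the images $\overline\lambda$ are indecomposable and pairwise distinct apart from the single coincidence $2\delta_1,2\delta_2\mapsto 2\delta$.

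The one step that does not hold up is your closing sentence. The hypothesis you correctly isolate --- that $2\delta_1$ and $2\delta_2$ are indecomposable in $\mathrm E_{H\times\FF^\times\times\FF^\times}(V^*)$, equivalently that neither $V_i$ is the trivial one-dimensional $H$-module --- is \emph{not} immediate from the classification: Table~\ref{table_orth_sph_mod} admits $(\SO_n,\FF^n)$ with $n=1$, so for instance $(H,V)=(\SO_{2l},\FF^{2l}\oplus\FF^1)$ falls under Theorem~\ref{thm_result_so_Q1}(\ref{thm_result_so_Q1_b}), case~(1), with $V_2=\FF^1$ trivial. There $\delta_2\in\mathrm E_{H\times\FF^\times\times\FF^\times}(V^*)$, so $2\delta_2=\delta_2+\delta_2$ is decomposable, your cardinality bound $|\phi(\mathrm E^0)|\le\rk\mathrm E-1$ fails, and indeed $\phi(2\delta_1)=(2\pi_1;0)=2\,(\pi_1;0)=2\,\phi(\delta_2)$ is a decomposable element of $\Gamma_{\lbrace 1\rbrace}(G,H)$, so the asserted description of the indecomposables breaks down. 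To be fair, this is an edge case the theorem itself overlooks --- its parenthetical claim that $2\delta_1,2\delta_2\in\mathrm E^0_{H\times\FF^\times\times\FF^\times}(V^*)$ is false in the same example --- so your argument is valid exactly on the domain where the statement is true; but you should record the hypothesis that each $V_i$ is a nontrivial $H$-module explicitly rather than claim it follows from the list of admissible pairs.
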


The proof of Theorem~\ref{thm_RBM_so_Q1} is given in~\S\,\ref{subsec_RBM_I={1}}.

We now turn to the case $I \ne \lbrace 1 \rbrace$.

\begin{theorem} \label{thm_result_so_odd}
Suppose that $d = 2n+1$ with $n \ge 3$, $I \subset S$ is a nonempty subset distinct from $\lbrace 1 \rbrace$, and the following properties hold:
\begin{itemize}
\item
$H$ is not intermediate between a Levi subgroup of~$G$ and its derived subgroup;

\item
$H$ is not a symmetric subgroup of~$G$.
\end{itemize}
Then the following conditions are equivalent:
\begin{enumerate}[label=\textup{(\arabic*)},ref=\textup{\arabic*}]
\item
The variety $X_I$ is $H$-spherical and $I$ is maximal with this property.

\item
the pair $(H,V)$, considered up to BF-equivalence, and the set $I$ fall into one of the cases in Table~\textup{\ref{table_result_so_odd}}.
\end{enumerate}
Moreover, Table~\textup{\ref{table_result_so_odd}} lists also the rank and indecomposable elements of the monoid $\Gamma_I(G,H)$ for each of the cases.
\end{theorem}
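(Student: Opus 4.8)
The plan is to specialize the general strategy of the paper to $G = \Spin_{2n+1}$ and to exploit the spreading of $H$-sphericity along the partial order on $\mathscr F(G)/{\sim}$. For this group the unique minimal nil-equivalence class is $\bl X_{\lbrace 1 \rbrace} \br$ (Table~\ref{table_min_elements}), and $\bl X_{\lbrace 1 \rbrace} \br \preccurlyeq \bl X_I \br$ for every nonempty $I$; hence if $X_I$ is $H$-spherical then so is $X_{\lbrace 1 \rbrace}$. Consequently every admissible $H$ already occurs in the classification of Theorem~\ref{thm_result_so_Q1}, whose part~(c) forces the number $r$ of orthogonal summands of $V$ to be $1$ or $2$. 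After discarding the subgroups excluded by the hypotheses of the theorem --- those intermediate between a Levi subgroup of $G$ and its derived subgroup, and the symmetric subgroups of $G$ --- one is left with a short, explicit list of pairs $(H,V)$, and the problem reduces to finding, for each of them, the maximal $I \ne \lbrace 1 \rbrace$ for which $X_I$ stays $H$-spherical.

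To test $H$-sphericity of a candidate $X_I$ I would apply the effective criterion of Proposition~\ref{prop_sph_criterion_eff}: from $(H, X_I)$ one extracts a Levi subgroup $M \subset H$ together with a finite-dimensional $M$-module $U$, and $X_I$ is $H$-spherical exactly when $U$ is a spherical $M$-module, which is decided by the classification of spherical modules. Since for the odd orthogonal group the spherical actions cluster at the isotropic Grassmannians of small and maximal dimension, I would run the criterion first for $I = \lbrace n \rbrace$ (the spinor variety) and $I = \lbrace 2 \rbrace$, then for two-element and larger subsets, using Reduction~1 to propagate sphericity to subsets and maximality to terminate. Each pair $(H,I)$ that survives is entered into Table~\ref{table_result_so_odd}; the rest are ruled out.

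For each surviving case the remaining data are obtained as follows. The rank is given by Theorem~\ref{thm_Gamma_is_free}(\ref{thm_Gamma_is_free_b}), namely $\rk \Gamma_I(G,H) = |I| + \rk_H X_I$, where $\rk_H X_I$ equals the rank of the spherical $M$-module $U$ furnished by the criterion and is read off the spherical-module tables. The indecomposable elements of $\Gamma_I(G,H)$ are then determined by explicitly restricting to $H$ a few simple $G$-modules $R_G(\lambda)$ with small highest weight $\lambda$ (a fundamental weight, or a sum of two fundamental weights), decomposing each restriction through a chain of intermediate restrictions to Levi and symmetric subgroups and, where the inclusion $\mathsf G_2 \subset \Spin_7$ intervenes, through the explicit embedding of Appendix~\ref{sect_g2&spin7}.

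The main obstacle I anticipate is the case analysis of the second step: there is no uniform argument excluding every $I \supsetneq \lbrace i \rbrace$ simultaneously for all candidates, so each of the finitely many pairs $(H,I)$ must be handled by computing its module $U$ and comparing it with Table~\ref{table_orth_sph_mod} and the general spherical-module lists. The spinor-type subgroups $\mathsf G_2$, $\Spin_7$, and $\Spin_9$ are especially troublesome, since for them one must resort to explicit coordinate computations rather than to the general reductions.
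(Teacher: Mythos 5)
Your proposal is correct and follows essentially the same route as the paper: reduce via the nil-equivalence partial order to the minimal class $\bl X_{\lbrace 1\rbrace}\br$ so that the candidates for $H$ come from the $\SOGr_1(V)$ classification, then treat $\SOGr_{\max}(V)$ and $\SOGr_2(V)$ (the next classes in the order) with the criterion of Proposition~\ref{prop_sph_criterion_eff}, and finally dispose of all remaining $I$ by the domination $\bl X_I\br \succ \bl\SOGr_2(V)\br$ together with case-by-case computation of the module $\mathfrak g/(\mathfrak p_I^-+\mathfrak h)$. The rank and indecomposable elements of $\Gamma_I(G,H)$ are computed exactly as you describe, via formula~(\ref{eqn_rank_of_RBM}) and successive explicit restrictions through Levi, symmetric, and $\mathsf G_2\subset\Spin_7$ steps.
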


\begin{table}[h]

\caption{The odd orthogonal case for $I \ne \lbrace 1 \rbrace$}
\label{table_result_so_odd}

\begin{tabular}{|c|l|l|l|}
\hline
No. & Conditions & Rank & Indecomposable elements of~$\Gamma_I(G,H)$  \\

\hline

\newcase
& \multicolumn{3}{|c|}{$(\mathsf G_2, \FF^7)$} \\
\hline
\no & $I = \lbrace 1,2 \rbrace$ & $4$ & $(\pi_1;\pi_1)$, $(\pi_2;\pi_1)$, $(\pi_2; \pi_2)$, $(\pi_1 {+} \pi_2; \pi_2)$ \\
\hline
\no & $I = \lbrace 1,3 \rbrace$ & $4$ & $(\pi_1;\pi_1)$, $(\pi_3; \pi_1)$, $(\pi_3; 0)$, $(\pi_1 {+} \pi_3; \pi_2)$ \\
\hline
\no & $I = \lbrace 2,3 \rbrace$ & $4$ & $(\pi_2; \pi_1)$, $(\pi_2; \pi_2)$, $(\pi_3; \pi_1)$, $(\pi_3; 0)$ \\

\hline

\newcase
& \multicolumn{3}{|c|}{$(\Spin_7, \FF^8 \oplus \FF^1)$} \\
\hline
\no & $I = \lbrace 1,2 \rbrace$ & 5 &
$(\pi_1; \pi_3)$, $(\pi_1; 0)$, $(\pi_2; \pi_1)$, $(\pi_2; \pi_2)$, $(\pi_2; \pi_3)$
\\
\hline
\no & $I = \lbrace 3 \rbrace$ & 4 & $(\pi_3;\pi_1)$, $(\pi_3; \pi_2)$, $(\pi_3;\pi_3)$, $(\pi_3; \pi_1 {+} \pi_3)$ \\
\hline
\no & $I = \lbrace 4 \rbrace$ & 3 & $(\pi_4;\pi_1)$, $(\pi_4; 0)$, $(\pi_4; \pi_3)$ \\

\hline

\end{tabular}

\end{table}

\begin{theorem} \label{thm_result_so_even}
Suppose that $d = 2n$ with $n \ge 4$, $I \subset S$ is a nonempty subset distinct from $\lbrace 1 \rbrace$, and the following properties hold:
\begin{itemize}
\item
$H$ is not intermediate between a Levi subgroup of~$G$ and its derived subgroup;

\item
$H$ is not a symmetric subgroup of~$G$.
\end{itemize}
Then the following conditions are equivalent:
\begin{enumerate}[label=\textup{(\arabic*)},ref=\textup{\arabic*}]
\item
The variety $X_I$ is $H$-spherical.

\item
Up to an automorphism of~$G$, the pair $(H,V)$, considered up to BF-equivalence, and the set $I$ fall into one of the cases in Table~\textup{\ref{table_result_so_even}}.
\end{enumerate}
Moreover, Table~\textup{\ref{table_result_so_even}} lists also the rank and indecomposable elements of the monoid $\Gamma_I(G,H)$ for each of the cases.
\end{theorem}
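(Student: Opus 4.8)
The plan is to reduce the classification of $H$-spherical flag varieties $X_I$ for $G = \Spin_{2n}$ with $I \neq \{1\}$ to a finite combinatorial problem governed by Richardson nilpotent orbits and spherical modules, exactly parallel to the odd case of Theorem~\ref{thm_result_so_odd}. First I would invoke the nil-equivalence machinery from the introduction: since the property of being $H$-spherical spreads downward along the partial order $\preccurlyeq$ on $\mathscr F(G)/\!\sim$, it suffices to analyze the minimal and near-minimal nil-equivalence classes listed in Table~\ref{table_min_elements}. For $\SO_{2n}$ the minimal classes are those of $\SOGr_1(V)$ (the case $I=\{1\}$, already handled in Theorem~\ref{thm_result_so_Q1}) and the two maximal-isotropic Grassmannians $\SOGr_{\max}^{\pm}(V)$; the latter account for the appearance of ``up to an automorphism of $G$'' in the statement, since the diagram automorphism swaps the two half-spin fundamental weights $\pi_{n-1}$ and $\pi_n$. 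The near-minimal classes, corresponding to isotropic Grassmannians of dimension $2$ and the small combinations $I$, contribute the remaining rows of Table~\ref{table_result_so_even}.

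The key technical engine is the effective sphericity criterion of Proposition~\ref{prop_sph_criterion_eff}: for each candidate pair $(H,V)$ and subset $I$, one computes explicitly a Levi subgroup $M \subset H$ together with an $M$-module $U$ such that $X_I$ is $H$-spherical precisely when $U$ is a spherical $M$-module. Thus the second step is, for every admissible $(H,V)$ arising from Kimelfeld's classification~\cite{Kim} of subgroups with an open orbit on the relevant isotropic Grassmannian, to read off $(M,U)$ and test it against the classification of spherical modules (via~\cite[Theorem~5.3]{AvP1}). Necessity of having an open orbit eliminates the vast majority of $(H,V)$, after which the finitely many survivors are checked one by one. Throughout, Reductions~1--3 of \S\,\ref{subsec_reductions} trim the search: Reduction~1 lets me restrict to maximal $I$, Reduction~3 removes groups intermediate between a Levi and its derived subgroup, and the hypothesis already excludes symmetric subgroups, so only genuinely ``new'' pairs remain.

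Once $H$-sphericity and maximality of $I$ are established for each surviving case, the final step computes the rank and indecomposable elements of $\Gamma_I(G,H)$. The rank follows immediately from Theorem~\ref{thm_Gamma_is_free}(\ref{thm_Gamma_is_free_b}), namely $\rk \Gamma_I(G,H) = |I| + \rk_H X_I$, where $\rk_H X_I$ is obtained from the spherical module $U$ produced by the criterion. The indecomposable elements are found by explicitly restricting to $H$ the simple $G$-modules $R_G(\lambda)$ for $\lambda$ a fundamental weight or a sum of two fundamental weights in $\Lambda^+_I(G)$, using formula~(\ref{eqn_restriction}); these restrictions are computed through chains of intermediate restrictions to Levi and symmetric subgroups (tables in~\cite[\S\,5.5]{AvP2}) and, where needed, the $\Spin_7 \to \mathsf G_2$ or $\Spin_8$ half-spin data, with the explicit realizations of Appendix~\ref{sect_g2&spin7} supporting the even-dimensional spinor computations.

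The main obstacle will be the passage from Kimelfeld's open-orbit classification, which treats only the isotropic Grassmannians and only under the weak-reducibility/irreducibility hypothesis on $V$, to a complete and honest list of all $H$-spherical $X_I$ for \emph{arbitrary} $I \neq \{1\}$. The difficulty is twofold: first, the decomposition $V = V_1 \oplus \dots \oplus V_r$ into orthogonal irreducible or weakly reducible summands multiplies the bookkeeping, and one must control how $H$ acts across summands (through the images $H_i \subset \SO(V_i)$) and rule out non-product embeddings; second, unlike the odd case, the two inequivalent half-spin weights mean the maximal isotropic Grassmannian splits into two components, forcing careful tracking of the diagram automorphism and explicit spinor realizations to verify sphericity and to pin down the branching data. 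Organizing these subcases so that every $(H,V,I)$ is shown to reduce — via the nil-equivalence order and the reductions — to exactly one row of Table~\ref{table_result_so_even} (up to automorphism) is where the real work lies.
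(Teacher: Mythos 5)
Your proposal follows essentially the same route as the paper: descent along the nil-equivalence order to reduce to $\SOGr_{\max}^{\pm}(V)$ and $\SOGr_2(V)$, Kimelfeld's open-orbit lists as the initial filter, the Levi-module criterion of Proposition~\ref{prop_sph_criterion_eff} for the case-by-case checks (organized by the number $r$ of orthogonal summands of $V$), and the rank formula plus successive restrictions for $\Gamma_I(G,H)$. You also correctly locate the real work in controlling cross-summand actions of $H$ and the two half-spin components, which is exactly what the paper's auxiliary reductions (e.g.\ Proposition~\ref{prop_so_aux} and Lemma~\ref{lemma_outer_aut}) are designed to handle.
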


\begin{table}[h]

\caption{The even orthogonal case for $I \ne \lbrace 1 \rbrace$}
\label{table_result_so_even}

\begin{tabular}{|c|l|l|l|}
\hline
No. & Conditions & Rank & Indecomposable elements of~$\Gamma_I(G,H)$  \\

\hline

\newcase
& \multicolumn{3}{|c|}{$(\SO_{2l} \times \SO_{2m+1}, \mathop{\FF^{2l}} \limits_1{\!} \oplus \mathop{\FF^{2m+1}} \limits_2{\!} \oplus \FF^1)$, $l \ge 2$, $m \ge 1$, $2l+2m+2 = 2n$} \\
\hline
\no & $I = \lbrace n \rbrace$ & $\min(2l,2m{+}1) {+} 1$
&
\renewcommand{\tabcolsep}{0pt}%
\begin{tabular}{l}
$(\pi_n; \pi_{l-1} {+} \pi'_{m})$,
$(\pi_n; \pi_{l} {+} \pi'_{m})$,\\[-2pt]
$(2\pi_n; \pi_{l-1} {+} \pi_{l} {+} \pi'_{m-1})$, \\[-2pt]
$(2\pi_n; \pi_{l -k} {+} \pi'_{m-k})$ for $2 {\le} k {\le} [\frac{\min(2l,2m+1)}2]$,\\[1pt]
$(2\pi_n;\pi_{l - k} {+} \pi'_{m+1-k})$ for $2 {\le} k {\le} [\frac{\min(2l,2m+1)+1}2]$\\[1pt]
\end{tabular}\\

\hline

\newcase
& \multicolumn{3}{|c|}{$(\SO_{2m+1} \times \FF^\times, \FF^{2m+1} \oplus \Omega(\FF^1_\chi) \oplus \FF^1)$, $m \ge 2$, $2m+4 = 2n$} \\
\hline
\no \label{chi/2_no1}
& $I = \lbrace n \rbrace$ & 3 & $(\pi_{n}; \pi_m {+} \frac{\chi}2)$, $(\pi_{n}; \pi_m {-} \frac{\chi}2)$, $(2\pi_{n}; \pi_{m-1})$ \\

\hline

\newcase
& \multicolumn{3}{|c|}{$(\Spin_7 \times \SO_{2l}, \mathop{\FF^8} \limits_1{\!} \oplus \mathop{\FF^{2l}} \limits_2{\!})$, $l \ge 2$, $2l+8 = 2n$} \\
\hline
\no & $I = \lbrace n \rbrace$ & $\min(7,2l){+}1$ &
\renewcommand{\tabcolsep}{0pt}%
\begin{tabular}{l}
$(\pi_n; \pi_3 {+} \pi'_l)$, $(\pi_n; \pi_1 {+} \pi'_{l-1})$, $(\pi_n; \pi'_{l-1})$, \\[-2pt]
$(2\pi_n; \pi_2 {+} \pi'_{l-2})$, $(2\pi_n; \pi_1 {+} \pi'_{l-2})$,\\[-2pt]
$(2\pi_n; \pi_1 {+} \pi'_{l-3})$, ($l {\ge} 3$), $(2\pi_n; \pi'_{l-4})$ ($l {\ge} 4$), \\[-2pt]
$(3\pi_n; \pi_2 {+} \pi'_{l} {+} \pi'_{l-3})$ ($l {\ge} 3$)
\end{tabular}
\\

\hline

\newcase
& \multicolumn{3}{|c|}{$(\Spin_7 \times \FF^\times, \FF^8 \oplus \Omega(\FF^1_{\chi}))$} \\
\hline
\no & $I = \lbrace 2 \rbrace$ & 5 & $(\pi_2; \pi_1)$, $(\pi_2; \pi_2)$, $(\pi_2; 0)$, $(\pi_2; \pi_3 {+} \chi)$, $(\pi_2; \pi_3 {-} \chi)$ \\
\hline
\no \label{chi/2_no2}
& $I = \lbrace 5 \rbrace$ & 3 & $(\pi_5; \pi_1 {+} \frac{\chi}2)$, $(\pi_5,\frac{\chi}2)$, $(\pi_5; \pi_3 {-} \frac{\chi}2)$ \\

\hline

\newcase
& \multicolumn{3}{|c|}{$(\Spin_7, \FF^8 \oplus \FF^1 \oplus \FF^1)$} \\
\hline

\no & $I = \lbrace 5 \rbrace$ & 3 & $(\pi_5;\pi_1)$, $(\pi_5; 0)$, $(\pi_5; \pi_3)$ \\

\hline

\newcase \label{Spin7xGL2}
& \multicolumn{3}{|c|}{$(\Spin_7 \times \SL_2 \times \FF^\times, \mathop{\vphantom|\FF^8} \limits_1{\!} \oplus \Omega([\mathop{\vphantom|\FF^2} \limits_2{\!}]_\chi))$} \\

\hline

\no & $I = \lbrace 6 \rbrace$ & 6 &
\renewcommand{\tabcolsep}{0pt}%
\begin{tabular}{l}
$(\pi_6; \pi_1 {+} \pi'_1 {+} \chi)$, $(\pi_6; \pi'_1 {+} \chi)$, $(\pi_6; \pi_3 {+} \chi)$,\\[-2pt]
$(\pi_6; \pi_3 {-} \chi)$, $(2\pi_6; \pi_1 {+} \chi)$, $(2\pi_6; 2\chi)$
\end{tabular}
\\

\hline

\newcase
& \multicolumn{3}{|c|}{$(\mathsf G_2 \times \SO_3, \mathop{\FF^7} \limits_1{\!} \oplus \mathop{\FF^3} \limits_2{\!})$} \\
\hline
\no & $I = \lbrace 5 \rbrace$ & 4 & $(\pi_5; \pi_1 {+} \pi'_1)$, $(\pi_5; \pi'_1)$, $(2\pi_5; \pi_1)$, $(2\pi_5; \pi_2)$\\

\hline

\newcase
& \multicolumn{3}{|c|}{$(\mathsf G_2, \FF^7 \oplus \FF^1)$} \\
\hline
\no & $I = \lbrace 4 \rbrace$ & 2 & $(\pi_4; \pi_1)$, $(\pi_4; 0)$ \\

\hline

\end{tabular}
\end{table}

For notation used in Tables~\ref{table_result_so_odd} and~\ref{table_result_so_even}, we refer to \S\,\ref{subsec_notation_in_tables}.

In Theorems~\ref{thm_result_so_odd} and~\ref{thm_result_so_even}, the equivalence of conditions~(1) and~(2) follows from results in~\S\S\,\ref{subsec_SOGr_max_r=1}--\ref{subsec_orth_case_end}.
The method for computing the monoid $\Gamma_I(G,H)$ for each of the cases in Tables~\ref{table_result_so_odd} and~\ref{table_result_so_even} is described in~\S\,\ref{subsec_RBM_Ine{1}}.

\subsection{Notation and conventions used in Tables~\texorpdfstring{\ref{table_result_sympl}}{5},\,%
\texorpdfstring{\ref{table_result_so_odd}}{7}, and~\texorpdfstring{\ref{table_result_so_even}}{8}}
\label{subsec_notation_in_tables}

The symbol $\delta_i^j$ denotes the Kronecker delta, that is, $\delta_i^j = 1$ for $i = j$ and $\delta_i^j = 0$ otherwise.

Whenever an element $(\lambda; \mu)$ in the last column is followed by a parenthesis containing an inequality on parameters, this means that $(\lambda; \mu)$ is an indecomposable element of $\Gamma_I(G,H)$ if and only if the inequality is satisfied.

In all the tables under consideration, the group $H$ contains at most three factors different from $\FF^\times$; we write $\pi_i$ (resp.~$\pi'_i$, $\pi''_i$) for the $i$th fundamental weight of the first (resp. second, third) factor (for $\SO_4$, the fundamental weights have numbers~$1$ and~$2$).
For convenience in certain formulas, we put $\pi_0 = \pi'_0 = 0$.
We point out that the usage of the symbols $\pi_i$ to denote fundamentals weights of both $G$ and $H$ simultaneously does not cause any ambiguity.

In Cases~\ref{chi/2_no1} and~\ref{chi/2_no2} of Table~\ref{table_result_so_even}, the symbol $\chi/2$ stands for the character of the preimage of $\FF^\times$ in $\Spin(V)$ such that $2 \cdot (\chi/2) = \chi$.

In Case~\ref{Spin7xGL2} of Table~\ref{table_result_so_even}, there are two conjugacy classes in $G$ of subgroups $H$ having the indicated type, and the variety $X_I$ is $H$-spherical only for one of them.
See \S\,\ref{subsec_so_prelim_remarks} for the convention choosing the right conjugacy class.

\section{Main tools}
\label{sect_main_tools}

\subsection{Nil-equivalence relation on \texorpdfstring{$\mathscr F(G)$}{F(G)} and its properties}
\label{subsec_NE-relation}

Let $\mathscr F(G)$ denote the set of nontrivial flag varieties of the group~$G$.

Given a parabolic subgroup $P \subset G$, consider a Levi decomposition $P = L P^u$ where $L$ is a Levi subgroup and $P^u$ is the unipotent radical of~$P$.
A well-known result of Richardson \cite[Proposition~6(c)]{Rich74} asserts that $\mathfrak p^u$ has an open orbit for the adjoint action of~$P$. Let $O_P$ denote this open orbit and put
\[
\mathscr N(G/P) = GO_P \subset \mathfrak g.
\]
Then $\mathscr N(G/P)$ is a nilpotent orbit in~$\mathfrak g$.

We say that two varieties $X_1, X_2 \in \mathscr F(G)$ are \textit{nil-equivalent} (notation $X_1 \sim X_2$) if $\mathscr N(X_1) = \mathscr N(X_2)$.

\begin{remark}
If $P,Q \subset G$ are two \textit{associated} parabolic subgroups (that is, their Levi subgroups are conjugate in~$G$) then $\mathscr N(G/P) = \mathscr N(G/Q)$ by~\cite[Theorem~2.7]{JR}.
Hence $G/P$ and $G/Q$ are automatically nil-equivalent in this case.
\end{remark}

Now let $K \subset G$ be an arbitrary connected reductive subgroup.

\begin{theorem}[{\cite[Theorem~1.3]{AvP1}}] \label{thm_nil-equiv}
Suppose that $X_1, X_2 \in \mathscr F(G)$ and $X_1 \sim X_2$.
Then the following conditions are equivalent:
\begin{enumerate}[label=\textup{(\arabic*)},ref=\textup{\arabic*}]
\item
$X_1$ is $K$-spherical.

\item
$X_2$ is $K$-spherical.
\end{enumerate}
\end{theorem}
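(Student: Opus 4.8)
The plan is to reduce $K$-sphericity of a flag variety $G/P$ to an invariant attached to the Richardson orbit $\mathscr N(G/P)$ by passing to the cotangent bundle and its moment map. Recall that $X = G/P$ is $K$-spherical if and only if its complexity $c_K(X)$ (the codimension of a generic $B_K$-orbit) vanishes, and by Vinberg's coisotropy principle this quantity can be read off from the $K$-action on $T^*X$. First I would use the standard identification $T^*(G/P) \cong G \times_P \mathfrak p^u$ together with the $G$-moment map $\mu_G \colon T^*(G/P) \to \mathfrak g^*$, whose image is exactly $\overline{\mathscr N(G/P)}$ by Richardson's theorem. Since $\dim T^*(G/P) = 2\dim(G/P) = \dim \mathscr N(G/P) = \dim \overline{\mathscr N(G/P)}$, the map $\mu_G$ is dominant and generically finite onto $\overline{\mathscr N(G/P)}$.

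Next I would exploit the functoriality of moment maps for the inclusion $K \subset G$: the $K$-moment map factors as $\mu_K = p \circ \mu_G$, where $p \colon \mathfrak g^* \to \mathfrak k^*$ is the corestriction map dual to $\mathfrak k \hookrightarrow \mathfrak g$. Because $\mu_G$ is $G$-equivariant, dominant and generically finite, it is in particular $B_K$-equivariant and preserves the dimensions of generic orbits. Consequently the $K$-complexity and $K$-rank of the $G$-variety $T^*(G/P)$ coincide with those of $\overline{\mathscr N(G/P)}$, a variety depending only on the nil-equivalence class of $X$. In other words, all invariant-theoretic data of the $K$-action on $T^*(G/P)$ that are invisible to generically finite dominant covers, in particular the image, generic fibres, and generic $K$-orbits of $\mu_K$, are already determined by the $K$-action on $\overline{\mathscr N(G/P)}$ through the map $p$.

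It then remains to convert these cotangent-bundle invariants into the complexity of $X$ itself. Here I would invoke Panyushev's results (the same circle of ideas underlying Proposition~\ref{prop_sph_criterion_eff}): the complexity and rank of a smooth $K$-variety are recovered from those of its cotangent bundle via relations such as $c_K(T^*X) = 2c_K(X) + \rk_K X$, so that $c_K(X)$ is ultimately expressed purely in terms of quantities attached to $\overline{\mathscr N(G/P)}$ and $K$. Granting this, $c_K(G/P)$ depends only on $\mathscr N(G/P)$ and $K$; hence if $X_1 \sim X_2$, so that $\mathscr N(X_1) = \mathscr N(X_2)$, then $c_K(X_1) = c_K(X_2)$, and in particular one complexity vanishes exactly when the other does. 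This is precisely the asserted equivalence of conditions (1) and (2).

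The main obstacle, and the technical heart of the argument, is this last step: making fully rigorous the claim that $c_K(X)$ is governed by the moment-map data on $\overline{\mathscr N(G/P)}$. This requires the correct bookkeeping between the complexity and rank of a smooth $K$-variety and those of its cotangent bundle, combined with the fact that complexity and rank are insensitive to generically finite dominant $K$-equivariant morphisms of irreducible varieties of equal dimension. Both ingredients are available in the work of Knop and Panyushev, but care is needed because the moment map $\mu_K$ is equivariant rather than invariant; the relevant generic fibres must therefore be understood with respect to the invariant collective motion $T^*X \to \mathfrak k^* /\!/ K$ rather than naively over points of $\mathfrak k^*$, and the comparison of generic $B_K$-orbits on $T^*(G/P)$ and on $\overline{\mathscr N(G/P)}$ must be carried out with this in mind.
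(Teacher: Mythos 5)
Your proposal is correct and follows essentially the same route as the proof the paper relies on for this statement (the theorem is quoted from~\cite{AvP1}, and the remark after Theorem~\ref{thm_sph_descent} summarizes exactly this argument): pass to $T^*X$, use that the $G$-moment map is dominant and generically finite onto $\overline{\mathscr N(X)}$, and characterize $K$-sphericity by an invariant of the $K$-action on $\overline{\mathscr N(X)}$ alone --- coisotropy in~\cite{AvP1}, equivalently the vanishing of the complexity extracted from the moment-map data. The one correction needed is in the quoted relation: $2c_K(X)+\rk_K X$ is not the ordinary complexity of the $K$-variety $T^*X$ (for $X=\SL_3/B$ and $K=\SL_3$ that complexity equals $1$ while $2c_K(X)+\rk_K X=0$), but rather Knop's defect $\dim T^*X-\dim\overline{\mu_K(T^*X)}$, which together with the identity $\rk_K X=\dim\overline{\mu_K(T^*X)}-\dim K\mu_K(\alpha)$ for generic $\alpha$ recovers $c_K(X)$ from data depending only on $\mathscr N(X)$ and $K$, exactly as your final paragraph anticipates.
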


For every $X \in \mathscr F(G)$, let $\bl X \br$ denote the nil-equivalence class of~$X$.

The set $\mathscr F(G) /\! \sim$ of all nil-equivalence classes is naturally equipped with a partial order $\preccurlyeq$ defined as follows: $\bl X_1 \br \preccurlyeq \bl X_2 \br$ (or $\bl X_2 \br \succcurlyeq \bl X_1 \br$) if and only if $\mathscr N(X_1) \subset \overline{\mathscr N(X_2)}$ for all $X_1, X_2 \in \mathscr F(G)$. We shall also write $\bl X_1 \br \prec \bl X_2 \br$ (or $\bl X_2 \br \succ \bl X_1 \br$) when $\bl X_1 \br \preccurlyeq \bl X_2 \br$ but $\bl X_1 \br \ne \bl X_2 \br$.

The following theorem, which traces back to~\cite[Theorem~5.8]{Pet}, is based on a result of Losev~\cite{Los}.

\begin{theorem}[{\cite[Theorem~1.4]{AvP1}}] \label{thm_sph_descent}
Suppose that $X_1, X_2 \in \mathscr F(G)$, $\bl X_1 \br \prec \bl X_2 \br$, and $X_2$ is $K$-spherical.
Then $X_1$ is also $K$-spherical.
\end{theorem}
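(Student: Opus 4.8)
The plan is to reformulate $K$-sphericity of a flag variety $G/P$ as a property of the Richardson orbit $\mathscr N(G/P)$ by passing to the cotangent bundle, and then to transport this property downward along the closure containment $\mathscr N(X_1)\subset\overline{\mathscr N(X_2)}$. First I would replace $G/P$ by its cotangent bundle $T^*(G/P)$, equipped with its natural symplectic structure and the induced $K$-action. By a theorem of Knop, the smooth projective $K$-variety $G/P$ is $K$-spherical if and only if $T^*(G/P)$ is coisotropic, equivalently the $K$-invariant functions in $\FF[T^*(G/P)]$ Poisson-commute; this converts the discrete multiplicity condition of Theorem~\ref{thm_criterion_spherical} into a geometric one. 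The advantage is that $T^*(G/P)$ carries the $G$-moment map $\mu_G\colon T^*(G/P)\to\mathfrak g^*$ (with $\mathfrak g^*$ identified with $\mathfrak g$), whose image is exactly $\overline{\mathscr N(G/P)}$ and which is generically finite over the open Richardson orbit.

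Second, I would use $\mu_G$ to express the relevant coisotropy invariant (equivalently, the complexity $c_K$) entirely in terms of the pair $(K,\overline{\mathscr N(G/P)})$, following the analysis of Panyushev~\cite{Pan}. Since the $K$-invariant symplectic geometry of $T^*(G/P)$ is controlled by the moment map and by the structure of its fibers over $\mathscr N(G/P)$, the coisotropy condition becomes a statement about how $K$ sits relative to the nilpotent orbit closure. This refines Theorem~\ref{thm_nil-equiv}: flag varieties sharing the same Richardson orbit are assigned the same invariant, which is precisely the phenomenon already recorded there.

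Finally, and this is the heart of the matter, I would invoke Losev's result~\cite{Los} (in the form used in~\cite[Theorem~5.8]{Pet}) to compare the invariants attached to $\overline{\mathscr N(X_1)}$ and $\overline{\mathscr N(X_2)}$. Because $\mathscr N(X_1)\subset\overline{\mathscr N(X_2)}$, the closure $\overline{\mathscr N(X_1)}$ is a $G$-stable, hence $K$-stable, subvariety of $\overline{\mathscr N(X_2)}$ that arises as a degeneration of it; Losev's theorem then guarantees that the complexity $c_K$ does not increase when one passes to the smaller orbit closure. Combining $c_K(X_2)=0$ with $c_K(X_1)\ge 0$ yields $c_K(X_1)=0$, and running Knop's criterion backward for $X_1$ shows that $X_1$ is $K$-spherical.

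The hard step is the last one. The cotangent bundles $T^*(G/P_1)$ and $T^*(G/P_2)$ are genuinely different $K$-varieties and neither embeds in the other, so Theorem~\ref{thm_subvar_sph} does not apply directly; the transfer takes place only after descending to the nilpotent orbit closures, where $\overline{\mathscr N(X_1)}$ appears inside $\overline{\mathscr N(X_2)}$. A naive semicontinuity argument for complexity would typically move in the \emph{wrong} direction on special fibers, so the assertion that sphericity propagates to the smaller orbit is exactly what requires Losev's finer control over moment maps and coisotropic Hamiltonian actions. Every other ingredient — Knop's reformulation, the generic finiteness of $\mu_G$ over the Richardson orbit, and the reduction to the pair $(K,\overline{\mathscr N(G/P)})$ — is standard.
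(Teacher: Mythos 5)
Your proposal follows essentially the same route as the paper's source (\cite[Theorem~1.4]{AvP1}), as the remark after Theorem~\ref{thm_sph_descent} makes explicit: reformulate $K$-sphericity of $X$ as coisotropicity of the $K$-action on $\mathscr N(X)$ via the moment map of the cotangent bundle, and then use Losev's result that coisotropicity is inherited by adjoint orbits lying in the closure of a given one. You have correctly identified both the reduction to the nilpotent orbit and the fact that the closure-inheritance step is the nontrivial ingredient requiring~\cite{Los}.
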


\begin{remark}
In~\cite{AvP1}, Theorem~\ref{thm_nil-equiv} was proved by showing that, for a given $X \in \mathscr F(G)$, the $K$-sphericity of $X$ is equivalent to the action of $K$ on $\mathscr N(X)$ being coisotropic (see the definition in loc.~cit.), and the core of the proof of Theorem~\ref{thm_sph_descent} was the fact that the property of being coisotropic is inherited by all adjoint orbits in~$\mathfrak g$ lying in the closure of a given one.
The latter fact has been recently generalized in~\cite{PaYa} to actions of higher corank on adjoint orbits in~$\mathfrak g$, which provides a more general framework for the results discussed in this subsection.
\end{remark}

\subsection{Compositions and partitions}

Let $d$ be a positive integer.

A tuple $(a_1, \ldots, a_p)$ of positive integers satisfying $a_1 +\nobreak \ldots + a_p = d$ is called a \textit{composition} of~$d$.
Given a composition $\mathbf{a} = (a_1, \ldots, a_p)$ of~$d$, each number $a_i$ is said to be a \textit{part} of~$\mathbf{a}$. For every part $x$ of $\mathbf{a}$, its \textit{multiplicity} is the cardinality of the set $\lbrace i \mid a_i = x \rbrace$.

We say that a composition $(a_1,\ldots, a_p)$ is \textit{trivial} if $p = 1$ and \textit{nontrivial} if $p \ge 2$.

A composition $(a_1, \ldots, a_p)$ of~$d$ is said to be \textit{symmetric} if $a_i = a_{p+1-i}$ for all $i = 1, \ldots, p$.

A composition $(a_1, \ldots, a_p)$ of~$d$ is said to be a \textit{partition} if $a_1 \ge \ldots \ge a_p$.
We let $\mathcal P(d)$ denote the set of all partitions of~$d$.

If $b_1 > b_2 > \ldots > b_l$ are all parts of a partition $\mathbf a \in \mathcal P(d)$ and $k_1,k_2,\ldots, k_l$ are their multiplicities, then $\mathbf a$ will be also written as $[b_1^{k_1},b_2^{k_2},\ldots,b_l^{k_l}]$.

Given a partition $(a_1,\ldots, a_p)$ of~$d$, it is often convenient to assume that $a_i = 0$ for $i > p$.

The set $\mathcal P(d)$ carries a natural partial order defined as follows.
For two partitions $\mathbf{a} = (a_1, \ldots, a_p)$ and $\mathbf{b} = (b_1, \ldots, b_q)$, we write $\mathbf a \preccurlyeq \mathbf b$ (or $\mathbf b \succcurlyeq \mathbf a$) if
\[
a_1 + \ldots + a_i \le b_1 + \ldots + b_i \quad \text{for all} \quad i = 1, \ldots, d.
\]
We shall also write $\mathbf a \prec \mathbf b$ (or $\mathbf b \succ \mathbf a$) if $\mathbf a \preccurlyeq \mathbf b$ and $\mathbf a \ne \mathbf b$.

For each $\varepsilon \in \lbrace \pm 1 \rbrace$, we define the subset $\mathcal P_\varepsilon(d) \subset \mathcal P(d)$ by the formula
\[
\mathcal P_\varepsilon(d) =
\begin{cases}
\lbrace \mathbf{a} \in \mathcal P(d) \mid \text{each odd part of $\mathbf{a}$ has even multiplicity} \rbrace & \text{if} \ \varepsilon = -1;\\
\lbrace \mathbf{a} \in \mathcal P(d) \mid \text{each even part of $\mathbf{a}$ has even multiplicity} \rbrace & \text{if} \ \varepsilon = 1.
\end{cases}
\]

A partition $\mathbf a \in \mathcal P_1(d)$ is said to be \textit{very even} if all parts of $\mathbf a$ are even.
Clearly, very even partitions occur only when $d = 4k$ for an integer~$k$.

\subsection{Flag varieties for the symplectic and orthogonal group}
\label{subsec_FV_via_compositions}

In this subsection, we discuss the description of flag varieties of the symplectic and orthogonal group as varieties of flags of isotropic subspaces in the space of the tautological representation.
This description will be widely used in the remaining part of the paper.

Let $V$ be a vector space of dimension $d > 0$ equipped with a nondegenerate bilinear form $\omega$ such that $\omega(x,y) = \varepsilon \omega(y,x)$ for all $x,y \in V$, where $\varepsilon \in \lbrace \pm 1 \rbrace$.
In other words, $\omega$ is symmetric for $\varepsilon = 1$ and skew-symmetric for $\varepsilon = -1$.
Let $\widetilde G_\varepsilon = \Aut(V,\omega)$ be the subgroup of $\GL(V)$ consisting of all elements preserving~$\omega$, so that $\widetilde G_\varepsilon = \mathrm{O}(V)$ for $\varepsilon = 1$ and $\widetilde G_\varepsilon = \Sp(V)$ for $\varepsilon = -1$.
We also put $G_\varepsilon = (\widetilde G_\varepsilon)^0$, so that  $G_\varepsilon = \SO(V)$ for $\varepsilon = 1$ and $G_\varepsilon = \widetilde G_\varepsilon = \Sp(V)$ for $\varepsilon = -1$.

For every symmetric composition $\mathbf a = (a_1, \ldots, a_p)$ of~$d$, let $\Fl^{(\varepsilon)}_{\mathbf a}(V)$ be the set of all tuples $(V_0, V_1, \ldots, V_{[p/2]})$ where $V_0 = \lbrace 0 \rbrace$ and $V_1, \ldots, V_{[p/2]}$ are isotropic subspaces of $V$ such that $V_1 \subset \ldots \subset V_{[p/2]}$ and $\dim V_i = a_1 + \ldots + a_i$ for all $i = 1, \ldots, [p/2]$.
It is well known that $\Fl^{(\varepsilon)}_{\mathbf a}(V)$ is a projective homogeneous $\widetilde G_\varepsilon$-variety.
If $[p/2] = 1$ then the points of $\Fl^{(\varepsilon)}_{\mathbf a}(V)$ are naturally identified with the $a_1$-dimensional isotropic subspaces of $V$; in this situation $\Fl^{(\varepsilon)}_{\mathbf a}(V)$ is also called an \textit{isotropic Grassmannian}.

If $G_{\varepsilon} \simeq \Sp_{2n}$, $n \ge 1$, then for every symmetric composition $\mathbf a = (a_1, \ldots, a_p)$ of $2n$ the variety $\Fl^{(-1)}_{\mathbf a}(V)$ is a single $G_{\varepsilon}$-orbit isomorphic to $X_I$ with
\[
I = \lbrace a_1, a_1 + a_2, \ldots, a_1 + \ldots +  a_{[p/2]} \rbrace.
\]
In this case, we shall use the notation $\SpFl_{\mathbf a}(V) = \Fl^{(-1)}_{\mathbf a}(V)$.

If $G_{\varepsilon} \simeq \SO_{2n+1}$, $n \ge 1$, then for every symmetric composition $\mathbf a = (a_1, \ldots, a_p)$ of $2n+1$ the variety $\Fl^{(1)}_{\mathbf a}(V)$ is a single $G_{\varepsilon}$-orbit isomorphic to $X_I$ with
\[
I = \lbrace a_1, a_1 + a_2, \ldots, a_1 + \ldots + a_{[p/2]} \rbrace.
\]
In this case, we shall use the notation $\SOFl_{\mathbf a}(V) = \Fl^{(1)}_{\mathbf a}(V)$.

If $G_\varepsilon \simeq \SO_{2n}$, $n \ge 2$, then, given a symmetric composition $\mathbf a = (a_1, \ldots, a_p)$ of~$2n$, there are the following possibilities:
\begin{itemize}
\item
if $p$ is odd then $\Fl^{(1)}_{\mathbf a}(V)$ is a single $G_{\varepsilon}$-orbit isomorphic to $X_I$ with
\[
I = \begin{cases}
\lbrace a_1, a_1 + a_2, \ldots, a_1 + \ldots + a_{[p/2]}\rbrace & \text{if} \ a_{[p/2]+1} \ge4; \\
\lbrace a_1, a_1 + a_2, \ldots, a_1 + \ldots + a_{[p/2]-1}, n-1, n \rbrace & \text{if} \ a_{[p/2]+1} = 2;
\end{cases}
\]
in this case we shall use the notation $\SOFl_{\mathbf a}(V) = \Fl^{(1)}_{\mathbf a}(V)$;

\item
if $p = 2q$ is even and $a_q = 1$ then $\Fl^{(1)}_{\mathbf a}(V)$ is a single $\SO_{2n}$-orbit isomorphic to $X_I$ with
\[
I = \lbrace a_1, a_1+a_2, \ldots a_1 + \ldots + a_{q-1}, n-1,n \rbrace;
\]
in this case we shall use the notation $\SOFl_{\mathbf a}(V) = \Fl^{(1)}_{\mathbf a}(V)$;

\item
if $p = 2q$ is even and $a_q \ge 2$ then $\Fl^{(1)}_{\mathbf a}(V)$ is a disjoint union of two $\SO_{2n}$-orbits $\SOFl^+_{\mathbf a}(V)$ and $\SOFl^{-}_{\mathbf a}(V)$ such that, by convention in the choice of signs, $\SOFl^+_{\mathbf a}(V) \simeq X_I$ with
\[
I = \lbrace a_1, a_1 + a_2, \ldots, a_1 + \ldots + a_{q-1}, n \rbrace
\]
and $\SOFl^{-}_{\mathbf a}(V) \simeq X_I$ with
\[
I = \lbrace a_1, a_1 + a_2, \ldots, a_1 + \ldots + a_{q-1}, n-1 \rbrace.
\]
\end{itemize}

For isotropic Grassmannians, we shall use special notation.
Namely, for the variety of $k$-dimensional isotropic subspaces of~$V$ we shall write $\SpGr_k(V)$ if $\varepsilon = -1$ and $\SOGr_k(V)$ if $\varepsilon = 1$.
If $k = [d/2]$ is maximal possible, we shall also write $\SpGr_{\max}(V)$ and $\SOGr_{\max}(V)$ instead of $\SpGr_k(V)$ and $\SOGr_k(V)$, respectively.
Finally, if $\varepsilon = 1$ and $d = 2k$ is even then we put $\SOGr_{\max}^+(V) = \SOGr_k^+(V) = \SOFl_{(k,k)}^+(V)$ and $\SOGr_{\max}^-(V) = \SOGr_k^-(V) = \SOFl_{(k,k)}^-(V)$.

Observe that a flag variety $\SpFl_{\mathbf a}(V)$ (or $\SOFl_{\mathbf a}(V)$) is nontrivial if and only if so is the corresponding symmetric composition~$\mathbf a$.

\subsection{Nilpotent orbits in the symplectic and orthogonal Lie algebra and their relation to flag varieties}
\label{subsec_NO_in_sp_and_so}

We retain the notation introduced in \S\,\ref{subsec_FV_via_compositions}.

For every partition $\mathbf a = (a_1, \ldots, a_p) \in \mathcal P_\varepsilon(d)$, let $O_{\mathbf a}$ be the set of all matrices in $\mathfrak g_\varepsilon$ whose Jordan normal form has zeros on the diagonal and the block sizes are $a_1, \ldots, a_p$ up to permutation.

In the next theorem, which provides a parametrization of nilpotent orbits in the symplectic and orthogonal Lie algebra, parts~(\ref{thm_partitions_vs_orbits_a}) and~(\ref{thm_partitions_vs_orbits_b}) were obtained in~\cite[Ch.~II, \S\,1]{Ger} and~\cite[Ch.~IV, 2.27(ii)]{SpSt}, respectively; see also \cite[\S\,5.1]{CM}.

\begin{theorem} \label{thm_partitions_vs_orbits}
The following assertions hold.
\begin{enumerate}[label=\textup{(\alph*)},ref=\textup{\alph*}]
\item \label{thm_partitions_vs_orbits_a}
The map $\mathbf a \mapsto O_{\mathbf a}$ is a bijection between the set $\mathcal P_{\varepsilon}(d)$ and the nilpotent $\widetilde G_\varepsilon$-orbits in $\mathfrak g_{\varepsilon}$.

\item \label{thm_partitions_vs_orbits_b}
For every $\varepsilon \in \lbrace \pm 1 \rbrace$ and $\mathbf a \in \mathcal P_\varepsilon(d)$, the set $O_{\mathbf a}$ is a single $G_\varepsilon$-orbit unless $\varepsilon = 1$ and $\mathbf a$ is very even, in which case $O_{\mathbf a}$ is a union of two $G_\varepsilon$-orbits $O_{\mathbf a}^+$ and $O_{\mathbf a}^{-}$.
\end{enumerate}
\end{theorem}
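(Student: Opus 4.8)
The plan is to reduce the statement to the classification of nilpotent orbits in $\mathfrak{gl}(V)$ by Jordan type (partitions of~$d$) and then to extract the constraints imposed by the invariant form~$\omega$ through $\mathfrak{sl}_2$-representation theory. First I would, given a nilpotent $N \in \mathfrak g_\varepsilon$, complete it to an $\mathfrak{sl}_2$-triple inside $\mathfrak g_\varepsilon$ by Jacobson--Morozov and decompose $V$ as a module over this $\mathfrak{sl}_2$, say $V \cong \bigoplus_m V_m^{\oplus r_m}$, where $V_m$ denotes the irreducible of dimension~$m$. The multiplicities $r_m$ record precisely the Jordan type of $N$ (a part equal to~$m$ occurring $r_m$ times), so that the map $\mathbf a \mapsto O_{\mathbf a}$ agrees with the $\GL(V)$-classification after this translation. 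The key input is that each $V_m$ carries an essentially unique nondegenerate invariant bilinear form, symmetric when $m$ is odd and skew-symmetric when $m$ is even.

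For the parity constraint I would use that $\omega$ is $\mathfrak{sl}_2$-invariant, since $N$ and the whole triple lie in $\mathfrak g_\varepsilon$. By Schur's lemma the isotypic components are mutually orthogonal, and on the $V_m$-isotypic piece $\omega$ pairs copies of $V_m$ in accordance with the intrinsic parity of~$V_m$. When that parity disagrees with the parity of~$\omega$ --- that is, $m$ even for $\varepsilon = 1$ or $m$ odd for $\varepsilon = -1$ --- the copies must be paired hyperbolically, forcing $r_m$ to be even; these are exactly the conditions defining $\mathcal P_\varepsilon(d)$. Conversely every partition in $\mathcal P_\varepsilon(d)$ is realized: I would build the model by taking the hyperbolic space $\Omega(V_m)$ for the wrong-parity parts and the self-dual form on $V_m$ for the right-parity parts, and then forming the orthogonal direct sum.

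The remaining content of part~(\ref{thm_partitions_vs_orbits_a}) is that Jordan type is a \emph{complete} invariant for $\widetilde G_\varepsilon$-conjugacy, not merely for $\GL(V)$-conjugacy. Here I would argue that the orthogonal decomposition above is canonical up to isometry, so that two nilpotents of the same type produce isometric configurations of blocks; an element of $\widetilde G_\varepsilon$ carrying one to the other is then assembled block by block via Witt's theorem. I expect this to be the main obstacle, as it is the only step that genuinely uses the theory of bilinear forms rather than pure linear algebra: one must ensure the conjugating transformation can be taken inside $\widetilde G_\varepsilon$ and not merely in~$\GL(V)$.

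Finally, for part~(\ref{thm_partitions_vs_orbits_b}) I would pass from $\widetilde G_\varepsilon$ to its identity component $G_\varepsilon$. When $\varepsilon = -1$ one has $\widetilde G_{-1} = G_{-1} = \Sp(V)$ and there is nothing to prove. When $\varepsilon = 1$ the group $G_1 = \SO(V)$ has index two in $\widetilde G_1 = \mathrm O(V)$, so $O_{\mathbf a}$ remains a single $\SO(V)$-orbit exactly when the centralizer $Z_{\mathrm O(V)}(N)$ meets $\mathrm O(V) \setminus \SO(V)$, and splits into two orbits otherwise. Computing the reductive part of this centralizer from the decomposition of the first step gives a product of factors $\mathrm O(r_m)$ over odd $m$ and $\Sp(r_m)$ over even~$m$, the odd parts carrying a symmetric multiplicity form and the even parts a symplectic one. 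A factor $\mathrm O(r_m)$ with $r_m > 0$ contains elements acting with determinant $-1$ on~$V$, whereas the symplectic factors are connected; hence $Z_{\mathrm O(V)}(N) \subset \SO(V)$ precisely when no odd part occurs, that is, exactly when $\mathbf a$ is very even. This yields the stated dichotomy.
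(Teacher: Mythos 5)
The paper does not prove this theorem: it only cites the literature ([Ger, Ch.~II, \S\,1] and [SpSt, Ch.~IV, 2.27(ii)], see also [CM, \S\,5.1]) for parts (a) and (b). Your sketch is essentially the standard argument found in those references --- Jacobson--Morozov plus the fact that the irreducible $\mathfrak{sl}_2$-module of dimension $m$ carries a $(-1)^{m-1}$-symmetric invariant form yields exactly the multiplicity constraints defining $\mathcal P_\varepsilon(d)$; the uniqueness (up to isometry intertwining the nilpotents, which rests on Schur's lemma and the equivalence of all nondegenerate forms of a given parity over an algebraically closed field rather than on Witt's theorem per se) gives $\widetilde G_\varepsilon$-conjugacy; and the determinant computation on the reductive centralizer $\prod_{m\ \mathrm{odd}}\mathrm{O}(r_m)\times\prod_{m\ \mathrm{even}}\Sp(r_m)$ gives the splitting criterion for $\SO(V)$. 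The outline is correct and matches the cited sources.
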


The following theorem was obtained in~\cite[Ch.~III, \S\,3]{Ger} and \cite[Theorem~3.10]{Hes}; see also~\cite[Theorem~6.2.5]{CM}.

\begin{theorem} \label{thm_nilp_orb_inclusions}
Suppose that $\mathbf a, \mathbf b \in \mathcal P_\varepsilon(d)$ and $N_{\mathbf a}$ \textup(resp. $N_{\mathbf b}$\textup) is a $G_\varepsilon$-orbit in $O_{\mathbf a}$ \textup(resp.~$O_{\mathbf b}$\textup).
Then the following conditions are equivalent:
\begin{enumerate}[label=\textup{(\arabic*)},ref=\textup{\arabic*}]
\item
$N_{\mathbf a} \subsetneq \overline N_{\mathbf b}$.

\item
$\mathbf a \prec \mathbf b$.
\end{enumerate}
\end{theorem}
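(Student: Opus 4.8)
The plan is to reduce everything to the type~$A$ situation in $\mathfrak{gl}(V)$ and then account for the two extra features of the symplectic/orthogonal setting: the invariant form $\omega$, and the splitting of very even orbits into two components. As a prototype I would first record the classical statement for $\GL(V)$. For a nilpotent $X \in \mathfrak{gl}(V)$ of Jordan type $\mathbf c = (c_1, c_2, \ldots)$ one has $\rk X^k = \sum_i \max(c_i - k, 0)$; these numbers are constant on the $\GL(V)$-orbit, are lower semicontinuous in $X$, and determine the Jordan type. The theorem in this case asserts that the orbit of type $\mathbf a$ lies in the closure of the orbit of type $\mathbf b$ if and only if $\rk X_{\mathbf a}^k \le \rk X_{\mathbf b}^k$ for all $k$. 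Passing to conjugate partitions via $\dim \ker X^k = \sum_{i \le k} c'_i$ (where $\mathbf c' = (c'_1, c'_2, \ldots)$ is the conjugate of $\mathbf c$), and using that conjugation reverses the dominance order, this system of rank inequalities is seen to be equivalent to $\mathbf a \preccurlyeq \mathbf b$.

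The easy direction of Theorem~\ref{thm_nilp_orb_inclusions} is then immediate. Since $\mathfrak g_\varepsilon \subset \mathfrak{gl}(V)$ is a closed subspace, the closure $\overline{N_{\mathbf b}}$ taken inside $\mathfrak g_\varepsilon$ is contained in the closure of the ambient $\GL(V)$-orbit of Jordan type $\mathbf b$. Hence $N_{\mathbf a} \subsetneq \overline{N_{\mathbf b}}$ forces the $\GL(V)$-orbit of type $\mathbf a$ to meet, and therefore to be contained in, the closure of the $\GL(V)$-orbit of type $\mathbf b$, so $\mathbf a \preccurlyeq \mathbf b$ by the type~$A$ statement. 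The equality $\mathbf a = \mathbf b$ is excluded: two distinct $G_\varepsilon$-orbits inside one $O_{\mathbf a}$ occur only in the very even case, and there they have equal dimension, so neither lies in the closure of the other. Thus $\mathbf a \prec \mathbf b$.

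For the hard direction I would argue by induction along the covering relations of the poset $(\mathcal P_\varepsilon(d), \prec)$. Because orbit-closure containment is transitive, it suffices to produce, for each covering pair $\mathbf a \lessdot \mathbf b$ in $\mathcal P_\varepsilon(d)$, a one-parameter degeneration inside $\mathfrak g_\varepsilon$, namely a morphism $t \mapsto X_t \in \mathfrak g_\varepsilon$ with $X_t \in N_{\mathbf b}$ for $t \ne 0$ and $X_0 \in N_{\mathbf a}$; then $X_0 \in N_{\mathbf a} \cap \overline{N_{\mathbf b}}$, and $G_\varepsilon$-stability of $\overline{N_{\mathbf b}}$ gives $N_{\mathbf a} \subseteq \overline{N_{\mathbf b}}$. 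The key combinatorial input, due to Gerstenhaber and Hesselink, is that a cover in the restricted poset is always realized by a local modification of at most two Jordan blocks: the naive single-box move that produces a cover in $\mathcal P(d)$ may land outside $\mathcal P_\varepsilon(d)$ because of the parity constraint, which is precisely why the covers here are coarser than in the full dominance order. For each such elementary modification one writes the relevant blocks in a normal form adapted to $\omega$ and degenerates them by an explicit curve that visibly preserves the form and lies in $\mathfrak g_\varepsilon$.

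The main obstacle is exactly this last step, and it has two aspects. First, there is the bookkeeping with the form: a degeneration that is harmless in $\mathfrak{gl}(V)$ need not preserve $\omega$, so one must pair the affected blocks correctly—an isotropic block together with its $\omega$-dual, or a single self-dual block—and check that the elementary move can be carried out $\omega$-equivariantly; this is where the description of which covers actually occur in $\mathcal P_\varepsilon(d)$, rather than in $\mathcal P(d)$, is genuinely used. Second, there is the very even case in the orthogonal setting, where the rank functions cannot separate the two components $O_{\mathbf c}^+$ and $O_{\mathbf c}^-$. Once a degeneration lands in a very even $O_{\mathbf a}$ one must determine which component is reached, and, since the statement of the theorem is phrased at the level of the underlying partitions, one verifies that every very even class is reached from and degenerates to both neighbouring classes, so that the component labels never obstruct the partition-level order $\prec$ asserted in the theorem.
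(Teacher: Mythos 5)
First, a point of comparison: the paper does not prove Theorem~\ref{thm_nilp_orb_inclusions} at all --- it is quoted from Gerstenhaber \cite{Ger} and Hesselink \cite{Hes} (see also \cite[Theorem~6.2.5]{CM}) --- so there is no internal argument to measure yours against. Your outline does follow the standard proof from those sources: semicontinuity of the functions $X \mapsto \rk X^k$ for the implication (1)$\Rightarrow$(2), and explicit one-parameter degenerations realizing each covering relation of $(\mathcal P_\varepsilon(d), \preccurlyeq)$ for (2)$\Rightarrow$(1). The easy direction as you present it is complete: semicontinuity forces $\mathbf a \preccurlyeq \mathbf b$, and when $\mathbf a = \mathbf b$ the two candidate orbits are distinct of equal dimension, so neither lies in the other's closure.

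The hard direction, however, is not proved; it is reduced to precisely the two tasks you yourself label as ``the main obstacle'': (i) determining the covering relations of $\mathcal P_\varepsilon(d)$ and exhibiting, for each type of cover, a curve in $\mathfrak g_\varepsilon$ that preserves $\omega$ and degenerates $N_{\mathbf b}$ into $O_{\mathbf a}$; and (ii) the component bookkeeping for very even partitions. These two items are the entire content of the Gerstenhaber--Hesselink (and Kraft--Procesi) analysis, so what you have is a correct plan rather than a proof. Two concrete remarks. The elementary moves realizing covers in $\mathcal P_\varepsilon(d)$ can change up to four parts --- for instance $[4,4]$ covers $[3,3,1,1]$ in $\mathcal P_1(8)$ --- so ``a local modification of at most two Jordan blocks'' understates the case analysis you would have to carry out. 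And for (ii) there is a cleaner route than the verification you gesture at: conjugation by any $g \in \mathrm{O}(V) \setminus \SO(V)$ interchanges $O^+_{\mathbf c}$ and $O^-_{\mathbf c}$ and fixes every non--very-even $O_{\mathbf c}$ setwise, so every closure relation in which at least one of the two partitions is not very even is automatically symmetric in the $\pm$ labels; it then suffices to check that two distinct comparable very even partitions always admit a non--very-even element of $\mathcal P_1(d)$ strictly between them, after which transitivity disposes of all cross-relations between components.
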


\begin{proposition}[{\cite[\S\,2.2, Main Lemma]{Kem}}] \label{prop_collapse}
For every $\mathbf a \in \mathcal P(d)$ and $\varepsilon \in \lbrace \pm 1 \rbrace$, there exists a unique $\mathbf a^\sharp_\varepsilon \in \mathcal P_\varepsilon(d)$ such that $\mathbf a^\sharp_\varepsilon \preccurlyeq \mathbf a$ and $\mathbf b \preccurlyeq \mathbf a^\sharp_\varepsilon$ for all $\mathbf b \in \mathcal P_\varepsilon(d)$ with $\mathbf b \preccurlyeq \mathbf a$.
\end{proposition}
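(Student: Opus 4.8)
The plan is to recognize $\mathbf a^\sharp_\varepsilon$ as the greatest element of the finite poset
\[
S = \lbrace \mathbf b \in \mathcal P_\varepsilon(d) \mid \mathbf b \preccurlyeq \mathbf a \rbrace,
\]
ordered by the dominance order $\preccurlyeq$. Indeed, the two defining properties of $\mathbf a^\sharp_\varepsilon$ say precisely that it is an element of $S$ dominating every element of $S$; such a greatest element, if it exists, is automatically unique, so the entire content of the statement is the \emph{existence} of a greatest element. First I would observe that $S \neq \varnothing$: the minimal partition $[1^d]$ satisfies $[1^d] \preccurlyeq \mathbf a$ and lies in $\mathcal P_\varepsilon(d)$ (for $\varepsilon = 1$ this is immediate, and for $\varepsilon = -1$ it holds whenever $d$ is even, which is exactly the condition under which $\mathcal P_{-1}(d)$ is nonempty and hence the only case where the statement has content). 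Since $S$ is finite and nonempty it has maximal elements, and it suffices to show that this maximal element is unique.

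The key step is to prove that $\mathcal P_\varepsilon(d)$ is closed under the join operation of the dominance lattice. Here I would invoke the classical fact that $(\mathcal P(d), \preccurlyeq)$ is a lattice, whose meet $\mathbf b \wedge \mathbf c$ is computed by taking pointwise minima of the partial-sum sequences $s_i(\mathbf b) = b_1 + \ldots + b_i$ (the minimum of two concave sequences is again concave, hence a valid partition), and whose join is obtained through conjugation via $\mathbf b \vee \mathbf c = (\mathbf b^t \wedge \mathbf c^t)^t$, using that transposition is an order-reversing involution of $\mathcal P(d)$. Granting the Key Lemma that $\mathbf b, \mathbf c \in \mathcal P_\varepsilon(d)$ forces $\mathbf b \vee \mathbf c \in \mathcal P_\varepsilon(d)$, the proof finishes at once: for $\mathbf b, \mathbf c \in S$ one has $\mathbf b \vee \mathbf c \in \mathcal P_\varepsilon(d)$ by the lemma, while $\mathbf b \vee \mathbf c \preccurlyeq \mathbf a$ because $\mathbf a$ is a common upper bound and the join is the least one; hence $\mathbf b \vee \mathbf c \in S$. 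Thus $S$ is closed under join, and the join of its finitely many elements is the sought greatest element $\mathbf a^\sharp_\varepsilon$.

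I expect the Key Lemma to be the main obstacle. To attack it I would translate the parity conditions into conditions on the column lengths $c_j = (\mathbf a^t)_j$: the multiplicity of the part $k$ equals $c_k - c_{k+1}$, so membership in $\mathcal P_\varepsilon(d)$ amounts to $c_k \equiv c_{k+1} \pmod 2$ for every $k$ of a fixed parity (odd $k$ when $\varepsilon = -1$, even $k$ when $\varepsilon = 1$). Through $\mathbf b \vee \mathbf c = (\mathbf b^t \wedge \mathbf c^t)^t$ one is then reduced to checking that the pointwise minimum of two partial-sum sequences of column lengths, each subject to such a parity pattern, again yields a partition with the required pattern. The delicate point is that the minimum may switch from one argument to the other, and one must verify that the parity is preserved precisely at each switch; this is the genuinely combinatorial heart of the argument.

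As a more constructive alternative I would describe the explicit $\varepsilon$-collapse: starting from $\mathbf a$, one repeatedly locates the largest part violating the parity condition and moves a single box from it down to the next nonempty row, which lowers exactly one partial sum $s_i$ by $1$ and leaves the rest unchanged; after finitely many steps this produces a partition in $\mathcal P_\varepsilon(d)$ dominated by $\mathbf a$. Maximality would then be established one step at a time: whenever a box is moved at position $i$, one shows that every $\mathbf b \in \mathcal P_\varepsilon(d)$ dominated by the current partition in fact satisfies $s_i(\mathbf b) \le s_i(\text{current}) - 1$, so that $\mathbf b$ stays dominated after the move. This again rests on a parity count at the index $i$ where the defect occurs, so the essential difficulty coincides with that of the lattice approach.
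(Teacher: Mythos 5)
Your reduction is sound: the two defining properties do say that $\mathbf a^\sharp_\varepsilon$ is the greatest element of $S = \lbrace \mathbf b \in \mathcal P_\varepsilon(d) \mid \mathbf b \preccurlyeq \mathbf a \rbrace$, uniqueness is then automatic, the nonemptiness observation via $[1^d]$ is correct, and closure of $\mathcal P_\varepsilon(d)$ under the dominance join would indeed finish the argument. Note that the paper itself gives no proof of this proposition --- it is quoted from Kempken, and only the collapse algorithm is reproduced --- so your lattice-theoretic route is a genuinely different (and arguably cleaner) architecture than the source's algorithmic one. However, as written the proposal has a real gap: the Key Lemma, which you correctly identify as carrying the entire content, is only asserted, and the hint you give for proving it points in a direction that does not work on its own. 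The parity condition at a single relevant index $k$ is \emph{not} preserved by taking pointwise minima of the partial-sum sequences: one can easily write down two increasing concave sequences, each satisfying the parity constraint at position $k$, whose pointwise minimum violates it. So no purely local analysis ``at each switch'' can succeed; the constraints at \emph{all} relevant indices below $k$ must enter.

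The missing idea is a global mod~$2$ invariant. Write $\mathbf p = \mathbf b^\top$, $P_j = p_1 + \ldots + p_j$, and recall that $\mathbf b \in \mathcal P_\varepsilon(d)$ translates into $p_k \equiv p_{k+1} \pmod 2$ for every relevant $k$ (those with $\varepsilon(-1)^k = 1$). Telescoping these congruences shows that for every relevant $k$ the residue $P_{k-1} \bmod 2$ is the \emph{same for all} $\mathbf b \in \mathcal P_\varepsilon(d)$: for $\varepsilon = -1$ the sums $P_{k-1} = (p_1+p_2) + (p_3+p_4) + \ldots$ are sums of even pairs, hence $\equiv 0$; for $\varepsilon = 1$ one gets $P_{k-1} \equiv p_1 \equiv d \pmod 2$, since the number of even parts of $\mathbf b$ is even and the number of odd parts is congruent to their sum. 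Consequently $P_{k-1} \equiv Q_{k-1} \equiv P_{k+1} \equiv Q_{k+1} \pmod 2$ for any $\mathbf b, \mathbf c \in \mathcal P_\varepsilon(d)$, so $\min(P_{k+1},Q_{k+1}) \equiv \min(P_{k-1},Q_{k-1})$ no matter which argument achieves each minimum, and the meet $\mathbf b^\top \wedge \mathbf c^\top$ satisfies the required parity pattern at every relevant $k$. With this inserted, your first argument closes. Your second, constructive alternative has the analogous gap (the claim that every $\mathbf b \in \mathcal P_\varepsilon(d)$ dominated by the current partition loses at least one unit in the affected partial sums is exactly the nontrivial step of Kempken's proof), and would need a similar parity count to be completed.
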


We refer to the partition $\mathbf a^\sharp_\varepsilon$ as the $\varepsilon$-\textit{collapse} of~$\mathbf a$.

The proof of the above proposition in~\cite{Kem} contains an explicit algorithm for constructing the partition $\mathbf a^\sharp_\varepsilon$ from the initial partition $\mathbf a = (a_1,\ldots, a_p) \in \mathcal P(d)$.
We reproduce its steps below.
\begin{enumerate}[label=\textup{\arabic*)},ref=\textup{\arabic*}]
\item
If $\mathbf a \in \mathcal P_{\varepsilon}(d)$ then put $\mathbf a^\sharp_\varepsilon = \mathbf a$ and exit.

\item
If $\mathbf a \notin \mathcal P_{\varepsilon}(d)$ then define
\[
m = \max (\lbrace 0 \rbrace \cup \lbrace i \ge 1 \mid (a_1,\ldots, a_i) \in \mathcal P_{\varepsilon}(a_1 + \ldots + a_i) \rbrace).
\]
One automatically has $\varepsilon(-1)^{a_{m+1}} = 1$ and $a_{m+1} > a_{m+2}$.

\item
Define
\[
l = \min\lbrace j \ge m+2 \mid \varepsilon(-1)^{a_j} = 1 \rbrace,
\]
the minimum always exists.

\item
Define a new partition $\mathbf a' = (a'_1, a'_2, \ldots)$ as follows: $a'_{m+1} = a_{m+1}-1$, $a'_l = a_l + 1$, $a'_i = a_i$ for $i \ne m+1, l$.

\item
Repeat the algorithm for~$\mathbf a'$.
\end{enumerate}

For each composition $\mathbf a = (a_1, \ldots, a_p)$ of~$d$ one defines the \textit{dual} partition $\mathbf a^\top = (\widehat a_1, \ldots, \widehat a_q)$ of~$d$ by the following rule:
\[
\widehat a_i = |\lbrace j \mid a_j \ge i \rbrace|, \qquad i = 1, \ldots, q.
\]
The operation $\mathbf a \mapsto \mathbf a^\top$ is an involution on the set~$\mathcal P(d)$.

\begin{proposition}[{\cite[\S\,3.4, Corollary~2]{Kem}}] \label{prop_induction_of_orbits}
Suppose that $\mathbf a$ is a symmetric composition of~$d$ and $X$ is a $G_\varepsilon$-orbit in $\Fl^\varepsilon_{\mathbf a}(V)$.
Then $\mathscr N(X)$ is a $G_\varepsilon$-orbit in $O_{(\mathbf a^\top)^\sharp_\varepsilon}$.
\end{proposition}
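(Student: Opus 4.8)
The plan is to reduce everything to the general linear group $\GL(V)$ via the embedding $G_\varepsilon\subset\GL(V)$ and then account for the form $\omega$ through the collapse operation. Write $X=G_\varepsilon/P$ and fix a Levi decomposition $P=LP^u$, so that by definition $\mathscr N(X)=G_\varepsilon O_P$ with $O_P\subset\mathfrak p^u$ the dense $P$-orbit guaranteed by Richardson \cite{Rich74}. Because $\mathbf a$ is symmetric, the group $P$ is the stabilizer in $G_\varepsilon$ of an isotropic flag $V_1\subset\ldots\subset V_{[p/2]}$ (as in \S\,\ref{subsec_FV_via_compositions}), or equivalently of the complete flag
\[
0\subset V_1\subset\ldots\subset V_{[p/2]}\subset V_{[p/2]}^\perp\subset\ldots\subset V_1^\perp\subset V
\]
whose successive quotients have dimensions $a_1,\ldots,a_p$. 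Hence $P=\widetilde P\cap G_\varepsilon$, where $\widetilde P\subset\GL(V)$ is the parabolic preserving this complete flag, and, comparing Levi decompositions, $\mathfrak p^u=\widetilde{\mathfrak p}^u\cap\mathfrak g_\varepsilon$.

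First I would record the classical type-A input: for the parabolic $\widetilde P$, whose Levi has block sizes given by the parts of $\mathbf a$, the Richardson orbit in $\mathfrak{gl}(V)$ is the nilpotent orbit of Jordan type $\mathbf a^\top$. This follows from Richardson's theorem together with the standard computation of the kernel dimensions of the powers of a generic element of $\widetilde{\mathfrak p}^u$, and one uses that closures of nilpotent $\GL(V)$-orbits are governed by the dominance order. The second, geometric, input is the following observation: since $\mathfrak p^u$ is irreducible and its dense $P$-orbit $O_P$ is contained in $\mathscr N(X)$, we have $\mathfrak p^u\subset\overline{\mathscr N(X)}$; consequently, by Theorem~\ref{thm_nilp_orb_inclusions}, the partition $\mathbf b\in\mathcal P_\varepsilon(d)$ of $\mathscr N(X)$ is the $\preccurlyeq$-largest among the partitions of all nilpotent $G_\varepsilon$-orbits that meet $\mathfrak p^u$.

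The upper bound is now immediate. Every element of $\mathfrak p^u=\widetilde{\mathfrak p}^u\cap\mathfrak g_\varepsilon$ is a nilpotent element of $\mathfrak g_\varepsilon$, so by Theorem~\ref{thm_partitions_vs_orbits} its Jordan partition lies in $\mathcal P_\varepsilon(d)$, while lying in $\widetilde{\mathfrak p}^u$ forces that partition to be $\preccurlyeq\mathbf a^\top$. In particular $\mathbf b\in\mathcal P_\varepsilon(d)$ and $\mathbf b\preccurlyeq\mathbf a^\top$, so the defining property of the collapse in Proposition~\ref{prop_collapse} yields $\mathbf b\preccurlyeq(\mathbf a^\top)^\sharp_\varepsilon$. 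I note that, since the statement only asserts $\mathscr N(X)\subset O_{(\mathbf a^\top)^\sharp_\varepsilon}$, there is no need to decide which component occurs in the very even case: it suffices to pin down the partition $\mathbf b$.

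The matching lower bound $\mathbf b\succcurlyeq(\mathbf a^\top)^\sharp_\varepsilon$ is the main obstacle. By the observation above it is enough to produce a \emph{single} nilpotent $x\in\mathfrak p^u$ whose Jordan type equals $(\mathbf a^\top)^\sharp_\varepsilon$; then the orbit of $x$ meets $\mathfrak p^u$, so $(\mathbf a^\top)^\sharp_\varepsilon\preccurlyeq\mathbf b$, and equality follows. Such an $x$ must be adapted to the isotropic flag and self-adjoint (for $\varepsilon=1$) or skew-adjoint (for $\varepsilon=-1$) with respect to $\omega$, and the point is that symmetrizing a flag-adapted nilpotent of the naive type $\mathbf a^\top$ against the form forces precisely the part-by-part modifications recorded in the collapse algorithm following Proposition~\ref{prop_collapse}, so that the resulting admissible element has Jordan type $(\mathbf a^\top)^\sharp_\varepsilon$. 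This explicit construction is the technical heart of Kempf's original argument \cite{Kem}; alternatively one can bypass it by a dimension count, comparing $\dim\mathscr N(X)=\dim G_\varepsilon-\dim L$ with the known dimension of $O_{(\mathbf a^\top)^\sharp_\varepsilon}$ and invoking the upper bound together with the irreducibility of $\overline{G_\varepsilon O_P}$ to force equality of the two orbits' dimensions and hence of their partitions.
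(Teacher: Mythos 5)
The paper offers no proof of this proposition: it is quoted directly from Kempken \cite{Kem}, so there is no internal argument to compare yours against, and I can only assess your proposal on its own terms. The reduction to $\GL(V)$, the identification $\mathfrak p^u=\widetilde{\mathfrak p}^u\cap\mathfrak g_\varepsilon$, and the upper bound are sound: $G_\varepsilon\mathfrak p^u$ is closed and irreducible and contains $\mathscr N(X)$ as a dense subset, so every nilpotent $G_\varepsilon$-orbit meeting $\mathfrak p^u$ has partition $\preccurlyeq\mathbf b$ (the partition of $\mathscr N(X)$), while membership in $\widetilde{\mathfrak p}^u$ bounds each such partition by $\mathbf a^\top$ and membership in $\mathfrak g_\varepsilon$ places it in $\mathcal P_\varepsilon(d)$, whence $\mathbf b\preccurlyeq(\mathbf a^\top)^\sharp_\varepsilon$ by the defining property of the collapse in Proposition~\ref{prop_collapse}. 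That half is correct and essentially complete, modulo the standard type-$A$ computation of the Richardson orbit of $\widetilde P$.

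The other half is a genuine gap. Everything hinges on exhibiting a single element of $\mathfrak p^u$ whose Jordan type is exactly $(\mathbf a^\top)^\sharp_\varepsilon$, and at precisely that point you assert that ``symmetrizing'' a flag-adapted nilpotent of type $\mathbf a^\top$ against $\omega$ produces the modifications of the collapse algorithm, and you defer to Kempken. No such element is constructed, and the claim is not self-evident: a priori the constraints of being flag-adapted and $\omega$-(skew-)adjoint could force a partition strictly below $(\mathbf a^\top)^\sharp_\varepsilon$, and ruling this out is exactly the combinatorial content of the result. The fallback via dimensions is likewise only gestured at and is not a shortcut: one would need the identity $\dim G_\varepsilon-\dim L=\dim O_{(\mathbf a^\top)^\sharp_\varepsilon}$, whose verification through the partition formula for orbit dimensions in types $B$, $C$, $D$ encodes essentially the same combinatorics as the collapse. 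As written, your argument establishes only the inclusion $\mathscr N(X)\subset\overline{O_{(\mathbf a^\top)^\sharp_\varepsilon}}$, not the stated equality of partitions. (A minor slip: the author of \cite{Kem} is Kempken, not Kempf.)
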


\begin{corollary} \label{crl_partial_order}
Suppose that $\mathbf a, \mathbf b$ are two symmetric compositions of~$d$, $X$ is a $G_\varepsilon$-orbit in $\Fl^{\varepsilon}_{\mathbf a}(V)$, and $Y$ is a $G_\varepsilon$-orbit in $\Fl^{\varepsilon}_{\mathbf b}(V)$.
Then the following conditions are equivalent:
\begin{enumerate}[label=\textup{(\arabic*)},ref=\textup{\arabic*}]
\item
$\bl X \br \prec \bl Y \br$.

\item
$(\mathbf a^\top)^\sharp_\varepsilon \prec (\mathbf b^\top)^\sharp_\varepsilon$.
\end{enumerate}
\end{corollary}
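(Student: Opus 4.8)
The plan is to reduce the statement entirely to the combinatorics of the collapse partitions by combining Proposition~\ref{prop_induction_of_orbits} with Theorem~\ref{thm_nilp_orb_inclusions}. By Proposition~\ref{prop_induction_of_orbits}, the orbit $\mathscr N(X)$ is a $G_\varepsilon$-orbit contained in $O_{(\mathbf a^\top)^\sharp_\varepsilon}$, and likewise $\mathscr N(Y)$ is a $G_\varepsilon$-orbit contained in $O_{(\mathbf b^\top)^\sharp_\varepsilon}$. Thus the two partitions $(\mathbf a^\top)^\sharp_\varepsilon, (\mathbf b^\top)^\sharp_\varepsilon \in \mathcal P_\varepsilon(d)$ are exactly the partitions parametrizing, via Theorem~\ref{thm_partitions_vs_orbits}, the ambient orbit sets of $\mathscr N(X)$ and $\mathscr N(Y)$.

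Next I would unfold the two partial orders so that they are compared on common ground. On the one hand, by the definition of $\preccurlyeq$ on $\mathscr F(G)/\!\sim$ together with the fact that two flag varieties are nil-equivalent precisely when their Richardson orbits coincide, the relation $\bl X \br \prec \bl Y \br$ is equivalent to $\mathscr N(X) \subseteq \overline{\mathscr N(Y)}$ together with $\mathscr N(X) \ne \mathscr N(Y)$; since distinct $G_\varepsilon$-orbits are disjoint, this is the same as the strict closure containment $\mathscr N(X) \subsetneq \overline{\mathscr N(Y)}$ appearing in Theorem~\ref{thm_nilp_orb_inclusions}. On the other hand, applying Theorem~\ref{thm_nilp_orb_inclusions} to $N_{(\mathbf a^\top)^\sharp_\varepsilon} = \mathscr N(X)$ and $N_{(\mathbf b^\top)^\sharp_\varepsilon} = \mathscr N(Y)$ yields that this strict closure containment holds if and only if $(\mathbf a^\top)^\sharp_\varepsilon \prec (\mathbf b^\top)^\sharp_\varepsilon$. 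Chaining the two equivalences gives the claim.

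The one point requiring care, and the step I expect to be the main (though minor) obstacle, is the bookkeeping of strictness in the presence of very even partitions, where $O_{(\mathbf b^\top)^\sharp_\varepsilon}$ may split into two $G_\varepsilon$-orbits. I would treat the boundary case $(\mathbf a^\top)^\sharp_\varepsilon = (\mathbf b^\top)^\sharp_\varepsilon$ separately: when this common partition is very even and $X, Y$ land in the two different components, $\mathscr N(X)$ and $\mathscr N(Y)$ are distinct but mutually non-comparable in the closure order, so $\bl X \br \prec \bl Y \br$ fails, matching the fact that $(\mathbf a^\top)^\sharp_\varepsilon \prec (\mathbf b^\top)^\sharp_\varepsilon$ also fails; when the partition is not very even the two orbits coincide and again both sides fail. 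In all remaining cases $(\mathbf a^\top)^\sharp_\varepsilon \ne (\mathbf b^\top)^\sharp_\varepsilon$, and Theorem~\ref{thm_nilp_orb_inclusions} already asserts the equivalence for an \emph{arbitrary} choice of orbits inside $O_{(\mathbf a^\top)^\sharp_\varepsilon}$ and $O_{(\mathbf b^\top)^\sharp_\varepsilon}$, so the particular components selected by $X$ and $Y$ are immaterial and no further analysis is needed.
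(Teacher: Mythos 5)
Your argument is correct and is essentially the paper's own proof: the paper simply states that the corollary follows from Theorem~\ref{thm_nilp_orb_inclusions} and Proposition~\ref{prop_induction_of_orbits}, which is exactly the reduction you carry out. Your extra care with the boundary case $(\mathbf a^\top)^\sharp_\varepsilon = (\mathbf b^\top)^\sharp_\varepsilon$ (in particular the very even, two-component situation) is a sound and welcome elaboration of details the paper leaves implicit, not a deviation in method.
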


\begin{proof}
This follows from Theorem~\ref{thm_nilp_orb_inclusions} and Proposition~\ref{prop_induction_of_orbits}.
\end{proof}

\subsection{An analysis of the partial order on \texorpdfstring{$\mathscr F(G) / \sim$}{F(G)/\textasciitilde} for \texorpdfstring{$G \simeq \Sp_{2n}$}{G = Sp\_2n}}
\label{subsec_po_sp2n}

In this subsection, we assume that $G = \Sp(V)$ with $\dim V = 2n$ and $n \ge 2$.
Throughout this subsection, $\varepsilon = -1$.

\begin{proposition} \label{prop_P(V)_is_minimal}
Suppose that $X \in \mathscr F(G)$.
Then $\bl X \br \succcurlyeq \bl \PP(V) \br$.
Moreover, $\PP(V)$ is the unique element in $\bl \PP(V) \br$ for $n \ge 3$ and $\bl \PP(V) \br = \lbrace \PP(V), \SpGr_{\max}(V) \rbrace$ for $n = 2$.
\end{proposition}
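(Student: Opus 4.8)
The plan is to translate the statement into the combinatorics of partitions via Corollary~\ref{crl_partial_order} together with Propositions~\ref{prop_induction_of_orbits} and~\ref{prop_collapse}, and then argue in two stages: first that $\bl \PP(V) \br$ is the minimum of $\mathscr F(G)/\!\sim$, and second that this nil-equivalence class consists of exactly the stated varieties. Recall from \S\,\ref{subsec_FV_via_compositions} that $\PP(V) = \SpGr_1(V) = \Fl^{(-1)}_{\mathbf a}(V)$ for the symmetric composition $\mathbf a = (1, 2n-2, 1)$. The first step is to compute $\mathscr N(\PP(V))$. One has $\mathbf a^\top = (3, 1^{2n-3})$, and running the collapse algorithm of Proposition~\ref{prop_collapse} with $\varepsilon = -1$ replaces the leading $3$ and the first $1$ by two $2$'s, giving $(\mathbf a^\top)^\sharp_{-1} = [2^2, 1^{2n-4}]$; write $\mathbf p = [2^2, 1^{2n-4}] \in \mathcal P_{-1}(2n)$. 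Thus $\mathscr N(\PP(V)) = O_{\mathbf p}$ by Proposition~\ref{prop_induction_of_orbits}.

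For the minimality assertion I would show that $\mathbf p \preccurlyeq \mathbf b^\top$ for every nontrivial symmetric composition $\mathbf b = (b_1, \ldots, b_p)$ of $2n$; since $\mathbf p \in \mathcal P_{-1}(2n)$, Proposition~\ref{prop_collapse} then yields $\mathbf p \preccurlyeq (\mathbf b^\top)^\sharp_{-1}$, whence $\bl \PP(V) \br \preccurlyeq \bl \Fl^{(-1)}_{\mathbf b}(V)\br$ by Corollary~\ref{crl_partial_order}, and every $X \in \mathscr F(G)$ is of this form with $\mathbf b$ nontrivial. To prove $\mathbf p \preccurlyeq \mathbf b^\top$ I would use the identity $t_i := \widehat b_1 + \ldots + \widehat b_i = \sum_j \min(b_j, i)$ for the partial sums of $\mathbf b^\top$ and compare them with the partial sums $s_i$ of $\mathbf p$, where $s_1 = 2$ and $s_i = \min(i+2, 2n)$ for $i \ge 2$. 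Since $\mathbf b$ is nontrivial, $p \ge 2$, so $t_1 = p \ge 2 = s_1$; moreover $t_i - t_{i-1} = \widehat b_i \ge 1$ whenever $i \le \max_j b_j$, so an immediate induction gives $t_i \ge i + 2 \ge s_i$ once $p \ge 3$, while the single remaining case $p = 2$ forces $\mathbf b = (n,n)$ and $t_i = 2\min(n,i) \ge s_i$ directly. Beyond $i = \max_j b_j$ one has $t_i = 2n \ge s_i$, so $t_i \ge s_i$ for all $i$, as needed.

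To identify the class $\bl \PP(V) \br$ I would pass to dimensions: by Richardson's theorem the Richardson orbit $\mathscr N(X)$ of a flag variety $X = G/P$ satisfies $\dim \mathscr N(X) = 2\dim X$, so any $X$ with $\mathscr N(X) = O_{\mathbf p}$ has $\dim X = \tfrac12 \dim O_{\mathbf p} = \dim \PP(V) = 2n-1$. It then remains to list the flag varieties of dimension $2n-1$. A flag variety $X_I$ with $|I| \ge 2$ fibers over a Grassmannian $\SpGr_k(V) = X_{\{k\}}$ with positive-dimensional fiber, hence has dimension strictly larger than $\dim \SpGr_k(V) \ge 2n-1$; so only the isotropic Grassmannians can qualify. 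Using $\dim \SpGr_k(V) = 2nk - \tfrac12 k(3k-1)$, which is a concave function of $k$ equal to $2n-1$ at $k=1$, I would check that its values at $k=2$ and $k=n$ both exceed $2n-1$ exactly when $n \ge 3$; hence for $n \ge 3$ the only flag variety of dimension $2n-1$ is $\SpGr_1(V) = \PP(V)$, giving $\bl \PP(V)\br = \lbrace \PP(V)\rbrace$. For $n = 2$ one has $\SpGr_2(V) = \SpGr_{\max}(V)$ of the same dimension $3$, and a direct computation (the composition $(2,2)$ gives $\mathbf b^\top = (2,2) = \mathbf p$) confirms $\SpGr_{\max}(V) \in \bl \PP(V)\br$, while the only other flag variety of $\Sp_4$, the full flag, has dimension $4$; hence $\bl \PP(V)\br = \lbrace \PP(V), \SpGr_{\max}(V)\rbrace$.

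The main obstacle is the uniqueness half, namely the description of the class, rather than the minimality, which reduces to the clean partial-sum inequality above. Pinning down the class requires ruling out every other flag variety sharing the orbit $O_{\mathbf p}$, and I would do this through the dimension count combined with the concavity of $k \mapsto \dim \SpGr_k(V)$, which isolates the single coincidence $\SpGr_{\max}(V) \sim \PP(V)$ occurring only at $n = 2$. The delicate point is to apply the dimension argument only after confirming, via the explicit collapse, that the surviving Grassmannians genuinely land on $O_{\mathbf p}$ and not merely have the correct dimension.
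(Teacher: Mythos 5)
Your proposal is correct. The skeleton is the same as the paper's: identify $\PP(V)$ with $\SpFl_{(1,2n-2,1)}(V)$, compute $(\mathbf a^\top)^\sharp_{-1} = [2^2,1^{2n-4}]$, and compare via Corollary~\ref{crl_partial_order}. Where you diverge is in the execution. For minimality, the paper runs a case analysis on $|\mathbf b| \in \{2,3,\ge 4\}$ and exhibits in each case a partition in $\mathcal P_{-1}(2n)$ dominating the collapse; you instead prove the uniform inequality $\mathbf p \preccurlyeq \mathbf b^\top$ by the partial-sum identity $t_i = \sum_j \min(b_j,i)$ and then invoke the universal property in Proposition~\ref{prop_collapse} to pass to $(\mathbf b^\top)^\sharp_{-1}$ -- this is cleaner and avoids tracking the collapse of $\mathbf b^\top$ case by case. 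For the identification of the class, the paper simply observes that its case analysis yields \emph{strict} inequality except for $\mathbf b = (n,n)$ with $n=2$; you instead use the dimension formula $\dim \mathscr N(X) = 2\dim X$ together with concavity of $k \mapsto \dim \SpGr_k(V)$ and the fibration $X_I \to X_{\{k\}}$. That dimension fact is genuinely external to the paper (which only cites Richardson for the existence of the dense $P$-orbit in $\mathfrak p^u$), but it is part of Richardson's classical theorem and your use of it is sound; it buys you a uniqueness argument that does not require re-examining the collapses, at the cost of an extra imported ingredient and the (correctly noted) obligation to confirm that $\SpGr_{\max}(V)$ for $n=2$ actually lands on $O_{\mathbf p}$ rather than merely having the right dimension. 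Both routes are complete and correct.
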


\begin{proof}
We apply Corollary~\ref{crl_partial_order}.
Put $\mathbf a = (1,2n-2,1)$.
Then $\SpFl_{\mathbf a}(V) = \SpGr_1(V) \simeq \PP(V)$, $\mathbf a^\top = [3,1^{2n-3}]$, and $(\mathbf a^\top)^\sharp_\varepsilon = [2^2,1^{2n-4}]$.
Let $\mathbf b$ be the symmetric composition of $2n$ such that $X = \SpFl_{\mathbf b}(V)$ and assume $\mathbf b \ne \mathbf a$.
If $|\mathbf b| = 2$ then $\mathbf b = (n,n)$ and $\mathbf b^\top = (\mathbf b^\top)^\sharp_\varepsilon = [2^n]$, so that $(\mathbf b^\top)^\sharp_\varepsilon = (\mathbf a^\top)^\sharp_\varepsilon$ for $n = 2$ and $(\mathbf b^\top)^\sharp_\varepsilon \succ (\mathbf a^\top)^\sharp_\varepsilon$ for $n \ge 3$.
If $|\mathbf b | = 3$ then $\mathbf b^\top$ is of the form $(3,3,\ldots)$, hence $(\mathbf b^\top)^\sharp_\varepsilon$ is also of the form $(3,3,\ldots)$, hence $(\mathbf b^\top)^\sharp_\varepsilon \succ (\mathbf a^\top)^\sharp_\varepsilon$.
If $|\mathbf b | = l \ge 4$, then $\mathbf b^\top$ is of the form $(l,\ldots)$, hence $(\mathbf b^\top)^\sharp_\varepsilon \succcurlyeq [4,1^{2n-4}] \succ (\mathbf a^\top)^\sharp_\varepsilon$.
\end{proof}

\begin{proposition} \label{prop_SpGr2_partial_order}
Suppose that $X \in \mathscr F(G)$ and $X \ne \PP(V)$.
Then $X = \SpGr_{\max}(V)$, or $X = \SpGr_2(V)$, or $\bl X \br \succ \bl \SpGr_2(V) \br$.
\end{proposition}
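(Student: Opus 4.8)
By Corollary~\ref{crl_partial_order} with $\varepsilon = -1$, comparing nil-equivalence classes amounts to comparing, in the dominance order on $\mathcal P_{-1}(2n)$, the collapsed dual partitions attached to the relevant symmetric compositions. The plan is first to record the target partition for $\SpGr_2(V)$: its composition is $\mathbf a_0 = (2, 2n-4, 2)$, whence $\mathbf a_0^\top = [3^2, 1^{2n-6}]$, a partition that already lies in $\mathcal P_{-1}(2n)$, so that $(\mathbf a_0^\top)^\sharp_{-1} = [3^2, 1^{2n-6}]$ (here I assume $n \ge 3$; the case $n = 2$, in which $\SpGr_2(V) = \SpGr_{\max}(V)$, I would check separately by hand). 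The variety $\SpGr_{\max}(V)$ corresponds to $(n,n)$ with collapsed dual $[2^n]$, whose leading partial sum $2$ is less than $3$; hence its class is never $\succcurlyeq \bl \SpGr_2(V) \br$, which explains why it must appear as a separate exception. It then remains to show that every $X = \SpFl_{\mathbf b}(V)$ different from $\PP(V)$, $\SpGr_2(V)$, $\SpGr_{\max}(V)$ satisfies $(\mathbf b^\top)^\sharp_{-1} \succ [3^2, 1^{2n-6}]$.

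The key device is Proposition~\ref{prop_collapse}, which says that $(\mathbf b^\top)^\sharp_{-1}$ is the largest element of $\mathcal P_{-1}(2n)$ lying below $\mathbf b^\top$. Thus it is enough to produce a single partition $\mathbf c \in \mathcal P_{-1}(2n)$ with $[3^2, 1^{2n-6}] \prec \mathbf c \preccurlyeq \mathbf b^\top$, for then $(\mathbf b^\top)^\sharp_{-1} \succcurlyeq \mathbf c \succ [3^2, 1^{2n-6}]$. I would organize the search by the length $l = |\mathbf b|$, using that the $i$th partial sum of $\mathbf b^\top$ equals $\sum_j \min(b_j, i)$; in particular the first part of $\mathbf b^\top$ is $l$ and its second partial sum is $l + s$, where $s$ is the number of parts of $\mathbf b$ that are at least $2$.

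For $l = 2$ the only variety is $\SpGr_{\max}(V)$, already set aside. For $l = 3$ one has $\mathbf b = (k, 2n-2k, k)$, that is $X = \SpGr_k(V)$ with $1 \le k \le n-1$; the values $k = 1$ and $k = 2$ give $\PP(V)$ and $\SpGr_2(V)$, so I may take $k \ge 3$ and compute $\mathbf b^\top$ and its collapse directly ($\mathbf b^\top$ equals $[3^k, 1^{2n-3k}]$ when $3k \le 2n$ and $[3^{2n-2k}, 2^{3k-2n}]$ otherwise, followed by a single run of the collapse algorithm after Proposition~\ref{prop_collapse} when $\mathbf b^\top \notin \mathcal P_{-1}(2n)$), always obtaining a partition that begins with at least two parts equal to $3$ followed by parts $\ge 2$, which strictly dominates $[3^2, 1^{2n-6}]$. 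For $l \ge 4$ I expect the single choice $\mathbf c = [4, 2, 1^{2n-6}]$ to work: it lies in $\mathcal P_{-1}(2n)$ and strictly dominates $[3^2, 1^{2n-6}]$, since the two agree in all partial sums except the first, where $4 > 3$. Its partial sums being $4, 6, 7, 8, \dots$, the relation $\mathbf c \preccurlyeq \mathbf b^\top$ reduces to $l \ge 4$, to $l + s \ge 6$, and to the concavity of the partial sums of $\mathbf b^\top$ together with $\widehat b_k \ge 1$ below the largest part of $\mathbf b$. Here I would note that for $n \ge 3$ a symmetric composition of length $4$ has $s \ge 2$ (parts $\ge 2$ occur in symmetric pairs), one of length $5$ has $s \ge 1$ (its parts cannot all equal $1$), and one of length $\ge 6$ needs no condition on $s$, so that $l + s \ge 6$ holds throughout.

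The principal obstacle is that the $(-1)$-collapse may strictly lower a partition, so that $\mathbf b^\top \succ [3^2, 1^{2n-6}]$ alone would not suffice; the entire argument is arranged around Proposition~\ref{prop_collapse} precisely to avoid this, by exhibiting the witness $\mathbf c$ already inside $\mathcal P_{-1}(2n)$. The remaining delicate points, which I would dispatch by direct inspection, are the boundary value $k = n-1$ in the length-$3$ case (where $2n - 2k = 2$ alters the shape of $\mathbf b^\top$) and the small values $n = 2, 3$ (where $2n - 6 < 0$ or $[4, 2, 1^{2n-6}]$ degenerates).
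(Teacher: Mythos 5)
Your proposal is correct and follows essentially the same route as the paper's proof: reduce via Corollary~\ref{crl_partial_order} to comparing collapsed dual partitions with $[3^2,1^{2n-6}]$, treat the lengths $|\mathbf b| = 2,3$ directly, and for $|\mathbf b| \ge 4$ use the witness $[4,2,1^{2n-6}]$ together with Proposition~\ref{prop_collapse}. The only differences are cosmetic — the paper dispatches $n=2$ by citing Proposition~\ref{prop_P(V)_is_minimal} rather than a hand check, and leaves the $l \ge 4$ partial-sum estimate implicit where you spell it out.
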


\begin{proof}
If $n = 2$ then $\bl \PP(V) \br = \bl \SpGr_2(V) \br$ and the claim is implied by Proposition~\ref{prop_P(V)_is_minimal}, hence in what follows we assume $n \ge 3$.
Put $\mathbf a = (2, 2n-4, 2)$.
Then $\SpFl_{\mathbf a}(V) = \SpGr_2(V)$ and $\mathbf a^\top = (\mathbf a^\top)^\sharp_\varepsilon = [3^2,1^{2n-6}]$.
Let $\mathbf b$ be the symmetric composition of $2n$ such that $X = \SpFl_{\mathbf b}(V)$.
If $|\mathbf b| = 2$ then $X = \SpGr_{\max}(V)$.
Now consider the case $|\mathbf b| = 3$, so that $\mathbf b = (b_1,b_2,b_1)$.
As $X \ne \PP(V)$, we have $b_1 \ne 1$.
If $b_1 = 2$ then $X = \SpGr_2(V)$.
If $b_1 \ge 3$ then $\mathbf b^\top$ is of the form $(3,3,3,\ldots)$ or $(3,3,2,\ldots)$, hence $(\mathbf b^\top)^\sharp_\varepsilon$ is also of the form $(3,3,3,\ldots)$ or $(3,3,2,\ldots)$, which implies $(\mathbf b^\top)^\sharp_\varepsilon \succ (\mathbf a^\top)^\sharp_\varepsilon$.
If $|\mathbf b| \ge 4$ then $(\mathbf b^\top)^\sharp_\varepsilon \succcurlyeq [4,2,1^{2n-6}] \succ (\mathbf a^\top)^\sharp_\varepsilon$.
\end{proof}

\subsection{An analysis of the partial order on \texorpdfstring{$\mathscr F(G) / \sim$}{F(G)/\textasciitilde} for \texorpdfstring{$G \simeq \Spin_{2n+1}$}{G = Spin\_{2n+1}}}

In this subsection, we assume that $G = \Spin(V)$ with $\dim V = 2n+1$ and $n \ge 1$.
Throughout this subsection, $\varepsilon = 1$.

\begin{proposition} \label{prop_so_odd_Q1_is_minimal}
Suppose that $X \in \mathscr F(G)$.
Then $\bl X \br \succcurlyeq \bl \SOGr_1(V) \br$.
Moreover, $\SOGr_1(V)$ is the unique element in $\bl \SOGr_1(V) \br$ for $n \ne 2$ and
\[
\bl \SOGr_1(V) \br =\lbrace \SOGr_1(V), \SOGr_{\max}(V) \rbrace
\]
for $n = 2$.
\end{proposition}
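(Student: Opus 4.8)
The plan is to translate the statement into the combinatorics of partitions via Corollary~\ref{crl_partial_order}, following the pattern of the proof of Proposition~\ref{prop_P(V)_is_minimal}. Write $\SOGr_1(V) = \SOFl_{\mathbf a}(V)$ with $\mathbf a = (1, 2n-1, 1)$; then $\mathbf a^\top = [3, 1^{2n-2}]$, and since all parts of this partition are odd it already lies in $\mathcal P_1(2n+1)$, so $(\mathbf a^\top)^\sharp_1 = [3, 1^{2n-2}]$. For an arbitrary $X \in \mathscr F(G)$, let $\mathbf b$ be the nontrivial symmetric composition of $2n+1$ with $X = \SOFl_{\mathbf b}(V)$ (every nontrivial flag variety of $G$ arises this way by \S\,\ref{subsec_FV_via_compositions}). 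By Corollary~\ref{crl_partial_order} it then suffices to compare $(\mathbf b^\top)^\sharp_1$ with $[3,1^{2n-2}]$ in the dominance order on $\mathcal P_1(2n+1)$.

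First I would isolate an elementary dominance lemma: a partition $\mathbf c$ of $2n+1$ satisfies $\mathbf c \succcurlyeq [3,1^{2n-2}]$ if and only if its largest part is at least~$3$ (the nontrivial direction is a one-line induction on partial sums, and the converse is immediate from the first partial sum). So it is enough to show that the largest part of $(\mathbf b^\top)^\sharp_1$ is $\ge 3$. The key point is that $2n+1$ is odd, whence every symmetric composition of it has an odd number of parts; as $\mathbf b$ is nontrivial, its number of parts $p$ is odd and $\ge 3$, and the largest part of $\mathbf b^\top$ equals~$p$. Finally, inspecting the collapse algorithm of Proposition~\ref{prop_collapse} for $\varepsilon = 1$, the largest part is decreased only when it is even (that is the only case where the index~$m$ equals~$0$); since here it equals the odd number~$p$ and the algorithm never raises a part above it, it stays equal to~$p$. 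Hence $(\mathbf b^\top)^\sharp_1$ has largest part $p \ge 3$, and the lemma gives $\bl X \br \succcurlyeq \bl \SOGr_1(V) \br$.

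For the ``moreover'' part one must determine when $(\mathbf b^\top)^\sharp_1 = [3,1^{2n-2}]$. Since the largest part of the collapse equals~$p$, equality forces $p = 3$, i.e. $\mathbf b = (b_1, 2n+1-2b_1, b_1)$, and I would split on~$b_1$. For $b_1 = 1$ we recover exactly $\SOGr_1(V)$. For $b_1 \ge 2$ I would show strict domination using the monotonicity of the collapse with respect to~$\preccurlyeq$ (a direct consequence of the maximality property in Proposition~\ref{prop_collapse}), by exhibiting an auxiliary partition $\mathbf d \in \mathcal P_1(2n+1)$ with $[3,1^{2n-2}] \prec \mathbf d \preccurlyeq \mathbf b^\top$: when the middle part is $\ge 3$ one has $\mathbf b^\top \succcurlyeq (3,3,1^{2n-5})$, and when the middle part equals~$1$ (so $b_1 = n$ and $\mathbf b^\top = (3, 2^{n-1})$) one may take $\mathbf d = (3,2,2,1^{2n-7})$ for $n \ge 3$. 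The single configuration escaping this is $n = 2$, $\mathbf b = (2,1,2)$, where $\mathbf b^\top = (3,2)$ collapses to $[3,1^2]$ and $X = \SOGr_2(V) = \SOGr_{\max}(V)$; this yields the asserted exception. The main obstacle is exactly this last case analysis --- producing the witnesses~$\mathbf d$ uniformly while isolating the lone coincidence at $n = 2$, which is where the small-rank behaviour of the orthogonal group intervenes.
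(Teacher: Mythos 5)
Your argument is sound and follows the same route as the paper's proof: reduce to partitions via Corollary~\ref{crl_partial_order} and compare $(\mathbf b^\top)^\sharp_1$ with $[3,1^{2n-2}]$. Where the paper runs a direct case analysis on $|\mathbf b|$ and computes or estimates each collapse, you package the cases $|\mathbf b|\ge 5$ into two clean general observations --- the dominance criterion ``largest part $\ge 3$'' and the fact that the $1$-collapse preserves an odd largest part (since $m=0$ occurs only when $a_1$ is even, and $a'_l=a_l+1\le a_{m+1}\le a_1$ so no part is ever pushed above $a_1$) --- and you replace explicit collapse computations in the strict comparisons by the universal property of Proposition~\ref{prop_collapse}, exhibiting a witness $\mathbf d\in\mathcal P_1(2n+1)$ with $[3,1^{2n-2}]\prec\mathbf d\preccurlyeq\mathbf b^\top$. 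This is a legitimate and arguably tidier organization of the same combinatorics, and the identification of the $n=2$ exception is correct.

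One concrete slip: in the case $b_2=1$, $\mathbf b^\top=(3,2^{n-1})$, your witness $(3,2,2,1^{2n-7})$ sums to $2n$, not $2n+1$, so as written it is not a partition of the right number and the chain $\mathbf d\preccurlyeq\mathbf b^\top$ is meaningless. The intended witness is $(3,2,2,1^{2n-6})$, which does lie in $\mathcal P_1(2n+1)$, satisfies $(3,2,2,1^{2n-6})\preccurlyeq(3,2^{n-1})$ for $n\ge 3$ (compare partial sums $i+4$ against $2i+1$), and strictly dominates $[3,1^{2n-2}]$ at the second partial sum. With that correction the proof is complete; you might also state explicitly that $b_2=2n+1-2b_1$ is odd, which is what makes your two subcases (middle part $\ge 3$ or $=1$) exhaustive.
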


\begin{proof}
If $n = 1$ then $\mathscr F(G) = \lbrace \SOGr_1(V) \rbrace$ and the assertion holds trivially, hence in what follows we assume $n \ge 2$.
Put $\mathbf a = (1, 2n-1, 1)$. Then $\SOFl_{\mathbf a}(V) = \SOGr_1(V)$ and $\mathbf a^\top = (\mathbf a^\top)^\sharp_\varepsilon = [3,1^{2n-2}]$.
Let $\mathbf b$ be the symmetric composition of $2n+1$ such that $X = \SOFl_{\mathbf b}(V)$ and assume $\mathbf b \ne \mathbf a$.
First consider the case $|\mathbf b | = 3$, so that $\mathbf b = (b_1,b_2,b_1)$ with $b_1 \ge 2$.
If $b_2 = 1$ then $\mathbf b^\top = [3,2^{n-1}]$, so that $(\mathbf b^\top)^\sharp_\varepsilon = (\mathbf a^\top)^\sharp_\varepsilon$ for $n = 2$ and $(\mathbf b^\top)^\sharp_\varepsilon \succ (\mathbf a^\top)^\sharp_\varepsilon$ for $n \ge 3$.
If $b_2 > 1$ then $(\mathbf b^\top)^\sharp_\varepsilon$ is of the form $(3,3,\ldots)$, hence $(\mathbf b^\top)^\sharp_\varepsilon \succ (\mathbf a^\top)^\sharp_\varepsilon$.
In the case $|\mathbf b| = l \ge 5$ the partition $\mathbf b^\top$ has the form $(l, \ldots)$, hence $(\mathbf b^\top)^\sharp_\varepsilon \succcurlyeq [5, 1^{2n-4}] \succ (\mathbf a^\top)^\sharp_\varepsilon$.
\end{proof}

\begin{proposition} \label{prop_so_odd_SOGr_2}
Suppose that $n \ge 2$, $X \in \mathscr F(G)$, and $X \ne \SOGr_1(V)$.
Then $X = \SOGr_{\max}(V)$, or $X = \SOGr_2(V)$, or $\bl X \br \succ \bl \SOGr_2(V) \br$.
\end{proposition}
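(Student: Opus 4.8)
The plan is to mimic the proof of Proposition~\ref{prop_SpGr2_partial_order}, applying Corollary~\ref{crl_partial_order} to reduce the whole statement to a comparison of collapsed dual partitions. The orthogonal case differs from the symplectic one mainly in the bookkeeping and, crucially, in which symmetric composition yields $\SOGr_{\max}(V)$.

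First I would dispose of the case $n = 2$ separately. Here the maximal isotropic dimension equals $n = 2$, so $\SOGr_2(V) = \SOGr_{\max}(V)$, and by Proposition~\ref{prop_so_odd_Q1_is_minimal} this variety lies in $\bl \SOGr_1(V) \br$; hence $\bl \SOGr_2(V) \br = \bl \SOGr_1(V) \br$ is the unique minimal class. Consequently any $X \ne \SOGr_1(V)$ is either $\SOGr_2(V)$ itself or satisfies $\bl X \br \succ \bl \SOGr_1(V) \br = \bl \SOGr_2(V) \br$, which is exactly the claim. From now on I would assume $n \ge 3$.

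For $n \ge 3$ I would write $\SOGr_2(V) = \SOFl_{\mathbf a}(V)$ with $\mathbf a = (2, 2n-3, 2)$. A direct computation gives $\mathbf a^\top = [3^2, 1^{2n-5}]$, which already lies in $\mathcal P_1(2n+1)$ since it has no even parts, so $(\mathbf a^\top)^\sharp_\varepsilon = [3^2, 1^{2n-5}]$, with partial sums $3, 6, 7, 8, \dots$. Because $d = 2n+1$ is odd, no splitting into two orbits occurs, so every $X \in \mathscr F(G)$ equals $\SOFl_{\mathbf b}(V)$ for a unique nontrivial symmetric composition $\mathbf b$ of $2n+1$, and the number of parts $|\mathbf b|$ is odd and at least $3$. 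By Corollary~\ref{crl_partial_order} it then remains to compare $(\mathbf b^\top)^\sharp_\varepsilon$ with $[3^2, 1^{2n-5}]$. If $|\mathbf b| = 3$, write $\mathbf b = (b_1, b_2, b_1)$ with $2b_1 + b_2 = 2n+1$: the excluded value $b_1 = 1$ gives $X = \SOGr_1(V)$; the value $b_1 = 2$ gives $X = \SOGr_2(V)$; and $b_1 = n$, equivalently $b_2 = 1$, gives $X = \SOGr_{\max}(V)$. In the remaining range $3 \le b_1 \le n-1$ (which forces $n \ge 4$) all three parts of $\mathbf b$ are $\ge 3$, so $\mathbf b^\top$ begins with $(3,3,3,\dots)$. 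If instead $|\mathbf b| \ge 5$, then $\mathbf b^\top$ has first part $|\mathbf b| \ge 5$. In both remaining subcases I would exhibit an explicit $\mathbf d \in \mathcal P_1(2n+1)$ with $\mathbf d \preccurlyeq \mathbf b^\top$ and $\mathbf d \succ [3^2, 1^{2n-5}]$, namely $\mathbf d = [3^3, 1^{2n-8}]$ in the first subcase and $\mathbf d = [5, 1^{2n-4}]$ in the second. By the maximality property of the collapse (Proposition~\ref{prop_collapse}), $\mathbf d \preccurlyeq (\mathbf b^\top)^\sharp_\varepsilon$, whence $(\mathbf b^\top)^\sharp_\varepsilon \succcurlyeq \mathbf d \succ (\mathbf a^\top)^\sharp_\varepsilon$ and therefore $\bl X \br \succ \bl \SOGr_2(V) \br$ again by Corollary~\ref{crl_partial_order}.

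The only genuinely delicate point is the passage from $\mathbf b^\top$ to its collapse $(\mathbf b^\top)^\sharp_\varepsilon$: since collapsing can only lower partial sums, one cannot simply compare $\mathbf b^\top$ with $(\mathbf a^\top)^\sharp_\varepsilon$ directly. The device of inserting an explicit $\mathbf d \in \mathcal P_1$ between the two and invoking the maximality in Proposition~\ref{prop_collapse} is precisely what circumvents this, reducing everything to two elementary partial-sum verifications of $\mathbf d \preccurlyeq \mathbf b^\top$ and $\mathbf d \succ [3^2, 1^{2n-5}]$. The other feature demanding attention — and the genuine difference from the symplectic argument — is the correct matching of compositions to varieties, in particular recognizing that it is $b_2 = 1$ (that is, $b_1 = n$), and not $b_1 = 2$, that produces $\SOGr_{\max}(V)$.
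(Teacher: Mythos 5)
Your proof is correct and follows essentially the same route as the paper's: the same reduction via Corollary~\ref{crl_partial_order}, the same composition $\mathbf a = (2,2n-3,2)$ with $(\mathbf a^\top)^\sharp_\varepsilon = [3^2,1^{2n-5}]$, the same handling of $n=2$ via Proposition~\ref{prop_so_odd_Q1_is_minimal}, and the same case split on $|\mathbf b|$. The only difference is cosmetic: where the paper asserts directly that $(\mathbf b^\top)^\sharp_\varepsilon$ begins $(3,3,3,\ldots)$ when all parts of $\mathbf b$ are at least $3$, you interpose the explicit partition $[3^3,1^{2n-8}] \in \mathcal P_1(2n+1)$ and invoke the maximality property in Proposition~\ref{prop_collapse} --- a slightly more careful justification of the same step, and exactly the device the paper itself uses in the case $|\mathbf b| \ge 5$.
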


\begin{proof}
If $n = 2$ then $\bl \SOGr_1(V) \br = \bl \SOGr_{\max}(V) \br$ and the claim is implied by Proposition~\ref{prop_so_odd_Q1_is_minimal}, hence in what follows we assume $n \ge 3$.
Put $\mathbf a = (2, 2n-3, 2)$.
Then $\SOFl_{\mathbf a}(V) = \SOGr_2(V)$ and $\mathbf a^\top = (\mathbf a^\top)^\sharp_\varepsilon = [3^2,1^{2n-5}]$.
Let $\mathbf b$ be the symmetric composition of $2n+1$ such that $X = \SOFl_{\mathbf b}(V)$.
First consider the case $|\mathbf b| = 3$, so that $\mathbf b = (b_1,b_2,b_1)$. As $X \ne \SOGr_1(V)$, we have $b_1 \ne 1$.
If $b_1 = 2$ then $X = \SOGr_2(V)$.
If $b_2 = 1$ then $X = \SOGr_{\max}(V)$.
If $b_1 \ge 3$ and $b_2 \ge 2$ then $b_2 \ge 3$ and $(\mathbf b^\top)^\sharp_\varepsilon$ has the form $(3,3,3,\ldots)$, which implies $(\mathbf b^\top)^\sharp_\varepsilon \succ (\mathbf a^\top)^\sharp_\varepsilon$.
In the case $|\mathbf b| = l \ge 5$ we have $(\mathbf b^\top)^\sharp_\varepsilon \succcurlyeq [5, 1^{2n-4}] \succ (\mathbf a^\top)^\sharp_\varepsilon$.
\end{proof}

\subsection{An analysis of the partial order on \texorpdfstring{$\mathscr F(G) / \sim$}{F(G)/\textasciitilde} for \texorpdfstring{$G \simeq \Spin_{2n}$}{G = Spin\_2n}}
\label{subsec_po_so2n}

In this subsection, we assume that $G = \Spin(V)$ with $\dim V = 2n$ and $n \ge 2$.
Throughout this subsection, $\varepsilon = 1$.

\begin{proposition} \label{prop_so_even_minimal}
Suppose that $X \in \mathscr F(G)$.
Then $X = \SOGr_1(V)$, or $X = \SOGr^+_{\max}(V)$, or $X = \SOGr^{-}_{\max}(V)$, or $\bl X \br \succ \bl \SOGr_1(V) \br$.
\end{proposition}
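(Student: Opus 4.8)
The plan is to translate the assertion into the combinatorics of partitions via Corollary~\ref{crl_partial_order} and then follow the strategy of the proof of Proposition~\ref{prop_so_odd_Q1_is_minimal}, the one genuine novelty being that there are now three pairwise incomparable distinguished classes rather than a single minimum. First I would record the relevant data: $\SOGr_1(V) = \SOFl_{\mathbf a}(V)$ for $\mathbf a = (1,2n-2,1)$, and a direct computation gives $\mathbf a^\top = (\mathbf a^\top)^\sharp_\varepsilon = [3,1^{2n-3}]$ (this partition already lies in $\mathcal P_1(2n)$, so no collapse occurs); while $\SOGr^{\pm}_{\max}(V)$ are the two $G_\varepsilon$-orbits in $\Fl^{(1)}_{(n,n)}(V)$, corresponding to the symmetric composition $(n,n)$ with two parts. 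Writing a given $X \in \mathscr F(G)$ as a $G_\varepsilon$-orbit in $\Fl^{(1)}_{\mathbf b}(V)$ for a nontrivial symmetric composition $\mathbf b$ of $2n$ with $p = |\mathbf b|$ parts, the distinguished families are exactly $p = 2$ (yielding $\SOGr^{\pm}_{\max}$) and $p = 3$ with a part equal to $1$, which forces $\mathbf b = (1,2n-2,1)$ and yields $\SOGr_1$ (here $2n-2 \ge 4$ for $n \ge 3$ pins down the labelling). Since $(\mathbf b^\top)^\sharp_\varepsilon$ depends only on $\mathbf b$, not on the $\pm$ refinement, Corollary~\ref{crl_partial_order} reduces everything to showing $(\mathbf b^\top)^\sharp_\varepsilon \succ [3,1^{2n-3}]$ for every remaining $\mathbf b$, i.e. every symmetric composition with $p \ge 3$ and $\mathbf b \ne (1,2n-2,1)$. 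The case $n = 2$ is degenerate—$\mathscr F(\Spin_4)$ consists only of the three listed varieties—so I would assume $n \ge 3$ throughout.

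The heart of the argument is to exhibit, uniformly for all such $\mathbf b$, one auxiliary partition $\nu := [3^2,1^{2n-6}]$ squeezed strictly above $[3,1^{2n-3}]$ and weakly below $\mathbf b^\top$. One checks immediately that $\nu \in \mathcal P_1(2n)$ (it has no even parts) and, by comparing partial sums, that $\nu \succ [3,1^{2n-3}]$. To obtain $\nu \preccurlyeq \mathbf b^\top$ I would use that the transpose reverses the dominance order on $\mathcal P(2n)$ together with $\nu^\top = [2n-4,2,2]$, which turns $\nu \preccurlyeq \mathbf b^\top$ into the two inequalities $b_{(1)} \le 2n-4$ and $b_{(1)}+b_{(2)} \le 2n-2$ on the two largest parts $b_{(1)} \ge b_{(2)}$ of $\mathbf b$. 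Both are forced by the hypotheses $p \ge 3$, $\mathbf b \ne (1,2n-2,1)$, and the symmetry of $\mathbf b$ (which makes every part except a possible central one occur with even multiplicity): a short count shows that $b_{(1)}+b_{(2)} \ge 2n-1$ and that $b_{(1)} \ge 2n-3$ each force $\mathbf b = (1,2n-2,1)$, so both inequalities hold once this composition is excluded. Finally, since $\nu \in \mathcal P_1(2n)$ and $\nu \preccurlyeq \mathbf b^\top$, the defining property of the collapse in Proposition~\ref{prop_collapse} gives $\nu \preccurlyeq (\mathbf b^\top)^\sharp_\varepsilon$, whence $(\mathbf b^\top)^\sharp_\varepsilon \succcurlyeq \nu \succ [3,1^{2n-3}]$, which is the desired strict relation $\bl X \br \succ \bl \SOGr_1(V) \br$.

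I expect the only real obstacle to be the interaction with the collapse operation $\sharp_\varepsilon$. It is easy and true for all $p \ge 3$, $\mathbf b \ne \mathbf a$ that $\mathbf b^\top \succ [3,1^{2n-3}]$, but this is not by itself enough, because $\sharp_\varepsilon$ only decreases a partition in the dominance order and could in principle return $\mathbf b^\top$ to $[3,1^{2n-3}]$: the two partitions covering $[3,1^{2n-3}]$ in $\mathcal P(2n)$, namely $[4,1^{2n-4}]$ and $[3,2,1^{2n-5}]$, both lie outside $\mathcal P_1(2n)$ and collapse back to $[3,1^{2n-3}]$. Exhibiting the explicit intermediate element $\nu \in \mathcal P_1(2n)$ is exactly what rules this out, and the step deserving the most care is verifying $\nu \preccurlyeq \mathbf b^\top$ in one stroke—via the two part-size inequalities above—rather than splitting into many subcases on $p$ and on the sizes of the individual parts.
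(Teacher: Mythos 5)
Your proof is correct, and it takes a genuinely different route from the paper's. The paper argues by direct case analysis on the number of parts $|\mathbf b| \in \lbrace 2,3,4\rbrace$ or $|\mathbf b|\ge 5$, in each case identifying the shape of $(\mathbf b^\top)^\sharp_\varepsilon$ (e.g.\ ``of the form $(3,3,\ldots)$'' or ``$\succcurlyeq [5,1^{2n-5}]$'') and comparing it with $[3,1^{2n-3}]$ by hand. You instead produce one uniform witness $\nu = [3^2,1^{2n-6}] \in \mathcal P_1(2n)$ sitting strictly above $[3,1^{2n-3}]$, reduce $\nu \preccurlyeq \mathbf b^\top$ via the order-reversing property of transposition to the two inequalities $b_{(1)} \le 2n-4$ and $b_{(1)}+b_{(2)} \le 2n-2$, verify these in one stroke for every symmetric composition with at least three parts other than $(1,2n-2,1)$, and then invoke the universal property of the collapse (Proposition~\ref{prop_collapse}) to push the inequality through $\sharp_\varepsilon$ --- all steps I checked are sound, including the pivotal observation that $\mathbf b^\top \succ [3,1^{2n-3}]$ alone would not suffice because the two covers $[4,1^{2n-4}]$ and $[3,2,1^{2n-5}]$ collapse back down. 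What your approach buys is the elimination of the case-split and of any explicit collapse computations for $\mathbf b^\top$; what it costs is reliance on the anti-automorphism property of $\mathbf a \mapsto \mathbf a^\top$ for the dominance order, which is standard but nowhere stated in the paper. It is also worth noting that your $\nu$ equals $(\mathbf c^\top)^\sharp_\varepsilon$ for $\mathbf c = (2,2n-4,2)$, so your argument in effect establishes $\bl X \br \succcurlyeq \bl \SOGr_2(V) \br$ for all the remaining classes and thereby absorbs a portion of Proposition~\ref{prop_so_even_SOGr_2} as well.
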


\begin{proof}
If $n = 2$ then $\mathscr F(G) = \lbrace \SOGr_1(V), \SOGr^+_{\max}(V), \SOGr^{-}_{\max}(V) \rbrace$ and the assertion holds trivially, so in what follows we assume $n \ge 3$. Put $\mathbf a = (1, 2n-2, 1)$. Then $\SOFl_{\mathbf a}(V) = \SOGr_1(V)$ and $\mathbf a^\top = (\mathbf a^\top)^\sharp_\varepsilon = [3,1^{2n-3}]$. Let $\mathbf b$ be the symmetric composition of $2n$ corresponding to~$X$.

If $|\mathbf b | = 2$ then $\mathbf b = (n,n)$ and $X$ is one of $\SOGr^+_{\max}(V)$ or $\SOGr^{-}_{\max}(V)$.

Suppose that $|\mathbf b | = 3$, so that $\mathbf b = (b_1,b_2,b_1)$.
If $b_1 = 1$ then $X = \SOGr_1(V)$.
If $b_1 \ge 2$ then $(\mathbf b^\top)^\sharp_\varepsilon$ has the form $(3,3,\ldots)$, hence $(\mathbf b^\top)^\sharp_\varepsilon \succ (\mathbf a^\top)^\sharp_\varepsilon$.

Suppose that $|\mathbf b| = 4$.
Then $\mathbf b^\top$ is of the form $(4,4,\ldots)$ or $(4,2,\ldots)$, hence $(\mathbf b^\top)^\sharp_\varepsilon$ is of the form $(4,4,\ldots)$ or $(3,3,\ldots)$, which implies $(\mathbf b^\top)^\sharp_\varepsilon \succ (\mathbf a^\top)^\sharp_\varepsilon$.

Finally, in the case $|\mathbf b| \ge 5$ we have $(\mathbf b^\top)^\sharp_\varepsilon \succcurlyeq [5,1^{2n-5}] \succ (\mathbf a^\top)^\sharp_\varepsilon$.
\end{proof}

\begin{proposition} \label{prop_so_even_SOGr_2}
Suppose that $n \ge 3$, $X \in \mathscr F(G)$, $X \ne \SOGr_1(V)$, $X \ne \SOGr^+_{\max}(V)$, and $X \ne \SOGr^{-}_{\max}(V) \rbrace$.
Then $\bl X \br \succcurlyeq \bl \SOGr_2(V) \br$.
Moreover, $\SOGr_2(V)$ is the unique element in $\bl \SOGr_2(V) \br$ for $n \ge 5$,
\[
\bl \SOGr_2(V) \br = \lbrace \SOGr_2(V), \SOGr_3(V), \SOFl^+_{(1,3,3,1)}(V), \SOFl^{-}_{(1,3,3,1)}(V) \rbrace
\]
for $n =4$, and
\[
\bl \SOGr_2(V) \br = \lbrace \SOGr_2(V), \SOFl^+_{(1,2,2,1)}(V), \SOFl^{-}_{(1,2,2,1)}(V) \rbrace
\]
for $n = 3$.
\end{proposition}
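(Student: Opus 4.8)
The plan is to follow the same scheme as in the proofs of Propositions~\ref{prop_so_even_minimal} and~\ref{prop_so_odd_SOGr_2}, reducing everything to the combinatorics of partitions via Corollary~\ref{crl_partial_order}. Put $\mathbf a = (2, 2n-4, 2)$, so that $\SOFl_{\mathbf a}(V) = \SOGr_2(V)$; a direct computation gives $\mathbf a^\top = [3^2, 1^{2n-6}]$, which already lies in $\mathcal P_1(2n)$, whence $(\mathbf a^\top)^\sharp_\varepsilon = [3^2, 1^{2n-6}]$. For arbitrary $X \in \mathscr F(G)$ let $\mathbf b$ be the symmetric composition of $2n$ with $X = \SOFl_{\mathbf b}(V)$; the hypotheses $X \ne \SOGr_1(V), \SOGr^\pm_{\max}(V)$ mean exactly that $\mathbf b \ne (1, 2n-2, 1)$ and $|\mathbf b| \ne 2$, so $|\mathbf b| \ge 3$. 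By Corollary~\ref{crl_partial_order}, the assertion $\bl X \br \succcurlyeq \bl \SOGr_2(V) \br$ is equivalent to $(\mathbf b^\top)^\sharp_\varepsilon \succcurlyeq [3^2, 1^{2n-6}]$, and determining the class $\bl \SOGr_2(V) \br$ amounts to finding all $\mathbf b$ for which equality holds.

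For the first assertion I would avoid computing each collapse and instead exploit the extremal property of the collapse from Proposition~\ref{prop_collapse}: since $[3^2, 1^{2n-6}] \in \mathcal P_1(2n)$, it suffices to verify $[3^2, 1^{2n-6}] \preccurlyeq \mathbf b^\top$, after which Proposition~\ref{prop_collapse} yields $[3^2, 1^{2n-6}] \preccurlyeq (\mathbf b^\top)^\sharp_\varepsilon$. Writing $t_k = \sum_j \min(b_j, k)$ for the partial sums of $\mathbf b^\top$ and comparing them with those of $[3^2, 1^{2n-6}]$, namely $3, 6, 7, 8, \ldots$, the needed inequalities reduce to the elementary observations that $t_1 = |\mathbf b| \ge 3$, that at $k=2$ the only length-$3$ composition violating $t_2 \ge 6$ is the excluded $(1, 2n-2, 1)$, and that $t_k \ge k+4$ for $k \ge 2$ follows because each remaining part contributes at least $1$. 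This settles $\bl X \br \succcurlyeq \bl \SOGr_2(V) \br$ uniformly in $n$ and $\mathbf b$.

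To pin down the equality cases I would first dispose of long compositions. For $|\mathbf b| \ge 5$ the same partial-sum estimate gives $t_1 = |\mathbf b| \ge 5$ together with $t_k \ge k+4$ for $k \ge 2$, so $\mathbf b^\top \succcurlyeq [5, 1^{2n-5}]$; as $[5, 1^{2n-5}] \in \mathcal P_1(2n)$ and $[5, 1^{2n-5}] \succ [3^2, 1^{2n-6}]$, Proposition~\ref{prop_collapse} forces $(\mathbf b^\top)^\sharp_\varepsilon \succcurlyeq [5, 1^{2n-5}] \succ [3^2, 1^{2n-6}]$, so no such $X$ is nil-equivalent to $\SOGr_2(V)$. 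It then remains to run through the finitely many symmetric compositions with $|\mathbf b| \in \{3, 4\}$, compute each $(\mathbf b^\top)^\sharp_\varepsilon$ by the collapse algorithm, and retain those equal to $[3^2, 1^{2n-6}]$. I expect this to single out the composition $(2, 2n-4, 2)$ in all cases, together with $(3, 2, 3)$, $(3,1,1,3)$ and $(1, 3, 3, 1)$ when $n = 4$ and $(2, 2, 2)$, $(2,1,1,2)$, $(1, 2, 2, 1)$ when $n = 3$, while for $n \ge 5$ only $(2, 2n-4, 2)$ survives. Finally I would translate these compositions into the flag varieties named in the statement using the dictionary of \S\ref{subsec_FV_via_compositions}: the even-length compositions $(1,3,3,1)$ and $(1,2,2,1)$ split into the pairs $\SOFl^\pm$, while several encodings collapse to a single variety (for $n=4$ the compositions $(3,2,3)$ and $(3,1,1,3)$ both give $\SOGr_3(V) = X_{\{n-1,n\}}$, and for $n=3$ the compositions $(2,2,2)$ and $(2,1,1,2)$ both give $\SOGr_2(V) = X_{\{n-1,n\}}$).

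I expect the main obstacle to be the low-rank analysis ($n = 3$ and $n = 4$). For large $n$ the minimal $\mathcal P_1$-partitions lying strictly above $[3^2, 1^{2n-6}]$ are $[5, 1^{2n-5}]$ and $[3^2, 2^2, 1^{2n-10}]$, and avoiding both in the collapse is what isolates $\SOGr_2(V)$; but as $n$ decreases these degenerate (for $n=4$ the second one is replaced by $[4^2]$, and for $n=3$ only $[5,1]$ survives), so both the collapse computations for $|\mathbf b|\in\{3,4\}$ and the resulting lists of equivalent varieties genuinely change with $n$. Simultaneously, the coincidences among isotropic Grassmannians and their standard flag-variety labels in ranks $3$ and $4$, together with the bookkeeping of the $\SOFl^\pm$ splitting, must be tracked carefully to reproduce exactly the three sets displayed in the statement.
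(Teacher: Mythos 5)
Your proposal is correct and follows essentially the same route as the paper's proof: reduce to partitions via Corollary~\ref{crl_partial_order}, compute $(\mathbf a^\top)^\sharp_\varepsilon = [3^2,1^{2n-6}]$, dispose of $|\mathbf b| \ge 5$ via $[5,1^{2n-5}]$, and settle $|\mathbf b| \in \lbrace 3,4\rbrace$ by explicit collapse computations, whose outcome (including the coincidences $(3,2,3) \sim (3,1,1,3) \mapsto \SOGr_3(V)$ and $(2,2,2) \sim (2,1,1,2) \mapsto \SOGr_2(V)$, and the $\pm$ splitting of $(1,3,3,1)$ and $(1,2,2,1)$) you state correctly. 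The one genuine, and rather pleasant, variation is your use of the extremal property of the collapse from Proposition~\ref{prop_collapse} to deduce $\bl X \br \succcurlyeq \bl \SOGr_2(V) \br$ uniformly from the single domination $[3^2,1^{2n-6}] \preccurlyeq \mathbf b^\top$, where the paper instead reads off the shape of $(\mathbf b^\top)^\sharp_\varepsilon$ case by case; just make sure the partial-sum check at $k=2$ is also recorded for $|\mathbf b| \ge 4$ (it only fails for $2n=4$, which is excluded by $n \ge 3$).
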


\begin{proof}
Put $\mathbf a = (2,2n-4,2)$. Then $\SOFl_{\mathbf a}(V) = \SOGr_2(V)$ and $\mathbf a^\top = (\mathbf a^\top)^\sharp_\varepsilon = [3^2,1^{2n-6}]$.
Let $\mathbf b$ be the symmetric composition of $2n$ corresponding to~$X$.
As $X \ne \SOGr^+_{\max}(V)$ and $X \ne \SOGr^-_{\max}(V)$, we have $|\mathbf b| \ge 3$.

Suppose that $|\mathbf b | = 3$, so that $\mathbf b = (b_1, b_2, b_1)$.
As $X \ne \SOGr_1(V)$, we have $b_1 \ne 1$.
If $b_1 = 2$ then $X = \SOGr_2(V)$.
If $b_1 = 3$ and $b_2 = 2$ then $(\mathbf b^\top)^\sharp_\varepsilon = (\mathbf a^\top)^\sharp_\varepsilon$, hence $X = \SOGr_3(V) \sim \SOGr_2(V)$.
If either $b_1 = 3$, $b_2 \ge 3$ or $b_1 \ge 4$ then $(\mathbf b^\top)^\sharp_\varepsilon$ has the form $(3,3,3,\ldots)$ or $(3,3,2,2,\ldots)$, which implies $(\mathbf b^\top)^\sharp_\varepsilon \succ (\mathbf a^\top)^\sharp_\varepsilon$.

Suppose that $|\mathbf b| = 4$, so that $\mathbf b = (b_1,b_2,b_2,b_1)$.
If $b_1 = 1$ and $b_2 = 2$ then $(\mathbf b^\top)^\sharp_\varepsilon = (\mathbf a^\top)^\sharp_\varepsilon$ and we get $X \in \lbrace \SOFl^\pm_{(1,2,2,1)}(V) \rbrace \subset \bl \SOGr_2(V) \br$.
If $b_1 = 1$ and $b_2 = 3$ then $(\mathbf b^\top)^\sharp_\varepsilon = (\mathbf a^\top)^\sharp_\varepsilon$ and we get $X \in \lbrace \SOFl^\pm_{(1,3,3,1)}(V) \rbrace \subset \bl \SOGr_2(V) \br$.
If $b_2 = 1$ and $b_1 = 2$ then $X = \SOGr_2(V)$.
If $b_2 = 1$ and $b_1 = 3$ then $X = \SOGr_3(V)$.
If either $b_1 = 1$, $b_2 \ge 4$ or $b_2 = 1$, $b_1 \ge 4$ then $(\mathbf b^\top)^\sharp_\varepsilon$ has the form $(3,3,2,2,\ldots)$, hence $(\mathbf b^\top)^\sharp_\varepsilon \succ (\mathbf a^\top)^\sharp_\varepsilon$.
If $b_1,b_2 \ge 2$ then $(\mathbf b^\top)^\sharp_\varepsilon$ has the form $(4,4,\ldots)$, hence $(\mathbf b^\top)^\sharp_\varepsilon \succ (\mathbf a^\top)^\sharp_\varepsilon$.

In the case $|\mathbf b| \ge 5$ we have $(\mathbf b^\top)^\sharp_\varepsilon \succcurlyeq [5,1^{2n-5}] \succ (\mathbf a^\top)^\sharp_\varepsilon$.
\end{proof}

\subsection{Checking \texorpdfstring{$H$}{H}-sphericity for a given flag variety}

Let $H \subset G$ be a connected reductive subgroup of~$G$ and let $I \subset S$ be an arbitrary subset.
In this subsection, we present a criterion which enables one to check $H$-sphericity of~$X_I$ effectively.
This criterion is based on results of Panyushev~\cite{Pan}; see details in~\cite[\S\,4.3]{AvP2}.

\begin{proposition}[{\cite[Proposition~4.6]{AvP2}}] \label{prop_sphericity_criterion}
Let $K$ be a connected reductive group.
Suppose that $X$ is a smooth complete irreducible $K$-variety, $Y \subset X$ is a closed $K$-orbit, $y \in Y$, and $M$ is a Levi subgroup of~$K_y$.
Then the following conditions are equivalent:
\begin{enumerate}[label=\textup{(\arabic*)},ref=\textup{\arabic*}]
\item \label{X_spherical_1}
$X$ is a $K$-spherical variety.

\item \label{X_spherical_2}
$T_y X / T_y Y$ is a spherical $M$-module.
\end{enumerate}
Moreover, under the above two conditions one has $\rk_K X = \rk_M (T_y X / T_y Y)$.
\end{proposition}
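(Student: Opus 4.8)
The plan is to translate the statement into the language of complexity and rank and then reduce, via an affine slice, to a theorem of Panyushev on tangent spaces at fixed points. Write $c_K(X)$ for the complexity of a $K$-variety $X$, that is, the codimension of a generic $B_K$-orbit; thus $X$ is $K$-spherical if and only if $c_K(X) = 0$, and $\rk_K X$ denotes its rank as defined above. Put $N = T_y X / T_y Y$. Since $X$ is complete, the closed orbit $Y$ is complete, so $P := K_y$ is a parabolic subgroup with Levi decomposition $P = M \ltimes P^u$, where we take $M \supseteq T_K$; then $N$ is naturally a $P$-module, regarded as an $M$-module by restriction. The whole statement follows once I establish the two equalities $c_K(X) = c_M(N)$ and $\rk_K X = \rk_M N$.

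First I would produce the slice. Let $P^- = M \ltimes P^{u,-}$ be the opposite parabolic and $B_K^- \subset P^-$ the opposite Borel, so that $B_K^- \cap M$ is a Borel subgroup of $M$, which I denote $B_M^-$; complexity and rank may be computed with respect to $B_K^-$ in place of $B_K$, as any two Borel subgroups are conjugate. By the local structure theorem (see, e.g., \cite{Tim}), the smoothness of $X$ together with the closed orbit $Y \cong K/P$ yields an $M$-stable locally closed affine subvariety $S \ni y$, transverse to $Y$, such that the action map $P^{u,-} \times S \to X$ is an open immersion onto a $P^-$-stable open subset $X_-$. Transversality gives an $M$-equivariant isomorphism $T_y S \cong N$, and $y$ is an $M$-fixed point of $S$.

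Next I would absorb the unipotent factor. Under the identification $X_- = P^{u,-} \times S$ the projection onto $S$ is equivariant for $B_K^- \twoheadrightarrow B_K^-/P^{u,-} = B_M^-$, and its fibers are single $P^{u,-}$-orbits. Since $X_-$ is open and dense in the irreducible variety $X$, complexity and rank may be read off on $X_-$. A generic $B_K^-$-orbit contains the whole $P^{u,-}$-direction through a generic point and projects onto a generic $B_M^-$-orbit on $S$, so a dimension count gives $c_K(X) = c_{B_K^-}(X_-) = c_M(S)$. For the rank, every $B_K^-$-semiinvariant rational function is $P^{u,-}$-invariant, because $P^{u,-}$ is unipotent and so carries no nontrivial character, and hence descends to $S$; as its weight factors through $T_K = T_M$, this identifies the two weight lattices and yields $\rk_K X = \rk_M S$.

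Finally I would invoke Panyushev. Since $S$ is smooth and affine with $M$-fixed point $y$, the key result of Panyushev \cite{Pan} gives $c_M(S) = c_M(T_y S)$ and $\rk_M S = \rk_M T_y S$. Combining this with $T_y S \cong N$ produces $c_K(X) = c_M(N)$ and $\rk_K X = \rk_M N$; in particular $X$ is $K$-spherical exactly when $c_M(N) = 0$, i.e. when $N$ is a spherical $M$-module, and the rank formula holds. The main obstacle is precisely the passage from $X$ to its linear slice: the geometric input is the local structure theorem furnishing the transverse affine $M$-slice $S$ with $T_y S \cong N$, while the substantive analytic content—the invariance of complexity and rank under degeneration to the tangent space at a fixed point—is exactly what Panyushev's theorem supplies.
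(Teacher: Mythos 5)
The paper gives no in-text proof of this statement: it is imported verbatim from \cite[Proposition~4.6]{AvP2}, with the remark that it ``is based on results of Panyushev \cite{Pan}''. The standard derivation there runs in the opposite order to yours: one first applies Panyushev's theorem to degenerate $X$ to the normal bundle $K \times_{K_y} (T_yX/T_yY)$ of the closed orbit, and then trivializes that homogeneous bundle over the big cell of $K/K_y$ to descend from $K$ to the Levi $M$; you instead first trivialize $X$ itself over the big cell (local structure theorem) and then degenerate the affine slice $S$ to its tangent space at the fixed point. Both routes use exactly the same two ingredients, and your argument is correct, but two points deserve tightening. First, the statement you attribute to \cite{Pan} --- that a smooth affine $M$-variety with a fixed point has the same complexity and rank as its tangent space there --- is not how Panyushev's result is phrased; it is the special case $Y = \lbrace y \rbrace$ of his theorem that complexity and rank are unchanged upon passing to the (co)normal bundle of a $G$-stable irreducible subvariety of a smooth $G$-variety (alternatively it follows from Luna's slice theorem at a fixed point together with the invariance of complexity and rank under dominant, generically finite, equivariant morphisms). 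You should say which of these you are using, since the fixed-point formulation is not a quotable theorem as stated. Second, the local structure theorem producing the $P^-$-stable open cell $P^{u,-} \times S$ requires a quasi-projective (or normal plus Sumihiro) ambient variety; ``smooth complete'' alone does not literally suffice, so you should either note that a $K$-stable quasi-projective open neighborhood of the closed orbit exists by Sumihiro's theorem, or restrict to the projective case actually needed in the paper. With these two repairs the proof is complete; as a bonus, your computation shows the finer statement that the weight lattices $\Lambda_X$ and $\Lambda_{T_yX/T_yY}$ coincide, not merely their ranks.
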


It is well known that, under an appropriate choice of $H$ within its conjugacy class in~$G$, one can achieve the inclusion $B_H^- \subset B_G^-$.
In this situation, Proposition~\ref{prop_sphericity_criterion} combined with Theorem~\ref{thm_Gamma_is_free}(\ref{thm_Gamma_is_free_b}) yield the following result, which is widely used throughout this paper in explicit calculations.

\begin{proposition}[{see~\cite[Corollary~4.8]{AvP2}}]
\label{prop_sph_criterion_eff}
Suppose that $B_H^- \subset B_G^-$ and $M$ is a Levi subgroup of $P_I^- \cap H$.
Then the following conditions are equivalent:
\begin{enumerate}[label=\textup{(\arabic*)},ref=\textup{\arabic*}]
\item
$X_I$ is an $H$-spherical variety.

\item
$\mathfrak g/ (\mathfrak p_I^- + \mathfrak h)$ is a spherical $M$-module.
\end{enumerate}
Moreover, under the above conditions one has
\begin{equation} \label{eqn_rank_of_RBM}
\rk \Gamma_I(G,H) = |I| + \rk_M(\mathfrak g / (\mathfrak p_I^- + \mathfrak h)).
\end{equation}
\end{proposition}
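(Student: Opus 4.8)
The plan is to deduce the statement directly from Panyushev's criterion (Proposition~\ref{prop_sphericity_criterion}), applied with $K = H$ and $X = X_I = G/P_I^-$, together with the rank formula of Theorem~\ref{thm_Gamma_is_free}(\ref{thm_Gamma_is_free_b}). The variety $X_I$ is smooth, complete and irreducible, being a flag variety of~$G$, so it fits the hypotheses of Proposition~\ref{prop_sphericity_criterion} as soon as a closed $H$-orbit and a point on it are selected.

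First I would produce the closed orbit. Let $o = eP_I^- \in X_I$ be the base point; its $G$-stabilizer is~$P_I^-$, so its $H$-stabilizer is $H_o = H \cap P_I^-$. Since $B_H^- \subset B_G^- \subset P_I^-$ and $B_H^- \subset H$, we have $B_H^- \subset H_o$; thus $H_o$ contains a Borel subgroup of~$H$ and is therefore a parabolic subgroup of~$H$. Consequently the orbit $Y := H \cdot o \simeq H/H_o$ is complete, hence closed in the separated variety~$X_I$. This is the closed $H$-orbit required by Proposition~\ref{prop_sphericity_criterion}, and by hypothesis $M$ is a Levi subgroup of $H_o = H \cap P_I^-$, so we may take $y = o$ in that proposition.

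Next I would identify the $M$-module $T_o X_I / T_o Y$ appearing in Proposition~\ref{prop_sphericity_criterion}(\ref{X_spherical_2}) with $\mathfrak g / (\mathfrak p_I^- + \mathfrak h)$. Using the standard description of tangent spaces of homogeneous spaces at the base point, one has the $H_o$-equivariant identifications $T_o X_I = \mathfrak g / \mathfrak p_I^-$ and $T_o Y = \mathfrak h / (\mathfrak h \cap \mathfrak p_I^-)$, the embedding $T_o Y \hookrightarrow T_o X_I$ being induced by the inclusion $\mathfrak h \hookrightarrow \mathfrak g$. The image of this embedding is $(\mathfrak h + \mathfrak p_I^-)/\mathfrak p_I^-$, so the quotient is the $H_o$-module $\mathfrak g/(\mathfrak p_I^- + \mathfrak h)$; restricting the action to $M \subset H_o$ yields the desired $M$-module. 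With this identification, the equivalence of conditions~(1) and~(2) in the proposition is precisely the equivalence of~(\ref{X_spherical_1}) and~(\ref{X_spherical_2}) in Proposition~\ref{prop_sphericity_criterion}, and the last assertion of the latter gives $\rk_H X_I = \rk_M(\mathfrak g/(\mathfrak p_I^- + \mathfrak h))$.

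Finally, for the rank formula~(\ref{eqn_rank_of_RBM}) I would invoke Theorem~\ref{thm_Gamma_is_free}(\ref{thm_Gamma_is_free_b}): when $X_I$ is $H$-spherical one has $\rk \Gamma_I(G,H) = |I| + \rk_H X_I$, and substituting the rank equality just obtained gives $\rk \Gamma_I(G,H) = |I| + \rk_M(\mathfrak g/(\mathfrak p_I^- + \mathfrak h))$. The only genuinely delicate point is the verification that $Y$ is closed, that is, that $H_o$ is parabolic; everything else is a routine unwinding of the tangent-space computations and a citation of Panyushev's result through Proposition~\ref{prop_sphericity_criterion}. I therefore expect no serious obstacle, since the statement is essentially a specialization of that proposition to flag varieties.
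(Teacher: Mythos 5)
Your argument is correct and is exactly the route the paper takes: it derives the statement from Proposition~\ref{prop_sphericity_criterion} applied to the closed orbit $H\cdot o$ through the base point (closed because $H\cap P_I^-$ contains $B_H^-$ and is hence parabolic in~$H$), identifies $T_oX_I/T_o(H\cdot o)$ with $\mathfrak g/(\mathfrak p_I^-+\mathfrak h)$, and then combines the resulting rank equality with Theorem~\ref{thm_Gamma_is_free}(\ref{thm_Gamma_is_free_b}). No gaps.
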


\section{Classification in the symplectic case}
\label{sect_sympl_case}

\subsection{Preliminary remarks}

Throughout this section, we assume that $V$ is a vector space of even dimension $d \ge 4$ equipped with a nondegenerate skew-symmetric bilinear form~$\omega$ and $H$ is a connected reductive subgroup of $G = \Sp(V)$.

When referring to the classification of spherical modules, we always use the list in \cite[\S\,5]{Kn} (see also \cite[Theorems~5.1--5.2]{AvP1}) and the general criterion provided by \cite[Theorem~5.3]{AvP1}.
One useful consequence of this classification is the following lemma, for which we provide a proof in order to demonstrate application of the above-cited sources in an example.

\begin{lemma} \label{lemma_VixVj_spherical}
Consider the group $K = \GL_{n_1} \times \ldots \times \GL_{n_k}$ with $n_1 \ge \ldots \ge n_k \ge 1$ and the $K$-module
\[
W = \bigoplus \limits_{1 \le i < j \le k} \mathop{\vphantom|\FF^{n_i}}\limits_i{\!} \otimes \mathop{\vphantom|\FF^{n_j}}\limits_j{\!}.
\]
Then $W$ is $K$-spherical if and only if one of the following two conditions holds:
\begin{enumerate}[label=\textup{(\arabic*)},ref=\textup{\arabic*}]
\item
$k = 2$;

\item
$k = 3$ and $n_2 = n_3 = 1$.
\end{enumerate}
\end{lemma}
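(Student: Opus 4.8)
The plan is to combine a description of the $K$-submodules of $W$ with the classification of spherical modules from \cite[\S\,5]{Kn} and the criterion \cite[Theorem~5.3]{AvP1}. First I would record the structural observation that each summand $W_{ij} = \FF^{n_i} \otimes \FF^{n_j}$ is a simple $K$-module (the remaining factors of $K$ acting trivially on it) and that these summands are pairwise non-isomorphic; hence the $K$-submodules of $W$ are exactly the sums $\bigoplus_{(i,j) \in E} W_{ij}$ over sets $E$ of pairs. Since a subvariety of a spherical variety is spherical (Theorem~\ref{thm_subvar_sph}), to prove non-sphericity of $W$ it suffices to exhibit a single non-spherical submodule, whereas in the positive cases the criterion is applied to $W$ itself.

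For the \emph{if} direction: when $k = 2$ the module $(\GL_{n_1} \times \GL_{n_2}, \FF^{n_1} \otimes \FF^{n_2})$ is spherical, being one of the entries of the list in \cite[\S\,5]{Kn}. When $k = 3$ and $n_2 = n_3 = 1$, the module is $[\FF^{n_1}]_{\chi_2} \oplus [\FF^{n_1}]_{\chi_3} \oplus \FF^1_{\chi_2 + \chi_3}$ for $K = \GL_{n_1} \times \GL_1 \times \GL_1$; passing to the saturation $K' \times Z$ as in \S\,\ref{subsec_spherical_modules} splits off the line $\FF^1_{\chi_2+\chi_3}$ and leaves two copies of the tautological $\GL_{n_1}$-module linked by $\SL_{n_1}$, and one checks via \cite[Theorem~5.3]{AvP1} that the outcome is a direct sum of indecomposable saturated spherical modules, so $W$ is spherical.

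For the \emph{only if} direction, which is the substantive part, I would isolate two auxiliary non-sphericity statements, proved by the same criterion: (a) the \emph{triangle} module $(\GL_a \times \GL_b \times \GL_c, \FF^a \otimes \FF^b \oplus \FF^a \otimes \FF^c \oplus \FF^b \otimes \FF^c)$ is not spherical once the two smaller of $a,b,c$ both exceed $1$; and (b) the \emph{three-leaf star} $(\GL_a \times \GL_{b_1} \times \GL_{b_2} \times \GL_{b_3}, \FF^a \otimes (\FF^{b_1} \oplus \FF^{b_2} \oplus \FF^{b_3}))$ is not spherical whenever $a \ge 2$. Both are established by forming the saturation and observing that the resulting indecomposable module does not occur among the indecomposable saturated spherical modules of \cite[\S\,5]{Kn}. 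Granting (a) and (b), the conclusion follows by a short case analysis: if $k = 3$ and $n_2 \ge 2$, then $W$ is precisely the triangle of (a); if $k \ge 4$ and some $n_i \ge 2$, then that vertex has at least three neighbours and the three summands $W_{i j_1} \oplus W_{i j_2} \oplus W_{i j_3}$ form a non-spherical star by (b); finally if $k \ge 4$ and all $n_i = 1$, then $K$ is a torus of dimension $k$ while $\dim W = \binom{k}{2} > k = \dim B_K$, so $W$ cannot be spherical, a Borel subgroup with a dense orbit necessarily satisfying $\dim W \le \dim B_K$.

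The main obstacle is item (a): unlike the star, the triangle with $n_2 \ge 2$ \emph{equals} $W$ itself and has no proper non-spherical submodule (its proper submodules are single edges and two-edge paths, which in the relevant range are spherical), so non-sphericity cannot be inferred by passing to a submodule and must be read off directly from the classification, uniformly in $n_1, n_2, n_3$. I expect the cleanest way to settle (a) and (b) is to compute the decomposition of the saturation into indecomposable saturated spherical modules and compare with the finite list in \cite[\S\,5]{Kn} (equivalently, to invoke \cite[Theorem~5.3]{AvP1}); the dimension bound $\dim W \le \dim B_K$ disposes of many instances but fails exactly when one factor is large, and so cannot replace the classification in general.
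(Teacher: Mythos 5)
Your plan follows essentially the same route as the paper's proof: the case $k=2$ and the case $k=3$, $n_2=n_3=1$ are settled by the list in \cite[\S\,5]{Kn} and the criterion \cite[Theorem~5.3]{AvP1} exactly as in the text, and every non-spherical configuration is reduced to exhibiting a saturated indecomposable (sub)module that is absent from that list. Your two deviations are harmless: for $k\ge 4$ with $n_1\ge 2$ the paper uses the full $(k-1)$-leaf star $\bigoplus_{j\ge 2}\FF^{n_1}\otimes\FF^{n_j}$ where you use a three-leaf one, and for the all-ones case the paper argues uniformly for every $k\ge 3$ via linear independence of the characters $\chi_i+\chi_j$ where you use the count $\binom{k}{2}>k=\dim B_K$ for $k\ge 4$ (both work; your count fails at $k=3$, which you correctly route through the other branch).

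The one step that would fail as written is the hypothesis of your item (a). As stated (``the two smaller of $a,b,c$ both exceed $1$'') it requires $n_3\ge 2$, yet your case analysis invokes it whenever $k=3$ and $n_2\ge 2$, which includes $n_3=1$. That subcase is then formally uncovered, and it is not vacuous: for $(\GL_5\times\GL_2\times\GL_1,\;\FF^5\otimes\FF^2\oplus\FF^5\oplus\FF^2)$ one has $\dim W=17<19=\dim B_K$, so the dimension bound you fall back on elsewhere does not detect non-sphericity either. The correct claim, and the one the paper proves, is that the triangle is non-spherical as soon as $n_2\ge 2$, with $n_3$ arbitrary: under that hypothesis alone $W$ is already indecomposable (the factors $\GL_{n_1}$ and $\GL_{n_2}$ link all three edges) and saturated, and it does not occur in \cite[\S\,5]{Kn}. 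Your closing paragraph (``uniformly in $n_1,n_2,n_3$'') shows you intend exactly this, so the repair is only to restate (a) with the weaker hypothesis $n_2\ge 2$.
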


\begin{proof}
If $k = 2$ then $W = \FF^{n_1} \otimes \FF^{n_2}$, the latter module appearing in the list of \cite[\S\,5]{Kn} hence being $K$-spherical.
In what follows we assume $k \ge 3$.
If $n_1 = \ldots = n_k = 1$ then the pair $(K,W)$ is equivalent to
\[
(\underbrace{\FF^\times \times \ldots \times \FF^\times}_k, \bigoplus \limits_{1 \le i < j \le k} \FF^1_{\chi_i + \chi_j}).
\]
By \cite[Theorem~5.3]{AvP1}, the latter module is spherical if and only if all characters in the multiset $\lbrace \chi_i + \chi_j \mid 1 \le i < j \le k \rbrace$ are linearly independent, which holds only for $k = 3$.
Now we assume $n_1 \ge 2$.
If $k \ge 4$ then $W$ contains the $K$-submodule $\bigoplus \limits_{2 \le j \le k} \mathop{\vphantom|\FF^{n_1}}\limits_1{\!} \otimes \mathop{\vphantom|\FF^{n_j}}\limits_j{\!}$, which is saturated and indecomposable but not present in the list of \cite[\S\,5]{Kn}, hence not spherical.
It remains to consider the case $k = 3$.
If $n_2 \ge 2$ then $W$ is saturated, indecomposable but not present in the list of~\cite[\S,5]{Kn}, hence not spherical.
Finally, for $n_2 = n_3 = 1$ the pair $(K,V)$ is equivalent to
\[
(\SL_{n_1} \times \FF^\times \times \FF^\times \times \FF^\times, [\FF^{n_1}]_{\chi_1+\chi_2} \oplus [\FF^{n_1}]_{\chi_1+ \chi_3} \oplus \FF^1_{\chi_2 + \chi_3}).
\]
By \cite[Theorem~5.3]{AvP1}, the sphericity conditions for the latter module are as follows:
\begin{itemize}
\item
$\chi_1 + \chi_2, \ \chi_1 + \chi_3, \ \chi_2 + \chi_3$ are linearly independent if $n_1 = 2$;
\item
$\chi_2 - \chi_3, \ \chi_2 + \chi_3$ are linearly independent if $n_1 \ge 3$.
\end{itemize}
As both conditions hold, the proof is completed.
\end{proof}

For explicit calculations involving the subgroup~$H$, we use the following conventions.
First, we identify $V$ with $\FF^d$.
Second, suppose that $V$ is written as $V = V_1 \oplus \ldots \oplus V_m$ where all direct summands are pairwise orthogonal with respect to~$\omega$ and each $V_i$ is an $H$-module that is either simple or weakly reducible.
If $\dim V_1 = 2k$ then $V_1$ is embedded in $V$ as the linear span of the vectors $e_1,\ldots, e_k, e_{d-k+1},\ldots, e_d$ and $V_2 \oplus \ldots \oplus V_m$ is embedded as the linear span of the vectors $e_{k+1}, \ldots, e_{d-k}$.
Moreover, if $V_1$ is weakly reducible of the form $\Omega(W_1)$ then we assume in addition that the $H$-submodule $W_1 \subset V$ is the linear span of the vectors $e_1,\ldots, e_k$ and the $H$-submodule $W_1^* \subset V$ is the linear span of the vectors $e_{d-k+1}, \ldots, e_d$.
The embeddings of $V_2$, \ldots, $V_m$ in $V$ are determined by iterating the above procedure.

For checking sphericity of a given $H$-variety we always use Proposition~\ref{prop_sph_criterion_eff} with $B_H^- = B_G^- \cap H$.
The above conventions on~$H$ always guarantee that $B_G^- \cap H$ is a Borel subgroup of~$H$.
It is well known that in the case $H = G$ every flag variety of~$G$ is $H$-spherical, this fact will be used without extra explanation.

In \S\S\,\ref{subsec_P(V)}--\ref{subsec_SpGr_2}, our classification of spherical actions on flag varieties of $G$ involves all possible subgroups~$H$ including symmetric subgroups and Levi subgroups.
On the contrary, in~\S\,\ref{subsec_sympl_case_end} we exclude symmetric subgroups and Levi subgroups referring to~\cite[\S\,5]{AvP2}.

\subsection{Spherical actions on \texorpdfstring{$\PP(V)$}{P(V)}}
\label{subsec_P(V)}

The starting point of our classification in the symplectic case is the following result.

\begin{theorem} \label{thm_sp_SA_P(V)}
The variety $\PP(V)$ is $H$-spherical if and only if the pair $(H,V)$ is BF-equivalent to a pair in Table~\textup{\ref{table_sph_PV}}.
\end{theorem}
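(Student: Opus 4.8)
The plan is to reduce the statement to a question about spherical modules and then exploit the classification of those. By the first (trivial) equivalence in Theorem~\ref{thm_result_sp_PV}, the variety $\PP(V)$ is $H$-spherical precisely when $V$ is a spherical $(H\times\FF^\times)$-module with $\FF^\times$ acting by scalar transformations. Since $H\subset\Sp(V)$, the space $V$ carries the $H$-invariant nondegenerate skew-symmetric form $\omega$, so we are exactly in the setting to which Propositions~\ref{prop_no_nondegen} and~\ref{prop_spherical+sympl} apply, and the task becomes that of classifying, up to BF-equivalence, the skew-symmetric spherical modules $(H\times\FF^\times,V)$.

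First I would fix an orthogonal decomposition $V=V_1\oplus\ldots\oplus V_m$ into $H$-submodules, each nondegenerate and minimal with this property, so that none contains a proper nondegenerate $H$-submodule. By Proposition~\ref{prop_no_nondegen} each $V_i$ is either irreducible or weakly reducible. As a submodule of a spherical module is spherical, each $V_i$ is a spherical $(H_i\times\FF^\times)$-module, where $H_i$ is the image of $H$ in $\Sp(V_i)$ and the image of $\FF^\times$ is again the scalar torus; since $V_i$ has no proper nondegenerate $H_i$-submodule, Proposition~\ref{prop_spherical+sympl} identifies $(H_i,V_i)$, up to equivalence, with one of the four entries of Table~\ref{table_sympl_sph_mod}. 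Thus every summand is either the irreducible symplectic module $(\Sp_{2n_i},\FF^{2n_i})$ or one of the three weakly reducible modules $\Omega(W)$ built on $\GL_n$, $\SL_n$, or $\Sp_{2n}\times\FF^\times$.

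The heart of the argument is to control how these summands may be assembled: I would show that at most one summand is weakly reducible, that the irreducible summands are acted on by an \emph{external} product $\prod_i\Sp_{2n_i}$ with no diagonal identifications, and that no extra linking occurs between the group of the weakly reducible summand and the symplectic factors. The governing mechanism is that only a single central $\FF^\times$ is available. An irreducible summand $\FF^{2n_i}$ already has a dense $B_{\Sp_{2n_i}}$-orbit (the symmetric powers of the standard $\Sp_{2n_i}$-module are irreducible, so $\FF[\FF^{2n_i}]$ is multiplicity free) and hence consumes none of the central torus; by contrast a weakly reducible summand $\Omega(W)$ carries the $H$-invariant contraction $\langle\,\cdot\,,\cdot\,\rangle$, a bidegree-$(1,1)$ invariant forcing repeated $H$-isotypic components in $\FF[V]$ that are separated only by total degree, and a diagonal identification of two symplectic summands produces the invariant $\omega(v_1,v_2)$ with the same effect. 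Two such ``pairing sources'' would therefore yield two independent invariants of the same $(H\times\FF^\times)$-weight and hence a genuine multiplicity in $\FF[V]$, contradicting sphericity. Concretely I would run this through the sphericity criterion \cite[Theorem~5.3]{AvP1}, in the style of Lemma~\ref{lemma_VixVj_spherical}, checking the finitely many combinations of Table~\ref{table_sympl_sph_mod} entries. This is the main obstacle: it requires excluding every nontrivial Goursat-type embedding of $H$ into $\prod_i H_i$, including the coincidence $\SL_2\simeq\Sp_2$, and for large $n$ the exclusion of a diagonal symplectic action is invisible at the level of dimension counts and must be settled by the multiplicity-freeness test directly.

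Finally, for the converse direction I would verify that each pair in Table~\ref{table_sph_PV} is genuinely $(H\times\FF^\times)$-spherical. Here the symplectic factors already produce a dense Borel orbit on $\oplus_i\FF^{2n_i}$ on their own; the single weakly reducible summand together with the central $\FF^\times$ is spherical by the corresponding entry of Table~\ref{table_sympl_sph_mod}; and since the two parts act on orthogonal summands through a direct product of groups, one checks that the product of Borel subgroups has a dense orbit on $V$ (the scaling action of $\FF^\times$ preserves the open $\prod_i B_{\Sp_{2n_i}}$-orbit, so the coordinates can be adjusted independently). Reading off the four resulting shapes then gives exactly Table~\ref{table_sph_PV}, completing the proof.
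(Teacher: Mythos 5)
Your proposal follows essentially the same route as the paper's proof: reduce to sphericity of $V$ as an $(H\times\FF^\times)$-module, decompose $V$ orthogonally into minimal nondegenerate summands, identify each summand via Proposition~\ref{prop_spherical+sympl} with an entry of Table~\ref{table_sympl_sph_mod}, then rule out a second weakly reducible summand and any diagonal linking of factors (the paper does the latter by observing that a shared simple factor would force $V_i\simeq V_j$ as $(H\times\FF^\times)$-modules, killing sphericity already in degree one), and finally verify the converse. The argument is correct and matches the paper's.
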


\begin{proof}
The variety $\PP(V)$ is $H$-spherical if and only if $V$ is a spherical $(H \times \FF^\times)$-module, where $\FF^\times$ acts on $V$ by scalar transformations.

We may assume that the pair $(H,V)$ is BF-equivalent to a pair
\[
(H, V_1 \oplus \ldots \oplus V_p \oplus \Omega(W_1) \oplus \ldots \oplus \Omega(W_q))
\]
where $p,q \ge 0$ and each $V_i$ and $W_j$ are simple $H$-modules.

If $V$ is a spherical $(H \times \FF^\times)$-module then each $V_i$ and each $W_i$ is also a spherical $(H \times \FF^\times)$-module.
Then by Proposition~\ref{prop_spherical+sympl} the image of $H$ in $\GL(V_i)$ coincides with $\Sp(V_i)$ for all $i = 1,\ldots,p$.
Similarly, for each $i = 1,\ldots,q$ the pair $(H,W_i)$ is equivalent to one of $(\GL_n, \FF^n)$ ($n \ge 1$), $(\SL_n, \FF^n)$ ($n \ge 3$), or $(\Sp_{2n} \times \FF^\times, [\FF^{2n}]_{\chi})$ ($n \ge 2$).

Next we show that $q \le 1$. Indeed, otherwise $\Omega(W_1) \oplus \Omega(W_2)$ would be a spherical $(H \times \FF^\times)$-module, hence $W_1 \oplus W_1^* \oplus W_2 \oplus W_2^*$ would be a spherical $(\GL(W_1) \times \GL(W_2) \times \FF^\times)$-module (where $\FF^\times$ acts by scalar transformations), which is not the case.

Now suppose there is a simple factor of $H$ that acts nontrivially on some $V_i$ and some other summand.
According to the classification, this other summand cannot be $W_1$; neither can it be $V_j$ with $j \ne i$ since otherwise $V_i \simeq V_j$ as $(H \times \FF^\times)$-modules, in which case $V_i \oplus V_j$ cannot be a spherical $(H \times \FF^\times)$-module.

It follows from the above arguments that $V$ can be a spherical $(H \times \FF^\times)$-module only if the pair $(H,V)$ is BF-equivalent to a pair in Table~\ref{table_sph_PV}.
On the other hand, for each of these pairs the $(H \times \FF^\times)$-module $V$ is spherical.
\end{proof}

\subsection{Spherical actions on \texorpdfstring{$\SpGr_{\max}(V)$}{SpGr\_max(V)}}
\label{subsec_SpGr_max}

Recall from \S\,\ref{subsec_FV_via_compositions} that $\SpGr_{\max}(V) \simeq X_I$ with $I = \lbrace d/2 \rbrace$.
If $\SpGr_{\max}(V)$ is an $H$-spherical variety then $\PP(V)$ should be also $H$-spherical in view of Proposition~\ref{prop_P(V)_is_minimal} and Theorem~\ref{thm_sph_descent}.
Consequently, by Theorem~\ref{thm_sp_SA_P(V)} it suffices to consider only the cases listed in Table~\ref{table_sph_PV}.

\begin{proposition} \label{prop_sp...sp_Grmax}
Suppose that the pair $(H,V)$ is BF-equivalent to that in Case~\textup{\ref{sp_case_1}} of Table~\textup{\ref{table_sph_PV}}
with $n_1 \ge n_2 \ge \ldots \ge n_k \ge 1$.
Then $\SpGr_{\max}(V)$ is $H$-spherical if and only if one of the following conditions holds:
\begin{enumerate}[label=\textup{(\arabic*)},ref=\textup{\arabic*}]
\item \label{prop_sp...sp_Grmax_1}
$k \le 2$;

\item \label{prop_sp...sp_Grmax_2}
$k = 3$ and $n_2 = n_3 = 1$.
\end{enumerate}
\end{proposition}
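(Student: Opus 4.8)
The plan is to apply the effective sphericity criterion of Proposition~\ref{prop_sph_criterion_eff}. Write $n = n_1 + \ldots + n_k$, so that $\dim V = 2n$ and, by the description in \S\,\ref{subsec_FV_via_compositions}, one has $\SpGr_{\max}(V) \simeq X_I$ with $I = \lbrace n \rbrace$. It therefore suffices to exhibit a Levi subgroup $M$ of $P_I^- \cap H$ together with the $M$-module $\mathfrak g/(\mathfrak p_I^- + \mathfrak h)$ and to decide when the latter is spherical.

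First I would fix a convenient base point. For each $i$ choose a Lagrangian subspace $L_i \subset V_i$ and set $L = L_1 \oplus \ldots \oplus L_k$, a Lagrangian subspace of $V$ that is stable under $H = \Sp(V_1) \times \ldots \times \Sp(V_k)$. Arranging $H$ within its conjugacy class so that $B_H^- \subset B_G^-$ and $P_I^-$ is the $G$-stabilizer of~$L$, one gets $P_I^- \cap H = Q_1 \times \ldots \times Q_k$, where $Q_i \subset \Sp(V_i)$ is the (Siegel) parabolic stabilizing~$L_i$. Hence a Levi subgroup of $P_I^- \cap H$ is $M = \GL(L_1) \times \ldots \times \GL(L_k) \simeq \GL_{n_1} \times \ldots \times \GL_{n_k}$, and the closed $H$-orbit through $[L]$ is the product of Lagrangian Grassmannians $\SpGr_{\max}(V_1) \times \ldots \times \SpGr_{\max}(V_k)$.

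Next I would compute the module. Using the identification $\mathfrak g = \mathfrak{sp}(V) \simeq \mathrm S^2 V$ and the Lagrangian decomposition $V = L \oplus L^*$, one has $\mathrm S^2 V = \mathrm S^2 L \oplus (L \otimes L^*) \oplus \mathrm S^2 L^*$, where $L \otimes L^* = \mathfrak{gl}(L)$ is the Levi part of $\mathfrak p_I^-$ and, with the sign conventions fixed above, $\mathfrak g / \mathfrak p_I^- \simeq \mathrm S^2 L^*$. The image of $\mathfrak h = \bigoplus_i \mathfrak{sp}(V_i)$ in this quotient is spanned by the block-diagonal summand $\bigoplus_i \mathrm S^2 L_i^*$, so that
\[
\mathfrak g / (\mathfrak p_I^- + \mathfrak h) \simeq \mathrm S^2 L^* \big/ \bigoplus_{i} \mathrm S^2 L_i^* \simeq \bigoplus_{1 \le i < j \le k} L_i^* \otimes L_j^*.
\]
Since the pairs $(K, U)$ and $(K, U^*)$ are always equivalent, this $M$-module is equivalent to the module $W = \bigoplus_{1 \le i < j \le k} \FF^{n_i} \otimes \FF^{n_j}$ of Lemma~\ref{lemma_VixVj_spherical}.

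It then remains to invoke Lemma~\ref{lemma_VixVj_spherical}, which asserts that $W$ is $M$-spherical exactly when $k = 2$ or $k = 3$ with $n_2 = n_3 = 1$; together with the trivial case $k = 1$ (where the module is zero and hence spherical) this yields precisely conditions~(\ref{prop_sp...sp_Grmax_1}) and~(\ref{prop_sp...sp_Grmax_2}). The one step requiring genuine care is the third paragraph: one must pin down the sign conventions guaranteeing $B_H^- \subset B_G^-$ and then correctly identify both $\mathfrak g/\mathfrak p_I^-$ with $\mathrm S^2 L^*$ and the image of $\mathfrak h$ with the block-diagonal summand $\bigoplus_i \mathrm S^2 L_i^*$. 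Once the $M$-module has been extracted, the classification encoded in Lemma~\ref{lemma_VixVj_spherical} closes the argument at once.
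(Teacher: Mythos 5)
Your proposal is correct and follows essentially the same route as the paper: the paper likewise applies Proposition~\ref{prop_sph_criterion_eff}, identifies the pair $(M,\mathfrak g/(\mathfrak p_I^-+\mathfrak h))$ with $(\GL_{n_1}\times\ldots\times\GL_{n_k},\bigoplus_{i<j}\FF^{n_i}\otimes\FF^{n_j})$ (stating this as ``easy to see''), and concludes via Lemma~\ref{lemma_VixVj_spherical}. Your third paragraph merely supplies the details of that identification, and they check out.
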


\begin{proof}
If $k = 1$ then $\SpGr_{\max}(V)$ is $H$-spherical.
If $k \ge 2$ then it is easy to see that the pair $(M, \mathfrak g / (\mathfrak p_I^- + \mathfrak h))$ is equivalent to
\[
(\GL_{n_1} \times \ldots \times \GL_{n_k}, \bigoplus \limits_{1 \le i < j \le k} \mathop{\vphantom|\FF^{n_i}}\limits_i{\!} \otimes \mathop{\vphantom|\FF^{n_j}}\limits_j{\!}).
\]
By Lemma~\ref{lemma_VixVj_spherical}, the latter module is spherical if and only if either $k = 2$ or condition~(\ref{prop_sp...sp_Grmax_2}) holds.
\end{proof}

\begin{proposition} \label{prop_sp...spgl_Grmax}
Suppose that the pair $(H,V)$ is BF-equivalent to that in Case~\textup{\ref{sp_case_2}} of Table~\textup{\ref{table_sph_PV}}.
Then $\SpGr_{\max}(V)$ is $H$-spherical if and only if one of the following two conditions holds:
\begin{enumerate}[label=\textup{(\arabic*)},ref=\textup{\arabic*}]
\item
$k = 0$;

\item
$k = m = 1$.
\end{enumerate}
\end{proposition}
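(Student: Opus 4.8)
The plan is to apply the effective sphericity criterion of Proposition~\ref{prop_sph_criterion_eff}. As explained at the beginning of \S\ref{subsec_SpGr_max}, the hypothesis that $\SpGr_{\max}(V)$ be $H$-spherical already forces $(H,V)$ into Case~\ref{sp_case_2}, so only the sphericity of the module in that criterion has to be decided. Writing $I = \lbrace n \rbrace$ with $2n = \dim V$ and taking $B_H^- = B_G^- \cap H$ (a Borel subgroup of $H$ by our conventions), I would choose the base-point Lagrangian $U$ compatibly with the orthogonal decomposition $V = \FF^{2n_1} \oplus \ldots \oplus \FF^{2n_k} \oplus \Omega(\FF^m)$, namely $U = U_1 \oplus \ldots \oplus U_k \oplus \FF^m$ with each $U_i \subset \FF^{2n_i}$ a Lagrangian and $\FF^m \subset \Omega(\FF^m)$ the marked isotropic summand.

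The key computation is the identification of $(M, \mathfrak g/(\mathfrak p_I^- + \mathfrak h))$. Under the Levi $\GL(U) \simeq \GL_n$ of the Siegel parabolic $P_I^-$ one has $\mathfrak g/\mathfrak p_I^- \simeq \mathrm{S}^2 U$, and the decomposition of $U$ gives $\mathrm{S}^2 U = \bigoplus_{1 \le i \le k+1} \mathrm{S}^2 U_i \oplus \bigoplus_{1 \le i < j \le k+1} U_i \otimes U_j$, where $U_{k+1} = \FF^m$. Each factor $\mathfrak{sp}_{2n_i}$ with $i \le k$ maps onto its summand $\mathrm{S}^2 U_i$, whereas $\mathfrak{gl}_m$ preserves $U$ and hence lies in $\mathfrak{gl}(U) \subset \mathfrak p_I^-$, so it contributes nothing. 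Consequently $M = \GL_{n_1} \times \ldots \times \GL_{n_k} \times \GL_m$ and
\[
\mathfrak g/(\mathfrak p_I^- + \mathfrak h) \simeq \mathrm{S}^2 \FF^m \oplus \bigoplus_{1 \le i < j \le k} \FF^{n_i} \otimes \FF^{n_j} \oplus \bigoplus_{1 \le i \le k} \FF^{n_i} \otimes \FF^m,
\]
the surviving summand $\mathrm{S}^2 \FF^m$ being exactly what distinguishes this case from Proposition~\ref{prop_sp...sp_Grmax}.

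It then remains to decide sphericity of this $M$-module. For $k = 0$ it is $(\GL_m, \mathrm{S}^2 \FF^m)$, which is spherical; for $k = m = 1$ it is $\mathrm{S}^2 \FF^1 \oplus \FF^{n_1} \otimes \FF^1$ over $\GL_{n_1} \times \GL_1$, which is equivalent to a decomposable spherical module and hence spherical. For $k \ge 3$ the submodule $\bigoplus_{1 \le i < j \le k+1} \FF^{n_i} \otimes \FF^{n_j}$ (with $n_{k+1} = m$) already fails to be spherical by Lemma~\ref{lemma_VixVj_spherical}, and since every submodule of a spherical module is spherical, the whole module is non-spherical.

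The main obstacle is the remaining finite range $k = 2$ and $k = 1$ with $m \ge 2$, where the tensor submodule $\bigoplus_{i<j}\FF^{n_i}\otimes\FF^{n_j}$ can itself be spherical (precisely when the smaller dimensions among $n_1,n_2,m$ are equal to~$1$), so non-sphericity must be extracted from the interaction of the $\mathrm{S}^2 \FF^m$ summand with the summands $\FF^{n_i} \otimes \FF^m$, or from a repeated standard summand of some $\GL_{n_i}$. I would dispatch these configurations one by one using the classification of spherical modules in the form of the criterion \cite[Theorem~5.3]{AvP1}: for instance, when $n_1 = n_2 = m = 1$ the module is a direct sum of four one-dimensional weight spaces whose weights span a rank-$3$ lattice, hence are linearly dependent and the module is non-spherical; the remaining cases (those with $m \ge 2$, or with a repeated $\FF^{n_i}$) are excluded by inspection against the list of indecomposable saturated spherical modules. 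This case analysis, though elementary, is where the real work lies.
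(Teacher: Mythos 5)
Your identification of $(M,\mathfrak g/(\mathfrak p_I^-+\mathfrak h))$ is correct and in fact gives in one formula what the paper only writes out case by case: with $U_{k+1}=\FF^m$ the quotient is $\mathrm{S}^2 U_{k+1}\oplus\bigoplus_{i<j}U_i\otimes U_j$, and this specializes exactly to the modules appearing in the paper's Cases 1, 2 and 4. Where you diverge is in how the range of $k$ is cut down. The paper first enlarges $H$ to $\Sp_{2n_1}\times\ldots\times\Sp_{2n_k}\times\Sp_{2m}$ (using $\GL_m\subset\Sp_{2m}$) and invokes Proposition~\ref{prop_sp...sp_Grmax} to force $k\le 2$ before computing anything; you instead read the same constraint off the tensor submodule $\bigoplus_{1\le i<j\le k+1}U_i\otimes U_j$ via Lemma~\ref{lemma_VixVj_spherical}. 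These are equivalent (the cited proposition is itself proved from that lemma), and your version has the advantage of making the role of the extra summand $\mathrm{S}^2\FF^m$ transparent. The paper also disposes of the subcase $k=2$, $n_1=n_2=1$ by a second overgroup reduction (to $\Sp_4\times\GL_m$, forcing $m=1$) followed by the dimension count $\dim B_H=5<6=\dim X$, whereas you would inspect the explicit weight module directly; both work, and your observation that for $n_1=n_2=m=1$ one gets four characters in a rank-$3$ lattice is a correct instance of the paper's Case 4 computation.

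The one place you should be careful is the endgame you defer to ``inspection against the list'': the case $k=1$, $m\ge 2$, i.e.\ $(\GL_{n_1}\times\GL_m,\ \FF^{n_1}\otimes\FF^m\oplus\mathrm{S}^2\FF^m)$, and the case $k=2$ with exactly two of $n_1,n_2,m$ equal to $1$. These are genuinely where the content of the proposition lies (they are what separates conditions (1)--(2) from everything else), so they need to be actually checked, e.g.\ by verifying that the saturated indecomposable module $\FF^{n_1}\otimes\FF^m\oplus\mathrm{S}^2\FF^m$ does not occur in Knop's list for $m\ge 2$, or by the criterion of \cite[Theorem~5.3]{AvP1} as the paper does. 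Your sketch is sound and the conclusions you assert agree with the paper's, but as written these decisive verifications are promised rather than performed. Also, a small wording slip: the hypothesis that $(H,V)$ falls into Case~\ref{sp_case_2} of Table~\ref{table_sph_PV} is part of the statement of the proposition, not something forced by sphericity of $\SpGr_{\max}(V)$ (which only forces membership in one of the four cases of that table).
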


\begin{proof}
Without loss of generality we may assume $n_1 \ge n_2 \ge \ldots \ge n_k \ge 1$.
If $H$ acts spherically on $X = \SpGr_{\max}(V)$ then the group $\Sp_{2n_1} \times \ldots \times \Sp_{2n_k} \times \Sp_{2m}$ also acts spherically on~$X$.
Then Proposition~\ref{prop_sp...sp_Grmax} implies that $k \le 2$ and the following cases may occur.

\textit{Case}~1: $k = 0$.
The pair $(M, \mathfrak g/(\mathfrak p_I^- + \mathfrak h))$ is equivalent to $(\GL_m, \mathrm{S}^2 \FF^m)$, the latter module being spherical.

\textit{Case}~2: $k = 1$.
The pair $(M, \mathfrak g/(\mathfrak p_I^- + \mathfrak h))$ is equivalent to
\[
(\GL_{n_1} \times \GL_{m}, \mathop{\vphantom|\FF^{n_1}}\limits_1{\!} \otimes \mathop{\vphantom|\FF^m}\limits_2{\!} \oplus \mathop{\vphantom| \mathrm{S}^2\FF^m} \limits_2{\!}),
\]
the latter module being spherical if and only if $m = 1$.

\textit{Case}~3: $k = 2$, $n_1 = n_2 = 1$.
If $X$ is $H$-spherical then $X$ is $(\Sp_4 \times \GL_m)$-spherical, which implies $m = 1$ by the previous case.
Then $\dim B_H = 5 < 6 = \dim X$, hence $X$ is not $H$-spherical.

\textit{Case}~4: $k = 2$, $n_2 = m = 1$. In this case, the pair $(M, \mathfrak g / (\mathfrak p_I^- + \mathfrak h))$ is equivalent to
\[
(\GL_{n_1} \times \FF^\times \times \FF^\times, \mathop{[\FF^{n_1}]}_1{\!}_{\chi_1} \oplus \mathop{[\FF^{n_1}]}_1{\!}_{\chi_2} \oplus \FF^1_{\chi_1+\chi_2} \oplus \FF^1_{2\chi_1}),
\]
the latter module being not spherical.
\end{proof}

\begin{proposition}
Suppose that the pair $(H,V)$ is BF-equivalent to that in Case~\textup{\ref{sp_case_3}} of Table~\textup{\ref{table_sph_PV}}.
Then $\SpGr_{\max}(V)$ is not $H$-spherical.
\end{proposition}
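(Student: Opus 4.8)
The plan is to reduce the statement to the already-settled Case~\ref{sp_case_2} by enlarging $H$, and then to eliminate the single surviving subcase by a dimension count.

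First I would note that $\SL_m$ sits inside $\GL_m$ as subgroups of $\Sp(\Omega(\FF^m))$: both act on $\Omega(\FF^m) = \FF^m \oplus (\FF^m)^*$ and preserve the canonical symplectic form. Hence the group $H = \Sp_{2n_1} \times \ldots \times \Sp_{2n_k} \times \SL_m$ of Case~\ref{sp_case_3} is contained in $H' = \Sp_{2n_1} \times \ldots \times \Sp_{2n_k} \times \GL_m$, which is exactly the group of Case~\ref{sp_case_2} acting on the \emph{same} space $V$. With the conventions $B_H^- = B_G^- \cap H$ and $B_{H'}^- = B_G^- \cap H'$ one has $B_H^- \subset B_{H'}^-$, so a dense $B_H^-$-orbit in $\SpGr_{\max}(V)$ would be contained in a (necessarily dense) $B_{H'}^-$-orbit. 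Therefore $H$-sphericity of $\SpGr_{\max}(V)$ forces $H'$-sphericity, and Proposition~\ref{prop_sp...spgl_Grmax} then leaves only the possibilities $k = 0$ or $k = m = 1$. Since $m \ge 3$ in Case~\ref{sp_case_3}, the only surviving possibility is $k = 0$.

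It then remains to treat $H = \SL_m$, $V = \Omega(\FF^m)$ with $m \ge 3$, so that $G = \Sp_{2m}$ and $\SpGr_{\max}(V)$ is the Lagrangian Grassmannian of $\FF^{2m}$, of dimension $\tfrac{m(m+1)}{2}$. Here $\SL_m$ preserves the Lagrangian $\FF^m \subset \Omega(\FF^m)$, so $\mathfrak h \subset \mathfrak p_I^-$; thus in the notation of Proposition~\ref{prop_sph_criterion_eff} one gets $M = \SL_m$ and $\mathfrak g/(\mathfrak p_I^- + \mathfrak h) = \mathfrak g/\mathfrak p_I^- \cong \mathrm{S}^2(\FF^m)^*$. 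I would finish by a dimension estimate: since $\dim B_{\SL_m} = (m-1) + \binom{m}{2} = \tfrac{m(m+1)}{2} - 1 < \tfrac{m(m+1)}{2} = \dim \mathrm{S}^2(\FF^m)^*$, the $\SL_m$-module $\mathrm{S}^2(\FF^m)^*$ admits no dense Borel orbit and is therefore not spherical; equivalently, $B_H^-$ cannot act with a dense orbit on $\SpGr_{\max}(V)$. By Proposition~\ref{prop_sph_criterion_eff} this shows $\SpGr_{\max}(V)$ is not $H$-spherical, completing the argument.

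I expect the only point requiring care to be the verification that $H \subset H'$ is a genuine inclusion of subgroups of $G$ acting on one and the same symplectic space, so that Proposition~\ref{prop_sp...spgl_Grmax} applies verbatim; once that is in place, both the descent step and the final dimension count are routine, and there is no serious obstacle.
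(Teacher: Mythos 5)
Your proposal is correct and follows essentially the same route as the paper: reduce to the already-classified $\GL_m$ case via the inclusion $H \subset H'$, invoke Proposition~\ref{prop_sp...spgl_Grmax} (using $m \ge 3$ to discard $k=m=1$), and then rule out $k=0$ by showing the $\SL_m$-module $\mathrm{S}^2(\FF^m)^*$ arising from Proposition~\ref{prop_sph_criterion_eff} is not spherical. The only cosmetic difference is that you establish non-sphericity of $\mathrm{S}^2(\FF^m)^*$ by the dimension count $\dim B_{\SL_m} = \tfrac{m(m+1)}{2}-1 < \dim \mathrm{S}^2(\FF^m)^*$, whereas the paper cites the classification of spherical modules; both are valid.
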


\begin{proof}
If $H$ acts spherically on $X = \SpGr_{\max}(V)$ then the group $\Sp_{2n_1} \times \ldots \times \Sp_{2n_k} \times \GL_{m}$ also acts spherically on~$X$. Then Proposition~\ref{prop_sp...spgl_Grmax} leaves us with the following two cases.

\textit{Case}~1: $k = 0$.
The pair $(M, \mathfrak g/(\mathfrak p_I^- + \mathfrak h))$ is equivalent to $(\SL_m, \mathrm{S}^2 \FF^m)$, the latter module being not spherical.

\textit{Case}~2: $k = m = 1$.
The pair $(M, \mathfrak g/(\mathfrak p_I^- + \mathfrak h))$ is equivalent to $(\GL_{n_1}, \mathop{\FF^{n_1}}\limits_1{\!} \oplus \FF^1)$, the latter module being not spherical.
\end{proof}

\begin{proposition} \label{prop_sp...spFx_Grmax}
Suppose that the pair $(H,V)$ is BF-equivalent to that in Case~\textup{\ref{sp_case_4}} of Table~\textup{\ref{table_sph_PV}}.
Then $\SpGr_{\max}(V)$ is not $H$-spherical.
\end{proposition}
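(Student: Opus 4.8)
The plan is to combine Proposition~\ref{prop_sph_criterion_eff} with a reduction to the already-settled Case~\ref{sp_case_2}, splitting the argument according to whether $k=0$ or $k\ge 1$. Throughout, $V$ has dimension $4m + 2(n_1 + \ldots + n_k)$ and $I = \lbrace d/2 \rbrace$, so that $X_I = \SpGr_{\max}(V)$.

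First I would dispose of the case $k \ge 1$. The factor $\Sp_{2m} \times \FF^\times$ of $H$ acts on the summand $\Omega([\FF^{2m}]_\chi)$ through the Levi embedding $\Sp_{2m} \times \FF^\times \subset \GL_{2m} \hookrightarrow \Sp(\Omega(\FF^{2m}))$; hence $H$ is contained in $H' = \Sp_{2n_1} \times \ldots \times \Sp_{2n_k} \times \GL_{2m}$ acting on $\FF^{2n_1} \oplus \ldots \oplus \FF^{2n_k} \oplus \Omega(\FF^{2m})$, which is precisely the pair of Case~\ref{sp_case_2} with GL-factor of rank~$2m$. Since $B_H$ may be chosen inside $B_{H'}$, a dense $B_H$-orbit in $X_I$ would yield a dense $B_{H'}$-orbit, so $H$-sphericity of $X_I$ would force $H'$-sphericity. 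By Proposition~\ref{prop_sp...spgl_Grmax}, the latter holds only when $k=0$ or when $k=1$ and the GL-rank equals~$1$; as $2m \ge 4$, both are impossible for $k \ge 1$, which gives the desired non-sphericity.

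It then remains to treat $k = 0$, that is, $H = \Sp_{2m} \times \FF^\times$ and $V = \Omega([\FF^{2m}]_\chi)$ with $m \ge 2$. Here I would compute the module of Proposition~\ref{prop_sph_criterion_eff} directly. Since $H$ preserves both isotropic halves $W$ and $W^*$ of $V$ and acts on the distinguished Lagrangian through $\GL(W) = \GL_{2m}$, one has $\mathfrak h \subset \mathfrak{gl}_{2m} \subset \mathfrak p_I^-$, so that $\mathfrak p_I^- + \mathfrak h = \mathfrak p_I^-$ and $M = H$. Consequently $\mathfrak g/(\mathfrak p_I^- + \mathfrak h) = \mathfrak g / \mathfrak p_I^-$ is the tangent module $\mathrm{S}^2 \FF^{2m}$ of the Lagrangian Grassmannian, which as an $\Sp_{2m}$-module is the irreducible $R(2\pi_1)$ of dimension $m(2m+1)$, with $\FF^\times$ acting by a nonzero scalar character.

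The final step is a dimension count showing this module is not $M$-spherical for $m \ge 2$: a spherical module $U$ must satisfy $\dim U \le \dim B_M$, whereas $\dim \mathrm{S}^2\FF^{2m} = 2m^2 + m$ exceeds $\dim B_{\Sp_{2m} \times \FF^\times} = m^2 + m + 1$ precisely when $m \ge 2$. By Proposition~\ref{prop_sph_criterion_eff} this rules out $H$-sphericity of $X_I$, completing the proof. The only genuinely delicate point is the bookkeeping in the $k=0$ case, namely verifying that the conventions fixed in the preliminary remarks of this section place $H$ inside the Levi of $P_I^-$, so that the relevant module is literally $\mathfrak g/\mathfrak p_I^-$; once this is in place, everything reduces to the elementary inequality above.
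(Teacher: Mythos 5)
Your proposal is correct and follows essentially the same route as the paper: reduce to the already-settled case of $\Sp_{2n_1} \times \ldots \times \Sp_{2n_k} \times \GL_{2m}$ (Proposition~\ref{prop_sp...spgl_Grmax}) to force $k=0$, then identify $(M,\mathfrak g/(\mathfrak p_I^-+\mathfrak h))$ with $(\Sp_{2m}\times\FF^\times,[\mathrm S^2\FF^{2m}]_{2\chi})$ and conclude non-sphericity. The only cosmetic difference is that you justify the last step by the dimension inequality $2m^2+m>m^2+m+1$ rather than by citing the classification of spherical modules; both are valid.
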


\begin{proof}
If $H$ acts spherically on $X = \SpGr_{\max}(V)$ then the group $\Sp_{2n_1} \times \ldots \times \Sp_{2n_k} \times \GL_{2m}$ also acts spherically on~$X$.
By Proposition~\ref{prop_sp...spgl_Grmax}, the latter is possible only if $k = 0$.
In this case, the pair $(M, \mathfrak g/(\mathfrak p_I^- + \mathfrak h))$ is equivalent to $(\Sp_{2m}\times \FF^\times, [\mathrm{S}^2\FF^{2m}]_{2\chi})$, the latter module being not spherical.
\end{proof}

Summarizing the results obtained in Propositions~\ref{prop_sp...sp_Grmax}--\ref{prop_sp...spFx_Grmax} and comparing them with the statements in~\S\,\ref{subsec_Levi&symmetric}, we arrive at

\begin{corollary} \label{crl_SpGr_max}
Suppose that $H$ is neither a symmetric subgroup nor a Levi subgroup of~$\Sp(V)$.
Then $\SpGr_{\max}(V)$ is $H$-spherical if and only if the pair $(H,V)$ is BF-equivalent to $(\Sp_{2m} \times \SL_2 \times \SL_2, \mathop{\FF^{2m}} \limits_1{\!} \oplus \mathop{\FF^{2}}\limits_2{\!} \oplus \mathop{\FF^{2}} \limits_3{\!})$ with $m \ge 1$.
\end{corollary}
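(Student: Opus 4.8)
The plan is to read the corollary off directly from the four preceding propositions, then discard every resulting case in which $H$ is symmetric or Levi. First I would note that, by Proposition~\ref{prop_P(V)_is_minimal} and Theorem~\ref{thm_sph_descent}, $H$-sphericity of $\SpGr_{\max}(V)$ forces $\PP(V)$ to be $H$-spherical; hence by Theorem~\ref{thm_sp_SA_P(V)} the pair $(H,V)$ is BF-equivalent to one of the four entries of Table~\ref{table_sph_PV}. This is precisely the reduction that lets Propositions~\ref{prop_sp...sp_Grmax}--\ref{prop_sp...spFx_Grmax} handle the four families one at a time.

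Next I would collect their conclusions. In Cases~\ref{sp_case_3} and~\ref{sp_case_4} the variety $\SpGr_{\max}(V)$ is never $H$-spherical, so they contribute nothing. In Case~\ref{sp_case_2} sphericity holds exactly when $k=0$ or $k=m=1$, and in Case~\ref{sp_case_1} exactly when $k\le 2$ or ($k=3$ and $n_2=n_3=1$). The spherical instances to be sifted are therefore: from Case~\ref{sp_case_1}, the subcases $k=1$, $k=2$, and ($k=3$, $n_2=n_3=1$); and from Case~\ref{sp_case_2}, the subcases $k=0$ and $k=m=1$.

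The substance of the argument is to match each surviving subcase against the explicit descriptions in~\S\,\ref{subsec_Levi&symmetric}. Case~\ref{sp_case_1} with $k=1$ gives $H=\Sp(V)=G$, the improper Levi subgroup (the entry $k=0$, $m=n$ of the Levi list); Case~\ref{sp_case_1} with $k=2$ gives $(\Sp_{2n_1}\times\Sp_{2n_2},\FF^{2n_1}\oplus\FF^{2n_2})$, a symmetric subgroup; Case~\ref{sp_case_2} with $k=0$ gives $(\GL_m,\Omega(\FF^m))$, also symmetric; and Case~\ref{sp_case_2} with $k=m=1$ gives $(\Sp_{2n_1}\times\GL_1,\FF^{2n_1}\oplus\Omega(\FF^1))$, which is the Levi subgroup having a single $\GL_1$-factor. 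By hypothesis all four are excluded. The one remaining possibility is Case~\ref{sp_case_1} with $k=3$ and $n_2=n_3=1$, namely $(\Sp_{2n_1}\times\SL_2\times\SL_2,\FF^{2n_1}\oplus\FF^2\oplus\FF^2)$ (using $\Sp_2=\SL_2$); writing $m=n_1\ge 1$ gives exactly the asserted pair, which is $H$-spherical by Proposition~\ref{prop_sp...sp_Grmax}.

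I expect no real obstacle here, since the argument is essentially bookkeeping over the outputs of Propositions~\ref{prop_sp...sp_Grmax}--\ref{prop_sp...spFx_Grmax}. The only point that needs care is recognizing the ``boundary'' spherical subcases correctly as Levi or symmetric--- in particular that $H=G$ and the $k=m=1$ instance of Case~\ref{sp_case_2} are Levi, while the two-factor instance of Case~\ref{sp_case_1} and the $\Omega(\FF^m)$ instance of Case~\ref{sp_case_2} are symmetric--- so that they are all properly removed and nothing spurious is left behind.
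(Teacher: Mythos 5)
Your proposal is correct and is essentially the paper's own argument: the text proves the corollary by exactly this summary of Propositions~\ref{prop_sp...sp_Grmax}--\ref{prop_sp...spFx_Grmax} followed by comparison with the lists of Levi and symmetric subgroups in \S\,\ref{subsec_Levi&symmetric}. Your identification of the surviving subcases (including $H=G$ as the improper Levi and the $k=m=1$ instance as a Levi with a single $\GL_1$-factor) matches what the paper intends.
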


\subsection{Spherical actions on \texorpdfstring{$\SpGr_2(V)$}{SpGr\_2(V)}}
\label{subsec_SpGr_2}

Recall from \S\,\ref{subsec_FV_via_compositions} that $\SpGr_2(V) \simeq X_I$ with $I = \lbrace 2 \rbrace$.
If $\SpGr_2(V)$ is an $H$-spherical variety then $\PP(V)$ should be also $H$-spherical in view of Proposition~\ref{prop_P(V)_is_minimal} and Theorem~\ref{thm_sph_descent}.
Consequently, by Theorem~\ref{thm_sp_SA_P(V)} it suffices to consider only the cases listed in Table~\ref{table_sph_PV}.

\begin{proposition} \label{prop_sp...sp_Gr2}
Suppose that the pair $(H,V)$ is BF-equivalent to that in Case~\textup{\ref{sp_case_1}} of Table~\textup{\ref{table_sph_PV}}.
Then $\SpGr_2(V)$ is $H$-spherical if and only if $k \le 2$.
\end{proposition}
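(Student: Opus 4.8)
The plan is to apply the effective criterion of Proposition~\ref{prop_sph_criterion_eff} with $I=\lbrace 2\rbrace$, so that $X_I\simeq\SpGr_2(V)$. I would take as base point the $B_G^-$-stable isotropic $2$-plane $W_0$, which in the chosen realization is $W_0=\langle e_{d-1},e_d\rangle$ (here $d=2n=\dim V$). Everything then reduces to identifying a Levi subgroup $M$ of $P_I^-\cap H$ together with the $M$-module $U=\mathfrak g/(\mathfrak p_I^-+\mathfrak h)\simeq T_{W_0}\SpGr_2(V)/\mathfrak h\cdot W_0$, and to deciding whether $U$ is $M$-spherical. The case $k=1$ is trivial ($H=G$), so I assume $k\ge 2$ and order the blocks so that $n_1\ge\ldots\ge n_k\ge 1$.

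First I would identify the ambient tangent space by the standard description $T_{W_0}\SpGr_2(V)\simeq\Hom(W_0,W_0^\perp/W_0)\oplus\mathrm S^2(W_0^*)$, where $W_0^\perp$ is the $\omega$-orthogonal complement of $W_0$. The computation of $U$ then depends on the position of $W_0$ relative to the decomposition $V=V_1\oplus\ldots\oplus V_k$. If $n_1\ge 2$ then $e_{d-1},e_d\in V_1$, so $W_0\subset V_1$; since $\Sp(V_1)$ acts transitively on the isotropic $2$-planes of $V_1$, the image of $\mathfrak{sp}(V_1)$ fills out the whole summand $\Hom(W_0,W_0^{\perp_{V_1}}/W_0)\oplus\mathrm S^2(W_0^*)$, while the remaining factors $\Sp(V_2),\ldots,\Sp(V_k)$ fix $W_0$. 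Using $W_0^\perp/W_0=(W_0^{\perp_{V_1}}/W_0)\oplus V_2\oplus\ldots\oplus V_k$ and passing to the quotient leaves
\[
U\simeq\Hom(W_0,V_2\oplus\ldots\oplus V_k)\simeq\FF^2\otimes(\FF^{2n_2}\oplus\ldots\oplus\FF^{2n_k}),
\]
an $M$-module for $M=(\GL_2\times\Sp_{2n_1-4})\times\Sp_{2n_2}\times\ldots\times\Sp_{2n_k}$, where $\GL_2=\GL(W_0)$ and the factor $\Sp_{2n_1-4}$ acts trivially on $U$.

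Now I would read off sphericity of $U$ in this case. For $k=2$ one has $U\simeq\FF^2\otimes\FF^{2n_2}$, which is spherical by the classification of spherical modules (it is the module $(\Sp_{2n_2}\times\SL_2,\FF^{2n_2}\otimes\FF^2)$ up to scalars, cf.\ Table~\ref{table_orth_sph_mod}), so $\SpGr_2(V)$ is $H$-spherical. For $k\ge 3$ the module $U$ contains the $(\GL_2\times\Sp_{2n_2}\times\Sp_{2n_3})$-submodule $\FF^2\otimes\FF^{2n_2}\oplus\FF^2\otimes\FF^{2n_3}$ (on which the other factors of $M$ act trivially); arguing as in the proof of Lemma~\ref{lemma_VixVj_spherical}, this module is not spherical, by the criterion \cite[Theorem~5.3]{AvP1}, equivalently by inspection of the list \cite[\S\,5]{Kn} after saturation. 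Since every submodule of a spherical module is spherical, $U$ is not spherical and $\SpGr_2(V)$ is not $H$-spherical.

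It remains to treat the case $n_1=1$, i.e.\ $H=\SL_2^k$ with $d=2k$. Here $W_0=\langle e_d\rangle\oplus\langle e_{d-1}\rangle$ meets $V_1$ and $V_2$ each in a line, its $H$-orbit is $\PP(V_1)\times\PP(V_2)\simeq\PP^1\times\PP^1$, and $M\simeq\FF^\times\times\FF^\times\times\SL_2^{k-2}$. A dimension count then suffices: $\dim U=\dim\SpGr_2(V)-\dim(H\cdot W_0)=(4k-5)-2=4k-7$, whereas $\dim B_M=2k-2$. For $k=2$ this gives $\dim U=1$, so $U$ is automatically spherical; for $k\ge 3$ one has $\dim B_M<\dim U$, so $B_M$ has no dense orbit and $U$ is not spherical. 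The main obstacle is the first step, namely pinning down $U$ as an explicit $M$-module and correctly handling the two possible positions of $W_0$ with respect to the block decomposition; once this is done, the symplectic factor $\Sp_{2n_1-4}$ drops out and the question is settled either by the classification of spherical modules (blocks of size $\ge 2$) or by the elementary dimension estimate above (the all-$\SL_2$ case).
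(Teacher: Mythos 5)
Your overall strategy is the paper's: reduce to sphericity of the $M$-module $U=\mathfrak g/(\mathfrak p_I^-+\mathfrak h)$ at the base point via Proposition~\ref{prop_sph_criterion_eff}, split according to whether $W_0$ lies inside $V_1$ (i.e.\ $n_1\ge 2$) or meets two blocks (all $n_i=1$), and your identification $U\simeq\FF^2\otimes(\FF^{2n_2}\oplus\cdots\oplus\FF^{2n_k})$ for $M=\GL_2\times\Sp_{2n_2}\times\cdots\times\Sp_{2n_k}$ agrees exactly with the module the paper computes. However, two of your justifications would fail as written. For $n_1\ge 2$ and $k\ge 3$ you dismiss $\FF^2\otimes\FF^{2n_2}\oplus\FF^2\otimes\FF^{2n_3}$ ``as in Lemma~\ref{lemma_VixVj_spherical}\ldots equivalently by inspection of the list after saturation.'' But the saturation of this module (independent scalars on the two summands) \emph{is} spherical --- it is precisely the first entry of Table~\ref{table_so_Q1}, namely $(\Sp_{2l}\times\SL_2\times\Sp_{2m},\;\FF^{2l}\otimes\FF^2\oplus\FF^{2m}\otimes\FF^2)$ made into a spherical $(H\times\FF^\times\times\FF^\times)$-module --- so inspecting the list after saturating gives the \emph{opposite} answer, and the offending submodules in Lemma~\ref{lemma_VixVj_spherical} were already saturated, so that argument does not transfer. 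The genuine obstruction is that here $M$ contributes only the one-dimensional centre of $\GL_2$, which scales both summands by the \emph{same} character, so the linear-independence condition of \cite[Theorem~5.3]{AvP1} fails; this is what must be invoked (a raw dimension count $\dim B_M<\dim U$ happens to work for small $n_2,n_3$ but already fails for $n_2=3$, $n_3=1$).

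Second, in the all-$\SL_2$ case with $k=2$ the assertion ``$\dim U=1$, so $U$ is automatically spherical'' is not a valid principle: a one-dimensional module is spherical if and only if the group acts through a nonzero character. Here $U$ is the line $L_1^*\otimes L_2^*$ (with $L_i=W_0\cap V_i$), on which $\FF^\times\times\FF^\times$ acts by $\chi_1+\chi_2\ne 0$, so the conclusion is correct but this weight must actually be exhibited; the paper instead computes the module explicitly as $\FF^1_{\chi_1+\chi_2}\oplus\bigl([\FF^2]_{\chi_1}\oplus[\FF^2]_{\chi_2}\bigr)^{\oplus(k-2)}$, which settles both $k=2$ and $k\ge 3$ at once (your dimension count $2k-2<4k-7$ for $k\ge3$ is a fine alternative for the negative direction). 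With these two repairs your argument is complete and coincides with the paper's.
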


\begin{proof}
If $k = 1$ then $\SpGr_2(V)$ is $H$-spherical.
In what follows we assume $k \ge 2$ and $n_1 \ge n_2 \ge \ldots \ge n_k \ge 1$ without loss of generality.

If $n_1 = \ldots = n_k = 1$ then the pair $(M, \mathfrak g/(\mathfrak p_I^- + \mathfrak h))$ is equivalent to
\[
(\underbrace{\SL_2 \times \ldots \times \SL_2}_{k-2} \times \FF^\times \times \FF^\times, \FF^1_{\chi_1 + \chi_2} \oplus \bigoplus \limits_{1 \le i \le k-2} (\mathop{[\FF^{2}]}\limits_i{\!}_{\chi_1} \oplus \mathop{[\FF^{2}]}\limits_i{\!}_{\chi_2})),
\]
the latter module being spherical if and only if $k = 2$.

If $n_1 \ge 2$ then the pair $(M, \mathfrak g/(\mathfrak p_I^- + \mathfrak h))$ is equivalent to
\[
(\GL_2 \times \Sp_{2n_2} \times \ldots \times \Sp_{2n_k}, \bigoplus \limits_{2 \le i \le k}
(\mathop{\vphantom|\FF^2}\limits_1{\!} \otimes \mathop{\vphantom|\FF^{2n_i}}\limits_i{\!})),
\]
the latter module being spherical if and only if $k = 2$.
\end{proof}

\begin{proposition} \label{prop_sp...spgl_Gr2}
Suppose that the pair $(H,V)$ is BF-equivalent to that in Case~\textup{\ref{sp_case_2}} of Table~\textup{\ref{table_sph_PV}}.
Then $\SpGr_2(V)$ is $H$-spherical if and only if one of the following two conditions holds:
\begin{enumerate}[label=\textup{(\arabic*)},ref=\textup{\arabic*}]
\item
$k = 0$, $m = 2$.

\item
$k=m=1$.
\end{enumerate}
\end{proposition}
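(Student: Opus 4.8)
The plan is to apply the effective criterion of Proposition~\ref{prop_sph_criterion_eff} with $I = \lbrace 2 \rbrace$, after first cutting down the range of the parameters $k$ and $m$ by an overgroup argument. First I would embed $\GL_m$ into $\Sp_{2m}$ as the Levi subgroup of a Siegel parabolic, so that $\Omega(\FF^m) \cong \FF^{2m}$ as $\Sp_{2m}$-modules and $H \subset \widetilde H = \Sp_{2n_1} \times \ldots \times \Sp_{2n_k} \times \Sp_{2m}$ with $B_H \subset B_{\widetilde H}$ for compatible choices. Then $H$-sphericity of $\SpGr_2(V)$ forces $\widetilde H$-sphericity, and since $(\widetilde H, V)$ is the pair of Case~\ref{sp_case_1} of Table~\ref{table_sph_PV} with $k+1$ symplectic factors, Proposition~\ref{prop_sp...sp_Gr2} gives $k+1 \le 2$, i.e.\ $k \le 1$. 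It remains to treat $k = 0$ and $k = 1$ and, in each, to determine for which $m$ the relevant $M$-module is spherical.

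For $k = 0$ we have $H = \GL_m$, $V = \Omega(\FF^m)$, and $m \ge 2$ (otherwise $\lbrace 2 \rbrace$ is not a subset of the index set of $G = \Sp_{2m}$). Taking the base isotropic $2$-plane $U$ fixed by $B_G^-$ (it lies in the Lagrangian $W^*$), the subgroup $P_I^- \cap H$ has Levi $M = \GL_2 \times \GL_{m-2}$, and inspecting $T_U \SpGr_2(V) \cong \Hom(U, U^\perp/U) \oplus \mathrm{S}^2 U^*$ together with the image of $\mathfrak h$ gives
\[
\mathfrak g/(\mathfrak p_I^- + \mathfrak h) \cong (\FF^2 \otimes \FF^{m-2}) \oplus \mathrm{S}^2 \FF^2
\]
as an $M$-module. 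For $m = 2$ this is $(\GL_2, \mathrm{S}^2 \FF^2)$, which is spherical; for $m = 3, 4$ the inequality $\dim B_M \ge \dim(\mathfrak g/(\mathfrak p_I^- + \mathfrak h))$ already fails; and for $m \ge 5$ the module does not appear in the list of~\cite[\S\,5]{Kn}. Thus in this case $\SpGr_2(V)$ is $H$-spherical exactly when $m = 2$.

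For $k = 1$ we have $H = \Sp_{2n_1} \times \GL_m$. When $n_1 \ge 2$ the plane $U$ sits inside the symplectic summand $\FF^{2n_1}$, the factor $\GL_m$ fixes $U$, and $M = \GL_2 \times \Sp_{2n_1 - 4} \times \GL_m$. Since $\Sp_{2n_1}$ acts transitively on $\SpGr_2(\FF^{2n_1})$, the image of $\mathfrak h$ in $T_U$ absorbs the entire $\mathrm{S}^2 U^*$ summand and all directions internal to $\FF^{2n_1}$, leaving
\[
\mathfrak g/(\mathfrak p_I^- + \mathfrak h) \cong (\FF^2 \otimes \FF^m) \oplus (\FF^2 \otimes (\FF^m)^*),
\]
on which $\Sp_{2n_1 - 4}$ acts trivially. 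A direct orbit computation shows this is $(\GL_2 \times \GL_m)$-spherical for $m = 1$; for $2 \le m \le 5$ the dimension inequality fails, and for $m \ge 6$ the module is absent from~\cite[\S\,5]{Kn}. Hence $m = 1$ is the only spherical value here.

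The main obstacle is the remaining subcase $n_1 = 1$ of $k = 1$: here $\Sp_2 = \SL_2$ and the plane $U$ straddles the two summands $\FF^2$ and $\Omega(\FF^m)$, so neither $M$ nor $\mathfrak g/(\mathfrak p_I^- + \mathfrak h)$ is produced by the clean separation used above, and both must be recomputed from scratch. I expect the conclusion to again be sphericity precisely for $m = 1$, verified against the classification of~\cite[\S\,5]{Kn} and the dimension bound as before; carrying this computation out and ruling out accidental spherical modules for larger $m$ is the delicate step. Combining the three cases yields that $\SpGr_2(V)$ is $H$-spherical if and only if $k = 0$, $m = 2$ or $k = m = 1$.
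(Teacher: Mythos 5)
Your overall strategy coincides with the paper's: pass to the overgroup $\Sp_{2n_1} \times \ldots \times \Sp_{2n_k} \times \Sp_{2m}$ and invoke Proposition~\ref{prop_sp...sp_Gr2} to force $k \le 1$, then apply the criterion of Proposition~\ref{prop_sph_criterion_eff} case by case. The two computations you actually carry out produce the same $M$-modules as the paper — $(\GL_2 \times \GL_{m-2},\ \FF^2 \otimes \FF^{m-2} \oplus \mathrm{S}^2\FF^2)$ for $k=0$, and $(\GL_2 \times \GL_m,\ \FF^2 \otimes \FF^m \oplus \FF^2 \otimes (\FF^m)^*)$ for $k=1$, $n_1 \ge 2$ — with the same conclusions.

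The genuine gap is the subcase $k=1$, $n_1=1$, which you explicitly defer as a conjecture. This subcase cannot be skipped: it is needed both to \emph{establish} sphericity when $k=m=1$ and $n_1=1$ (condition (2) places no restriction on $n_1$, so this is part of the ``if'' direction) and to \emph{exclude} $n_1=1$, $m \ge 2$ (part of the ``only if'' direction); neither follows from the $n_1 \ge 2$ computation, since the ambient group $G$ changes with $n_1$ and sphericity is not inherited in either direction here. The paper does carry it out: with the base isotropic plane straddling the $\SL_2$-summand and a line of $W^*$, one gets $M \simeq \GL_{m-1} \times \FF^\times \times \FF^\times$ and
\[
\mathfrak g/(\mathfrak p_I^- + \mathfrak h) \simeq [\FF^{m-1}]_{\chi_1} \oplus [\FF^{m-1}]_{\chi_2} \oplus [(\FF^{m-1})^*]_{\chi_1} \oplus \FF^1_{\chi_1+\chi_2} \oplus \FF^1_{2\chi_2},
\]
which is spherical if and only if $m=1$ (for $m=1$ the characters $\chi_1+\chi_2$ and $2\chi_2$ are independent; for $m \ge 2$ one checks non-sphericity against \cite[Theorem~5.3]{AvP1} — note the dimension bound $\dim B_M \ge 3m-1$ alone does not suffice for large $m$). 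Until this computation is supplied, both directions of the proposition remain unproved in the range $n_1=1$.
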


\begin{proof}
If $H$ acts spherically on $X = \SpGr_2(V)$ then the group $\Sp_{2n_1} \times \ldots \times \Sp_{2n_k} \times \Sp_{2m}$ also acts spherically on~$X$.
Then Proposition~\ref{prop_sp...sp_Gr2} leaves us with the following two cases.

\textit{Case}~1: $k = 0$.
The pair $(M, \mathfrak g/(\mathfrak p_I^- + \mathfrak h))$ is equivalent to
\[
(\GL_2 \times \GL_{m-2}, \mathop{\vphantom|\FF^2}\limits_1{\!} \otimes \mathop{\vphantom|\FF^{m-2}}\limits_2{\!} \oplus \mathop{\vphantom|\mathrm{S}^2\FF^2}\limits_1{\!}),
\]
the latter module being spherical if and only if $m = 2$.

\textit{Case}~2: $k = 1$.
If $n_1 = 1$ then the pair $(M, \mathfrak g/(\mathfrak p_I^- + \mathfrak h))$ is equivalent to
\[
(\GL_{m-1} \times \FF^\times \times \FF^\times,
\mathop{[\FF^{m-1}]}\limits_1{\!}_{\chi_1} \oplus \mathop{[\FF^{m-1}]}\limits_1{\!}_{\chi_2} \oplus \mathop{[(\FF^{m-1})^*]}\limits_1{\!}_{\chi_1} \oplus \FF^1_{\chi_1 + \chi_2} \oplus \FF^1_{2\chi_2}),
\]
the latter module being spherical if and only if $m = 1$.
If $n_1 \ge 2$ then the pair $(M, \mathfrak g/(\mathfrak p_I^- +\nobreak \mathfrak h))$ is equivalent to
\[
(\GL_2 \times \GL_m, \mathop{\vphantom|\FF^2}\limits_1{\!} \otimes \mathop{\vphantom|\FF^m}\limits_2{\!} \oplus \mathop{\vphantom|\FF^2}\limits_1{\!} \otimes \mathop{(\FF^m)^*}\limits_2{\!}),
\]
the latter module being spherical if and only if $m = 1$.
\begin{comment}
If $m = 1$ then the pair $(M, \mathfrak g/(\mathfrak p_I^- + \mathfrak h))$ is equivalent to
\[
(\Sp_{2n_1-2} \times \FF^\times \times \FF^\times, \FF^{2n_1-2} \otimes \FF_{\chi_1} \oplus \FF_{\chi_1 + \chi_2} \oplus \FF_{2\chi_1}),
\]
the latter module being spherical.
If $m \ge 2$ then the pair $(M, \mathfrak g/(\mathfrak p_I^- + \mathfrak h))$ is equivalent to $(\GL_2 \times \GL_{m-2} \times \Sp_{2n_1}, \mathrm{S}^2 \FF^2 \oplus \FF^2 \otimes (\FF^{m-2} \oplus \FF^{2n_1}))$, the latter module being not spherical.
\end{comment}
\end{proof}

\begin{proposition}
Suppose that the pair $(H,V)$ is BF-equivalent to that in Case~\textup{\ref{sp_case_3}} of Table~\textup{\ref{table_sph_PV}}.
Then $\SpGr_2(V)$ is not $H$-spherical.
\end{proposition}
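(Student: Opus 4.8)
The plan is to reuse verbatim the overgroup reduction employed for the analogous statement about $\SpGr_{\max}(V)$. The only input needed is the monotonicity of sphericity under enlarging the acting group: if a subgroup has a dense Borel orbit in a variety, then so does any overgroup, because the Borel subgroup of the overgroup contains that of the subgroup. First I would apply this principle to the situation at hand. Assuming that the Case~\ref{sp_case_3} group $\Sp_{2n_1} \times \ldots \times \Sp_{2n_k} \times \SL_m$ acts spherically on $X = \SpGr_2(V)$, I would enlarge the factor $\SL_m$ to $\GL_m$ (which acts on the summand $\Omega(\FF^m)$ through the natural representation together with scalars, and hence still preserves~$\omega$), obtaining the overgroup $\Sp_{2n_1} \times \ldots \times \Sp_{2n_k} \times \GL_m$. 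By monotonicity this larger group must then also act spherically on~$X$.

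Next I would invoke Proposition~\ref{prop_sp...spgl_Gr2}, which treats precisely this overgroup (Case~\ref{sp_case_2} of Table~\ref{table_sph_PV}) and asserts that $\SpGr_2(V)$ is spherical for it only when $k=0$, $m=2$ or when $k=m=1$. Since Case~\ref{sp_case_3} carries the standing hypothesis $m \ge 3$, neither of these two configurations is admissible: the first forces $m=2$ and the second forces $m=1$. This contradiction shows that the Case~\ref{sp_case_3} group cannot act spherically on $\SpGr_2(V)$, which is exactly the assertion.

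I expect essentially no obstacle here, and this is the point worth emphasizing in the writeup. In the $\SpGr_{\max}(V)$ analogue the corresponding $\GL_m$-classification (Proposition~\ref{prop_sp...spgl_Grmax}) permitted $k=0$ with \emph{arbitrary}~$m$, so after the reduction one was still left with genuine cases to examine and had to compute the slice module $(M,\mathfrak g/(\mathfrak p_I^- + \mathfrak h))$ explicitly and check its non-sphericity. By contrast, here the $\GL_m$-classification of Proposition~\ref{prop_sp...spgl_Gr2} already caps $m$ at~$2$, so the constraint $m \ge 3$ eliminates both admissible possibilities simultaneously and the overgroup reduction alone finishes the argument. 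The single thing I would verify is the bookkeeping that the overgroup produced is exactly the one covered by Proposition~\ref{prop_sp...spgl_Gr2}, with the same values of $k$, $n_i$, and $m$; this is immediate from comparing the module descriptions in Cases~\ref{sp_case_2} and~\ref{sp_case_3} of Table~\ref{table_sph_PV}.
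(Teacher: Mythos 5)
Your proposal is correct and coincides with the paper's own proof: pass from $\SL_m$ to the overgroup $\GL_m$ (sphericity is inherited by overgroups since a Borel subgroup of $H$ sits inside a Borel subgroup of the larger group), and then the hypothesis $m\ge 3$ rules out both admissible configurations ($k=0$, $m=2$ and $k=m=1$) of Proposition~\ref{prop_sp...spgl_Gr2}. No further computation is needed, exactly as you observe.
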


\begin{proof}
If $H$ acts spherically on $X = \SpGr_2(V)$ then the group $\Sp_{2n_1} \times \ldots \times \Sp_{2n_k} \times \GL_{m}$ also acts spherically on~$X$. As $m \ge 3$, the latter is impossible by Proposition~\ref{prop_sp...spgl_Gr2}.
\end{proof}

\begin{proposition} \label{prop_SpGr2_last}
Suppose that the pair $(H,V)$ is BF-equivalent to that in Case~\textup{\ref{sp_case_4}} of Table~\textup{\ref{table_sph_PV}}.
Then $\SpGr_2(V)$ is not $H$-spherical.
\end{proposition}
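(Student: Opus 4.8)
The plan is to reuse the reduction strategy that settled Case~\ref{sp_case_3}: I would embed $H$ into a larger group of the shape occurring in Case~\ref{sp_case_2} and then quote Proposition~\ref{prop_sp...spgl_Gr2}.

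First I would note that the factor $\Sp_{2m} \times \FF^\times$ of $H$, acting on the weakly reducible summand $\Omega([\FF^{2m}]_\chi)$, acts through $\GL_{2m}$: the underlying space is $\FF^{2m} \oplus (\FF^{2m})^*$, on which $\GL_{2m}$ acts tautologically and dually, while $\Sp_{2m} \subset \GL_{2m}$ and the factor $\FF^\times$ acts by scalar transformations on $\FF^{2m}$, which again lie in $\GL_{2m}$. Hence the image of $H$ in $\Sp(V)$ is contained in the image of $\tilde H = \Sp_{2n_1} \times \ldots \times \Sp_{2n_k} \times \GL_{2m}$, and the pair $(\tilde H, V)$ is BF-equivalent to the Case~\ref{sp_case_2} pair of Table~\ref{table_sph_PV} with the parameter $m$ there replaced by $2m$.

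Next, after conjugating so that $B_H \subset B_{\tilde H}$, a dense $B_H$-orbit in $X = \SpGr_2(V)$ would be contained in a (hence dense) $B_{\tilde H}$-orbit; so if $X$ were $H$-spherical it would also be $\tilde H$-spherical. By Proposition~\ref{prop_sp...spgl_Gr2}, applied with $2m$ in place of $m$, the variety $X$ is $\tilde H$-spherical only if either $k = 0$ and $2m = 2$, or $k = 1$ and $2m = 1$. Since the hypothesis $m \ge 2$ forces $2m \ge 4$, both possibilities are excluded, so $X$ is not $\tilde H$-spherical, and therefore not $H$-spherical.

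I expect no real obstacle here: the only step needing a word of justification is the containment $\Sp_{2m} \times \FF^\times \subset \GL_{2m}$ yielding $H \subset \tilde H$, after which the conclusion is immediate from the numerical incompatibility of $2m \ge 4$ with the two spherical cases of Proposition~\ref{prop_sp...spgl_Gr2}. As a fallback one could instead compute the $M$-module $\mathfrak g/(\mathfrak p_I^- + \mathfrak h)$ via Proposition~\ref{prop_sph_criterion_eff} and check non-sphericity directly against the list in~\cite[\S\,5]{Kn}, but the reduction route is shorter and parallels the proof of Case~\ref{sp_case_3}.
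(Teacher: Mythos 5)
Your proposal is correct and is essentially the paper's own proof: the paper likewise observes that $H$-sphericity of $\SpGr_2(V)$ would force sphericity for the larger group $\Sp_{2n_1} \times \ldots \times \Sp_{2n_k} \times \GL_{2m}$ and then rules this out via Proposition~\ref{prop_sp...spgl_Gr2} using $m \ge 2$. The justification you give for the containment $\Sp_{2m} \times \FF^\times \subset \GL_{2m}$ and for the passage from $H$-sphericity to $\tilde H$-sphericity is exactly what the paper leaves implicit.
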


\begin{proof}
If $H$ acts spherically on $X = \SpGr_2(V)$ then the group $\Sp_{2n_1} \times \ldots \times \Sp_{2n_k} \times \GL_{2m}$ also acts spherically on~$X$. As $m \ge 2$, the latter is impossible by Proposition~\ref{prop_sp...spgl_Gr2}.
\end{proof}

Summarizing the results obtained in Propositions~\ref{prop_sp...sp_Gr2}--\ref{prop_SpGr2_last} and comparing them with the statements in~\S\,\ref{subsec_Levi&symmetric}, we arrive at

\begin{corollary} \label{crl_SpGr_2}
If $\SpGr_2(V)$ is $H$-spherical then $H$ is either a symmetric subgroup or a Levi subgroup of~$\Sp(V)$.
\end{corollary}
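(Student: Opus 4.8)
The plan is to synthesize the four preceding propositions---one for each case of Table~\ref{table_sph_PV}---with the explicit lists of Levi and symmetric subgroups recorded in \S\,\ref{subsec_Levi&symmetric}. As already observed at the start of this subsection, $\bl \PP(V) \br \preccurlyeq \bl \SpGr_2(V) \br$ by Proposition~\ref{prop_P(V)_is_minimal}, so Theorem~\ref{thm_sph_descent} forces $\PP(V)$ to be $H$-spherical whenever $\SpGr_2(V)$ is; by Theorem~\ref{thm_sp_SA_P(V)} this reduces the whole question to the four pairs $(H,V)$ appearing in Table~\ref{table_sph_PV}. The remaining argument is pure bookkeeping over these four cases.

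First I would read off the surviving subcases directly from Propositions~\ref{prop_sp...sp_Gr2}--\ref{prop_SpGr2_last}: Cases~\ref{sp_case_3} and~\ref{sp_case_4} are excluded entirely; Case~\ref{sp_case_1} survives only for $k \le 2$; and Case~\ref{sp_case_2} survives only for $(k,m) = (0,2)$ or $k = m = 1$. Up to BF-equivalence this leaves exactly the four pairs $(\Sp_{2n_1}, \FF^{2n_1})$, $(\Sp_{2n_1} \times \Sp_{2n_2}, \FF^{2n_1} \oplus \FF^{2n_2})$, $(\GL_2, \Omega(\FF^2))$, and $(\Sp_{2n_1} \times \GL_1, \FF^{2n_1} \oplus \Omega(\FF^1))$, where in the last pair $\GL_1 = \FF^\times$ acts on the two-dimensional space $\Omega(\FF^1)$.

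Finally I would match each of these against \S\,\ref{subsec_Levi&symmetric}. The second and third pairs are symmetric subgroups, the third being the $n = 2$ instance of $(\GL_n, \Omega(\FF^n))$; the first is the Levi subgroup with no $\GL$-factors (so $H = G$, corresponding to $k = 0$), and the fourth is the Levi subgroup obtained by taking a single $\GL_1$-factor together with $\Sp_{2m} = \Sp_{2n_1}$. Hence in every surviving case $H$ is either symmetric or Levi, which is the assertion. Since each step is direct inspection against the tables, there is no genuine obstacle here; the only point requiring minor care is verifying that the two boundary pairs $(\GL_2, \Omega(\FF^2))$ and $(\Sp_{2n_1} \times \GL_1, \FF^{2n_1} \oplus \Omega(\FF^1))$ indeed fit the symmetric and Levi characterizations with the correct parameter identification.
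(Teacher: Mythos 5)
Your proposal is correct and follows exactly the paper's route: the paper's proof of this corollary is precisely the summary of Propositions~\ref{prop_sp...sp_Gr2}--\ref{prop_SpGr2_last} (after the reduction to Table~\ref{table_sph_PV} via Proposition~\ref{prop_P(V)_is_minimal}, Theorem~\ref{thm_sph_descent}, and Theorem~\ref{thm_sp_SA_P(V)}) matched against the lists in \S\,\ref{subsec_Levi&symmetric}. Your identification of the four surviving pairs and their classification as Levi or symmetric is accurate, including the boundary cases $(\GL_2,\Omega(\FF^2))$ and $(\Sp_{2n_1}\times\GL_1,\FF^{2n_1}\oplus\Omega(\FF^1))$.
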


\subsection{Completion of the classification}
\label{subsec_sympl_case_end}

We conclude our classification in the symplectic case by

\begin{proposition}
Let $X$ be a nontrivial flag variety of $\Sp(V)$ different from $\PP(V)$, $\SpGr_2(V)$, and $\SpGr_{\max}(V)$.
If $X$ is $H$-spherical then $H$ is either a symmetric subgroup or a Levi subgroup of $\Sp(V)$.
\end{proposition}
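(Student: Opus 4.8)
The plan is to reduce the problem to the single case of $\SpGr_2(V)$, which has already been settled in Corollary~\ref{crl_SpGr_2}. The whole point is that the partial order on $\mathscr F(G)/\!\sim$, together with the downward spreading of $H$-sphericity established in \S\,\ref{subsec_NE-relation}, lets one bypass any direct analysis of the variety~$X$ itself.

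First I would invoke Proposition~\ref{prop_SpGr2_partial_order}. Since $X \neq \PP(V)$ by hypothesis, this proposition yields the trichotomy $X = \SpGr_{\max}(V)$, or $X = \SpGr_2(V)$, or $\bl X \br \succ \bl \SpGr_2(V) \br$. The first two alternatives are excluded by the assumptions on~$X$, so the only surviving possibility is $\bl \SpGr_2(V) \br \prec \bl X \br$.

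Next I would apply Theorem~\ref{thm_sph_descent} to the pair $(\SpGr_2(V), X)$: since $\bl \SpGr_2(V) \br \prec \bl X \br$ and $X$ is $H$-spherical by hypothesis, the theorem gives that $\SpGr_2(V)$ is $H$-spherical as well. Finally, Corollary~\ref{crl_SpGr_2} asserts precisely that $H$-sphericity of $\SpGr_2(V)$ forces $H$ to be a symmetric subgroup or a Levi subgroup of $\Sp(V)$, which is the desired conclusion.

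I do not expect any genuine obstacle in this step, as all the substantive work is concentrated in the preceding subsections—the partition/collapse computations behind Proposition~\ref{prop_SpGr2_partial_order} and the module-by-module classification behind Corollary~\ref{crl_SpGr_2}. The only point deserving a moment's care is to confirm that the trichotomy really exhausts all relevant varieties in the low-rank degeneracies (for instance when $n = 2$, where $\SpGr_2(V)$ and $\SpGr_{\max}(V)$ coincide and the only variety to which the proposition applies nontrivially is the full flag variety); but since Proposition~\ref{prop_SpGr2_partial_order} is stated uniformly in~$n$, no separate treatment of these cases is required.
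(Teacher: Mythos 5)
Your proof is correct and follows exactly the paper's own argument: Proposition~\ref{prop_SpGr2_partial_order} plus Theorem~\ref{thm_sph_descent} reduce the claim to $H$-sphericity of $\SpGr_2(V)$, and Corollary~\ref{crl_SpGr_2} finishes. Your side remark on the $n=2$ degeneracy is a reasonable sanity check but, as you note, already absorbed into the uniform statement of Proposition~\ref{prop_SpGr2_partial_order}.
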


\begin{proof}
By Proposition~\ref{prop_SpGr2_partial_order} and Theorem~\ref{thm_sph_descent}, $X$ being $H$-spherical implies $\SpGr_2(V)$ being $H$-spherical.
Then the assertion follows from Corollary~\ref{crl_SpGr_2}.
\end{proof}

\section{Classification in the orthogonal case}
\label{sect_orth_case}

\subsection{Preliminary remarks}
\label{subsec_so_prelim_remarks}

Throughout this section, we assume that $V$ is a vector space of dimension $d \ge 3$ equipped with a nondegenerate symmetric bilinear form~$\omega$.
We assume that $H$ is a connected reductive subgroup of $G = \Spin(V)$.

We fix a decomposition $V = V_1 \oplus \ldots \oplus V_r$ where all direct summands are pairwise orthogonal with respect to~$\omega$ and each $V_i$ is an $H$-module that is either simple or weakly reducible.
For each $i = 1,\ldots, r$ we put $d_i = \dim V_i$.

The image of $H$ in $\SO(V)$ is denoted by~$H_0$.
Note that either $H \simeq H_0$ or $H$ is a two-fold covering of~$H_0$.
For every $i = 1, \ldots, r$, the projection of $H_0$ to $\SO(V_i)$ is denoted by~$H_i$.

As in \S\,\ref{sect_sympl_case}, when referring to the classification of spherical modules, we always use the list in \cite[\S\,5]{Kn} (see also \cite[Theorems~5.1--5.2]{AvP1}) and the general criterion provided by~\cite[Theorem~5.3]{AvP1}.

Suppose that $d$ is even and there is a decomposition $V = \widetilde V_1 \oplus \widetilde V_2$ where the direct summands are pairwise orthogonal and $\dim \widetilde V_2 = 1$.
In this situation, it is well known that the group $\SO(\widetilde V_1)$ acts transitively on both varieties $\SOGr_{\max}^\pm(V)$ and each of these is isomorphic to $\SOGr_{\max}(\widetilde V_1)$ as an $\SO(\widetilde V_1)$-variety.
This leads to the following result, which will be used several times in this section.

\begin{proposition} \label{prop_so_2n_to_2n-1}
Under the above notation, suppose that $H_0$ is a subgroup of $\SO(\widetilde V_1)$.
Then the following conditions are equivalent:
\begin{enumerate}[label=\textup{(\arabic*)},ref=\textup{\arabic*}]
\item
$\SOGr_{\max}^+(V)$ is $H$-spherical;

\item
$\SOGr_{\max}^-(V)$ is $H$-spherical;

\item
$\SOGr_{\max}(\widetilde V_1)$ is $H$-spherical.
\end{enumerate}
\end{proposition}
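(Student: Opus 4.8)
The plan is to reduce all three sphericity conditions to sphericity under the image group $H_0$ and then transport them along the $\SO(\widetilde V_1)$-equivariant isomorphisms recalled immediately before the statement. The key geometric input is already granted, so the argument is essentially a matter of bookkeeping about how sphericity behaves under passing to images and under equivariant isomorphisms.

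First I would observe that the $H$-action on each of the three varieties factors through~$H_0$. Indeed, $\SOGr_{\max}^+(V)$ and $\SOGr_{\max}^-(V)$ are $\SO(V)$-varieties, and $H$ acts on them through its image $H_0 \subset \SO(V)$; likewise $H$ acts on $\SOGr_{\max}(\widetilde V_1)$ through $H_0 \subset \SO(\widetilde V_1)$. I would then note that sphericity is insensitive to this replacement. Since $H$ is either isomorphic to $H_0$ or a two-fold covering of it, the projection $H \to H_0$ is surjective with finite kernel, so a Borel subgroup $B_H$ of~$H$ maps onto a Borel subgroup $B_{H_0}$ of~$H_0$; consequently $B_H$ and $B_{H_0}$ have the same orbits on any variety on which $H$ acts through~$H_0$. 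Hence, for each of the three varieties, being $H$-spherical is equivalent to being $H_0$-spherical.

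Second, I would invoke the fact recalled before the proposition: as $\SO(\widetilde V_1)$-varieties, both $\SOGr_{\max}^+(V)$ and $\SOGr_{\max}^-(V)$ are isomorphic to $\SOGr_{\max}(\widetilde V_1)$. Because $H_0 \subset \SO(\widetilde V_1)$, these isomorphisms are in particular $H_0$-equivariant, and an $H_0$-equivariant isomorphism carries a dense $B_{H_0}$-orbit to a dense $B_{H_0}$-orbit, hence preserves $H_0$-sphericity. Therefore $\SOGr_{\max}^+(V)$ is $H_0$-spherical if and only if $\SOGr_{\max}(\widetilde V_1)$ is, and the same holds with $\SOGr_{\max}^+(V)$ replaced by $\SOGr_{\max}^-(V)$. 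Combining this with the first step yields the equivalence of~(1),~(2), and~(3).

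I do not expect any serious obstacle, since the essential content — the $\SO(\widetilde V_1)$-equivariant identifications of the two components $\SOGr_{\max}^\pm(V)$ with $\SOGr_{\max}(\widetilde V_1)$ — is exactly what is recalled just above the statement. The only point demanding a little care is the passage between $H$ and $H_0$ in the case where $H$ is a genuine two-fold cover, but this is settled by the elementary remark that a surjective homomorphism with finite kernel sends Borel subgroups onto Borel subgroups and thus preserves the property of having a dense Borel orbit.
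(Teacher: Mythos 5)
Your argument is correct and is exactly the elaboration the paper intends: the result is stated as an immediate consequence of the $\SO(\widetilde V_1)$-equivariant identifications of $\SOGr_{\max}^{\pm}(V)$ with $\SOGr_{\max}(\widetilde V_1)$ recalled just before it, and your two bookkeeping steps (passing from $H$ to $H_0$ via the surjection with finite kernel, then transporting sphericity along the equivariant isomorphisms) are precisely what is being left implicit. No gaps.
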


For explicit calculations involving the subgroup~$H$, we use the following conventions.
First, we identify $V$ with~$\FF^d$.
Second, suppose that $V$ is written as $V = \widetilde V_1 \oplus \ldots \oplus \widetilde V_m$ where all direct summands are pairwise orthogonal with respect to~$\omega$ and each $\widetilde V_i$ is an $H$-module that is either simple or weakly reducible.
Unless otherwise specified, we use the following embeddings.
\begin{itemize}
\item
If $\dim \widetilde V_1 = 2k$ then $\widetilde V_1$ is embedded in $V$ as the linear span of the vectors $e_1,\ldots, e_k, e_{d-k+1},\ldots, e_d$ and $\widetilde V_2 \oplus \ldots \oplus \widetilde V_m$ is embedded as the linear span of the vectors $e_{k+1}, \ldots, e_{d-k}$.
Moreover, if $\widetilde V_1$ is weakly reducible of the form $\Omega(W_1)$ then we assume in addition that the $H$-submodule $W_1 \subset V$ is the linear span of the vectors $e_1,\ldots, e_k$ and the $H$-submodule $W_1^* \subset V$ is the linear span of the vectors $e_{d-k+1}, \ldots, e_d$.

\item
If $\dim \widetilde V_1 = 2k+1$ and $d=2l+1$ then $\widetilde V_1$ is embedded in $V$ as the linear span of the vectors $e_{l-k+1},\ldots, e_{l+k+1}$ and $\widetilde V_2 \oplus \ldots \oplus \widetilde V_m$ is embedded as the linear span of the vectors $e_1, \ldots, e_{l-k}, e_{l+k+2},\ldots, e_d$.

\item
If $\dim \widetilde V_1 = 2k+1$ and $d = 2l$ then $\widetilde V_1$ is embedded in $V$ as the linear span of the vectors $e_1,\ldots, e_k, e_l + e_{l+1}, e_{d-k+1},\ldots, e_d$ and $\widetilde V_2 \oplus \ldots \oplus \widetilde V_m$ is embedded as the linear span of the vectors $e_{k+1},\ldots, e_{l-1}, e_l - e_{l+1}, e_{l+2},\ldots, e_{d - k}$.
\end{itemize}
The embeddings of $\widetilde V_2$, \ldots, $\widetilde V_m$ in $V$ are determined by iterating the above procedure. (In fact, the situation where $\dim \widetilde V_1$ is odd occurs only for $m = 2$.)

The group $\mathsf G_2$ is always embedded in $\SO_7$ according to the embedding of the corresponding Lie algebras described in Appendix~\ref{sect_g2&spin7}.

The group $\Spin_7$ is always embedded in $\SO_8$ according to the embedding of the corresponding Lie algebras described in Appendix~\ref{sect_g2&spin7}.
Since there are two conjugacy classes of $\Spin_7$ in $\SO_8$, to distinguish between them we write $\Spin^+_7$ and $\Spin^-_7$ where $\Spin^+_7$ refers to the above-described subgroup of~$\SO_8$.

For the module $(\Sp_{2n} \times \SL_2, \FF^{2n} \otimes \FF^2)$, $n \ge 2$, the image of the group $\Sp_{2n} \times \SL_2$ in $\SO_{4n}$ is described as follows.
Let $e_1,\ldots,e_{2n}$ (resp.~$f_1,f_2$) be the standard basis of $\FF^{2n}$ (resp.~$\FF^2$).
Then the standard basis of $\FF^{2n} \otimes \FF^2$ providing the required embedding is $e_1 \otimes f_1$, $e_2 \otimes f_1$, $\ldots$, $e_{2n} \otimes f_1$, $e_1 \otimes f_2$, $e_2 \otimes f_2$, $\ldots$, $e_{2n} \otimes f_2$.
For $n = 2$, there are two conjugacy classes in $\SO_8$ of (the image of) $\Sp_4 \times \SL_2$, and the above-described realization will be referred to as the default one.

For checking sphericity of a given $H$-variety we always use Proposition~\ref{prop_sph_criterion_eff} with $B_H^- = B_G^- \cap H$.
The above conventions on embeddings of~$H$, $\mathsf G_2$, $\Spin_7$, and $\Sp_{2n} \times \SL_2$ always guarantee that $B_G^- \cap H$ is a Borel subgroup of~$H$.
It is well known that any flag variety of $G$ is $H$-spherical if the pair $(H,V)$ is equivalent to either $(\SO_m, \FF^m)$ or $(\Spin_7, \FF^8)$ (in the latter case, the subgroup $H \simeq \Spin_7$ is symmetric in~$G \simeq \Spin_8$, and a suitable outer automorphism of~$G$ takes $H$ to $\widetilde H$ such that the pair $(\widetilde H, V)$ is equivalent to $(\SO_7, \FF^7 \oplus \FF^1)$), this will be used without extra explanation.

In \S\S\,\ref{subsec_SOGr_1}--\ref{subsec_SOGr_2}, our classification of spherical actions on flag varieties of $G$ involves all possible subgroups~$H$ including symmetric subgroups and Levi subgroups.
On the contrary, in~\S\,\ref{subsec_orth_case_end} we exclude the cases where $H$ is either a symmetric subgroup or intermediate between a Levi subgroup of $G$ and its derived subgroup; for these cases we refer to~\cite[\S\,5]{AvP2}.

We finish this subsection with the following lemma, which allows us to shorten computations in many cases.

\begin{lemma} \label{lemma_outer_aut}
Suppose that $\SO(V_i) \subset H_0$ for some $i \in \lbrace 1,\ldots, d \rbrace$.
Then the following assertions hold.
\begin{enumerate}[label=\textup{(\alph*)},ref=\textup{\alph*}]
\item \label{lemma_outer_aut_a}
If $d$ is even then $\SOGr_{\max}^+(V)$ is $H$-spherical if and only if so is $\SOGr_{\max}^-(V)$.

\item \label{lemma_outer_aut_b}
If $H_j \subset H_0$ for some $j \ne i$ and the pair $(H_j,V_j)$ is BF-equivalent to one of $(\Spin_7, \FF^8)$ or $(\Sp_4 \times \SL_2$, $\FF^4 \otimes\nobreak \FF^2)$ then for every subset $I \subset S$ the condition of $X_I$ being $H$-spherical holds or does not hold simultaneously for both choices of the conjugacy class of $H_j$ in $\SO(V_j)$.
\end{enumerate}
\end{lemma}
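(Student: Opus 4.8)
The plan is to reduce both statements to one structural consequence of the hypothesis $\SO(V_i) \subset H_0$, namely the existence of a determinant $-1$ orthogonal transformation supported on $V_i$ that normalizes $H_0$. First I would note that, since the decomposition $V = V_1 \oplus \ldots \oplus V_r$ is $H$-stable and $H_0$ is connected, one has $H_0 \subset \SO(V_1) \times \ldots \times \SO(V_r)$ with the $i$th factor $\SO(V_i)$ normal. The assumption $\SO(V_i) \subset H_0$ then forces $H_0 = \SO(V_i) \times \overline H$, where $\overline H$ is the image of $H_0$ under the projection forgetting the $i$th coordinate: indeed, if an element with $i$th component $s$ and remaining components $\overline h$ lies in $H_0$, then multiplying by $(s^{-1},e) \in \SO(V_i) \times \{e\} \subset H_0$ shows that $(e, \overline h) \in H_0$. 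Now I would fix $g_i \in \mathrm{O}(V_i)$ with $\det g_i = -1$, extended to $V$ by the identity on $V_i^\perp$. Since $g_i$ normalizes $\SO(V_i)$ and commutes with $\overline H$ (which acts trivially on $V_i$), it normalizes $H_0$, while as an element of $\mathrm{O}(V)$ it has determinant $-1$.

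For part~(\ref{lemma_outer_aut_a}), this $g_i$ lies in $\mathrm{O}(V) \setminus \SO(V)$ and hence interchanges the two families of maximal isotropic subspaces, sending $\SOGr_{\max}^+(V)$ to $\SOGr_{\max}^-(V)$. Because $g_i$ normalizes $H_0$, the map $U \mapsto g_i U$ intertwines the $H_0$-action on $\SOGr_{\max}^+(V)$ with the $H_0$-action on $\SOGr_{\max}^-(V)$ twisted by the automorphism $\mathrm{Int}(g_i)$ of $H_0$; as an automorphism carries a Borel subgroup to a Borel subgroup, one variety carries a dense $B_{H_0}$-orbit exactly when the other does. Finally, the varieties $\SOGr_{\max}^\pm(V)$ are $\SO(V)$-homogeneous, so the $H$-action factors through $H_0$ and $H$-sphericity coincides with $H_0$-sphericity, giving part~(\ref{lemma_outer_aut_a}).

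For part~(\ref{lemma_outer_aut_b}), let $H'$ be the subgroup obtained from $H$ by replacing $H_j$ with the other conjugacy class in $\SO(V_j)$, with image $H_0'$. For each of the two modules $(\Spin_7, \FF^8)$ and $(\Sp_4 \times \SL_2, \FF^4 \otimes \FF^2)$ the two conjugacy classes of the image in $\SO_8 = \SO(V_j)$ form a single $\mathrm{O}_8$-orbit, so they are related by some $g_j \in \mathrm{O}(V_j)$ with $\det g_j = -1$, which I extend by the identity on $V_j^\perp$. The key device is the determinant correction: since $i \ne j$ the transformations $g_i$ and $g_j$ act on orthogonal summands and commute, so $g := g_i g_j$ satisfies $\det g = 1$, i.e.\ $g \in \SO(V)$. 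Conjugation by $g_j$ switches the $V_j$-component of $H_0$ while fixing the others, so $g_j H_0 g_j^{-1} = H_0'$; and $g_i$ normalizes both $H_0$ and $H_0'$, since both contain $\SO(V_i)$. Hence $g H_0 g^{-1} = H_0'$ with $g \in \SO(V)$.

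It then remains to upgrade this to a conjugacy inside $G = \Spin(V)$, which is the step I expect to be the main obstacle, since an $\SO(V)$-conjugacy is needed precisely to produce an \emph{inner} automorphism of $G$ rather than the merely outer one that a single determinant $-1$ element would give. Lifting $g$ to $\widehat g \in G$ and setting $\sigma = \mathrm{Int}(\widehat g)$, one has $\pi(\sigma(H)) = H_0' = \pi(H')$ for the covering $\pi \colon G \to \SO(V)$; as both $\sigma(H)$ and $H'$ are connected subgroups of dimension $\dim H_0'$ mapping onto $H_0'$, each equals the identity component of $\pi^{-1}(H_0')$, so $\sigma(H) = H'$. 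Thus $H$ and $H'$ are conjugate in $G$ and have identical sphericity for every $X_I$, which proves part~(\ref{lemma_outer_aut_b}). The only genuinely external inputs are that the two embeddings of $H_j$ are fused by a determinant $-1$ element of $\mathrm{O}(V_j)$ and that such a sign can be absorbed using the factor $\SO(V_i) \subset H_0$.
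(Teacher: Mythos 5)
Your proof is correct and follows essentially the same route as the paper's: a determinant $-1$ element of $\mathrm{O}(V_i)$ normalizes $H$ because $\SO(V_i)\subset H_0$, which for (a) swaps $\SOGr_{\max}^+(V)$ and $\SOGr_{\max}^-(V)$, and for (b) is multiplied by a determinant $-1$ element of $\mathrm{O}(V_j)$ to land in $\SO(V)$ and produce an inner automorphism exchanging the two conjugacy classes of $H_j$. You merely spell out more details than the paper does (the splitting $H_0=\SO(V_i)\times\overline H$ and the lift of the conjugating element to $\Spin(V)$), which is fine.
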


\begin{proof}
For every element $g \in \mathrm{O}(V)$, let $\sigma \colon x \mapsto gxg^{-1}$ be the corresponding automorphism of the group $\SO(V)$.
Then, as described in Reduction~2 (see~\S\,\ref{subsec_reductions}), $X_I$ is $H$-spherical if and only if $X_{\sigma(I)}$ is $\sigma(H)$-spherical.

(\ref{lemma_outer_aut_a})
If $d = 2n$ and $g \in \mathrm{O}(V_i) \setminus \SO(V_i)$ then $\sigma(H) = H$, $\SOGr_{\max}^+(V) = X_{\lbrace n \rbrace}$, and $\SOGr_{\max}^-(V) = X_{\lbrace n-1 \rbrace} = X_{\sigma(\lbrace n \rbrace)}$.

(\ref{lemma_outer_aut_b})
If $g = g_1g_2$ with $g_1 \in \mathrm{O}(V_i) \setminus \SO(V_i)$ and $g_2 \in \mathrm{O}(V_j) \setminus \SO(V_j)$ then $g \in \SO(V)$ and hence $\sigma(H)$ is conjugate to~$H$ and $\sigma(I) = I$ for every subset $I \subset S$.
On the other hand, $\sigma(H)$ differs from $H$ by changing the conjugacy class of $H_j$ in~$\SO(V_j)$.
\end{proof}

\subsection{Spherical actions on~\texorpdfstring{$\SOGr_1(V)$}{SOGr\_1(V)}}
\label{subsec_SOGr_1}

Recall from \S\,\ref{subsec_FV_via_compositions} that $\SOGr_1(V) \simeq X_I$ with $I = \lbrace 1 \rbrace$.
We start with the following auxiliary lemma in which $V$ is not necessarily a simple $H$-module.

\begin{lemma} \label{lemma_sogr1_sph}
Suppose that $v \in V$ is a highest weight vector with respect to the $H$-module structure, $v$ is isotropic in~$V$, $Q$ is the stabilizer in $H$ of the point $\langle v \rangle \in \PP(V)$, and $M$ is a Levi subgroup of~$Q$.
Suppose also that $\SOGr_1(V)$ is $H$-spherical.
Then the $M$-module $T_{\langle v \rangle} \PP(V) / T_{\langle v \rangle} (H\langle v \rangle)$ contains a spherical submodule of codimension~$1$.
Moreover, there are $M$-module isomorphisms
\begin{gather} \label{eqn_iso's1}
T_{\langle v \rangle} \PP(V) \simeq (V/\langle v \rangle)\otimes \langle v \rangle^*,\\
\label{eqn_iso's2}
T_{\langle v \rangle} (Hv) \simeq \mathfrak h/\mathfrak q \simeq (\mathfrak q^u)^*
\end{gather}
where $\mathfrak q^u$ is the nilpotent radical of~$\mathfrak q$.
\end{lemma}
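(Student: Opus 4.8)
The plan is to realize $\SOGr_1(V)$ as a smooth quadric hypersurface in $\PP(V)$ and then invoke the sphericity criterion of Proposition~\ref{prop_sphericity_criterion}. Since $v$ is a highest weight vector, the line $\langle v \rangle$ is stable under a Borel subgroup of~$H$, so $Q = H_{\langle v \rangle}$ contains a Borel subgroup and is therefore a parabolic subgroup of~$H$. As $v$ is isotropic, $\langle v \rangle$ is a point of $\SOGr_1(V) = \lbrace \langle x \rangle \in \PP(V) \mid \omega(x,x) = 0 \rbrace$, and its orbit $Y := H\langle v \rangle \cong H/Q$ is complete, hence a closed $H$-orbit in the projective variety $\SOGr_1(V) = X_{\lbrace 1 \rbrace}$. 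First I would apply Proposition~\ref{prop_sphericity_criterion} with $K = H$, the smooth complete irreducible variety $X = \SOGr_1(V)$, the closed orbit~$Y$, the point $y = \langle v \rangle$, and the Levi subgroup $M$ of $H_y = Q$: since $\SOGr_1(V)$ is $H$-spherical by hypothesis, the criterion yields that the $M$-module $T_{\langle v \rangle} \SOGr_1(V) / T_{\langle v \rangle} Y$ is spherical.

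To obtain the codimension-$1$ assertion, I would compare the quadric with the ambient projective space. Because $\omega$ is nondegenerate and $\FF$ has characteristic zero, the quadratic form $x \mapsto \omega(x,x)$ is nondegenerate, so $\SOGr_1(V)$ is a smooth hypersurface in~$\PP(V)$ and $T_{\langle v \rangle} \SOGr_1(V)$ has codimension~$1$ in $T_{\langle v \rangle}\PP(V)$. Since $Y \subset \SOGr_1(V)$, we have $T_{\langle v \rangle} Y \subset T_{\langle v \rangle}\SOGr_1(V)$, so that $T_{\langle v \rangle}\SOGr_1(V)/T_{\langle v \rangle}Y$ is a codimension-$1$ $M$-submodule of $T_{\langle v \rangle}\PP(V)/T_{\langle v \rangle}Y$; by the previous paragraph it is spherical, which is exactly the claim.

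It remains to establish the $M$-module isomorphisms. The isomorphism $T_{\langle v \rangle}\PP(V) \simeq \Hom(\langle v \rangle, V/\langle v \rangle) = (V/\langle v \rangle)\otimes\langle v \rangle^*$ is the standard description of the tangent space to a projective space, and it is $Q$-equivariant (hence $M$-equivariant) because $Q$ stabilizes the line~$\langle v \rangle$. The differential at the identity of the orbit map $H \to H\langle v \rangle$ gives the $M$-equivariant isomorphism $T_{\langle v \rangle}(H\langle v \rangle) \simeq \mathfrak h/\mathfrak q$, with $M$ acting by the adjoint action on the right-hand side. Finally, writing the parabolic as $\mathfrak q = \mathfrak m \oplus \mathfrak q^u$ and using the opposite decomposition $\mathfrak h = \mathfrak q^{u-}\oplus\mathfrak m\oplus\mathfrak q^u$ I would identify $\mathfrak h/\mathfrak q \simeq \mathfrak q^{u-}$ as $M$-modules; the Killing form of~$\mathfrak h$ restricts to a nondegenerate $M$-invariant pairing between $\mathfrak q^{u-}$ and $\mathfrak q^u$, giving $\mathfrak q^{u-}\simeq(\mathfrak q^u)^*$ and hence $\mathfrak h/\mathfrak q \simeq (\mathfrak q^u)^*$.

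The argument is essentially a bookkeeping of isotropy representations, so I do not expect a serious obstacle. The one point requiring care is the verification that $Y = H\langle v \rangle$ is genuinely a closed orbit, so that Proposition~\ref{prop_sphericity_criterion} applies; this is exactly where the hypothesis that $v$ is a highest weight vector (making $Q$ parabolic) is used, and it is also what permits $M$ to be taken as a Levi subgroup of the parabolic~$Q$ in the two tangent-space identifications.
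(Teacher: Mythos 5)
Your proposal is correct and follows essentially the same route as the paper's proof: identify $H\langle v\rangle \simeq H/Q$ as a closed orbit in $\SOGr_1(V)$, apply Proposition~\ref{prop_sphericity_criterion} to get sphericity of $T_{\langle v \rangle} \SOGr_1(V) / T_{\langle v \rangle} (H\langle v \rangle)$, observe the codimension-$1$ inclusion into $T_{\langle v \rangle} \PP(V) / T_{\langle v \rangle} (H\langle v \rangle)$, and use the standard tangent-space identifications. The only difference is that you spell out details the paper leaves implicit (the smooth-quadric argument for the codimension and the pairing between $\mathfrak q^{u-}$ and $\mathfrak q^u$ for~(\ref{eqn_iso's2})), which is harmless.
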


\begin{proof}
The hypotheses imply that $\langle v \rangle \in \SOGr_1(V)$ and $Q$ is a parabolic subgroup of~$H$, hence $H\langle v \rangle \simeq H/Q$ is a closed $H$-orbit in~$\SOGr_1(V)$.
Then it follows from Proposition~\ref{prop_sphericity_criterion} that $T_{\langle v \rangle} \SOGr_1(V) / T_{\langle v \rangle} (H\langle v \rangle)$ is a spherical $M$-module.
Clearly, the latter module has codimension~$1$ in $T_{\langle v \rangle} \PP(V) / T_{\langle v \rangle} (H\langle v \rangle)$.
The isomorphism in~(\ref{eqn_iso's1}) follows from the $P$-module isomorphisms $T_{\langle v \rangle} \PP(V) \simeq \mathfrak{gl}(V)/\mathfrak p \simeq (V/\langle v \rangle) \otimes \langle v \rangle^*$ where $P$ is the stabilizer of $\langle v \rangle$ in $\GL(V)$.
The isomorphism in~(\ref{eqn_iso's2}) is obvious.
\end{proof}

\begin{remark} \label{rem_sogr1_sph}
If $V$ is a nontrivial simple $H$-module then every highest weight vector in $V$ is automatically isotropic.
\end{remark}

\begin{proposition} \label{prop_so_Q1_V_irr}
Suppose that $V$ is a simple $H$-module.
Then $\SOGr_1(V)$ is $H$-spherical if and only if the pair $(H,V)$ is BF-equivalent to one of $(\SO_n, \FF^n)$ $(n \ge 3)$, $(\Sp_{2n} \times \SL_2$, $\FF^{2n} \otimes \FF^2)$ $(n \ge 2)$, $(\mathsf G_2, \FF^7)$, $(\Spin_7, \FF^8)$, $(\Spin_9, \FF^{16})$.
\end{proposition}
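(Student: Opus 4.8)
The plan is to derive the proposition from the classification of simple spherical orthogonal modules (Proposition~\ref{prop_spherical+orth}) together with the slice criterion at the closed orbit (Proposition~\ref{prop_sphericity_criterion}) and its consequence Lemma~\ref{lemma_sogr1_sph}. First I would settle the ``if'' direction. Since $V$ is a simple $H$-module it has no proper $H$-submodules at all, so Proposition~\ref{prop_spherical+orth} applies and shows that $V$ is a spherical $(H\times\FF^\times)$-module if and only if $(H,V)$ is BF-equivalent to one of the irreducible rows of Table~\ref{table_orth_sph_mod}; in the present setting $d\ge 3$ these rows are precisely the five pairs in the statement. As $\PP(V)$ is $H$-spherical exactly when $V$ is a spherical $(H\times\FF^\times)$-module, and $\SOGr_1(V)=\lbrace \langle w\rangle : \omega(w,w)=0\rbrace$ is an irreducible (because $d\ge 3$) $H$-stable subvariety of $\PP(V)$, Theorem~\ref{thm_subvar_sph} yields that $\SOGr_1(V)$ is $H$-spherical whenever one of the five pairs occurs. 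This disposes of the ``if'' direction and reduces the ``only if'' direction to the implication: for simple $V$, if $\SOGr_1(V)$ is $H$-spherical then $\PP(V)$ is $H$-spherical.

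To prove this implication I would pass to the closed orbit through the highest weight line. Let $v\in V$ be a highest weight vector; by Remark~\ref{rem_sogr1_sph} it is isotropic, so $\langle v\rangle\in\SOGr_1(V)$ and $Y:=H\langle v\rangle\simeq H/Q$ is the closed $H$-orbit, where $Q$ is the parabolic stabilizing $\langle v\rangle$ and $M$ is a Levi subgroup of $Q$. Applying Proposition~\ref{prop_sphericity_criterion} with this $Y$ to $X=\PP(V)$ and to $X=\SOGr_1(V)$ shows that $\PP(V)$ (resp.\ $\SOGr_1(V)$) is $H$-spherical if and only if the slice $M$-module $U:=T_{\langle v\rangle}\PP(V)/T_{\langle v\rangle}Y$ (resp.\ its codimension-one submodule $U':=T_{\langle v\rangle}\SOGr_1(V)/T_{\langle v\rangle}Y$) is spherical; Lemma~\ref{lemma_sogr1_sph} repackages the latter equivalence by exhibiting the easily computable module $U$ together with its spherical codimension-one submodule $U'$. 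Thus the task becomes: for simple $V$, sphericity of $U'$ must force sphericity of $U$.

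Here I would make $U$, $U'$, and $M$ explicit. By the isomorphisms in Lemma~\ref{lemma_sogr1_sph} one has $U\simeq\bigl(V/(\mathfrak h v+\langle v\rangle)\bigr)\otimes\langle v\rangle^*$ and $T_{\langle v\rangle}Y\simeq(\mathfrak q^u)^*$, where $\mathfrak q^u$ is the nilradical of $\mathfrak q$ and $M$ is the Levi factor attached to the highest weight $\lambda$ of $v$; in particular $\dim V=1+\dim\mathfrak q^u+\dim U$, which links the size of $V$ to the pair $(M,U)$. I would then invoke the classification of spherical modules to enumerate all $(M,U')$ with $U'$ spherical of the shape produced above, and for each reconstruct the admissible $(H,V,\lambda)$: the dimension relation, together with the constraint that $M$ and $\mathfrak q^u$ fit together into a genuine parabolic $Q$ of a reductive group $H$ acting on a \emph{simple} module $V$, restricts the list to finitely many explicit candidates.

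The step I expect to be the main obstacle is this enumeration, compounded by the fact that Lemma~\ref{lemma_sogr1_sph} supplies only a necessary condition: it guarantees that \emph{some} codimension-one submodule of $U$ is spherical, whereas $\SOGr_1(V)$ is $H$-spherical precisely when the \emph{distinguished} submodule $U'$ cut out by the quadric is spherical (Proposition~\ref{prop_sphericity_criterion}), and these may differ. Accordingly, each candidate $(H,V)$ surviving the necessary condition must be examined individually: one computes the distinguished slice $U'$ and tests its sphericity via the effective criterion of Proposition~\ref{prop_sph_criterion_eff} applied to $\SOGr_1(V)=X_{\lbrace1\rbrace}$. Under the standing hypothesis that $\SOGr_1(V)$ is $H$-spherical, candidates with non-spherical $U'$ cannot occur and are discarded, while those with spherical $U'$ are confirmed to lie among the five listed pairs. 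Carrying out this finite but delicate analysis, and verifying that the only simple modules it admits are $(\SO_n,\FF^n)$, $(\Sp_{2n}\times\SL_2,\FF^{2n}\otimes\FF^2)$, $(\mathsf G_2,\FF^7)$, $(\Spin_7,\FF^8)$, and $(\Spin_9,\FF^{16})$, would complete the ``only if'' direction and the proof.
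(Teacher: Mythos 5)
Your ``if'' direction is exactly the paper's: for each of the five pairs $V$ is a spherical $(H\times\FF^\times)$-module by Proposition~\ref{prop_spherical+orth}, so $\PP(V)$ is $H$-spherical and Theorem~\ref{thm_subvar_sph} passes sphericity down to the $H$-stable irreducible subvariety $\SOGr_1(V)$. That half is fine.

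The ``only if'' direction has a genuine gap at its central step. You propose to take the slice module $U'$ at the closed orbit, observe via Lemma~\ref{lemma_sogr1_sph} that it must be spherical, and then ``invoke the classification of spherical modules to enumerate all $(M,U')$ of the shape produced above and reconstruct the admissible $(H,V,\lambda)$.'' This inverse problem --- recovering all simple self-dual $(H,V)$ from the condition that a slice module built from $M$, $\mathfrak q^u$ and $V/\langle v\rangle$ is spherical --- is not a routine application of the spherical-module tables; a priori the set of candidates $(H,V)$ is infinite, and the dimension relation $\dim V = 1+\dim\mathfrak q^u+\dim U$ does not by itself make the search finite or effective. You have in effect deferred the entire content of the classification to this unexecuted enumeration. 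The paper avoids this by importing a different finiteness input: Kimelfeld's classification \cite[Theorem~2.1]{Kim} of all irreducible $(H,V)$ for which $H$ has an \emph{open orbit} in $\SOGr_1(V)$ (a weaker necessary condition than sphericity, but one that is already classified). This yields an explicit list of thirteen candidates; all but three of the non-listed ones are then killed by the inequality $\dim B_H\ge\dim\SOGr_1(V)$, and the remaining three ($(\Sp_6,\wedge^2_0\FF^6)$, $(\mathsf F_4,R(\pi_4))$, $(\Sp_{2n}\times\Sp_4,\FF^{2n}\otimes\FF^4)$) are eliminated by the slice computation of Lemma~\ref{lemma_sogr1_sph}, showing the module $T_{\langle v\rangle}\PP(V)/T_{\langle v\rangle}(H\langle v\rangle)$ has no spherical codimension-one submodule at all. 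So your slice lemma is used only as the finishing blow on a short explicit list, not as the engine of the classification. Without Kimelfeld's theorem (or an equivalent open-orbit classification) your argument does not close. A secondary point: your intermediate goal ``sphericity of $U'$ forces sphericity of $U$,'' i.e.\ that $\SOGr_1(V)$ spherical implies $\PP(V)$ spherical for simple $V$, is true only a posteriori from the completed classification; it is not established by anything you write and should not be used as a stepping stone.
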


\begin{proof}
If $H$ acts spherically on~$\SOGr_1(V)$ then $H$ has an open orbit in $\SOGr_1(V)$. According to~\cite[Theorem~2.1]{Kim}, all pairs $(H,V)$ (up to BF-equivalence) for which $H$ has an open orbit in $\SOGr_1(V)$ are listed in Table~\ref{table_open_Q1_irr}.

\begin{table}[h]
\caption{} \label{table_open_Q1_irr}

\begin{tabular}{|c|c|c|c|c|c|}
\hline
No. & $H$ & $V$ & $\dim \SOGr_1(V)$ & $\dim B_H$ & Note \\

\hline

1 & $\SO_n$ & $\FF^n$ & $n-2$ & $[\frac{n}2][\frac{n+1}2]$ & $n \ge 3$ \\

\hline

2 & $\Sp_{2n} \times \SL_2$ & $\FF^{2n} \otimes \FF^2$ & $4n-2$ & $n^2 + n + 2$ & $n \ge 2$ \\

\hline

3 & $\mathsf G_2$ & $\FF^7$ & $5$ & $8$ & \\

\hline

4 & $\Spin_7$ & $\FF^8$ & $6$ & $12$ & \\

\hline

5 & $\Spin_9$ & $\FF^{16}$ & $14$ & $20$ & \\

\hline

6 & $\SL_3$ & $R(\pi_1 {+} \pi_2)$ & $6$ & $5$ & \\

\hline

7 & $\Sp_4$ & $\mathrm{S}^2 \FF^4$ & $8$ & $6$ & \\

\hline

8 & $\mathsf G_2$ & $R(\pi_2)$ & $12$ & $8$ & \\

\hline

9 & $\SL_2$ & $\mathrm{S}^4 \FF^2$ & $3$ & $2$ & \\

\hline

10 & $\Sp_6$ & $\wedge^2_0 \FF^6$ & $12$ & $12$ & \\

\hline

11 & $\mathsf F_4$ & $R(\pi_4)$ & $24$ & $28$ & \\

\hline

12 & $\Sp_{2n} \times \Sp_4$ & $\FF^{2n} \otimes \FF^4$ & $8n-2$ & $n^2+n+6$ & $n \ge 2$ \\

\hline

13 & $\SL_2 \times \SL_2$ & $\FF^2 \otimes \mathrm{S}^3 \FF^2$ & $6$ & $4$ & \\

\hline
\end{tabular}
\end{table}

For each pair $(H,V)$ listed in the statement of the theorem, $V$ is a spherical $(H \times \FF^\times)$-module, hence $H$ acts spherically on $\PP(V)$, hence on $\SOGr_1(V)$ by Theorem~\ref{thm_subvar_sph}.
As $\dim B_H \ge \dim \SOGr_1(V)$ is a necessary sphericity condition, a case-by-case check of the remaining entries of Table~\ref{table_open_Q1_irr} leaves us with the following three cases, which are treated using Lemma~\ref{lemma_sogr1_sph} and Remark~\ref{rem_sogr1_sph}.
In all the cases, $v$ denotes a highest weight vector of~$V$ (as an $H$-module) and $M$ is a Levi subgroup of the stabilizer in~$H$ of the line~$\langle v \rangle$.

\setcounter{case}{0}

\textit{Case}~\caseno: $H = \Sp_6$, $V = \wedge^2_0 \FF^6$.
The pairs $(M, V / \langle v \rangle)$, $(M, \langle v \rangle)$, and $(M, T_{\langle v \rangle} (H\langle v \rangle))$ are equivalent to
\[
(\SL_2 \times \SL_2 \times \FF^\times, [\mathop{\vphantom|\FF^2} \limits_1{\!} \otimes \mathop{\vphantom|\FF^2} \limits_2{\!}]_\chi \oplus [\mathop{\vphantom|\FF^2} \limits_1{\!} \otimes \mathop{\vphantom|\FF^2} \limits_2{\!}]_{-\chi} \oplus \mathop{\vphantom|\mathrm{S}^2 \FF^2}\limits_1{\!} \oplus \FF^1_{2\chi} \oplus \FF^1),
\]
$(\SL_2 \times \SL_2 \times \FF^\times, \FF^1_{-2\chi})$, and
\[
(\SL_2 \times \SL_2 \times \FF^\times, [\mathop{\vphantom|\FF^2} \limits_1{\!} \otimes \mathop{\vphantom|\FF^2} \limits_2{\!}]_{\chi} \oplus \mathop{[\mathrm{S}^2 \FF^2]}\limits_1{\!}_{2\chi}),
\]
respectively. Then the pair $(M, T_{\langle v \rangle} \PP(V) / T_{\langle v \rangle} (H\langle v \rangle))$ is equivalent to
\[
(\SL_2 \times \SL_2 \times \FF^\times, [\mathop{\vphantom|\FF^2} \limits_1{\!} \otimes \mathop{\vphantom|\FF^2} \limits_2{\!}]_{3\chi} \oplus \FF^1_{4\chi} \oplus \FF^1_{2\chi}).
\]
The latter module contains no spherical submodules of codimension~$1$, hence $\SOGr_1(V)$ is not $H$-spherical.

\textit{Case}~\caseno: $H = \mathsf F_4$, $V = R(\pi_4)$.
The pairs $(M, V / \langle v \rangle)$, $(M, \langle v \rangle)$, and $(M, T_{\langle v \rangle} (H\langle v \rangle))$ are equivalent to
\[
(\Spin_7 \times \FF^\times, \mathop{R(\pi_1)} \limits_1{\!} \oplus [\mathop{R(\pi_3)} \limits_1{\!}]_\chi \oplus [\mathop{R(\pi_3)} \limits_1{\!}]_{-\chi} \oplus \FF^1_{2\chi} \oplus \FF^1),
\]
$(\Spin_7 \times \FF^\times, \FF^1_{-2\chi})$, and
\[
(\Spin_7 \times \FF^\times, [\mathop{R(\pi_1)} \limits_1{\!}]_{2\chi} \oplus [\mathop{R(\pi_3)} \limits_1{\!}]_{\chi}),
\]
respectively (the first pair was computed with LiE using the information in~\cite[\S\,5.12]{LiE2}).
Then the pair $(M, T_{\langle v \rangle} \PP(V) / T_{\langle v \rangle} (H\langle v \rangle))$ is equivalent to
\[
(\Spin_7 \times \FF^\times,
[\mathop{R(\pi_3)} \limits_1{\!}]_{3\chi} \oplus \FF^1_{4\chi} \oplus \FF^1_{2\chi}).
\]
The latter module contains no spherical submodules of codimension~$1$, hence $\SOGr_1(V)$ is not $H$-spherical.

\textit{Case}~\caseno: $H = \Sp_{2n} \times \Sp_4$, $V = \FF^{2n} \otimes \FF^4$, $n \ge 6$.
The pairs $(M, V / \langle v \rangle)$, $(M, \langle v \rangle)$, and $(M, T_{\langle v \rangle} (H\langle v \rangle))$ are equivalent to
\begin{multline*}
(\Sp_{2n-2} \times \SL_2 \times \FF^\times \times \FF^\times,\\
\mathop{\vphantom|\FF^{2n-2}} \limits_1{\!} \otimes \mathop{\vphantom|\FF^2} \limits_2{\!} \oplus \mathop{[\FF^{2n-2}]} \limits_1{\!}_{\chi_2} \oplus \mathop{[\FF^{2n-2}]} \limits_1{\!}_{-\chi_2} \oplus \mathop{[\FF^2]} \limits_2{\!}_{\chi_1} \oplus \mathop{[\FF^2]}\limits_2{\!}_{-\chi_1} \oplus\\
\FF^1_{\chi_1 - \chi_2} \oplus \FF^1_{-\chi_1 + \chi_2} \oplus \FF^1_{\chi_1 + \chi_2}),
\end{multline*}
$(\Sp_{2n-2} \times \SL_2 \times \FF^\times \times \FF^\times, \FF^1_{-\chi_1 - \chi_2})$, and
\[
(\Sp_{2n-2} \times \SL_2 \times \FF^\times \times \FF^\times,
\mathop{[\FF^{2n-2}]} \limits_1{\!}_{\chi_1} \oplus \mathop{[\FF^2]} \limits_2{\!}_{\chi_2} \oplus \FF^1_{2\chi_1} \oplus \FF^1_{2\chi_2}),
\]
respectively. Then the pair $(M, T_{\langle v \rangle} \PP(V) / T_{\langle v \rangle} (H\langle v \rangle))$ is equivalent to
\[
(\Sp_{2n-2} \times \SL_2 \times \FF^\times \times \FF^\times,
[\mathop{\vphantom|\FF^{2n-2}} \limits_1{\!} \otimes \mathop{\vphantom|\FF^2} \limits_2{\!}]_{\chi_1+\chi_2} \oplus \mathop{[\FF^{2n-2}]} \limits_1{\!}_{\chi_1+2\chi_2} \oplus \mathop{[\FF^2]}\limits_2{\!}_{2\chi_1+\chi_2} \oplus
\FF^1_{2\chi_1+2\chi_2}).
\]
The latter module contains no spherical submodules of codimension~$1$, hence $\SOGr_1(V)$ is not $H$-spherical.
\end{proof}

\begin{proposition} \label{prop_so_Q1_V_wr}
Suppose that $V$ is BF-equivalent to $\Omega(W)$ for a simple $H$-module~$W$.
Then $\SOGr_1(V)$ is $H$-spherical if and only if the pair $(H,W)$ is equivalent to one of $(\GL_n, \FF^n)$ $(n \ge 2)$, $(\SL_n, \FF^n)$ $(n \ge 2)$, $(\Sp_{2n} \times \FF^{\times}, \FF^{2n})$ $(n \ge 2)$.
\end{proposition}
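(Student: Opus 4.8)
The plan is to mirror the proof of Proposition~\ref{prop_so_Q1_V_irr}, adapting it to the weakly reducible situation. Throughout, set $n = \dim W$, so that $\dim V = 2n$ and $G = \Spin_{2n}$. For the \emph{if} direction I would observe that for each pair $(H,W)$ listed in the conclusion the module $V = \Omega(W)$ is precisely one of the weakly reducible entries of Table~\ref{table_orth_sph_mod}; hence $V$ is a spherical $(H \times \FF^\times)$-module, so that $\PP(V)$ is $H$-spherical. Since $\SOGr_1(V)$ is the zero locus in $\PP(V)$ of the $H$-invariant quadratic form $\omega$, it is a closed irreducible $H$-stable subvariety of $\PP(V)$, and Theorem~\ref{thm_subvar_sph} then gives that $\SOGr_1(V)$ is $H$-spherical as well.

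For the \emph{only if} direction, suppose $\SOGr_1(V)$ is $H$-spherical; in particular $H$ has an open orbit in $\SOGr_1(V)$. Because $V = \Omega(W)$ is weakly reducible, it contains no proper nondegenerate $H$-submodule by Proposition~\ref{prop_no_nondegen}, so the hypotheses of Kimelfeld's classification \cite[Theorem~2.1]{Kim} are satisfied. I would extract from that classification the list of all weakly reducible pairs $(H,V)$ for which $H$ has an open orbit in $\SOGr_1(V)$. The three families in the conclusion occur in this list (and are already known to be spherical by the previous paragraph), so it remains to rule out the finitely many remaining candidates, after first discarding those violating the elementary necessary bound $\dim B_H \ge \dim \SOGr_1(V) = 2n-2$.

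To eliminate a surviving candidate I would apply Lemma~\ref{lemma_sogr1_sph}. Let $v$ be a highest weight vector of the simple $H$-module $W$; since $W \subset V$ is isotropic, $v$ is an isotropic highest weight vector of $V$, so the lemma applies with $Q$ the stabilizer of $\langle v \rangle$ in $H$ and $M$ a Levi subgroup of $Q$. Using the isomorphisms $T_{\langle v \rangle}\PP(V) \simeq (V/\langle v \rangle) \otimes \langle v \rangle^*$ and $T_{\langle v \rangle}(Hv) \simeq \mathfrak h / \mathfrak q$, I would compute the $M$-module $T_{\langle v \rangle}\PP(V)/T_{\langle v \rangle}(Hv)$ explicitly and check that it contains no spherical submodule of codimension~$1$; by Lemma~\ref{lemma_sogr1_sph} this forces $\SOGr_1(V)$ to be non-spherical, excluding the candidate. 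Here the splitting $V/\langle v \rangle \simeq (W/\langle v \rangle) \oplus W^*$ makes the weight bookkeeping transparent, the dual summand $W^*$ accounting for the extra directions compared with the irreducible case.

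The main obstacle is twofold: first, reading off from \cite{Kim} the precise (short) list of weakly reducible candidate pairs; and second, carrying out the explicit tangent-space computations for the extraneous candidates, exactly in the spirit of the three displayed cases in the proof of Proposition~\ref{prop_so_Q1_V_irr}. Finally, the boundary values of the parameters, where two of the three families degenerate or coincide for small $n$, should be verified directly via Proposition~\ref{prop_sph_criterion_eff} rather than through the subvariety argument.
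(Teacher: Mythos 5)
Your proposal follows essentially the same route as the paper's proof: Kimelfeld's open-orbit classification for the weakly reducible case (the paper cites \cite[Theorem~2.2]{Kim} here, not Theorem~2.1, which handles the irreducible case), sphericity of $\Omega(W)$ as an $(H \times \FF^\times)$-module together with Theorem~\ref{thm_subvar_sph} for the ``if'' direction, the bound $\dim B_H \ge \dim \SOGr_1(V)$ to discard most candidates, and explicit local computations at a closed orbit for the surviving families (which are infinite families in $n$ rather than finitely many cases, though the computations are uniform in~$n$). The only cosmetic difference is that the paper eliminates two of these families, $(\Sp_{2n},\FF^{2n})$ and $(\SO_n\times\FF^\times,[\FF^n]_\chi)$, via the criterion of Proposition~\ref{prop_sph_criterion_eff} rather than Lemma~\ref{lemma_sogr1_sph}, but your uniform use of Lemma~\ref{lemma_sogr1_sph} works equally well there.
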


\begin{proof}
If $H$ acts spherically on $\SOGr_1(V)$ then $H$ has an open orbit in $\SOGr_1(V)$. According to \cite[Theorem~2.2]{Kim}, all pairs $(H,W)$ (up to equivalence) for which $H$ has an open orbit in $\SOGr_1(\Omega(W))$ are listed in Table~\ref{table_open_Q1_wr}.

\begin{table}[h]
\caption{} \label{table_open_Q1_wr}

\begin{tabular}{|c|c|c|c|c|c|}
\hline
No. & $H$ & $W$ & $\dim \SOGr_1(V)$ & $\dim B_H$ & Note \\

\hline

1 & $\GL_n$ & $\FF^n$ & $2n-2$ & $\frac{n(n+1)}2$ & $n \ge 2$ \\

\hline

2 & $\SL_n$ & $\FF^n$ & $2n-2$ & $\frac{n(n+1)}2 - 1$ & $n \ge 2$ \\

\hline

3 & $\Sp_{2n} \times \FF^\times$ & $[\FF^{2n}]_\chi$ & $4n-2$ & $n^2 + n + 1$ & $n \ge 2$ \\

\hline

4 & $\Sp_{2n}$ & $\FF^{2n}$ & $4n-2$ & $n^2 + n$ & $n \ge 2$ \\

\hline

5 & $\SO_n \times \FF^\times$ & $[\FF^n]_\chi$ & $2n-2$ & $[\frac{n}2][\frac{n+1}2] + 1$ & $n \ge 3$ \\

\hline

6 & $\GL_n \times \SL_2$ & $\FF^n \otimes \FF^2$ & $4n-2$ & $\frac{n(n+1)}2 + 2$ & $n \ge 3$ \\

\hline

7 & $\SL_n \times \SL_2$ & $\FF^n \otimes \FF^2$ & $4n-2$ & $\frac{n(n+1)}2 + 1$ & $n \ge 3$ \\

\hline

8 & $\Spin_7 \times \FF^\times$ & $[\FF^8]_\chi$ & $14$ & $13$ & \\

\hline

9 & $\mathsf G_2 \times \FF^\times$ & $[\FF^7]_\chi$ & $12$ & $9$ & \\

\hline

10 & $\GL_5$ & $\wedge^2 \FF^5$ & $18$ & $15$ & \\

\hline

11 & $\SL_5$ & $\wedge^2 \FF^5$ & $18$ & $14$ & \\

\hline

12 & $\Spin_{10} \times \FF^\times$ & $[\FF^{16}]_\chi$ & $30$ & $26$ & \\

\hline

13 & $\Spin_{10}$ & $\FF^{16}$ & $30$ & $25$ & \\

\hline
\end{tabular}
\end{table}

For each pair $(H,W)$ mentioned in the statement, $V$ is a spherical $(H \times \FF^{\times})$-module (where $\FF^\times$ acts on $V$ by scalar transformations), hence $H$ acts spherically on $\PP(V)$ and hence on $\SOGr_1(V)$ by Theorem~\ref{thm_subvar_sph}. Below we consider all the remaining cases in Table~\ref{table_open_Q1_wr} not satisfying the necessary sphericity condition $\dim B_H \ge \dim \SOGr_1(V)$.

\setcounter{casea}{0}

\textit{Case}~\casenoa: $H = \Sp_{2n}, W = \FF^{2n}$, $n \ge 2$.
The pair $(M, \mathfrak g/(\mathfrak p_I^- + \mathfrak h))$ is equivalent to
\[
(\Sp_{2n-2} \times \FF^{\times}, \mathop{[\FF^{2n-2}]} \limits_1{\!}_\chi \oplus \FF^1),
\]
the latter module being not spherical.

\textit{Case}~\casenoa: $H = \SO_n \times \FF^{\times}, W = \FF^n$, $n \ge 3$.
The pair $(M, \mathfrak g/(\mathfrak p_I^- + \mathfrak h))$ is equivalent to
\[
(\SO_{n-2} \times \FF^{\times} \times \FF^{\times}, \mathop{[\FF^{n-2}]} \limits_1{\!}_{\chi_1 + 2\chi_2} \oplus \FF^1_{2\chi_1} \oplus \FF^1_{2\chi_2}),
\]
the latter module being not spherical.

\textit{Case}~\casenoa: $H = \GL_n \times \SL_2, W = \FF^n \otimes \FF^2$, $n \ge 3$.
Let $v \in V$ be a highest weight vector for the $H$-submodule $W \subset V$ and let $M$ be the Levi subgroup of the stabilizer of the line~$\langle v \rangle$.
Computations using~(\ref{eqn_iso's1}) show that the pair $(M, T_{\langle v \rangle} \PP(V))$ is equivalent to
\begin{multline*}
(\GL_{n-1} \times \FF^{\times} \times \FF^{\times}, \\ [\mathop{\vphantom|\FF^{n-1}} \limits_1{\!}]_{\chi_1} \oplus [\mathop{\vphantom|\FF^{n-1}} \limits_1{\!}]_{\chi_1 + 2\chi_2} \oplus [\mathop{(\FF^{n-1})^*} \limits_1{\!}]_{\chi_1} \oplus [\mathop{(\FF^{n-1})^*} \limits_1{\!}]_{\chi_1 +2\chi_2} \oplus \FF^1_{2\chi_1} \oplus \FF^1_{2\chi_2} \oplus \FF^1_{2\chi_1+2\chi_2}).
\end{multline*}
Using~(\ref{eqn_iso's2}) we find that the pair $(M, T_{\langle v \rangle}(H\langle v \rangle)$ is equivalent to
\[
(\GL_{n-1} \times \FF^{\times} \times \FF^{\times}, [\mathop{\FF^{n-1}} \limits_1{\!}]_{\chi_1} \oplus \FF^1_{2\chi_2}),
\]
hence the pair $(M, T_{\langle v \rangle} \PP(V))/T_{\langle v \rangle}(H\langle v \rangle)$ is equivalent to
\[
(\GL_{n-1} \times \FF^{\times} \times \FF^{\times}, [\mathop{\vphantom|\FF^{n-1}} \limits_1{\!}]_{\chi_1 +2\chi_2} \oplus [\mathop{\vphantom|(\FF^{n-1})^*} \limits_1{\!}]_{\chi_1} \oplus [\mathop{\vphantom|(\FF^{n-1})^*} \limits_1{\!}]_{\chi_1 +2\chi_2} \oplus \FF^1_{2\chi_1} \oplus \FF^1_{2\chi_1+2\chi_2}).
\]
Since the latter module does not contain spherical submodules of codimension~$1$, the variety $\SOGr_1(V)$ is not $H$-spherical by Lemma~\ref{lemma_sogr1_sph}.

\textit{Case}~\casenoa: $H = \SL_n \times \SL_2, W = \FF^n \otimes \FF^2$, $n \ge 3$.
This is a subgroup of the group in the previous case, which does not act spherically on~$\SOGr_1(V)$.
\end{proof}

\begin{proposition} \label{prop_so_Q1_V_sr_aux}
Suppose that $V = \widetilde V_1 \oplus \widetilde V_2$ for two pairwise orthogonal nonzero $H$-submodules $\widetilde V_1, \widetilde V_2 \subset V$.
Then $\SOGr_1(V)$ is $H$-spherical if and only if $V$ is a spherical module with respect to the action of $H \times \FF^\times \times \FF^\times$, where the first \textup(resp. second\textup) factor $\FF^\times$ acts on $\widetilde V_1$ \textup(resp.~$\widetilde V_2$\textup) by scalar transformations.
\end{proposition}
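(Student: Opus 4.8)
The plan is to prove both implications at once by comparing \emph{complexities}, using the elementary fact that an irreducible $K$-variety $X$ is $K$-spherical if and only if a generic $B_K$-orbit has codimension~$0$ in~$X$, equivalently $\operatorname{trdeg}_\FF \FF(X)^{B_K} = 0$. The strategy is to interpose the auxiliary $H$-variety $Y = \PP(\widetilde V_1) \times \PP(\widetilde V_2)$ and to show that all three objects $\SOGr_1(V)$, $Y$, and the module $V$ carry the same complexity. First I would record the basic geometric input: since $\omega$ is nondegenerate and $\widetilde V_1 \perp \widetilde V_2$, each $\widetilde V_i$ is nondegenerate, so writing $q_i$ for the restriction of $v \mapsto \omega(v,v)$ to $\widetilde V_i$ one has $\omega(v,v) = q_1(v_1) + q_2(v_2)$ for $v = v_1 + v_2$. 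Note $\SOGr_1(V)$ is a smooth quadric in $\PP(V)$, hence irreducible (as $d \ge 3$), and $\dim \SOGr_1(V) = d-2 = \dim Y$.

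The key step is the rational map $\phi \colon \SOGr_1(V) \dashrightarrow Y$, $\langle v \rangle \mapsto (\langle v_1 \rangle, \langle v_2 \rangle)$, defined where $v_1, v_2 \ne 0$; it is plainly $H$-equivariant. I would check that $\phi$ is dominant and generically finite: for a generic pair $(\langle v_1 \rangle, \langle v_2 \rangle)$ with $q_1(v_1), q_2(v_2) \ne 0$, the isotropy condition $s^2 q_1(v_1) + t^2 q_2(v_2) = 0$ has a nonzero solution $[s:t]$ because $\FF$ is algebraically closed (dominance), and there are exactly two such ratios (generic finiteness of degree~$2$). For a dominant, generically finite, $H$-equivariant morphism between irreducible varieties of equal dimension the complexity is preserved: replacing $\SOGr_1(V)$ by the open locus where $\phi$ is a morphism, one has for generic $x$ that $\phi$ maps the orbit $B_H x$ dominantly onto $B_H\phi(x)$ with finite fibers, so $\dim B_H x = \dim B_H \phi(x)$; since the maximal $B_H$-orbit dimensions therefore coincide and the total dimensions agree, the codimensions of generic $B_H$-orbits agree. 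Thus $\SOGr_1(V)$ is $H$-spherical if and only if $Y$ is $H$-spherical.

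Finally I would identify the complexity of $Y$ with that of the module. Put $\widetilde H = H \times \FF^\times \times \FF^\times$ and let $T = \FF^\times \times \FF^\times$ act by the two independent scalings of $\widetilde V_1$ and $\widetilde V_2$. Because $T$ is a torus, $B_{\widetilde H} = B_H \times T$; and the generic $T$-orbit on $V$ is the fiber of the quotient map $V \dashrightarrow Y$, so $\FF(V)^{T} = \FF(Y)$ and hence $\FF(V)^{B_{\widetilde H}} = \FF(Y)^{B_H}$. Therefore $V$ is a spherical $\widetilde H$-module (complexity~$0$) exactly when $Y$ is $H$-spherical, and combining with the previous step gives the proposition. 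The only points requiring care—and thus the main obstacle—are the verification of dominance and generic finiteness of $\phi$ via the fiber computation above, together with the (standard) invariance of complexity under dominant generically finite equivariant maps; once these are in place the rest is formal.
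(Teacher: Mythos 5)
Your proposal is correct and follows essentially the same route as the paper: both arguments interpose $\PP(\widetilde V_1) \times \PP(\widetilde V_2)$ via the $H$-equivariant assignment $\langle v_1 + v_2 \rangle \mapsto (\langle v_1 \rangle, \langle v_2 \rangle)$, observe that it is a two-to-one map from a dense open subset of $\SOGr_1(V)$ onto a dense open subset of the product, and then translate $H$-sphericity of the product into $V$ being a spherical $(H \times \FF^\times \times \FF^\times)$-module. You merely spell out two steps the paper leaves implicit (invariance of complexity under dominant generically finite equivariant maps, and the function-field identification $\FF(V)^{B_H \times T} = \FF(Y)^{B_H}$), both of which are sound.
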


\begin{proof}
Put
\[
U = \lbrace \langle v_1 + v_2 \rangle \in \SOGr_1(V) \mid v_1 \in \widetilde V_1, \ v_2 \in \widetilde V_2, \ \omega(v_1,v_1) \ne 0, \ \omega(v_2,v_2) \ne 0 \rbrace.
\]
Then $U$ is an open subset of $\SOGr_1(V)$.
Define a map $\varphi \colon U \to \PP(\widetilde V_1) \times \PP(\widetilde V_2)$ by $\langle v_1 + v_2 \rangle \mapsto (\langle v_1 \rangle, \langle v_2\rangle)$.
It is easy to see that this map is $H$-equivariant, its image is
\[
(\PP(\widetilde V_1)\setminus \SOGr_1(\widetilde V_1))\times (\PP(\widetilde V_2) \setminus \SOGr_1(\widetilde V_2))
\]
(which is open in $\PP(\widetilde V_1) \times \PP(\widetilde V_2)$), and $\varphi$ is a two-fold covering over the image. It follows that $\SOGr_1(V)$ is $H$-spherical if and only if $\PP(\widetilde V_1) \times \PP(\widetilde V_2)$ is $H$-spherical.
The latter is equivalent to the fact that $V$ is a spherical $(H \times \FF^\times \times \FF^\times)$-module.
\end{proof}

\begin{corollary} \label{crl_no_3_summands}
If $r \ge 3$ then $\SOGr_1(V)$ is not $H$-spherical.
\end{corollary}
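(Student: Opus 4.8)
The plan is to generalize the fibration used in the proof of Proposition~\ref{prop_so_Q1_V_sr_aux} from two summands to $r$ summands and to extract a contradiction from the fact that the generic fibre becomes positive-dimensional. Concretely, I would set
\[
U = \lbrace \langle v_1 + \ldots + v_r \rangle \in \SOGr_1(V) \mid v_i \in V_i, \ \omega(v_i,v_i) \ne 0 \ \text{for all} \ i \rbrace,
\]
which is a nonempty $H$-stable open subset of $\SOGr_1(V)$ (nonempty because each $V_i$ is nondegenerate), and consider the $H$-equivariant map $\varphi \colon U \to \PP(V_1) \times \ldots \times \PP(V_r)$, $\langle v_1 + \ldots + v_r \rangle \mapsto (\langle v_1 \rangle, \ldots, \langle v_r \rangle)$. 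Its image is the open subset $\prod_i (\PP(V_i) \setminus \SOGr_1(V_i))$, and the fibre over a point $(\langle v_1 \rangle, \ldots, \langle v_r \rangle)$ is identified, after fixing anisotropic representatives $v_i$ with $\omega(v_i,v_i) = a_i$, with the quadric $Q = \lbrace [c_1 : \ldots : c_r] \in \PP^{r-1} \mid \sum_i a_i c_i^2 = 0 \rbrace$ intersected with the locus where all $c_i \ne 0$. For $r = 2$ this quadric is a pair of points, which recovers the two-fold covering of Proposition~\ref{prop_so_Q1_V_sr_aux}; for $r \ge 3$ it is a smooth irreducible quadric of dimension $r - 2 \ge 1$.

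Assume now, towards a contradiction, that $\SOGr_1(V)$ is $H$-spherical. Then $B_H$ has a dense orbit $O$ in $U$, and $\varphi(O)$ is a dense $B_H$-orbit in the image. Fixing a generic point $p \in \varphi(O)$ and writing $S = (B_H)_p$, the fibre $O \cap \varphi^{-1}(p)$ is a single $S$-orbit whose dimension equals $\dim U - \dim \varphi(O) = \dim Q = r - 2$. Thus $S$ would have to act with a dense orbit on the fibre quadric $Q$, and the remaining task is to show that this is impossible for $r \ge 3$. (One may first reduce to $r = 3$ by restricting to the closed $H$-stable subvariety $\SOGr_1(V_1 \oplus V_2 \oplus V_3)$ and invoking Theorem~\ref{thm_subvar_sph}, but the argument below works uniformly.)

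The key step, which I expect to be the main obstacle, is to analyze the action of $S$ on $Q$. Since $S$ fixes each line $\langle v_i \rangle$, every $g \in S$ scales $v_i$ by a character $\theta_i(g)$, and hence acts on the fibre by $[c_1 : \ldots : c_r] \mapsto [\theta_1(g) c_1 : \ldots : \theta_r(g) c_r]$. As $S$ preserves $\SOGr_1(V)$, it preserves $Q$, so the quadratic form $\sum_i a_i \theta_i(g)^2 c_i^2$ vanishes on $Q$ for every $g$. For $r \ge 3$ the quadric $Q$ is an irreducible hypersurface, so its ideal is generated by $\sum_i a_i c_i^2$; comparing the two proportional quadratic forms yields $\theta_i(g)^2 = \theta_j(g)^2$ for all $i,j$. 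Because the characters of the connected solvable group $S^0$ form a torsion-free lattice, this forces $\theta_1 = \ldots = \theta_r$ on $S^0$, so $S^0$ acts on $Q \subset \PP^{r-1}$ by an overall scalar, that is, trivially. Therefore all $S$-orbits in $Q$ are finite, contradicting the required dimension $r - 2 \ge 1$, and hence $\SOGr_1(V)$ is not $H$-spherical once $r \ge 3$. The delicate point is precisely the proportionality of quadratic forms, which is exactly what makes $r \ge 3$ behave differently from the finite two-fold covering obtained for $r = 2$.
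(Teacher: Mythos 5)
Your proof is correct, but it takes a genuinely different route from the paper's. The paper disposes of the corollary in two lines: since $H_0 \subset \SO(V_1) \times \cdots \times \SO(V_r)$, it suffices to rule out sphericity for this larger group; grouping the summands as $V_1$ and $V_2 \oplus \cdots \oplus V_r$ and applying Proposition~\ref{prop_so_Q1_V_sr_aux}, sphericity of $\SOGr_1(V)$ would force $V_2 \oplus \cdots \oplus V_r$ to be a spherical module for $\SO(V_2) \times \cdots \times \SO(V_r) \times \FF^\times$ with a \emph{single} $\FF^\times$ acting by overall scalar transformations, and that module is not spherical (two orthogonal summands need two independent scalar actions). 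You instead generalize the fibration underlying Proposition~\ref{prop_so_Q1_V_sr_aux} to all $r$ factors and observe that the fibre becomes a positive-dimensional quadric on which the stabilizer $S$ of a generic base point acts through characters $\theta_i$ whose squares all coincide, hence (as $\mathfrak X(S^0)$ is torsion-free) trivially on $\PP^{r-1}$; this is incompatible with $S$ having a dense orbit in an $(r-2)$-dimensional fibre. All your steps check out: $U$ is $H$-stable, open and dense, so the dense $B_H$-orbit lies entirely in $U$; the fibre of the orbit map over a point of $\varphi(O)$ is a single $S$-orbit of dimension $r-2$; and the ideal-theoretic comparison of quadratic forms is valid precisely because the fibre quadric is irreducible for $r \ge 3$. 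Your argument is longer but self-contained, avoiding the classification of spherical modules and isolating the geometric reason the case $r \ge 3$ fails. One simplification worth noting: since $S \subset H_0 \subset \SO(V)$ preserves $\omega$ and each $v_i$ is anisotropic, one has $\theta_i(g)^2 = \omega(gv_i,gv_i)/\omega(v_i,v_i) = 1$ directly, so $S$ acts on the fibre through a finite group of sign changes and the proportionality-of-quadratic-forms step can be skipped altogether.
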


\begin{proof}
If $H$ acts spherically on $V$ then so does the group $\SO(V_1) \times \ldots \times \SO(V_r)$.
By Proposition~\ref{prop_so_Q1_V_sr_aux}, in this situation $V_2 \oplus \ldots \oplus V_r$ should be a spherical module with respect to the action of $\SO(V_2) \times \ldots \times \SO(V_r) \times \FF^\times$ with $\FF^\times$ acting by scalar transformations. However the latter module is not spherical.
\end{proof}

\begin{proposition} \label{prop_so_Q1_V_sr}
Suppose that $r \ge 2$.
Then $\SOGr_1(V)$ is $H$-spherical if and only if $r = 2$ and one of the following two conditions holds:
\begin{enumerate}[label=\textup{(\arabic*)},ref=\textup{\arabic*}]
\item \label{prop_so_Q1_V_sr_1}
$H_0 = H_1 \times H_2$ and for $i = 1,2$ the pair $(H_i,V_i)$ is BF-equivalent to a pair in Table~\textup{\ref{table_orth_sph_mod}};

\item \label{prop_so_Q1_V_sr_2}
the pair $(H,V)$ is BF-equivalent to a pair in Table~\textup{\ref{table_so_Q1}}.
\end{enumerate}
\end{proposition}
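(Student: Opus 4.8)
The plan is to reduce the geometric sphericity question to a purely representation-theoretic one and then invoke the classification of spherical modules. First I would dispose of the case $r \ge 3$: by Corollary~\ref{crl_no_3_summands}, if $r \ge 3$ then $\SOGr_1(V)$ is never $H$-spherical, so together with the hypothesis $r \ge 2$ we may assume $r = 2$ throughout (and both conditions~(\ref{prop_so_Q1_V_sr_1}) and~(\ref{prop_so_Q1_V_sr_2}) yield $r = 2$ by construction). Next I would apply Proposition~\ref{prop_so_Q1_V_sr_aux} with $\widetilde V_1 = V_1$ and $\widetilde V_2 = V_2$, which converts the problem into the following: determine all $H$ for which $V = V_1 \oplus V_2$ is a spherical module for $K = H \times \FF^\times \times \FF^\times$, where the two extra factors act by scalar transformations on $V_1$ and $V_2$ respectively.

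Having passed to modules, I would first extract necessary conditions on each summand. Since $V$ is $K$-spherical, each $V_i$ is a spherical $K$-submodule, and as the other torus acts trivially on it, $V_i$ is a spherical $(H_i \times \FF^\times)$-module. Because $V_i$ is either simple or weakly reducible, it contains no proper nondegenerate $H_i$-submodule, so Proposition~\ref{prop_spherical+orth} applies and forces each pair $(H_i, V_i)$ to be BF-equivalent to an entry of Table~\ref{table_orth_sph_mod}. This is exactly the per-summand information already recorded in Propositions~\ref{prop_so_Q1_V_irr} and~\ref{prop_so_Q1_V_wr}.

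The heart of the argument is then to decide, given that each $V_i$ lies in Table~\ref{table_orth_sph_mod}, precisely which global configurations $V_1 \oplus V_2$ remain $K$-spherical. I would split into two cases according to whether $H$ acts on the two summands independently. If $H_0 = H_1 \times H_2$, then $(K,V)$ is equivalent to the external product of $(H_1 \times \FF^\times, V_1)$ and $(H_2 \times \FF^\times, V_2)$, so by the additivity of sphericity for direct products (see \S\,\ref{subsec_spherical_modules}) it is spherical if and only if each factor is, which is exactly condition~(\ref{prop_so_Q1_V_sr_1}). Otherwise $V_1$ and $V_2$ are \emph{linked}, either by a simple factor of $H$ acting nontrivially on both or, in the weakly reducible entries, by an identification of the two central tori; in each such case $(K,V)$ is indecomposable, and I would test its sphericity against the list of \cite[\S\,5]{Kn} via the criterion of \cite[Theorem~5.3]{AvP1}, passing to the saturation as in \S\,\ref{subsec_spherical_modules}.

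The main obstacle is this last enumeration of linked configurations. I expect it to demand careful and exhaustive bookkeeping: the admissible links are a shared $\SL_2$ coming from the family $(\Sp_{2l} \times \SL_2, \FF^{2l} \otimes \FF^2)$ (yielding the first line of Table~\ref{table_so_Q1}), the triality linking of the three $8$-dimensional representations of $\Spin_8$ (the second line), and a shared central $\FF^\times$ joining two weakly reducible summands with nonzero weights $a,b$ (the last two lines); every other way of making two Table~\ref{table_orth_sph_mod} modules share a factor must be shown to produce a module absent from \cite[\S\,5]{Kn}, hence non-spherical. A subtle point is that the low-rank exceptional isomorphisms recorded in Table~\ref{table_equiv_modules} (the small $\SO_n$) can create spurious-looking links that have to be recognized and checked separately, and for the surviving cases one must verify that the exact numerical restrictions on the parameters in Table~\ref{table_so_Q1} are precisely those that keep the module spherical.
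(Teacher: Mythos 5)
Your proposal is correct and follows essentially the same route as the paper's proof: Corollary~\ref{crl_no_3_summands} to force $r=2$, Proposition~\ref{prop_so_Q1_V_sr_aux} to reduce to sphericity of $V_1\oplus V_2$ as an $(H\times\FF^\times\times\FF^\times)$-module, Proposition~\ref{prop_spherical+orth} applied summand-wise, and then the dichotomy between $H_0=H_1\times H_2$ and a linking normal subgroup (simple factor versus a shared torus), settled against the list of \cite[\S\,5]{Kn} and \cite[Theorem~5.3]{AvP1}. The linked configurations you anticipate are exactly the four cases of Table~\ref{table_so_Q1}.
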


\begin{proof}
Corollary~\ref{crl_no_3_summands} implies $r=2$. In this case,
according to Proposition~\ref{prop_so_Q1_V_sr_aux}, $\SOGr_1(V)$ is $H$-spherical if and only if $V_1 \oplus V_2$ is a spherical $(H \times \FF^\times \times \FF^\times)$-module.
In particular, for each $i=1,2$ the summand $V_i$ is a spherical $(H_i \times \FF^\times)$-module, hence by Proposition~\ref{prop_spherical+orth} the pair $(H_i,V_i)$ is equivalent to one of those in Table~\ref{table_orth_sph_mod}.
If $H_0 = H_1 \times H_2$ then we get~(\ref{prop_so_Q1_V_sr_1}).
If $H_0$ is a proper subgroup of~$H_1 \times H_2$ then there is a connected normal subgroup $K \subset H_0$ that acts nontrivially on both $V_1$ and~$V_2$.
If $K$ contains a simple factor then the saturation of the $H$-module $V$ is indecomposable, hence it should be contained in the list of~\cite[\S\,5]{Kn}.
The latter implies that both $V_1,V_2$ are simple $H$-modules and an easy case-by-case check of the above-cited list yields the first two cases of Table~\ref{table_so_Q1}.
If $K$ contains no simple factors then, inspecting Table~\ref{table_orth_sph_mod}, we find that $K \simeq \FF^\times$ and for $i=1,2$ the pair $(H_i, V_i)$ is BF-equivalent to either $(\SL_{n_i} \times \FF^\times, \Omega([\FF^{n_i}]_\chi))$ ($n_i \ge 1$) or $(\Sp_{2n_i} \times \FF^\times, \Omega([\FF^{2n_i}]_\chi))$ ($n_i \ge 2$).
Now an application of~\cite[Theorem~5.3]{AvP1} yields the last two cases in Table~\ref{table_so_Q1}, whence~(\ref{prop_so_Q1_V_sr_2}).
\end{proof}

\subsection{Spherical actions on \texorpdfstring{$\SOGr_{\max}^{(\pm)}(V)$}{SOGr\_max\textasciicircum(\textpm)} for \texorpdfstring{$r=1$}{r=1}}
\label{subsec_SOGr_max_r=1}

Throughout \S\S\,\ref{subsec_SOGr_max_r=1}--\ref{subsec_SOGr_max_r>=3} we keep in mind the following identifications (see~\S\,\ref{subsec_FV_via_compositions}):
\begin{itemize}
\item
if $d = 2k+1$ then $\SOGr_{\max}(V) \simeq X_I$ with $I = \lbrace k \rbrace$;

\item
if $d = 2k$ then $\SOGr^+_{\max}(V) \simeq X_I$ with $I = \lbrace k \rbrace$ and $\SOGr^-_{\max}(V) \simeq X_I$ with $I = \lbrace k - 1 \rbrace$.
\end{itemize}

\begin{proposition} \label{prop_so_Qmax_odd_V_irr}
Suppose that $V$ is a simple $H$-module and $d$ is odd.
Then $\SOGr_{\max}(V)$ is $H$-spherical if and only if the pair $(H,V)$ is BF-equivalent to one of $(\SO_n, \FF^n)$ $(n \ge 3)$ or $(\mathsf G_2, \FF^7)$.
\end{proposition}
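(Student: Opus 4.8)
The plan is to prove both implications, using the descent machinery of \S\ref{subsec_NE-relation} for necessity and the effective sphericity criterion of Proposition~\ref{prop_sph_criterion_eff} for sufficiency. Since $d$ is odd we have $G = \Spin(V) \simeq \Spin_{2k+1}$, so by Proposition~\ref{prop_so_odd_Q1_is_minimal} the class $\bl \SOGr_1(V) \br$ is minimal in $\mathscr F(G)/\!\sim$; in particular $\bl \SOGr_1(V) \br \preccurlyeq \bl \SOGr_{\max}(V) \br$. Hence, if $\SOGr_{\max}(V)$ is $H$-spherical, then $\SOGr_1(V)$ is $H$-spherical as well, by Theorem~\ref{thm_sph_descent} when the two classes are distinct and by Theorem~\ref{thm_nil-equiv} when they coincide (which happens only for $d = 5$).

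For the ``only if'' direction I would then invoke Proposition~\ref{prop_so_Q1_V_irr}, which lists, up to BF-equivalence, the five pairs $(H,V)$ with $V$ simple for which $\SOGr_1(V)$ is $H$-spherical, and simply discard those whose underlying space has even dimension: $(\Sp_{2n}\times \SL_2,\, \FF^{2n}\otimes \FF^2)$ has $\dim V = 4n$, while $(\Spin_7,\FF^8)$ and $(\Spin_9,\FF^{16})$ have $\dim V = 8$ and $16$. Only $(\SO_n,\FF^n)$, which forces $n = d$ odd, and $(\mathsf G_2,\FF^7)$ survive the parity constraint, yielding exactly the two claimed cases.

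For the ``if'' direction I would treat the two survivors separately. For $(\SO_n,\FF^n)$ the image of $H$ in $\SO(V)$ is all of $\SO(V)$, so \emph{every} flag variety of $G$ is $H$-spherical by the well-known fact recorded in \S\ref{subsec_so_prelim_remarks}; in particular so is $\SOGr_{\max}(V)$. The genuine content, and the step I expect to be the main obstacle, is the case $(\mathsf G_2,\FF^7)$, where $G = \Spin_7$ and $\SOGr_{\max}(V) \simeq X_{\{3\}}$ has dimension $6$. Here I would apply Proposition~\ref{prop_sph_criterion_eff} to the embedding $\mathsf G_2 \subset \SO_7$ fixed in \S\ref{subsec_so_prelim_remarks}, which guarantees $B_H^- = B_G^- \cap H$; since $B_H^- \subset B_G^- \subset P_{\{3\}}^-$, the intersection $Q = P_{\{3\}}^- \cap H$ contains a Borel subgroup and is thus a parabolic subgroup of $\mathsf G_2$, and the base point lies on the closed $H$-orbit $H/Q$. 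Because $\FF^7$ is an irreducible $\mathsf G_2$-module there is no $\mathsf G_2$-fixed maximal isotropic subspace, so $Q$ is a \emph{proper} parabolic; as every proper parabolic of $\mathsf G_2$ has flag variety of dimension $5$ or $6$, we get $\dim(H/Q) \ge 5$. Consequently the $M$-module $\mathfrak g/(\mathfrak p_{\{3\}}^- + \mathfrak h)$, whose dimension equals $\dim X_{\{3\}} - \dim(H/Q) \le 6 - 5 = 1$, is at most one-dimensional and therefore automatically spherical, so $\SOGr_{\max}(V)$ is $\mathsf G_2$-spherical by Proposition~\ref{prop_sph_criterion_eff}. (Alternatively, one could confirm this last point by explicitly computing $M$ and $\mathfrak g/(\mathfrak p_{\{3\}}^- + \mathfrak h)$ from the realization of $\mathfrak g_2 \subset \mathfrak{so}_7$ in Appendix~\ref{sect_g2&spin7}, but the codimension count makes this unnecessary.)
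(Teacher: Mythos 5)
Your overall route coincides with the paper's: necessity is obtained from the descent machinery (Proposition~\ref{prop_so_odd_Q1_is_minimal} together with Theorem~\ref{thm_sph_descent}, resp.\ Theorem~\ref{thm_nil-equiv} when $d=5$) combined with Proposition~\ref{prop_so_Q1_V_irr} and the parity constraint on $\dim V$, and sufficiency for $(\SO_n,\FF^n)$ is immediate. Your handling of the degenerate case $d=5$ is in fact slightly more explicit than the paper's.

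The step that does not hold up is the final claim in the $(\mathsf G_2,\FF^7)$ case: a module of dimension at most one is \emph{not} automatically spherical. If $M$ acts trivially on a one-dimensional module, its $B_M$-orbits are single points, so there is no dense orbit and the module is not spherical; equivalently, in the criterion of \cite[Theorem~5.3]{AvP1} the single character occurring must be nonzero. Your codimension count correctly yields $\dim \mathfrak g/(\mathfrak p_{\{3\}}^-+\mathfrak h)\le 1$, but it gives no information about the character by which the Levi subgroup $M$ of $Q$ acts on this line, so the explicit computation you declare unnecessary is still needed --- either in the form the paper carries out, showing that $(M,\mathfrak g/(\mathfrak p_I^-+\mathfrak h))$ is equivalent to $(\FF^\times,\FF^1_\chi)$ with $\chi\ne 0$, or via the observation (readable off the realization in Appendix~\ref{sect_g2&spin7}) that no root of $\mathfrak{so}_7$ restricts to zero on the Cartan subalgebra of $\mathfrak g_2$, so the residual weight on the one-dimensional quotient cannot vanish. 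Your conclusion is correct, but as written the justification of the only nontrivial case has a genuine gap.
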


\begin{proof}
As $d$ is odd, $\SOGr_{\max}(V)$ being $H$-spherical implies that $\SOGr_1(V)$ is $H$-spherical by Proposition~\ref{prop_so_odd_Q1_is_minimal} and Theorem~\ref{thm_sph_descent}.
Then Proposition~\ref{prop_so_Q1_V_irr} leaves us with the following two cases.

\setcounter{caseb}{0}

\textit{Case}~\casenob: $(\SO_n, \FF^n)$, $n \ge 3$.
In this case $\SOGr_{\max}(V)$ is $H$-spherical.

\textit{Case}~\casenob: $(\mathsf G_2, \FF^7)$.
In this case the pair $(M, \mathfrak g/(\mathfrak p_I^- + \mathfrak h))$ is equivalent to $(\FF^\times, \FF^1_{\chi})$, the latter module being spherical.
\end{proof}

\begin{proposition} \label{prop_so_Qmax_even_V_irr}
Suppose that $V$ is a simple $H$-module, $d$ is even, and $* \in \lbrace +,- \rbrace$.
Then $\SOGr_{\max}^{*}(V)$ is $H$-spherical if and only if the pair $(H,V)$ is BF-equivalent to one of $(\SO_{2n}, \FF^{2n})$ $(n \ge 2)$, $(\Sp_4 \times \SL_2, \FF^4 \otimes \FF^2)$, $(\Spin_7, \FF^8)$.
\end{proposition}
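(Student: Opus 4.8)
The plan is to avoid the descent argument used for Proposition~\ref{prop_so_Qmax_odd_V_irr} and instead combine Kim's open-orbit classification with a single dimension inequality. Descent is genuinely unavailable here: since $d$ is even, the class $\bl \SOGr^\pm_{\max}(V) \br$ is itself a minimal element of $\mathscr F(G)/\!\sim$ by Proposition~\ref{prop_so_even_minimal}, so $H$-sphericity of $\SOGr^\pm_{\max}(V)$ does not formally propagate to $\SOGr_1(V)$ via Theorem~\ref{thm_sph_descent}. Throughout I would treat the two families $\SOGr^+_{\max}(V)$ and $\SOGr^-_{\max}(V)$ together, using that they are interchanged by a suitable element of $\mathrm O(V)$ and that the relevant conjugacy-class ambiguities are absorbed by Lemma~\ref{lemma_outer_aut}.

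For sufficiency, the pairs $(\SO_{2n},\FF^{2n})$ and $(\Spin_7,\FF^8)$ make \emph{every} flag variety of $G$ automatically $H$-spherical by the preliminary remarks in \S\ref{subsec_so_prelim_remarks}, so in particular $\SOGr^\pm_{\max}(V)$ is $H$-spherical. For the remaining pair $(\Sp_4\times\SL_2,\FF^4\otimes\FF^2)$, which lives in $\SO_8$, I would verify $H$-sphericity directly through Proposition~\ref{prop_sph_criterion_eff}: taking $I=\lbrace 4\rbrace$ (so that $X_I = \SOGr^+_{\max}(V)$), I would compute a Levi subgroup $M$ of $P_I^-\cap H$ together with the module $\mathfrak g/(\mathfrak p_I^-+\mathfrak h)$ and check that it appears in the list of \cite[\S5]{Kn}; a single such computation suffices in view of the symmetry between the two families.

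For necessity, $H$-sphericity of $\SOGr^\pm_{\max}(V)$ forces a dense $B_H$-orbit, hence a dense $H$-orbit, on $\SOGr_{\max}(V)$. Kim's classification \cite{Kim} of irreducible orthogonal modules admitting an open orbit on the maximal isotropic Grassmannian (the analogue for $\SOGr_{\max}$ of the statement producing Table~\ref{table_open_Q1_irr}) then confines $(H,V)$ to a finite list, which for even $d$ consists of the even-dimensional pairs occurring in Table~\ref{table_open_Q1_irr}. The decisive step is then the necessary condition $\dim B_H \ge \dim \SOGr_{\max}(V) = \binom{d/2}{2}$. Running through the candidates, this inequality eliminates all but the three listed pairs: for $(\Sp_{2n}\times\SL_2,\FF^{2n}\otimes\FF^2)$ one has $\dim B_H = n^2+n+2$ while $\dim\SOGr_{\max}(V)=2n^2-n$, so the inequality holds only for $n=2$; for $(\Spin_9,\FF^{16})$ one has $\dim B_H = 20 < 28 = \dim\SOGr_{\max}(V)$; and every remaining sporadic irreducible module fails the inequality by an even wider margin.

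The main obstacle is the first step of the necessity argument, namely securing the candidate list for the maximal isotropic Grassmannian. In the odd case this role was played, via descent, by the $\SOGr_1$-classification of Proposition~\ref{prop_so_Q1_V_irr}; here it must be supplied directly by Kim's work, and I would need to be careful that no irreducible even-dimensional module with a dense orbit on $\SOGr_{\max}(V)$ escapes Table~\ref{table_open_Q1_irr}. Once the candidate list is in hand, the rest is the essentially mechanical dimension count above together with the single positive verification for $(\Sp_4\times\SL_2,\FF^4\otimes\FF^2)$; in particular, no delicate codimension-one tangent-module analysis of the kind used in Proposition~\ref{prop_so_Q1_V_irr} is required.
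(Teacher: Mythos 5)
Your overall route is the same as the paper's: cite Kim's open-orbit classification for the maximal isotropic Grassmannian to get a finite candidate list, eliminate all but three candidates by the inequality $\dim B_H \ge \dim \SOGr_{\max}^{\pm}(V) = \binom{d/2}{2}$, and verify the one surviving non-obvious case $(\Sp_4 \times \SL_2, \FF^4 \otimes \FF^2)$ with the criterion of Proposition~\ref{prop_sph_criterion_eff}. Two remarks on accuracy. First, the list that \cite[Theorem~2.1]{Kim} actually produces for $\SOGr_{\max}$ is strictly smaller than the even-dimensional part of Table~\ref{table_open_Q1_irr} (it omits, e.g., $(\mathsf G_2, R(\pi_2))$, $(\mathsf F_4, R(\pi_4))$, $(\Sp_{2n}\times\Sp_4, \FF^{2n}\otimes\FF^4)$, $(\SL_2\times\SL_2, \FF^2\otimes\mathrm{S}^3\FF^2)$); your over-count is harmless here because every extra candidate also fails the dimension inequality, but you should not present the two lists as equal.

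The genuine gap is your claim that \emph{one} computation suffices for $(\Sp_4 \times \SL_2, \FF^4 \otimes \FF^2)$ ``in view of the symmetry between the two families,'' with the conjugacy-class ambiguity ``absorbed by Lemma~\ref{lemma_outer_aut}.'' Lemma~\ref{lemma_outer_aut} is inapplicable: both of its parts require $\SO(V_i) \subset H_0$ for some summand, which fails when $V$ is a simple $H$-module and $H_0 \subsetneq \SO(V)$. Conjugation by $g \in \mathrm{O}(V)\setminus\SO(V)$ does interchange $\SOGr_{\max}^+(V)$ and $\SOGr_{\max}^-(V)$, but it simultaneously interchanges the two $\SO_8$-conjugacy classes of the image of $\Sp_4 \times \SL_2$; it therefore relates sphericity of $\SOGr_{\max}^+$ for one embedding to sphericity of $\SOGr_{\max}^-$ for the \emph{other} embedding, and does not let you deduce the $-$ case from the $+$ case for a fixed $H$. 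The paper accordingly fixes the default embedding and carries out both computations, which yield genuinely different Levi modules — $(\SO_5 \times \FF^\times, [\FF^5]_\chi)$ for $\SOGr_{\max}^+$ and $(\GL_2, \FF^2)$ for $\SOGr_{\max}^-$ — each of which must be checked to be spherical separately. The fix is trivial (do the second computation), but as written your argument does not cover the $*=-$ case.
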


\begin{proof}
If $H$ acts spherically on $\SOGr_{\max}^*(V)$ then $H$ has an open orbit in $\SOGr_{\max}^*(V)$. According to \cite[Theorem~2.1]{Kim}, all pairs $(H,V)$ (up to BF-equivalence) for which $H$ has an open orbit in at least one of the two varieties $\SOGr_{\max}^\pm(V)$ are listed in Table~\ref{table_open_Qmax_even_irr}.

\begin{table}[h]
\caption{} \label{table_open_Qmax_even_irr}

\begin{tabular}{|c|c|c|c|c|c|}
\hline
No. & $H$ & $V$ & $\dim \SOGr_{\max}(V)$ & $\dim B_H$ & Note \\

\hline

1 & $\SO_{2n}$ & $\FF^{2n}$ & $\frac{n(n-1)}2$ & $n^2$ & $n \ge 2$ \\

\hline

2 & $\Sp_{2n} \times \SL_2$ & $\FF^{2n} \otimes \FF^2$ & $2n^2 - n$ & $n^2 + n + 2$ & $n \ge 2$ \\

\hline

3 & $\Spin_7$ & $\FF^8$ & $6$ & $12$ & \\

\hline

4 & $\Spin_9$ & $\FF^{16}$ & $28$ & $20$ & \\

\hline

5 & $\SL_3$ & $R(\pi_1 {+} \pi_2)$ & $6$ & $5$ & \\

\hline

6 & $\Sp_4$ & $\mathrm{S}^2\FF^4$ & $10$ & $6$ & \\

\hline

7 & $\Sp_6$ & $\wedge^2_0 \FF^6$ & $21$ & $12$ & \\

\hline
\end{tabular}
\end{table}

Taking into account the necessary sphericity condition $\dim B_H \ge \dim \SOGr_{\max}^\pm(V)$, we are left with the following cases.

\setcounter{casec}{0}

\textit{Case}~\casenoc: $(\SO_{2n}, \FF^{2n})$, $n \ge 2$.
In this case both varieties $\SOGr_{\max}^\pm(V)$ are $H$-spherical.

\textit{Case}~\casenoc: $(\Sp_4 \times \SL_2, \FF^4 \otimes \FF^2)$.
Up to isomorphism, it suffices to assume that $H_0 \subset \SO(V)$ is the default embedding (see~\S\,\ref{subsec_so_prelim_remarks}).
For the action on $\SOGr_{\max}^+(V)$ the pair $(M, \mathfrak g/(\mathfrak p_I^- + \mathfrak h))$ is equivalent to $(\SO_5 \times \FF^\times, [\FF^5]_\chi)$, the latter module being spherical.
For the action on $\SOGr_{\max}^-(V)$ the pair $(M, \mathfrak g/(\mathfrak p_I^- + \mathfrak h))$ is equivalent to $(\GL_2, \FF^2)$, the latter module being spherical.

\textit{Case}~\casenoc: $(\Spin_7, \FF^8)$.
In this case both varieties $\SOGr_{\max}^{\pm}(V)$ are $H$-spherical.
\end{proof}

\begin{proposition} \label{prop_so_Qmax_V_wr}
Suppose that $V$ is BF-equivalent to $\Omega(W)$ for a simple $H$-module~$W$ and $* \in \lbrace +, - \rbrace$.
Then $\SOGr_{\max}^*(V)$ is $H$-spherical if and only if the pair $(H,W)$, considered up to equivalence, and $*$ appear in Table~\textup{\ref{table_so_even_Qmax_wr}}.
\end{proposition}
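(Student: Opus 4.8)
The plan is to follow the three-step scheme already used for the weakly reducible case of $\SOGr_1$ (Proposition~\ref{prop_so_Q1_V_wr}) and the irreducible case of $\SOGr_{\max}$ (Proposition~\ref{prop_so_Qmax_even_V_irr}). Write $k = \dim W$, so that $\dim V = 2k$ and $G = \Spin_{2k}$. The two isotropic summands $W$ and $W^*$ of $V = \Omega(W)$ are $H$-stable subspaces of the maximal dimension~$k$, hence they are $H$-fixed points of $\SOGr_{\max}(V)$; since $\dim(W \cap W^*) = 0$, they lie in the same connected component $\SOGr_{\max}^{\pm}(V)$ exactly when $k$ is even, and in opposite components when $k$ is odd. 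This parity dichotomy is what makes the answer depend on the sign~$*$ and is the source of all the subtlety below.

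First I would record a clean reduction valid for every component that contains one of these fixed points. If $U \in \lbrace W, W^* \rbrace$ is a fixed point lying in $\SOGr_{\max}^{*}(V)$, then $Y = \lbrace U \rbrace$ is a closed (one-point) $H$-orbit and $H_U = H$, so Proposition~\ref{prop_sphericity_criterion} applies with $y = U$ and $M = H$ (the group $H$ is its own Levi subgroup). As $T_U Y = 0$ and the tangent space of the orthogonal maximal isotropic Grassmannian at $U$ is canonically $\wedge^2 U^*$ as an $H$-module, the criterion becomes: $\SOGr_{\max}^{*}(V)$ is $H$-spherical if and only if $\wedge^2 U^*$ is a spherical $H$-module, which up to equivalence is the single condition that $\wedge^2 W$ be $H$-spherical. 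In particular, when $k$ is odd both components contain a fixed point and are therefore simultaneously spherical or not, governed by this one condition; one then decides it by confronting the simple module $W$ with the list of \cite[\S\,5]{Kn} and the criterion \cite[Theorem~5.3]{AvP1}.

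The second step handles the one remaining component when $k$ is even, namely the one containing neither $W$ nor $W^*$. Here I would use the uniform tool of Proposition~\ref{prop_sph_criterion_eff} (with $B_H^- = B_G^- \cap H$ and $I$ the index of that component): $\SOGr_{\max}^{*}(V)$ is $H$-spherical if and only if $\mathfrak g/(\mathfrak p_I^- + \mathfrak h)$ is a spherical $M$-module for a Levi subgroup $M$ of $P_I^- \cap H$. To make the search finite I would first impose the necessary open-orbit condition: the pairs $(H,W)$ for which $H$ has an open orbit in at least one of $\SOGr_{\max}^{\pm}(\Omega(W))$ are supplied by \cite[Theorem~2.2]{Kim} (the same source that produced Table~\ref{table_open_Q1_wr}), and the inequality $\dim B_H \ge \dim \SOGr_{\max}^{\pm}(V) = \tfrac{k(k-1)}{2}$ discards all but a few. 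For each survivor I would compute $(M, \mathfrak g/(\mathfrak p_I^- + \mathfrak h))$ explicitly and test it against the classification of spherical modules, exactly as in the cases treated in Proposition~\ref{prop_so_Qmax_even_V_irr}; assembling the results of the two steps produces Table~\ref{table_so_even_Qmax_wr}.

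The main obstacle is precisely the asymmetry between the two components when $k$ is even: the tangent-space reduction settles the fixed-point component at once, but the other component has no $H$-stable maximal isotropic subspace, so its closed $H$-orbit is not a point and must be analysed through the less transparent module $\mathfrak g/(\mathfrak p_I^- + \mathfrak h)$. The delicate bookkeeping is then to determine, for each candidate $(H,W)$, which of $\SOGr_{\max}^{+}(V)$ and $\SOGr_{\max}^{-}(V)$ is spherical. When $k$ is odd the orthogonal transformation swapping $W$ and $W^*$ is orientation-reversing and interchanges the two components, forcing them to agree (an instance of the mechanism behind Lemma~\ref{lemma_outer_aut}(\ref{lemma_outer_aut_a})); when $k$ is even this swap preserves each component and the two signs must be checked independently, which is what the sign data in Table~\ref{table_so_even_Qmax_wr} encodes.
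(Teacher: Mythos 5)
Your proposal is correct and its computational core coincides with the paper's: the paper also first invokes \cite[Theorem~2.2]{Kim} to cut the candidates down to the four families $(\GL_n,\FF^n)$, $(\SL_n,\FF^n)$, $(\SO_3\times\FF^\times,[\FF^3]_\chi)$, $(\Sp_4\times\FF^\times,[\FF^4]_\chi)$, and then applies the tangent-space sphericity criterion (Proposition~\ref{prop_sph_criterion_eff}) to each candidate and each sign, tabulating the resulting modules $(M,\mathfrak g/(\mathfrak p_I^-+\mathfrak h))$. What you add is a genuine conceptual layer that the paper leaves implicit. With the paper's conventions the base point of $X_{\{n\}}=\SOGr_{\max}^+(V)$ is exactly the $H$-fixed point $W^*$, so the paper's ``$+$'' rows are precisely your fixed-point computation ($M=H$, module $\wedge^2 W$ up to a character twist); your observation that the two components carry the dual modules $\wedge^2 W$ and $\wedge^2 W^*$ when $k$ is odd, and hence must agree, cleanly explains why the $\pm$ asymmetry in Table~\ref{table_so_even_Qmax_wr} occurs only for $k$ even (visible in the table for $(\SL_{2m},\FF^{2m})$ and $(\Sp_4\times\FF^\times,[\FF^4]_\chi)$), something the paper obtains only a posteriori from the case-by-case output. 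Your parity criterion for when $W$ and $W^*$ lie in the same component and the identification $T_U\SOGr_{\max}(V)\simeq\wedge^2U^*$ are both correct.

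One small presentational point: in your Step~1 the condition ``$\wedge^2W$ is a spherical $H$-module'' is a complete criterion for the fixed-point component, but to turn it into an explicit finite table you still need to bound the candidate pairs $(H,W)$; you invoke Kimelfeld's open-orbit classification only in Step~2, whereas it is needed (and suffices) for both steps, since sphericity of any component forces an open $H$-orbit there. This is easily repaired and does not affect the validity of the argument.
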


\begin{table}[h]
\caption{} \label{table_so_even_Qmax_wr}

\begin{tabular}{|c|l|c|c|}
\hline
No. & \multicolumn{1}{|c|}{$(H,W)$} & $*$ &  Note \\

\hline

\newcase &
$(\GL_n, \FF^n)$ & $\pm$ & $n \ge 2$ \\

\hline

\newcase &
$(\SL_{2n+1}, \FF^{2n+1})$ & $+$ & $n \ge 1$ \\

\hline

\newcase &
$(\SL_n, \FF^n)$ & $-$ & $n \ge 2$ \\

\hline

\newcase &
$(\SO_3 \times \FF^\times, [\FF^3]_\chi)$ & $\pm$ & \\

\hline

\newcase &
$(\Sp_4 \times \FF^\times, [\FF^4]_\chi)$ & $-$ & \\

\hline
\end{tabular}
\end{table}

\begin{proof}
If $H$ acts spherically on $\SOGr_{\max}^*(V)$ then $H$ has an open orbit in $\SOGr_{\max}^*(V)$.
According to \cite[Theorem~2.2]{Kim}, $H$ has an open orbit in at least one of the varieties $\SOGr_{\max}^\pm(V)$ if and only if the pair $(H,W)$ is equivalent to one of $(\GL_n, \FF^n)$ ($n \ge 2$), $(\SL_n, \FF^n)$ ($n \ge 2$), $(\SO_3 \times \FF^\times, [\FF^3]_\chi)$, or $(\Sp_4 \times \FF^\times, [\FF^4]_\chi)$.
Now it suffices to compute the pair $(M, \mathfrak g/(\mathfrak p_I^- + \mathfrak h))$ for each of these pairs $(H,W)$ and each~$*$ and conclude whether the resulting module is spherical or not.
The results for each of the cases are summarized in Table~\ref{table_open_Qmax_wr}.
\end{proof}

\begin{table}[h]
\caption{} \label{table_open_Qmax_wr}

\begin{tabular}{|c|c|c|c|c|c|}
\hline
No. & $(H,W)$ & $*$ & $(M, \mathfrak g/(\mathfrak p_I^- + \mathfrak h))$ & Conclusion \\

\hline

\multirow{2}{*}{1} & \multirow{2}{*}{$(\GL_n,\FF^n)$, $n {\ge} 2$} & $+$ & $(\GL_n, \wedge^2 \FF^n)$ & spherical \\ \cline{3-5}

& & $-$ & $(\GL_{n-1} \times \FF^\times, \wedge^2 \FF^{n-1})$ & spherical \\

\hline

\multirow{2}{*}{2} & \multirow{2}{*}{$(\SL_n,\FF^n)$, $n {\ge} 2$} & $+$ & $(\SL_n, \wedge^2 \FF^n)$ & spherical iff $n$ is odd \\ \cline{3-5}

& & $-$ & $(\GL_{n-1}, \wedge^2 \FF^{n-1})$ & spherical\\

\hline

\multirow{2}{*}{3} & \multirow{2}{*}{$(\SO_3 \times \FF^\times,[\FF^3]_\chi)$} & $+$ & $(\SO_3 \times \FF^\times, [\FF^3]_\chi)$ & spherical \\ \cline{3-5}

& & $-$ & $(\FF^\times {\times} \FF^\times, \FF^1_{\chi_1} {\oplus} \FF^1_{\chi_1 +\chi_2})$ & spherical \\

\hline

\multirow{2}{*}{4} & \multirow{2}{*}{$(\Sp_4 \times \FF^\times, [\FF^4]_\chi)$} & $+$ & $(\Sp_4 \times \FF^\times, [\wedge^2 \FF^4]_\chi)$ & not spherical \\ \cline{3-5}

&  & $-$ & $(\SL_2 {\times} \FF^\times {\times} \FF^\times, [\FF^2]_{\chi_1 + \chi_2} {\oplus} \FF^1_{\chi_1})$ & spherical \\

\hline
\end{tabular}

\end{table}

\subsection{Spherical actions on \texorpdfstring{$\SOGr_{\max}^{(\pm)}(V)$}{SOGr\_max\textasciicircum(\textpm)} for \texorpdfstring{$r = 2$}{r=2}}
\label{subsec_SOGr_max_r=2}

We begin with an auxiliary result.

\begin{proposition} \label{prop_so_aux}
Suppose that $V = \widetilde V_1 \oplus \widetilde V_2$ for two pairwise orthogonal nonzero $H$-submodules $\widetilde V_1, \widetilde V_2 \subset V$, $\dim \widetilde V_1 \ge 3$, and
\begin{itemize}
\item
$H$ acts spherically on $\SOGr_{\max}(V)$ if $d$ is odd;

\item
$H$ acts spherically on at least one of $\SOGr_{\max}^{\pm}(V)$ if $d$ is even.
\end{itemize}
Then
\begin{enumerate}[label=\textup{(\alph*)},ref=\textup{\alph*}]
\item \label{prop_so_aux_a}
if $\dim \widetilde V_1$ is odd then $\SOGr_{\max}(\widetilde V_1)$ is $H$-spherical;

\item \label{prop_so_aux_b}
if $\dim \widetilde V_1$ is even then both varieties $\SOGr_{\max}^{\pm}(\widetilde V_1)$ are $H$-spherical.
\end{enumerate}
\end{proposition}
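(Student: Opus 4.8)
The plan is to produce, for each parity pattern, a dominant rational map onto the desired maximal isotropic Grassmannian of $\widetilde V_1$ that is equivariant for a Borel subgroup $B_H$ of $H$, possibly after an auxiliary enlargement of $V$. The principle I would invoke repeatedly is the following transfer observation: if $\varphi\colon Z_1\dashrightarrow Z_2$ is a dominant $B_H$-equivariant rational map and $B_H$ has a dense orbit in $Z_1$, then $\varphi$ carries that orbit onto a dense $B_H$-orbit in $Z_2$, so $Z_2$ is $H$-spherical. Because $H$-sphericity only concerns $B_H$, this lets me work with subspaces that are merely $B_H$-stable rather than $H$-stable. Write $d=\dim V$ and $d_i=\dim\widetilde V_i$, and let $\SOGr^{(\ast)}_{\max}(V)$ denote the given spherical Grassmannian (the whole variety when $d$ is odd, one of the two components when $d$ is even).

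\textbf{Main construction.} By the Borel fixed point theorem, $B_H$ fixes a point of the complete variety $\SOGr_{\max}(\widetilde V_2)$; that is, there is a $B_H$-stable maximal isotropic subspace $U_2\subset\widetilde V_2$. I would set $\widehat V=U_2^\perp/U_2$, a nondegenerate quadratic space of dimension $d_1+(d_2\bmod 2)$ carrying a $B_H$-action, into which $\widetilde V_1$ embeds $B_H$-isometrically with cokernel of dimension $d_2\bmod 2$. The rule $U\mapsto((U\cap U_2^\perp)+U_2)/U_2$ then defines a $B_H$-equivariant rational map $\SOGr^{(\ast)}_{\max}(V)\dashrightarrow\SOGr_{\max}(\widehat V)$; its dominance follows since the same formula, read over the full group $\SO(V)$, is equivariant for $\mathrm{Stab}_{\SO(V)}(U_2)$, whose image in $\SO(\widehat V)$ acts transitively on the target. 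By the transfer observation, $\SOGr_{\max}(\widehat V)$ is $H$-spherical, and it remains to identify this with the Grassmannian(s) of $\widetilde V_1$.

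\textbf{Parity reductions.} If $d_2$ is even then $\widehat V\cong\widetilde V_1$, which settles the case $d_1$ odd immediately; when $d_1$ is even I would run the construction twice, using the two $B_H$-fixed maximal isotropics coming from the two families of $\SOGr_{\max}(\widetilde V_2)$, and check by additivity of the discrete type invariant under orthogonal direct sums that the two resulting maps hit the two distinct components $\SOGr^\pm_{\max}(\widetilde V_1)$. If $d_2$ is odd and $d_1$ is odd, then $\widehat V=\widetilde V_1\perp\langle w\rangle$ is even-dimensional with $\langle w\rangle$ a $B_H$-fixed anisotropic line, and the isomorphism underlying Proposition~\ref{prop_so_2n_to_2n-1} (which is equivariant for the stabilizer of the decomposition, hence for $B_H$) identifies each component of $\SOGr_{\max}(\widehat V)$ with $\SOGr_{\max}(\widetilde V_1)$, giving part~(\ref{prop_so_aux_a}).

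\textbf{Main obstacle and its resolution.} The case $d_1$ even, $d_2$ odd (so $d$ odd) is the one I expect to be the hard part: here the quotient $\widehat V$ is forced to be odd-dimensional, $\SOGr_{\max}(\widehat V)$ is strictly larger than either $\SOGr^\pm_{\max}(\widetilde V_1)$, and the natural copy of $\SOGr^\pm_{\max}(\widetilde V_1)$ inside it is only $B_H$-stable, so neither the quotient construction nor Theorem~\ref{thm_subvar_sph} applies. I would circumvent this by enlarging $V$: adjoin an $H$-fixed anisotropic line to form $V'=V\perp\FF^1$, so that $V$ is a nondegenerate hyperplane of the even-dimensional space $V'$. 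By Proposition~\ref{prop_so_2n_to_2n-1} both $\SOGr^\pm_{\max}(V')$ are isomorphic to $\SOGr_{\max}(V)$ as $H$-varieties, hence $H$-spherical, while $\widetilde V_2'=\widetilde V_2\perp\FF^1$ has become even-dimensional. Applying the even-$d_2$ step above to the decomposition $V'=\widetilde V_1\perp\widetilde V_2'$ then shows that both $\SOGr^\pm_{\max}(\widetilde V_1)$ are $H$-spherical, which is part~(\ref{prop_so_aux_b}). Apart from this device, the only points requiring care are the dominance verification and the component bookkeeping in the even cases.
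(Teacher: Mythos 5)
Your argument is correct, but it is genuinely different from the paper's. The paper stays inside the given spherical variety: it exhibits a closed $(\SO(\widetilde V_1)\times\SO(\widetilde V_2))$-orbit isomorphic to $\SOGr_{\max}^{(*)}(\widetilde V_1)\times\SOGr_{\max}^{(*)}(\widetilde V_2)$ (taking $U=U_1\oplus U_2$, with a ``diagonal'' isotropic vector $u_1+u_2$ inserted when $d_1,d_2$ are both odd, and an $\mathrm O\setminus\SO$ twist on $U_2$ to land in the prescribed component), then invokes Theorem~\ref{thm_subvar_sph} and projects to the first factor. You instead go outward, via a dominant $B_H$-equivariant rational map to $\SOGr_{\max}(U_2^\perp/U_2)$ for a Borel-fixed isotropic $U_2\subset\widetilde V_2$, using only the elementary transfer of a dense $B_H$-orbit along a dominant equivariant map. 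The two routes trade hard cases: for the paper the delicate case is $d_1,d_2$ both odd (the diagonal vector), which your quotient handles uniformly via Proposition~\ref{prop_so_2n_to_2n-1}; for you the delicate case is $d_1$ even, $d_2$ odd, where the quotient has the wrong parity and you need the enlargement $V'=V\perp\FF^1$ --- the paper's closed-orbit argument disposes of that case trivially since $U_1\oplus U_2$ is already maximal isotropic in the odd-dimensional $V$. Your dominance and component claims do check out, with two small points of care: the image of $\mathrm{Stab}_{\SO(V)}(U_2)$ in $\SO(U_2^\perp/U_2)$ is transitive only on each \emph{component} of the target when $\dim U_2^\perp/U_2$ is even, so the image of your (irreducible) domain is exactly one component; and that component is most cleanly identified by evaluating the formula at split subspaces $U=U_1\oplus U_2\supset U_2$, where it returns $U_1$, after which the parity additivity $\dim(U\cap U')=\dim(U_1\cap U_1')+\dim(U_2\cap U_2')$ gives the claimed matching of families. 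The paper's proof is shorter because Theorem~\ref{thm_subvar_sph} does the heavy lifting; yours is more self-contained and makes the mechanism (restriction to a $B_H$-stable isotropic flag) more transparent.
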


\begin{proof}
(\ref{prop_so_aux_a})
\textit{Case}~1: $\dim \widetilde V_2$ is even.
Choose maximal isotropic subspaces $U_1 \subset \widetilde V_1$, $U_2 \subset \widetilde V_2$ and put $U = U_1 \oplus U_2$.
Then $U \in \SOGr_{\max}(V)$.
Let $Q$ be the stabilizer of $U$ in the group $\SO(\widetilde V_1) \times \SO(\widetilde V_2)$.
It is easy to see that $Q = Q_1 \times Q_2$ where $Q_i$ is the stabilizer of $U_i$ in $\SO(\widetilde V_i)$ for $i=1,2$.
It follows that the $(\SO(\widetilde V_1)\times \SO(\widetilde V_2))$-orbit $O$ of $U$ in $\SOGr_{\max}(V)$ is closed and isomorphic to $\SOGr_{\max}(\widetilde V_1) \times \SOGr_{\max}^*(\widetilde V_2)$ for some choice of $* \in \lbrace +, - \rbrace$.
Theorem~\ref{thm_subvar_sph} implies that $O$ is $H$-spherical, hence $\SOGr_{\max}(\widetilde V_1)$ is also $H$-spherical.

\textit{Case}~2: $\dim \widetilde V_2$ is odd.
Choose $X \in \lbrace \SOGr_{\max}^+(V), \SOGr_{\max}^-(V) \rbrace$ such that $X$ is $H$-spherical.
Choose maximal isotropic subspaces $U_1 \subset \widetilde V_1, U_2 \subset \widetilde V_2$.
For $i=1,2$ let $U_i^\perp$ be the orthogonal complement of $U_i$ in $\widetilde V_i$ and choose $u_i \in U_i^\perp \setminus U_i$ in such a way that the vector $u = u_1 + u_2$ is isotropic.
(Note that for $i=1,2$ the vector $u_i$ is nonisotropic and $U_i^\perp = U_i \oplus \langle u_i \rangle$.)
Then $U = U_1 \oplus \langle u\rangle \oplus U_2$ is a maximal isotropic subspace in~$V$.
Replacing $u_2$ with $-u_2$ if necessary we may assume that $U \in X$.
Let $Q$ be the stabilizer of $U$ in the group $\SO(\widetilde V_1) \times \SO(\widetilde V_2)$.
It is easy to see that for each $i=1,2$ the group $Q$ stabilizes the subspace $U_i^\perp$ and hence the subspace $U_i$, which implies that $Q = Q_1 \times Q_2$ where $Q_i$ is the stabilizer of $U_i$ in $\SO(\widetilde V_i)$ for $i = 1,2$.
It follows that the $(\SO(\widetilde V_1)\times \SO(\widetilde V_2))$-orbit $O$ of $U$ in $\SOGr_{\max}(V)$ is closed and isomorphic to $\SOGr_{\max}(\widetilde V_1) \times \SOGr_{\max}(\widetilde V_2)$.
The rest of the argument is as in Case~1.

(\ref{prop_so_aux_b})
\textit{Case}~1: $\dim \widetilde V_2$ is odd.
Fix $* \in \lbrace +,- \rbrace$ and choose maximal isotropic subspaces $U_1 \subset \widetilde V_1, U_2 \subset \widetilde V_2$ such that $U_1 \in \SOGr_{\max}^*(\widetilde V_1)$.
Then $U \in \SOGr_{\max}(V)$.
Now an argument similar to that in Case~1 of part~(\ref{prop_so_aux_a}) shows that $\SOGr_{\max}^*(\widetilde V_1)$ is $H$-spherical.

\textit{Case}~2: $\dim \widetilde V_2$ is even.
Choose $X \in \lbrace \SOGr_{\max}^+(V), \SOGr_{\max}^-(V) \rbrace$ such that $X$ is $H$-spherical and fix $* \in \lbrace +, - \rbrace$.
Choose maximal isotropic subspaces $U_1 \subset \widetilde V_1, U_2 \subset \widetilde V_2$ and put $U = U_1 \oplus U_2$.
We may assume $U_1 \in \SOGr_{\max}^*(\widetilde V_1)$.
Acting on $U_2$ by an element of $\mathrm{O}(\widetilde V_2) \setminus \SO(\widetilde V_2)$ if necessary we may also assume that $U \in X$.
Now an argument similar to that in Case~1 of part~(\ref{prop_so_aux_a}) shows that $\SOGr_{\max}^*(\widetilde V_1)$ is $H$-spherical.
\end{proof}

\begin{proposition} \label{prop_so_odd_Qmax_sr}
Suppose that $r = 2$ and $d$ is odd.
Then $\SOGr_{\max}(V)$ is $H$-spherical if and only if the pair $(H,V)$ is BF-equivalent to one of $(\SO_{2l} \times \SO_{2m+1}$, $\FF^{2l} \oplus \nobreak \FF^{2m+1})$ $(l \ge 1, m \ge 0)$, $(\Spin_7, \FF^8 \oplus \FF^1)$, $(\GL_n, \Omega(\FF^n) \oplus \FF^1)$ $(n \ge 2)$.
\end{proposition}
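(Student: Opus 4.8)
The plan is to prove both implications by reducing to the one-summand classifications of \S\,\ref{subsec_SOGr_max_r=1} via the splitting Proposition~\ref{prop_so_aux}, and then to settle the finitely many surviving combinations with the effective criterion of Proposition~\ref{prop_sph_criterion_eff}. Since $d$ is odd and $r=2$, exactly one summand is even-dimensional; relabel so that $\dim V_1$ is even and $\dim V_2$ is odd. As a weakly reducible module has even dimension, $V_2$ is either the trivial module $\FF^1$ or a nontrivial simple module, whereas $V_1$ may be simple or weakly reducible.

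For the necessity direction, assume $\SOGr_{\max}(V)$ is $H$-spherical. After disposing directly of the low-dimensional cases $\dim V_1 = 2$ and $\dim V_2 = 1$ (where the inequality $\dim B_H \ge \dim \SOGr_{\max}(V)$ already excludes most pairs), I would invoke Proposition~\ref{prop_so_aux}: taking $\widetilde V_1 = V_1$ forces \emph{both} $\SOGr_{\max}^{+}(V_1)$ and $\SOGr_{\max}^{-}(V_1)$ to be $H$-spherical, hence $H_1$-spherical, and taking $\widetilde V_1 = V_2$ (when $\dim V_2 \ge 3$) forces $\SOGr_{\max}(V_2)$ to be $H_2$-spherical. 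The first conclusion, combined with Proposition~\ref{prop_so_Qmax_even_V_irr}, Proposition~\ref{prop_so_Qmax_V_wr}, and Table~\ref{table_so_even_Qmax_wr}, together with the requirement that the sign $*$ range over both $+$ and $-$, restricts $(H_1,V_1)$ to one of $(\SO_{2n},\FF^{2n})$, $(\Sp_4\times\SL_2,\FF^8)$, $(\Spin_7,\FF^8)$, $(\GL_n,\Omega(\FF^n))$, or $(\SO_3\times\FF^\times,\Omega([\FF^3]_\chi))$. The second conclusion, via Proposition~\ref{prop_so_Qmax_odd_V_irr}, restricts $(H_2,V_2)$ to $\FF^1$, $(\SO_n,\FF^n)$, or $(\mathsf G_2,\FF^7)$.

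It then remains to decide which gluings $H_0\subseteq H_1\times H_2$ and which pairings of these candidates actually produce a spherical $\SOGr_{\max}(V)$. I expect that every candidate with $(H_1,V_1)\in\{(\Sp_4\times\SL_2,\FF^8),(\SO_3\times\FF^\times,\Omega([\FF^3]_\chi))\}$, every pairing with $\dim V_2\ge 3$ other than the all-orthogonal one, and every proper diagonal embedding $H_0\subsetneq H_1\times H_2$ is eliminated: partly by the dimension inequality, but in the borderline regimes — for instance $(\SO_{2l}\times\mathsf G_2,\FF^{2l}\oplus\FF^7)$ for large $l$, or $(\Spin_7\times\SO_{2m+1},\FF^8\oplus\FF^{2m+1})$ with $m\ge 1$ — only by explicitly computing the $M$-module $\mathfrak g/(\mathfrak p_I^-+\mathfrak h)$ from Proposition~\ref{prop_sph_criterion_eff} with $I=\{(d-1)/2\}$ and checking that it fails to be spherical. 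After this pruning the only survivors are the three families in the statement, the subcases with $V_2=\FF^1$ being absorbed into the first family with $m=0$ when $H_1=\SO_{2l}$.

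For the sufficiency direction, the family $(\SO_{2l}\times\SO_{2m+1},\FF^{2l}\oplus\FF^{2m+1})$ is a symmetric subgroup of $\Spin(V)$, for which every flag variety is spherical; this can also be checked directly with Proposition~\ref{prop_sph_criterion_eff}. For the remaining families $(\Spin_7,\FF^8\oplus\FF^1)$ and $(\GL_n,\Omega(\FF^n)\oplus\FF^1)$ I would verify sphericity by computing $\mathfrak g/(\mathfrak p_I^-+\mathfrak h)$ and recognizing it as a spherical module from the list of \cite[\S\,5]{Kn}. The main obstacle is the necessity pruning: the per-summand reductions from Proposition~\ref{prop_so_aux} are necessary but far from sufficient — note that $\SOGr_{\max}(V_1\oplus\FF^1)$ has strictly larger dimension than $\SOGr_{\max}^{\pm}(V_1)$ — so the combined $M$-module computations cannot be avoided and form the bulk of the argument.
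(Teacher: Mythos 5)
Your route differs from the paper's in its first reduction. The paper does not start from Proposition~\ref{prop_so_aux}: since $d$ is odd, it first invokes Proposition~\ref{prop_so_odd_Q1_is_minimal} and Theorem~\ref{thm_sph_descent} to descend to $H$-sphericity of $\SOGr_1(V)$, and then applies Proposition~\ref{prop_so_Q1_V_sr}. This buys two things at once: the odd-dimensional summand $V_2$ is immediately pinned to $(\SO_{2m+1},\FF^{2m+1})$ or $(\mathsf G_2,\FF^7)$, and — because every entry of Table~\ref{table_so_Q1} has both summands even-dimensional — the gluing is forced to be $H_0=H_1\times H_2$ up to the one low-dimensional exception $(\SL_2\times\SL_2,\FF^2\otimes\FF^2\oplus\mathrm{S}^2\FF^2)$, which dies by dimension count. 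Your filter (Proposition~\ref{prop_so_aux} plus the $r=1$ classifications of \S\,\ref{subsec_SOGr_max_r=1}) only constrains the images $H_1$ and $H_2$, not the subgroup $H_0\subseteq H_1\times H_2$ itself. Your sentence ``every proper diagonal embedding $H_0\subsetneq H_1\times H_2$ is eliminated'' is asserted without a mechanism for enumerating such subdirect products; this is a genuine gap in your plan, and it is precisely the point where the paper's descent to $\SOGr_1(V)$ does real work. You would need either to import Proposition~\ref{prop_so_Q1_V_sr} anyway (via the same descent) or to run a Goursat-type enumeration of common quotients of the surviving pairs $(H_1,H_2)$.

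The second concrete problem is that your candidate list for $(H_1,V_1)$ is incomplete as an enumeration of the survivors of your own filter. Requiring sphericity of \emph{both} $\SOGr_{\max}^{\pm}(V_1)$ and consulting Table~\ref{table_so_even_Qmax_wr} leaves not only $(\GL_n,\Omega(\FF^n))$ and $(\SO_3\times\FF^\times,\Omega([\FF^3]_\chi))$ among the weakly reducible cases, but also $(\SL_{2m+1},\Omega(\FF^{2m+1}))$, since row~2 covers $*=+$ for odd rank and row~3 covers $*=-$ for all ranks. This case passes your necessary conditions and must then be excluded by an explicit computation — the paper's Case~6 reduces it to $V_2=\FF^1$ and shows the $M$-module $(\SL_l,\FF^l\oplus\wedge^2\FF^l)$ is not spherical. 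Omitting it means a filter survivor is never tested; the final statement happens to be unaffected only because the case turns out not to be spherical. Conversely, your extra candidate $(\SO_3\times\FF^\times,\Omega([\FF^3]_\chi))$ — which the paper never sees because it is not a spherical module and hence fails the $\SOGr_1(V)$ test — is harmless, as $\dim B_H=3<6\le\dim\SOGr_{\max}(V)$ kills it. The rest of your plan (dimension counts plus $M$-module computations via Proposition~\ref{prop_sph_criterion_eff} for the borderline pairings, and the sufficiency checks) matches what the paper actually does.
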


\begin{proof}
As $d$ is odd, $\SOGr_{\max}(V)$ being $H$-spherical implies that $\SOGr_1(V)$ is $H$-spherical by Proposition~\ref{prop_so_odd_Q1_is_minimal} and Theorem~\ref{thm_sph_descent}.

Without loss of generality we may assume that $\dim V_1 = 2l$ is even and $\dim V_2 = 2m+1$ is odd.
Then $V_2$ is a simple $H$-module, in which case the pair $(H_2, V_2)$ can be BF-equivalent to one of the two pairs $(\SO_{2m+1}, \FF^{2m+1})$ or $(\mathsf G_2, \FF^7)$ by Proposition~\ref{prop_so_Q1_V_sr}.
We first show that the group $\SO_{2l} \times \mathsf G_2$ does not act spherically on $\SOGr_{\max}(\FF^{2l} \oplus \FF^7)$.
Indeed, in this case the pair $(M, \mathfrak g/(\mathfrak p_I^- + \mathfrak h))$ is equivalent to
\[
(\GL_{l} \times \SL_2 \times \FF^\times, \mathop{\vphantom|\FF^{l}} \limits_1{\!} \oplus
[\mathop{\vphantom|\FF^{l}}\limits_1{\!} \otimes \mathop{\vphantom|\FF^2}\limits_2{\!}]_\chi \oplus \mathop{[\FF^{l}]}\limits_1{\!}_{2\chi} \oplus \FF^1_{2\chi}),
\]
the latter module being not spherical.

We have proved that a necessary $H$-sphericity condition for $\SOGr_{\max}(V)$ is that the pair $(H_2, V_2)$ is BF-equivalent to $(\SO_{2m+1}, \FF^{2m+1})$.
Then according to Proposition~\ref{prop_so_Q1_V_sr} the pair $(H_1,V_1)$ is BF-equivalent to one of those in Table~\ref{table_orth_sph_mod}.
Below we consider all these possibilities for the pair $(H_1,V_1)$ up to BF-equivalence.

\setcounter{cased}{0}

\textit{Case}~\casenod: $(\SO_{2l}, \FF^{2l})$, $l \ge 1$.

If $H_0 = H_1 \times H_2$ then the pair $(M, \mathfrak g/(\mathfrak p_I^- + \mathfrak h))$ is equivalent to
\[
(\GL_{l} \times \GL_{m}, \mathop{\FF^{l}} \limits_1{\!} \oplus \mathop{\FF^{l}} \limits_1{\!} \otimes \mathop{\FF^{m}} \limits_2{\!}),
\]
the latter module being spherical.

If the pair $(H,V)$ is BF-equivalent to $(\SL_2 \times \SL_2, \mathop{\FF^2} \limits_1{\!} {\otimes} \mathop{\FF^2} \limits_2{\!} \oplus \mathop{\mathrm{S}^2 \FF^2} \limits_1{\!})$ then
\[
\dim B_{H} = 4 < 6 = \dim \SOGr_{\max}(V),
\]
hence $\SOGr_{\max}(V)$ is not $H$-spherical.

\textit{Case}~\casenod: $(\Sp_{2n} \times \SL_2, \FF^{2n} \otimes \FF^2)$, $n \ge 2$.
By Proposition~\ref{prop_so_aux}(\ref{prop_so_aux_b}), a necessary $H$-spheric\-i\-ty condition for $\SOGr_{\max}(V)$ is that $\Sp_{2n} \times \SL_2$ acts spherically on both varieties $\SOGr_{\max}^\pm(V_1)$, which implies $n = 2$ by Proposition~\ref{prop_so_Qmax_even_V_irr}.
In this case, by Lemma~\ref{lemma_outer_aut}(\ref{lemma_outer_aut_b}) it suffices to use only the default embedding $H_1 \subset \SO(V_1)$ as described in~\S\,\ref{subsec_so_prelim_remarks}.

If $H_0 = H_1 \times H_2$ then the pair $(M, \mathfrak g/(\mathfrak p_I^- + \mathfrak h))$ is equivalent to
\[
(\Sp_4 \times \GL_{m} \times \FF^\times, [\mathop{\vphantom|\FF^4} \limits_1{\!}]_\chi \oplus [\mathop{\vphantom|\FF^4} \limits_1{\!} \otimes \mathop{\vphantom|\FF^{m}} \limits_2{\!}]_\chi \oplus [\mathop{\vphantom|\wedge^2_0 \FF^4} \limits_1{\!}]_{2\chi}),
\]
the latter module being not spherical.
It also follows from the above that $\SOGr_{\max}(V)$ is not $H$-spherical if $H_0$ is a proper subgroup of $H_1 \times H_2$.

\textit{Case}~\casenod: $(\Spin_7, \FF^8)$.
By Lemma~\ref{lemma_outer_aut}(\ref{lemma_outer_aut_b}), it suffices to do the computations only for $\Spin^+_7$.
If $H_0 = H_1 \times H_2$ then the pair $(M, \mathfrak g/(\mathfrak p_I^- + \mathfrak h))$ is equivalent to
\[
(\SL_3 \times \GL_{m} \times \FF^\times, [\mathop{\vphantom|\FF^3} \limits_1{\!}]_\chi \oplus [\mathop{\vphantom|\FF^3} \limits_1{\!} \otimes \mathop{\vphantom|\FF^{m}} \limits_2{\!}]_\chi \oplus [\mathop{\vphantom| \FF^m} \limits_2{\!}]_{3\chi} \oplus \FF^1_{3\chi}),
\]
the latter module being spherical if and only if $m = 0$.
It also follows from the above that $\SOGr_{\max}(V)$ is not $H$-spherical if $H_0$ is a proper subgroup of $H_1 \times H_2$.

\textit{Case}~\casenod: $(\Spin_9, \FF^{16})$.
By Proposition~\ref{prop_so_aux}(\ref{prop_so_aux_b}), a necessary sphericity condition is that $\Spin_9$ acts spherically on both varieties $\SOGr_{\max}^\pm(\FF^{16})$, which is not the case by Proposition~\ref{prop_so_Qmax_even_V_irr}.

\textit{Case}~\casenod: $(\GL_l, \Omega(\FF^l))$, $l \ge 2$.

If $H_0 = H_1 \times H_2$ then the pair $(M, \mathfrak g/(\mathfrak p_I^- + \mathfrak h))$ is equivalent to
\[
(\GL_l \times \GL_{m}, \mathop{\vphantom|\FF^l} \limits_1{\!} \oplus \mathop{\vphantom|\FF^l} \limits_1{\!} \otimes \mathop{\vphantom|\FF^{m}} \limits_2{\!} \oplus \mathop{\vphantom|\wedge^2 \FF^l} \limits_1{\!}),
\]
the latter module being spherical if and only if $m = 0$.
It also follows from the above that $\SOGr_{\max}(V)$ is not $H$-spherical if $H_0$ is a proper subgroup of $H_1 \times H_2$.

\textit{Case}~\casenod: $(\SL_l, \Omega(\FF^l))$ ($l \ge 3$).
A necessary $H$-sphericity condition is that $\SOGr_{\max}(V)$ is spherical for the group $\GL_l \times \SO_{2m+1}$, which implies $m=0$ by the previous case.
Then the pair $(M, \mathfrak g/(\mathfrak p_I^- + \mathfrak h))$ is equivalent to $(\SL_l, \mathop{\FF^l} \limits_1{\!} \oplus \mathop{\wedge^2 \FF^l} \limits_1{\!})$, the latter module being not spherical.
It also follows from the above that $\SOGr_{\max}(V)$ is not $H$-spherical if $H_0$ is a proper subgroup of $H_1 \times H_2$.

\textit{Case}~\casenod: $(\Sp_{2n} \times \FF^\times, \Omega([\FF^{2n}]_\chi))$, $n \ge 2$.
By Proposition~\ref{prop_so_aux}(\ref{prop_so_aux_b}), a necessary $H$-sphericity condition for $\SOGr_{\max}(V)$ is that $\Sp_{2n} \times \FF^\times$ acts spherically on both varieties $\SOGr_{\max}^\pm(V_1)$, which is not the case by Proposition~\ref{prop_so_Qmax_V_wr}.
\end{proof}

\begin{proposition} \label{prop_so_even_Qmax_V1+V2}
Suppose that $r = 2$, $d$ is even, and $* \in \lbrace +, - \rbrace$.
Then $\SOGr_{\max}^*(V)$ is $H$-spherical if and only if the pair $(H,V)$, considered up to BF-equivalence, and $*$ are listed in Table~\textup{\ref{table_so_even_Qmax_V1+V2}}.
\end{proposition}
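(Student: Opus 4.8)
The plan is to reduce the classification to the case $r=1$, already settled in \S\,\ref{subsec_SOGr_max_r=1}, by means of the auxiliary Proposition~\ref{prop_so_aux}, and then to decide each of the finitely many surviving candidates with the effective criterion of Proposition~\ref{prop_sph_criterion_eff}. The argument runs parallel to that of Proposition~\ref{prop_so_odd_Qmax_sr}, the essential new feature being that the two components $\SOGr_{\max}^+(V)$ and $\SOGr_{\max}^-(V)$ must be treated on an equal footing, so that the value of $*$ has to be tracked throughout.

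First I would order the summands so that $d_1 = \dim V_1 \ge d_2 = \dim V_2$; since $d$ is even these have the same parity and $d_1 \ge 3$. Assuming that $H$ acts spherically on $\SOGr_{\max}^*(V)$, Proposition~\ref{prop_so_aux} applied with $\widetilde V_1 = V_1$ shows that $\SOGr_{\max}(V_1)$ (if $d_1$ is odd) or both of $\SOGr_{\max}^{\pm}(V_1)$ (if $d_1$ is even) are $H$-spherical, hence $H_1$-spherical since the action factors through~$H_1$. The $r=1$ classification then pins down $(H_1,V_1)$ to a short list: Proposition~\ref{prop_so_Qmax_odd_V_irr} when $d_1$ is odd, Proposition~\ref{prop_so_Qmax_even_V_irr} when $d_1$ is even and $V_1$ is simple, and Proposition~\ref{prop_so_Qmax_V_wr} when $V_1$ is weakly reducible; I emphasize that in the even case part~(\ref{prop_so_aux_b}) of Proposition~\ref{prop_so_aux} demands \emph{both} signs simultaneously, so among the weakly reducible pairs only those rows of Table~\ref{table_so_even_Qmax_wr} marked $\pm$ survive. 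The same reasoning with $\widetilde V_1 = V_2$ constrains $(H_2,V_2)$ whenever $d_2 \ge 3$. The small summands are disposed of directly: if $d_2 = 1$ then $V_2$ is the trivial module and Proposition~\ref{prop_so_2n_to_2n-1} identifies $\SOGr_{\max}^{(\pm)}(V)$ with $\SOGr_{\max}(V_1)$, reducing the question to the odd $r=1$ case of Proposition~\ref{prop_so_Qmax_odd_V_irr}; if $d_2 = 2$ then $V_2 \simeq \Omega(\FF^1)$ with $H_2 \simeq \GL_1$, a pair lying below the range of Table~\ref{table_so_even_Qmax_wr}, and this is handled by a separate computation. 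After this step only finitely many combinations of $(H_1,V_1)$ and $(H_2,V_2)$ remain.

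For each surviving combination I would first determine the structure of $H_0$: either $H_0 = H_1 \times H_2$, or $H_0$ is a proper subgroup in which a common connected normal factor acts nontrivially on both $V_1$ and $V_2$. The latter diagonal cases are sieved exactly as in Proposition~\ref{prop_so_Q1_V_sr}, using the list of indecomposable saturated spherical modules in~\cite[\S\,5]{Kn} together with the criterion~\cite[Theorem~5.3]{AvP1}. In every remaining case I compute the pair $(M, \mathfrak g/(\mathfrak p_I^- + \mathfrak h))$ according to Proposition~\ref{prop_sph_criterion_eff} and read off sphericity from~\cite[\S\,5]{Kn} (this is a biconditional, so each computation settles both necessity and sufficiency for that candidate), obtaining the rank from~\eqref{eqn_rank_of_RBM}; the outcome, together with the value of~$*$, is recorded in Table~\ref{table_so_even_Qmax_V1+V2}. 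Conjugacy-class ambiguities for $\Spin_7$ or $\Sp_4 \times \SL_2$ inside $\SO(V_i)$ are removed by Lemma~\ref{lemma_outer_aut}(\ref{lemma_outer_aut_b}), while Lemma~\ref{lemma_outer_aut}(\ref{lemma_outer_aut_a}) shows that the two signs give identical answers whenever some $\SO(V_i)$ is contained in~$H_0$; the embedding conventions of~\S\,\ref{subsec_so_prelim_remarks} fix $M$ and the relevant Borel so that these computations are unambiguous.

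The main obstacle I anticipate is precisely the sign bookkeeping in the cases where no factor $\SO(V_i)$ lies in~$H_0$. Unlike the odd situation of Proposition~\ref{prop_so_odd_Qmax_sr}, here the two maximal isotropic Grassmannians $X_{\lbrace n \rbrace}$ and $X_{\lbrace n-1 \rbrace}$ need not be $H$-isomorphic, so a module $\mathfrak g/(\mathfrak p_I^- + \mathfrak h)$ that is spherical for one choice of $*$ may fail for the other, and each such case must be computed twice. Matching every computation to the correct half-spin component, and thereby to the correct sign in Table~\ref{table_so_even_Qmax_V1+V2}, is where the care will be required; the underlying module computations themselves are routine once $M$ and the embeddings are fixed.
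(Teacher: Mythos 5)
Your proposal follows essentially the same route as the paper: constrain each summand via Proposition~\ref{prop_so_aux} and the $r=1$ classification of \S\,\ref{subsec_SOGr_max_r=1}, dispose of summands of dimension $1$ via Proposition~\ref{prop_so_2n_to_2n-1}, and settle the finitely many survivors with Proposition~\ref{prop_sph_criterion_eff} and Lemma~\ref{lemma_outer_aut}, tracking the sign $*$ throughout. The only organizational difference is that the paper splits first by the common parity of $\dim V_1$ and $\dim V_2$ rather than ordering by size, which is immaterial.

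One concrete slip in your sieve: from the (correct) observation that Proposition~\ref{prop_so_aux}(\ref{prop_so_aux_b}) forces \emph{both} of $\SOGr_{\max}^{\pm}(V_1)$ to be spherical, you conclude that only the rows of Table~\ref{table_so_even_Qmax_wr} marked $\pm$ survive. But a pair can satisfy the both-signs condition through two \emph{different} rows: $(\SL_{2m+1},\Omega(\FF^{2m+1}))$ is spherical for $*=+$ by the row $(\SL_{2n+1},\FF^{2n+1})$ and for $*=-$ by the row $(\SL_n,\FF^n)$, so it passes the test even though neither row carries the label $\pm$. Applying your filter literally would discard this pair and hence miss the case $(\SL_{2m+1}\times\FF^\times,\ \Omega(\FF^{2m+1})\oplus\Omega(\FF^1_\chi))$ of Table~\ref{table_so_even_Qmax_V1+V2}. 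The fix is simply to keep every pair for which both signs occur somewhere in Table~\ref{table_so_even_Qmax_wr}; with that correction the argument goes through as in the paper.
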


\begin{table}[h]
\caption{} \label{table_so_even_Qmax_V1+V2}

\begin{tabular}{|c|l|c|c|}
\hline
No. & \multicolumn{1}{|c|}{$(H,V)$} & $*$ &  Note \\

\hline

\newcase &
$(\SO_p \times \SO_q, \mathop{\FF^p}\limits_1{\!} \oplus \mathop{\FF^q}\limits_2{\!})$ & $\pm$ & $p,q {\ge} 1$, $p{+}q = 2k \ge 6$
\\

\hline

\newcase &
$(\Spin_7 \times \SO_{2l}, \mathop{\FF^8} \limits_1{\!} \oplus \mathop{\FF^{2l}} \limits_2{\!})$ & $\pm$ & $l \ge 1$
\\

\hline

\newcase &
$(\GL_m \times \FF^\times, \Omega(\FF^m) \oplus \Omega(\FF^1_\chi))$ & $\pm$ & $m \ge 2$
\\

\hline

\newcase &
$(\SL_{2m+1} \times \FF^\times, \Omega(\FF^{2m+1}) \oplus \Omega(\FF^1_\chi))$ & $\pm$ & $m \ge 1$\\

\hline

\newcase &
\renewcommand{\tabcolsep}{0pt}%
\begin{tabular}{l}
$(\SL_{2m} \times \FF^\times, \Omega([\FF^{2m}]_{a\chi}) \oplus \Omega(\FF^1_{b\chi}))$,\\[-2pt]
$a,b \in \ZZ \setminus \lbrace 0 \rbrace$
\end{tabular}
& $\pm$ & $m \ge 1$
\\

\hline

\newcase &
\renewcommand{\tabcolsep}{0pt}%
\begin{tabular}{l}
$(\SL_{2m+1} \times \FF^\times, \Omega([\FF^{2m+1}]_{a\chi}) \oplus \Omega(\FF^1_{b\chi}))$,\\[-2pt]
$a,b \in \ZZ \setminus \lbrace 0 \rbrace$, $b \ne - (2m+1)a$
\end{tabular}
& $+$ & $m \ge 1$
\\

\hline

\newcase &
\renewcommand{\tabcolsep}{0pt}%
\begin{tabular}{l}
$(\SL_{2m+1} \times \FF^\times, \Omega([\FF^{2m+1}]_{a\chi}) \oplus \Omega(\FF^1_{b\chi}))$,\\[-2pt]
$a,b \in \ZZ \setminus \lbrace 0 \rbrace$, $b \ne (2m+1)a$
\end{tabular}
& $-$ & $m \ge 1$
\\

\hline

\newcase &
$(\GL_3 \times \SO_{2l}, \Omega(\mathop{\FF^3} \limits_1{\!}) \oplus \mathop{\FF^{2l}} \limits_2{\!})$ & $\pm$ & $l \ge 2$
\\

\hline

\newcase &
$(\GL_2 \times \SO_{2l}, \Omega(\mathop{\FF^2} \limits_1{\!}) \oplus \mathop{\FF^{2l}} \limits_2{\!})$ & $\pm$ & $l \ge 2$
\\

\hline

\newcase &
$(\mathsf G_2, \FF^7 \oplus \FF^1)$ & $\pm$ &
\\

\hline

\newcase &
$(\mathsf G_2 \times \SO_3, \mathop{\FF^7} \limits_1{\!} \oplus \mathop{\FF^3} \limits_2{\!})$ & $\pm$ &
\\

\hline

\newcase &
$(\SL_2 \times \SL_2 \times \SL_2, \mathop{\FF^2} \limits_1{\!} \otimes \mathop{\FF^2} \limits_3{\!} \oplus \mathop{\FF^2} \limits_2{\!} \otimes \mathop{\FF^2} \limits_3{\!})$ & $+$ & $H_0 \subset \Spin^+_7 \subset \SO(V)$
\\

\hline

\newcase &
$(\Spin_7 \times \GL_2, \mathop{\FF^8} \limits_1{\!} \oplus \Omega(\mathop{\FF^2} \limits_2{\!}))$
& $+$ & \\

\hline
\end{tabular}

\end{table}

\begin{proof}
Put $p = \dim V_1$, $q = \dim V_2$. The proof is divided into two parts depending on the parity of~$p$ and~$q$.

Part~1: $p, q$ are odd.

If $\min(p,q) = 1$ then applying Propositions~\ref{prop_so_2n_to_2n-1} and~\ref{prop_so_Qmax_odd_V_irr} yields that for each $* \in \lbrace +, - \rbrace$ the variety $\SOGr_{\max}^*(V)$ is $H$-spherical if and only if the pair $(H,V)$ is BF-equivalent to one of $(\SO_{2m+1}, \FF^{2m+1} \oplus \FF^1)$ ($m \ge 2$) or $(\mathsf G_2, \FF^7 \oplus \FF^1)$.

Now assume $\min(p,q) \ge 3$.
By Proposition~\ref{prop_so_aux}(\ref{prop_so_aux_a}), a necessary $H$-sphericity condition for $\SOGr_{\max}^\pm(V)$ is that $H$ acts spherically on both varieties $\SOGr_{\max}(V_1)$ and $\SOGr_{\max}(V_2)$, hence by Proposition~\ref{prop_so_Qmax_odd_V_irr} each pair $(H_1, V_1)$ and $(H_2,V_2)$ is BF-equivalent to either $(\SO_{2m+1}, \FF^{2m+1})$ for some $m \ge 1$ or $(\mathsf G_2, \FF^7)$.

If the pair $(H,V)$ is BF-equivalent to $(\SO_{2m+1} \times \SO_{2l+1}, \mathop{\FF^{2m+1}} \limits_1{\!} \oplus \mathop{\FF^{2l+1}} \limits_2{\!})$ then the pair $(M, \mathfrak g/(\mathfrak p_I^- + \mathfrak h))$ is equivalent to $(\GL_{m} \times \GL_{l}, \FF^{m} \otimes \FF^{l})$, the latter module being spherical.

If the pair $(H,V)$ is BF-equivalent to $(\SO_{2m+1}, \FF^{2m+1} \oplus \FF^{2m+1})$ then
\[
\dim B_{H} = m^2+m < 2m^2+m = \dim \SOGr_{\max}^\pm(V),
\]
whence $\SOGr_{\max}^\pm(V)$ is not spherical.

If $(H,V)$ is BF-equivalent to $(\mathsf G_2 \times \SO_{2l+1}, \mathop{\FF^7} \limits_1{\!} \oplus \mathop{\FF^{2l+1}} \limits_2{\!})$ then the pair $(M, \mathfrak g/(\mathfrak p_I^- + \mathfrak h))$ is equivalent to
\[
(\SL_2 \times \GL_{l} \times \FF^\times, [\mathop{\vphantom|\FF^{2}} \limits_1{\!} \otimes \mathop{\vphantom|\FF^l} \limits_2{\!}]_{\chi} \oplus [\mathop{\vphantom|\FF^{l}} \limits_2{\!}]_{2\chi} \oplus \FF^1_{2\chi}),
\]
the latter module being spherical if and only if $l \le 1$.
The latter also implies that none of the two varieties $\SOGr_{\max}^\pm(V)$ is $H$-spherical if both pairs $(H_1,V_1)$ and $(H_2,V_2)$ are BF-equivalent to $(\mathsf G_2, \FF^7)$.

Part~2: $p,q$ are even.

By Proposition~\ref{prop_so_aux}(\ref{prop_so_aux_b}), a necessary $H$-sphericity condition for $\SOGr_{\max}^{\pm}(V)$ is that for any $i = 1,2$ with $\dim V_i > 2$ the group $H$ acts spherically on both varieties $\SOGr_{\max}^\pm(V_i)$.
Then Propositions~\ref{prop_so_Qmax_even_V_irr} and~\ref{prop_so_Qmax_V_wr} imply that for $i = 1,2$ the pair $(H_i,V_i)$ can be BF-equivalent to one of $(\SO_{2m}, \FF^{2m})$ ($m \ge 1$), $(\Sp_4 \times \SL_2, \FF^4 \otimes \FF^2)$, $(\Spin_7, \FF^8)$, $(\GL_m, \Omega(\FF^m))$ $(m \ge 2)$, $(\SL_{2m+1}, \Omega(\FF^{2m+1}))$ ($m \ge 1$), $(\SO_3 \times \FF^\times, \Omega([\FF^3]_\chi))$.

In Cases \ref{caseno_1}--\ref{caseno_6} below we assume that the pair $(H_2, V_2)$ is BF-equivalent to $(\SO_{2l}, \FF^{2l})$ with $l \ge 1$ and consider the various possibilities for the pair $(H_1,V_1)$ up to BF-equivalence.
By Lemma~\ref{lemma_outer_aut}(\ref{lemma_outer_aut_a}), for the situation $H_0 = H_1 \times H_2$ in all these cases it suffices to check $H$-sphericity only for $\SOGr_{\max}^+(V)$.

\setcounter{casee}{0}

\textit{Case}~\casenoe: $(\SO_{2m},\FF^{2m})$, $m \ge 1$.
\label{caseno_1}

If $H_0 = H_1 \times H_2$ then the pair $(M, \mathfrak g/(\mathfrak p_I^- + \mathfrak h))$ is equivalent to
$
(\GL_{m} \times \GL_{l}, \FF^{m} \otimes \FF^{l}),
$
the latter module being spherical.

If the pair $(H,V)$ is BF-equivalent to $(\SO_{2l}, \FF^{2l} \oplus \FF^{2l})$, $l \ge 2$, then $\dim B_{H} = l^2 < 2l^2-l = \dim \SOGr_{\max}^\pm(V)$, hence both varieties $\SOGr_{\max}^\pm(V)$ are not $H$-spherical.

If the pair $(H,V)$ is BF-equivalent to $(\SL_2 \times \SL_2 \times \SL_2, \mathop{\FF^2} \limits_1{\!} \otimes \mathop{\FF^2} \limits_3{\!} \oplus \mathop{\FF^2} \limits_2{\!} \otimes \mathop{\FF^2} \limits_3{\!})$ and $H_0 \subset \Spin_7^+$ then:

\begin{itemize}
\item
for $\SOGr_{\max}^+(V)$ the pair $(M, \mathfrak g/(\mathfrak p_I^- + \mathfrak h))$ is equivalent to
\[
(\SL_2 \times \FF^\times \times \FF^\times, [\FF^2]_{\chi_1} \oplus [\FF^2]_{\chi_2}),
\]
the latter module being spherical;

\item
for $\SOGr_{\max}^-(V)$ the pair $(M, \mathfrak g/(\mathfrak p_I^- + \mathfrak h))$ is equivalent to
\[
(\SL_2 \times \FF^\times \times \FF^\times, [\mathrm S^2 \FF^2]_{\chi_1+\chi_2} \oplus \FF^1_{\chi_1+\chi_2}),
\]
the latter module being not spherical.
\end{itemize}

\textit{Case}~\casenoe: $(\Sp_4 \times \SL_2, \FF^4 \otimes \FF^2)$.
By Lemma~\ref{lemma_outer_aut}(\ref{lemma_outer_aut_b}), it suffices to assume that $H_1 \subset \SO(V_1)$ is the default embedding, see \S\,\ref{subsec_so_prelim_remarks}.

If $H_0 = H_1 \times H_2$ then the pair $(M, \mathfrak g/(\mathfrak p_I^- + \mathfrak h))$ is equivalent to
\[
(\GL_{q/2} \times \Sp_4 \times \FF^\times, [\mathop{\vphantom|\FF^{q/2}} \limits_1{\!} \otimes \mathop{\vphantom|\FF^4} \limits_2{\!}]_\chi \oplus [\mathop{\vphantom|\wedge^2_0 \FF^4} \limits_2{\!}]_{2\chi}),
\]
the latter module being not spherical.

It also follows that both varieties $\SOGr_{\max}^\pm(V)$ are not $H$-spherical if $H_0$ is a proper subgroup of $H_1 \times H_2$.

\textit{Case}~\casenoe: $(\Spin_7,\FF^8)$.
By Lemma~\ref{lemma_outer_aut}(\ref{lemma_outer_aut_b}), it suffices to do the computations only for $\Spin_7^+$.

The only possibility is $H_0 = H_1 \times H_2$, in which case the pair $(M, \mathfrak g/(\mathfrak p_I^- + \mathfrak h))$ is equivalent to
\[
(\SL_3 \times \GL_{l} \times \FF^\times, [\mathop{\vphantom|\FF^{3}} \limits_1{\!} \otimes \mathop{\vphantom|\FF^l} \limits_2{\!}]_\chi \oplus [\mathop{\vphantom|\FF^l} \limits_2{\!}]_{3\chi}),
\]
the latter module being spherical.

\textit{Case}~\casenoe: $(\GL_m,\Omega(\FF^m))$, $m \ge 2$.
\label{caseno_4}

If $H_0 = H_1 \times H_2$ then the pair $(M, \mathfrak g/(\mathfrak p_I^- + \mathfrak h))$ is equivalent to
\[
(\GL_m \times \GL_{l}, \mathop{\FF^m} \limits_1{\!} \otimes \mathop{\FF^{l}} \limits_2{\!} \oplus \mathop{\wedge^2 \FF^m} \limits_1{\!}),
\]
the latter module being spherical if and only if $m \le 3$ or $l = 1$.

If the pair $(H,V)$ is BF-equivalent to $(\SL_m \times \FF^\times, \mathop{\Omega([\FF^m]_{a\chi})} \limits_1{\!} \oplus \Omega(\FF^1_{b\chi}))$, $m \ge 2$, $a,b \in \ZZ \setminus \lbrace 0 \rbrace$, then:

\begin{itemize}
\item
for $\SOGr_{\max}^+(V)$ the pair $(M, \mathfrak g/(\mathfrak p_I^- + \mathfrak h))$ is equivalent to
\[
(\SL_m \times \FF^\times, [\mathop{\vphantom|\FF^m} \limits_1{\!}]_{(a+b)\chi} \oplus [\mathop{\vphantom|\wedge^2 \FF^m} \limits_1{\!}]_{2a\chi}),
\]
the latter module being spherical if and only if either $m$ is even or $m$ is odd and $b \ne - ma$;

\item
for $\SOGr_{\max}^-(V)$ the pair $(M, \mathfrak g/(\mathfrak p_I^- + \mathfrak h))$ is equivalent to
\[
(\SL_m \times \FF^\times, [\mathop{\vphantom|\FF^m} \limits_1{\!}]_{(a-b)\chi} \oplus [\mathop{\vphantom|\wedge^2 \FF^m} \limits_1{\!}]_{2a\chi}),
\]
the latter module being spherical if and only if either $m$ is even or $m$ is odd and $b \ne ma$.
\end{itemize}

If the pair $(H,V)$ is BF-equivalent to $(\SL_2 \times \SL_2 \times \FF^\times, \mathop{\Omega([\FF^2]_\chi)} \limits_1{\!} \oplus \mathop{\vphantom|\FF^2} \limits_1{\!} \otimes \mathop{\vphantom|\FF^2} \limits_2{\!})$ then $\dim B_{H} = 5 < 6 = \dim \SOGr_{\max}^\pm(V)$, hence both $\SOGr_{\max}^\pm(V)$ are not $H$-spherical in this case.

If the pair $(H,V)$ is BF-equivalent to $(\SL_4 \times \FF^\times, \mathop{\Omega([\FF^4]_\chi)} \limits_1{\!} \oplus \mathop{\vphantom|\wedge^2 \FF^4} \limits_1{\!})$ then
$\dim B_{H} = 10 < 21 = \dim \SOGr_{\max}^\pm(V)$, hence both $\SOGr_{\max}^\pm(V)$ are not $H$-spherical in this case.

\textit{Case}~\casenoe: $(\SL_{2m+1}, \Omega(\FF^{2m+1}))$, $m \ge 1$.

If $H_0 = H_1 \times H_2$ then the pair $(M, \mathfrak g/(\mathfrak p_I^- + \mathfrak h))$ is equivalent to
\[
(\SL_{2m+1} \times \GL_{l}, \mathop{\FF^{2m+1}} \limits_1{\!} \otimes \mathop{\FF^{l}} \limits_2{\!} \oplus \mathop{\wedge^2 \FF^{2m+1}} \limits_1{\!}),
\]
the latter module being spherical if and only if $l = 1$.
This shows in particular that both $\SOGr_{\max}^\pm(V)$ are not $H$-spherical when $H_0$ is a proper subgroup of $H_1 \times H_2$.

\textit{Case}~\casenoe: $(\SO_3 \times \FF^\times, \Omega([\FF^3]_\chi))$.
\label{caseno_6}

If $H_0 = H_1 \times H_2$ then the pair $(M, \mathfrak g/(\mathfrak p_I^- + \mathfrak h))$ is equivalent to
\[
(\GL_{l} \times \SO_3 \times \FF^\times, [\mathop{\vphantom|\FF^{l}} \limits_1{\!} \otimes \mathop{\vphantom|\FF^3} \limits_2{\!}]_\chi \oplus [\mathop{\vphantom|\FF^3} \limits_2{\!}]_{2\chi}),
\]
the latter module being not spherical.

It also follows that both varieties $\SOGr_{\max}^\pm(V)$ are not $H$-spherical if $H_0$ is a proper subgroup of $H_1 \times H_2$.

The results obtained in Cases~\ref{caseno_1}--\ref{caseno_6} show that it now suffices to consider only the cases where for $i=1,2$ the pair $(H_i,V_i)$ is BF-equivalent to either $(\Spin_7, \FF^8)$ or $(\GL_m, \Omega(\FF^m))$ ($m \ge 2$).
Next we consider the remaining cases for the pairs $(H_1,V_1)$ and $(H_2,V_2)$ up to BF-equivalence.

\textit{Case}~\casenoe: $(\Spin_7, \FF^8)$, $(\Spin_7, \FF^8)$.
\label{caseno_7}

If $H_0 = H_1 \times H_2$ then $\dim B_{H} = 24 < 28 = \dim \SOGr_{\max}^\pm(\FF^8 \oplus \FF^8)$, hence both varieties $\SOGr_{\max}^\pm(V)$ are not $H$-spherical.
Clearly, the latter also holds if $H_0$ is a proper subgroup of~$H_1 \times H_2$.

\textit{Case}~\casenoe: $(\Spin_7, \FF^8)$, $(\GL_m, \Omega(\FF^m))$, $m \ge 2$.

Up to automorphism, it suffices to do the computations only for $\Spin_7^+$.
If $\SOGr_{\max}^*(V)$ is $H$-spherical for some $* \in \lbrace +,- \rbrace$ then $\SOGr_{\max}^*(V)$ would be spherical for $\SO_8 \times \GL_m$, which implies $m \le 3$ by the results in Case~\ref{caseno_4}.

If $m = 3$ and $H_0 = H_1 \times H_2$ then $\dim B_{H} = 18 < 21 = \dim \SOGr_{\max}^\pm(V)$, hence both $\SOGr_{\max}^\pm(V)$ are not $H$-spherical.
Clearly, the latter also holds if $m = 3$ and $H_0$ is a proper subgroup of $H_1 \times H_2$.

If $m = 2$ and $H_0 = H_1 \times H_2$ then:

\begin{itemize}
\item
for $\SOGr_{\max}^+(V)$ the pair $(M, \mathfrak g/(\mathfrak p_I^- + \mathfrak h))$ is equivalent to
\[
(\SL_3 \times \SL_2 \times \FF^\times \times \FF^\times, [\mathop{\vphantom|\FF^3} \limits_1{\!} \otimes \mathop{\vphantom|\FF^2} \limits_2{\!}]_{\chi_1 + \chi_2} \oplus [\mathop{\vphantom|\FF^2} \limits_2{\!}]_{3\chi_1 + \chi_2} \oplus \FF^1_{2\chi_2}),
\]
the latter module being spherical;

\item
for $\SOGr_{\max}^-(V)$ the pair $(M, \mathfrak g/(\mathfrak p_I^- + \mathfrak h))$ is equivalent to
\[
(\Sp_4 \times \SL_2 \times \FF^\times \times \FF^\times, [\mathop{\vphantom|\FF^4} \limits_1{\!} \otimes \mathop{\vphantom|\FF^2} \limits_2{\!}]_{\chi_1 + \chi_2} \oplus \FF^1_{2\chi_1} \oplus \FF^1_{2\chi_2}),
\]
the latter module being not spherical.
\end{itemize}

\textit{Case}~\casenoe: $(\GL_m, \FF^m)$, $(\GL_l, \FF^l)$, $m,l \ge 2$.

If $H_0 = H_1 \times H_2$ then:

\begin{itemize}
\item
for $\SOGr_{\max}^+(V)$ the pair $(M, \mathfrak g/(\mathfrak p_I^- + \mathfrak h))$ is equivalent to
\[
(\GL_m \times \GL_l, \mathop{\FF^m} \limits_1{\!} \otimes \mathop{\FF^l} \limits_2{\!} \oplus \mathop{\wedge^2 \FF^m} \limits_1{\!} \oplus \mathop{\wedge^2 \FF^l} \limits_2{\!}),
\]
the latter module being not spherical;

\item
for $\SOGr_{\max}^-(V)$ the pair $(M, \mathfrak g/(\mathfrak p_I^- + \mathfrak h))$ is equivalent to
\[
(\GL_m \times \GL_{l-1}, \mathop{\FF^m} \limits_1{\!} \oplus \mathop{\FF^m} \limits_1{\!} \otimes \mathop{\FF^{l-1}} \limits_2{\!} \oplus \mathop{\wedge^2 \FF^m} \limits_1{\!} \oplus \mathop{\wedge^2 \FF^{l-1}} \limits_2{\!}),
\]
the latter module being not spherical.
\end{itemize}

It also follows that both varieties $\SOGr_{\max}^\pm(V)$ are not $H$-spherical if $H_0$ is a proper subgroup of $H_1 \times H_2$.
\end{proof}

\subsection{Spherical actions on \texorpdfstring{$\SOGr_{\max}^{(\pm)}(V)$}{SOGr\_max\textasciicircum(\textpm)} for \texorpdfstring{$r \ge 3$}{r>=3}}
\label{subsec_SOGr_max_r>=3}

\begin{proposition} \label{prop_rge3_odd}
Suppose that $r \ge 3$ and $d$ is odd.
Then $\SOGr_{\max}(V)$ is not $H$-spherical.
\end{proposition}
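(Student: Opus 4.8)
The plan is to reduce the claim to the already-established obstruction for $\SOGr_1(V)$, namely that $\SOGr_1(V)$ cannot be $H$-spherical once $V$ splits into at least three pairwise orthogonal pieces (Corollary~\ref{crl_no_3_summands}). The key structural input is that, for odd $d$, the nil-equivalence class of $\SOGr_1(V)$ is minimal in $\mathscr F(G)/\!\sim$, so $H$-sphericity of \emph{any} nontrivial flag variety of $G$ forces $H$-sphericity of $\SOGr_1(V)$. Combining this descent with the $r \ge 3$ obstruction yields the contradiction.

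Concretely, I would first invoke Proposition~\ref{prop_so_odd_Q1_is_minimal}, which gives the partial-order relation $\bl \SOGr_{\max}(V) \br \succcurlyeq \bl \SOGr_1(V) \br$. Assuming for contradiction that $\SOGr_{\max}(V)$ is $H$-spherical, I then pass this property down to $\SOGr_1(V)$: if the two nil-equivalence classes coincide this is Theorem~\ref{thm_nil-equiv}, and if the relation is strict it is Theorem~\ref{thm_sph_descent}. In either case $\SOGr_1(V)$ is $H$-spherical. But by hypothesis $r \ge 3$, so Corollary~\ref{crl_no_3_summands} asserts that $\SOGr_1(V)$ is \emph{not} $H$-spherical, a contradiction. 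Hence $\SOGr_{\max}(V)$ is not $H$-spherical.

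There is essentially no computational obstacle here: the statement is an immediate consequence of the minimality of $\bl \SOGr_1(V) \br$ recorded in Proposition~\ref{prop_so_odd_Q1_is_minimal}, together with the descent of sphericity along the order $\preccurlyeq$ and the obstruction of Corollary~\ref{crl_no_3_summands}. The only point requiring slight care is to treat uniformly the degenerate situation $d = 3$ (equivalently $n = 1$), in which $\SOGr_{\max}(V)$ and $\SOGr_1(V)$ literally coincide. Phrasing the argument through the relation $\succcurlyeq$ and splitting into the equal-class and strict-inequality cases absorbs this automatically, since the equal-class case is handled by Theorem~\ref{thm_nil-equiv} rather than by Theorem~\ref{thm_sph_descent}.
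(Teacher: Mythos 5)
Your proof is correct and follows exactly the paper's argument: descend $H$-sphericity from $\SOGr_{\max}(V)$ to $\SOGr_1(V)$ via Proposition~\ref{prop_so_odd_Q1_is_minimal} together with Theorem~\ref{thm_sph_descent} (and Theorem~\ref{thm_nil-equiv} when the classes coincide), then apply Corollary~\ref{crl_no_3_summands}. Your explicit handling of the equal-class case is a minor refinement the paper leaves implicit, but the route is the same.
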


\begin{proof}
As $d$ is odd, $\SOGr_{\max}(V)$ being $H$-spherical implies that $\SOGr_1(V)$ is $H$-spherical by Proposition~\ref{prop_so_odd_Q1_is_minimal} and Theorem~\ref{thm_sph_descent}.
Then Corollary~\ref{crl_no_3_summands} yields $r \le 2$.
\end{proof}

In what follows we assume that $d$ is even.

\begin{proposition} \label{prop_so_even_3summands}
Suppose that $V = \widetilde V_1 \oplus \widetilde V_2 \oplus \widetilde V_3$ for three pairwise orthogonal nonzero $H$-submodules $\widetilde V_1, \widetilde V_2, \widetilde V_3 \subset V$ of dimensions $n_1, n_2, n_3$, respectively.
If $H$ acts spherically on $\SOGr_{\max}^*(V)$ for some $* \in \lbrace +,- \rbrace$ then one of the following possibilities holds:
\begin{enumerate}[label=\textup{(\arabic*)},ref=\textup{\arabic*}]
\item
$\min(n_1,n_2,n_3) = 1$;

\item
at least two of $n_1,n_2,n_3$ equal~$2$.
\end{enumerate}
\end{proposition}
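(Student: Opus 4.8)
The plan is to argue by contraposition: assuming that neither of the two listed possibilities holds, I will show that $\SOGr_{\max}^*(V)$ is $H$-spherical for no choice of sign. Negating the conclusion means $n_i\ge 2$ for every $i$ while at most one $n_i$ equals $2$; after reordering the summands we may assume $n_1\ge n_2\ge n_3\ge 2$, so that necessarily $n_1\ge 3$ and $n_2\ge 3$. I keep the standing assumption that $d=n_1+n_2+n_3$ is even, and record the elementary fact that the parity of $n_1+n_2$ equals that of $n_3$.

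First I would cut down the number of summands by means of Proposition~\ref{prop_so_aux}, which imposes no irreducibility hypothesis on the blocks. Applied to the two-block splitting $V=\widetilde V_1\oplus(\widetilde V_2\oplus\widetilde V_3)$ (legitimate since $\dim\widetilde V_1=n_1\ge 3$) it shows that $\SOGr_{\max}^{(\pm)}(\widetilde V_1)$ is $H$-spherical, and symmetrically that $\SOGr_{\max}^{(\pm)}(\widetilde V_2)$ is $H$-spherical. Applied instead to $V=(\widetilde V_1\oplus\widetilde V_2)\oplus\widetilde V_3$ (legitimate since $n_1+n_2\ge 6$) it shows that $\SOGr_{\max}^{(\pm)}(\widetilde V_1\oplus\widetilde V_2)$ is $H$-spherical. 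The latter statement places me in a genuine two-summand situation on the space $\widetilde V_1\oplus\widetilde V_2$, to which the earlier classifications apply after refining $\widetilde V_1$ and $\widetilde V_2$ into simple or weakly reducible $H$-submodules and, where a block refines further, iterating the same reduction.

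Next I would pin down the admissible types. When $n_3$ is even, $n_1+n_2$ is even and both varieties $\SOGr_{\max}^{\pm}(\widetilde V_1\oplus\widetilde V_2)$ are $H$-spherical, so Proposition~\ref{prop_so_even_Qmax_V1+V2} forces $(H,\widetilde V_1\oplus\widetilde V_2)$ into one of the rows of Table~\ref{table_so_even_Qmax_V1+V2} carrying the sign $\pm$ and having both constituents of dimension $\ge 3$; when $n_3$ is odd, $n_1+n_2$ is odd and Proposition~\ref{prop_so_odd_Qmax_sr} plays the same role. In parallel, the sphericity of $\SOGr_{\max}^{(\pm)}(\widetilde V_1)$ and $\SOGr_{\max}^{(\pm)}(\widetilde V_2)$ together with Propositions~\ref{prop_so_Qmax_even_V_irr}, \ref{prop_so_Qmax_odd_V_irr} and~\ref{prop_so_Qmax_V_wr} restricts each $(H_i,\widetilde V_i)$, $i=1,2$, to a short explicit list (essentially $(\SO_{n_i},\FF^{n_i})$, $(\Spin_7,\FF^8)$, $(\mathsf G_2,\FF^7)$, or a weakly reducible $\GL$- or $\SL$-module). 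Only finitely many compatible pairs survive.

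Finally, for each surviving pair I would reinstate the third summand $\widetilde V_3$ (of dimension $\ge 2$) and show that $\SOGr_{\max}^*(V)$ is not $H$-spherical. When the three dimensions are comparable this follows from a crude count: since $H_0\subset\SO(\widetilde V_1)\times\SO(\widetilde V_2)\times\SO(\widetilde V_3)$ one has $\dim B_H\le\sum_i\dim B_{\SO_{n_i}}\le\frac14\sum_i n_i^2$, whereas $\dim\SOGr_{\max}^*(V)=\frac18\big((n_1+n_2+n_3)^2-2(n_1+n_2+n_3)\big)$, so the necessary inequality $\dim B_H\ge\dim\SOGr_{\max}^*(V)$ forces
\[
2(n_1n_2+n_1n_3+n_2n_3)\le n_1^2+n_2^2+n_3^2+2(n_1+n_2+n_3),
\]
which already fails for $n_1=n_2=3,\ n_3=2$ and, more generally, whenever no single $n_i$ dominates. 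When one summand does dominate, this estimate is too weak, and there I would instead compute the slice module $(M,\mathfrak g/(\mathfrak p_I^-+\mathfrak h))$ at the closed orbit via Proposition~\ref{prop_sph_criterion_eff} for the now fully determined pair $(H,V)$ and verify directly that it is non-spherical. I expect this last step — the explicit slice-module computations in the dominant cases, where the ``mixing'' directions among the three summands (invisible to the dimension bound) must be shown to destroy sphericity — together with the bookkeeping of the two parity branches, to be the main obstacle.
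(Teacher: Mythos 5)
Your proposal stops short of a proof at exactly the point where the proposition's content lies. The dimension count $\dim B_H\le\frac14\sum n_i^2$ versus $\dim\SOGr_{\max}^*(V)=\frac18\bigl((\sum n_i)^2-2\sum n_i\bigr)$ only eliminates ``balanced'' triples; as you note yourself, it says nothing when one summand dominates, e.g. for $(\SO_{n_1}\times\SO_3\times\SO_3,\ \FF^{n_1}\oplus\FF^3\oplus\FF^3)$ with $n_1$ large the inequality $\dim B_H\ge\dim\SOGr_{\max}^*(V)$ holds, and these infinitely many families are precisely the cases the proposition must rule out. For all of them you only promise to ``compute the slice module and verify directly that it is non-spherical'' --- but that verification \emph{is} the proof, so as written there is a genuine gap. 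There is also a foundational problem with the reduction you use to limit the case list: Propositions~\ref{prop_so_odd_Qmax_sr} and~\ref{prop_so_even_Qmax_V1+V2} are classifications under the hypothesis $r=2$, i.e.\ each block is simple or weakly reducible, whereas here the $\widetilde V_i$ are arbitrary orthogonal $H$-submodules; ``refining and iterating'' threatens to invoke the very $r\ge3$ statement being proved, and in any case is never pinned down.

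The missing idea is the reduction that makes the paper's proof a half-page computation. Since each $\widetilde V_i$ is $H$-stable, $H_0\subset K=\SO(\widetilde V_1)\times\SO(\widetilde V_2)\times\SO(\widetilde V_3)$, and $H$-sphericity of $\SOGr_{\max}^*(V)$ implies $K$-sphericity; so it suffices to disprove sphericity for the single group $K$ (and, by Lemma~\ref{lemma_outer_aut}(\ref{lemma_outer_aut_a}), only for $*=+$). For $K$ no classification of types is needed at all: Proposition~\ref{prop_sph_criterion_eff} gives the slice module $\mathfrak g/(\mathfrak p_I^-+\mathfrak h)$ uniformly in $n_1,n_2,n_3$ --- after arranging $n_1$ even, it is
$\bigoplus_{i<j}\FF^{[n_i/2]}\otimes\FF^{[n_j/2]}$ for the three factors $\GL_{[n_i/2]}$, plus a copy of $\FF^{n_1/2}$ when $n_2,n_3$ are odd --- and Lemma~\ref{lemma_VixVj_spherical} decides its sphericity in one stroke, yielding exactly the stated dichotomy. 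You in fact use the containment $H_0\subset K$ for your dimension bound but never exploit it to replace $H$ by $K$, which is what would have collapsed your entire case analysis into one computation.
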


\begin{proof}
It suffices to prove the assertion for the case $H_0 = \SO(\widetilde V_1) \times \SO(\widetilde V_2) \times \SO(\widetilde V_3)$.
Then by Lemma~\ref{lemma_outer_aut}(\ref{lemma_outer_aut_a}) it suffices to consider the case $* = +$.
Without loss of generality we may assume that $n_1$ is even.

\setcounter{casef}{0}

\textit{Case}~\casenof: $n_2,n_3$ are odd. We may assume $n_2 \ge n_3 \ge 1$. The pair $(M, \mathfrak g/(\mathfrak p_I^- + \mathfrak h))$ is equivalent to
\[
(\GL_{n_1/2} \times \GL_{[n_2/2]} \times \GL_{[n_3/2]}, \mathop{\FF^{n_1/2}} \limits_1{\!} \oplus \mathop{\FF^{n_1/2}} \limits_1{\!} \otimes \mathop{\FF^{[n_2/2]}} \limits_2{\!} \oplus \mathop{\FF^{n_1/2}} \limits_1{\!} \otimes \mathop{\FF^{[n_3/2]}} \limits_3{\!} \oplus \mathop{\FF^{[n_2/2]}} \limits_2{\!} \otimes \mathop{\FF^{[n_3/2]}} \limits_3{\!}).
\]
If $n_3 = 1$ then the above module is spherical.
If $n_3 \ge 3$ then the submodule consisting of all summands except the first one is not spherical by Lemma~\ref{lemma_VixVj_spherical}, hence the whole module is not spherical.

\textit{Case}~\casenof: $n_2,n_3$ are even.
The pair $(M, \mathfrak g/(\mathfrak p_I^- + \mathfrak h))$ is equivalent to
\[
(\GL_{n_1/2} \times \GL_{n_2/2} \times \GL_{n_3/2}, \mathop{\FF^{n_1/2}} \limits_1{\!} \otimes \mathop{\FF^{n_2/2}} \limits_2{\!} \oplus \mathop{\FF^{n_1/2}} \limits_1{\!} \otimes \mathop{\FF^{n_3/2}} \limits_3{\!} \oplus \mathop{\FF^{n_2/2}} \limits_2{\!} \otimes \mathop{\FF^{n_3/2}} \limits_3{\!}).
\]
By Lemma~\ref{lemma_VixVj_spherical}, the latter module is spherical if and only if at least two of $n_1,n_2,n_3$ equal~$2$.
\end{proof}

According to Proposition~\ref{prop_so_even_3summands}, the analysis of the case $r = 3$ is completed by Propositions~\ref{prop_so_even_r=3_1} and~\ref{prop_so_even_r=3_2} below.

\begin{proposition} \label{prop_so_even_r=3_1}
Suppose that $r = 3$, $d_1 \ge d_2 \ge d_3 = 1$, and $* \in \lbrace +, - \rbrace$.
Then $\SOGr_{\max}^*(V)$ is $H$-spherical if and only if the pair $(H,V)$ is BF-equivalent to one of $(\SO_{2l} \times \SO_{2m+1}, \FF^{2l} \oplus \FF^{2m+1} \oplus \FF^1)$ $(l \ge 1, m \ge 0)$, $(\Spin_7, \FF^8 \oplus \FF^1 \oplus \FF^1)$, $(\GL_n, \Omega(\FF^n) \oplus\nobreak \FF^1 \oplus\nobreak \FF^1)$ $(n \ge 2)$.

\end{proposition}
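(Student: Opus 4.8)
The plan is to strip off the one-dimensional summand $V_3$ and reduce the statement to the two-summand odd-dimensional case already settled in Proposition~\ref{prop_so_odd_Qmax_sr}.

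First I would record that $H$ acts trivially on $V_3$. Indeed, $V_3$ is a nondegenerate one-dimensional orthogonal $H$-submodule, so $H$ acts on it through a character taking values in $\{+1,-1\}$; since $H$ is connected, this character is trivial. Consequently $H_0 \subseteq \SO(V_1 \oplus V_2)$, and, writing $\widetilde V_1 = V_1 \oplus V_2$ and $\widetilde V_2 = V_3$, the hypotheses of Proposition~\ref{prop_so_2n_to_2n-1} are met (here $d$ is even and $\dim \widetilde V_2 = 1$). That proposition gives the equivalence of the $H$-sphericity of $\SOGr_{\max}^+(V)$, of $\SOGr_{\max}^-(V)$, and of $\SOGr_{\max}(\widetilde V_1)$. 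In particular the answer does not depend on $* \in \{+,-\}$, matching the $\pm$ implicit in the statement, and it suffices to decide when $\SOGr_{\max}(\widetilde V_1)$ is $H$-spherical.

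Next I would observe that $\widetilde V_1 = V_1 \oplus V_2$ is an orthogonal sum of exactly two $H$-submodules, each irreducible or weakly reducible by the standing convention of \S\,\ref{subsec_so_prelim_remarks}, and that $\dim \widetilde V_1 = d-1$ is odd. Hence Proposition~\ref{prop_so_odd_Qmax_sr} applies directly to the pair $(H,\widetilde V_1)$, telling me that $\SOGr_{\max}(\widetilde V_1)$ is $H$-spherical if and only if $(H,\widetilde V_1)$ is BF-equivalent to one of $(\SO_{2l}\times\SO_{2m+1},\FF^{2l}\oplus\FF^{2m+1})$, $(\Spin_7,\FF^8\oplus\FF^1)$, or $(\GL_n,\Omega(\FF^n)\oplus\FF^1)$. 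Transporting back to $V=\widetilde V_1\oplus V_3$ by appending the trivial line $V_3\simeq\FF^1$ to each of these three BF-equivalence classes then yields exactly the three pairs in the statement, settling both implications simultaneously.

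I expect no serious obstacle here: the proposition is a pure bookkeeping reduction, and the only points needing care are verifying that a connected group must act trivially on a nondegenerate one-dimensional orthogonal summand (so that Proposition~\ref{prop_so_2n_to_2n-1} is applicable) and checking that appending $\FF^1$ reproduces the summand pattern $d_1\ge d_2\ge d_3=1$ in each case — for instance the $\Spin_7$ case acquires a second trivial line, giving $\FF^8\oplus\FF^1\oplus\FF^1$, and the $\GL_n$ case gives $\Omega(\FF^n)\oplus\FF^1\oplus\FF^1$. Both checks are routine.
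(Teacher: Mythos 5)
Your proposal is correct and follows essentially the same route as the paper: reduce via Proposition~\ref{prop_so_2n_to_2n-1} to the $H$-sphericity of $\SOGr_{\max}(V_1 \oplus V_2)$ and then invoke Proposition~\ref{prop_so_odd_Qmax_sr}. The only addition is your explicit verification that $H$ acts trivially on the line $V_3$ (so $H_0 \subset \SO(V_1 \oplus V_2)$), a detail the paper leaves implicit.
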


\begin{proof}
By Proposition~\ref{prop_so_2n_to_2n-1}, $\SOGr_{\max}^*(V)$ is $H$-spherical if and only if $\SOGr_{\max}(V_1 \oplus V_2)$ is $H$-spherical.
Then the claim follows from Proposition~\ref{prop_so_odd_Qmax_sr}.
\end{proof}

\begin{proposition} \label{prop_so_even_r=3_2}
Suppose that $r = 3$, $d_2 = d_3 = 2$, and $* \in \lbrace +, - \rbrace$.
Then $\SOGr_{\max}^*(V)$ is $H$-spherical if and only if the pair $(H,V)$ is BF-equivalent to $(\SO_{2l} \times \FF^\times \times \FF^\times$, $\FF^{2l} \oplus \Omega(\FF^1_{\chi_1}) \oplus \Omega(\FF^1_{\chi_2}))$.
\end{proposition}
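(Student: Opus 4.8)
The plan is to prove both directions by combining the reduction of Proposition~\ref{prop_so_aux}(\ref{prop_so_aux_b}) with the already-established classification in the $r=2$ even case and a few direct computations of $\mathfrak g/(\mathfrak p_I^-+\mathfrak h)$ via Proposition~\ref{prop_sph_criterion_eff}. First I would fix the structure of the two small summands. Since $\dim V_2=\dim V_3=2$ and each carries a nondegenerate symmetric form, Table~\ref{table_equiv_modules} shows that $V_2$ and $V_3$ are weakly reducible and BF-equivalent to $\Omega(\FF^1_{\chi_2})$ and $\Omega(\FF^1_{\chi_3})$ for some characters, so $H_2,H_3\subseteq\FF^\times$. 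As $d$ is even and $d_2=d_3=2$, the dimension $d_1=d-4=2l$ is even.

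For the ``only if'' direction, I would apply Proposition~\ref{prop_so_aux}(\ref{prop_so_aux_b}) with the grouping $\widetilde V_1=V_1\oplus V_2$ and $\widetilde V_2=V_3$; here $\dim\widetilde V_1=2l+2\ge4$, so $H$-sphericity of $\SOGr_{\max}^*(V)$ forces $H$-sphericity of both $\SOGr_{\max}^\pm(V_1\oplus V_2)$. Since $V_1\oplus V_2$ is an $r=2$ configuration with a two-dimensional summand, Proposition~\ref{prop_so_even_Qmax_V1+V2} applies, and keeping only the entries of Table~\ref{table_so_even_Qmax_V1+V2} valid for both signs and compatible with $V_2$ being two-dimensional leaves a short list of possibilities for $(H_1,V_1)$, namely $(\SO_{2l},\FF^{2l})$, $(\Spin_7,\FF^8)$, $(\GL_m,\Omega(\FF^m))$, and $(\SL_{2m+1},\Omega(\FF^{2m+1}))$. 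The symmetric grouping $V_1\oplus V_3$ gives no new information.

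It then remains to decide each candidate by a genuine three-summand computation. Taking $I=\{n\}$ with $d=2n$, I would compute the $M$-module $\mathfrak g/(\mathfrak p_I^-+\mathfrak h)$ and test sphericity against \cite[\S\,5]{Kn} using \cite[Theorem~5.3]{AvP1}. I expect the families $(\Spin_7,\FF^8)$, $(\GL_m,\Omega(\FF^m))$ with $m\ge2$, and $(\SL_{2m+1},\Omega(\FF^{2m+1}))$ to produce non-spherical quotients and thus be excluded, while for $(\SO_{2l},\FF^{2l})$ the quotient is spherical. In the latter case the grouping still permits $H_0$ to be a proper subgroup of $H_1\times H_2\times H_3$ in which the two factors $\FF^\times$ are correlated (for instance a single diagonal $\FF^\times$, corresponding to $\chi_2$ and $\chi_3$ being proportional); the computation must therefore also pin down the torus, and I expect sphericity to hold precisely when the two characters are linearly independent, that is when $H_0=\SO_{2l}\times\FF^\times\times\FF^\times$. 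This yields exactly the BF-equivalence class in the statement. The borderline value $l=1$, where all three summands are two-dimensional and $\dim\widetilde V_1=4$ falls outside the range of Table~\ref{table_so_even_Qmax_V1+V2}, I would settle by the same direct computation, checking that the maximal torus $(\FF^\times)^3$ of $\Spin_6$ acts spherically on $\SOGr_{\max}^\pm(V)\cong\PP^3$. The ``if'' direction is then the same computation read in the spherical direction; I note in passing that the resulting group is a Levi subgroup of $G$, consistent with its exclusion from the final tables in \S\,\ref{subsec_orth_case_end}.

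The main obstacle is precisely the part that the pairwise groupings cannot see: the interaction of the two two-dimensional summands with each other inside the tangent space to $\SOGr_{\max}$. Both the elimination of the spurious families $(\Spin_7,\FF^8)$, $(\GL_m,\Omega(\FF^m))$, and $(\SL_{2m+1},\Omega(\FF^{2m+1}))$ and the determination that the two $\FF^\times$ factors must act through linearly independent characters depend on this genuinely three-summand information, and this is where the argument does its real work.
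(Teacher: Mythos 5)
Your argument is correct, but it takes a different route from the paper's at the key reduction step. The paper groups the two $2$-dimensional summands together: it passes to the overgroup $\widetilde H \simeq \SO(V_1)\times\GL_2$ stabilizing the two isotropic planes $V_2'\oplus V_3'$ and $V_2''\oplus V_3''$, so that $(\,\cdot\,,V)$ becomes the $r=2$ configuration $\FF^{d_1}\oplus\Omega(\FF^2)$; Table~\ref{table_so_even_Qmax_V1+V2} then immediately narrows $(H_1,V_1)$ to $(\SO_{2l},\FF^{2l})$ or $(\Spin_7,\FF^8)$, because no entry of that table pairs $\Omega(\FF^2)$ under $\GL_2$ with $\Omega(\FF^m)$ or $\Omega(\FF^{2m+1})$. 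You instead group $V_1\oplus V_2$ against $V_3$ via Proposition~\ref{prop_so_aux}(b), which leaves the larger candidate list $(\SO_{2l},\FF^{2l})$, $(\Spin_7,\FF^8)$, $(\GL_m,\Omega(\FF^m))$, $(\SL_{2m+1},\Omega(\FF^{2m+1}))$ and forces you to eliminate the last two families (and $\Spin_7$) by genuine three-summand computations, which you only announce rather than carry out. Those eliminations do go through, and in fact by pure dimension counts: for $\Omega(\FF^m)\oplus\Omega(\FF^1_{\chi_1})\oplus\Omega(\FF^1_{\chi_2})$ one has $\dim B_H \le \tfrac{m(m+1)}{2}+2 < \tfrac{(m+1)(m+2)}{2}=\dim\SOGr^*_{\max}(V)$ for $m\ge 2$, similarly for $\SL_{2m+1}$, and $14<15$ for $\Spin_7$; since these bounds are insensitive to how the torus factors are correlated, no case analysis on the torus is needed there. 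The remaining work — checking that $\SO_{2l}\times\FF^\times\times\FF^\times$ is spherical while a correlated torus $\SO_{2l}\times\FF^\times$ is not, and the $l=1$ torus case — coincides with the paper's Case~1. So the trade-off is: the paper's $\GL_2$-grouping buys a shorter candidate list at the cost of a slightly less obvious overgroup construction, while your grouping is the more mechanical application of Proposition~\ref{prop_so_aux} at the cost of three extra (but elementary) exclusions.
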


\begin{proof}
It follows from the hypothesis that for $i = 2,3$ the pair $(H_i,V_i)$ is BF-equivalent to $(\FF^\times, \Omega(\FF_\chi))$.
For $i=2,3$ let $V_i = V'_i \oplus V''_i$ be a decomposition of $V_i$ into a direct sum of two $H$-stable isotropic lines.
Let $\widetilde H$ be the subgroup of $\SO(V_1) \times \SO(V_2 \oplus V_3)$ stabilizing both subspaces $V'_2 \oplus V'_3$ and $V''_2 \oplus V''_3$.
Then the pair $(\widetilde H, V)$ is BF-equivalent to $(\SO_{d_1} \times \GL_2, \mathop{\FF^{d_1}} \limits_1{\!} \oplus \Omega(\mathop{\FF^2} \limits_2{\!}))$.

If $\SOGr_{\max}^*(V)$ is $H$-spherical then it is $\widetilde H$-spherical, hence by Proposition~\ref{prop_so_even_Qmax_V1+V2} the pair $(H_1,V_1)$ should be BF-equivalent to one of $(\SO_{2l}, \FF^{2l})$ ($l \ge 1$) or $(\Spin_7, \FF^8)$.
In what follows we treat these two cases separately.

\setcounter{caseg}{0}

\textit{Case}~\casenog: $(\SO_{2l}, \FF^{2l})$, $l \ge 1$.

If $H_0 = H_1 \times H_2 \times H_3$ then the pair $(M, \mathfrak g/(\mathfrak p_I^- + \mathfrak h))$ is equivalent to
\[
(\GL_{l} \times \FF^\times \times \FF^\times, [\mathop{\vphantom|\FF^{l}} \limits_1{\!}]_{\chi_1} \oplus [\mathop{\vphantom|\FF^{l}} \limits_1{\!}]_{\chi_2} \oplus \FF^1_{\chi_1 + \chi_2}),
\]
the latter module being spherical.

If $l = 1$ and $H_0$ is a proper subgroup of $H_1 \times H_2 \times H_3$ then $\dim B_H \le 2 < 3 = \dim \SOGr_{\max}^*(V)$, hence $\SOGr_{\max}^*(V)$ is not $H$-spherical.

If $l \ge 2$ and the pair $(H,V)$ is equivalent to $(\SO_{2l} \times \FF^\times, \FF^{2l} \oplus \Omega(\FF^1_{a\chi}) \oplus \Omega(\FF^1_{b\chi})$ for some $a,b \in \ZZ \setminus \lbrace 0 \rbrace$ then the pair  $(M, \mathfrak g/(\mathfrak p_I^- + \mathfrak h))$ is BF-equivalent to
\[
(\GL_{l} \times \FF^\times \times \FF^\times, [\mathop{\vphantom|\FF^{l}} \limits_1{\!}]_{a\chi} \oplus [\mathop{\vphantom|\FF^{l}} \limits_1{\!}]_{b\chi} \oplus \FF^1_{(a+b)\chi}),
\]
the latter module being not spherical.

\textit{Case}~\casenog: $(\Spin_7, \FF^8)$.
If $H_0 = H_1 \times H_2 \times H_3$ then $\dim B_{H} = 14 < 15 = \dim \SOGr_{\max}^*(V)$, hence $\SOGr_{\max}^*(V)$ is not $H$-spherical.
It also follows that $\SOGr_{\max}^*(V)$ is not $H$-spherical if $H$ is a proper subgroup of $H_1 \times H_2 \times H_3$.
\end{proof}

\begin{proposition}
Suppose that $V = \widetilde V_1 \oplus \widetilde V_2 \oplus \widetilde V_3 \oplus \widetilde V_4$ for four pairwise orthogonal nonzero $H$-submodules $\widetilde V_1, \widetilde V_2, \widetilde V_3, \widetilde V_4 \subset V$.
Then both varieties $\SOGr_{\max}^\pm(V)$ are not $H$-spherical.
\end{proposition}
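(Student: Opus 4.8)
The plan is to reduce, as throughout this section, to the largest group preserving the given orthogonal decomposition and then to whittle the admissible dimension vector down to a single torus case, which is dispatched by a dimension count. Write $n_i = \dim \widetilde V_i$ and order them so that $n_1 \ge n_2 \ge n_3 \ge n_4 \ge 1$. Since $H_0$ preserves each summand, $H_0 \subset K := \SO(\widetilde V_1) \times \SO(\widetilde V_2) \times \SO(\widetilde V_3) \times \SO(\widetilde V_4)$, and as the action on $\SOGr_{\max}^\pm(V)$ factors through $H_0$, it suffices to prove that neither variety is $K$-spherical (were it $H$-spherical it would be $K$-spherical, the Borel of $K$ containing that of $H_0$). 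I would therefore assume, for contradiction, that $\SOGr_{\max}^*(V)$ is $K$-spherical for some $* \in \{+,-\}$; note also that $d = \sum n_i$ is even, since for odd $d$ there is no $\pm$ and Proposition~\ref{prop_rge3_odd} applies at once.

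First I would force all $n_i$ to be even. Fix an index $i$ and apply Proposition~\ref{prop_so_aux} to the orthogonal splitting $V = \left(\bigoplus_{j \ne i} \widetilde V_j\right) \oplus \widetilde V_i$, whose first summand has dimension $d - n_i \ge 3$. This yields that the image of $K$ in $\SO\left(\bigoplus_{j \ne i}\widetilde V_j\right)$, namely the full product $\prod_{j \ne i} \SO(\widetilde V_j)$, acts spherically on $\SOGr_{\max}^{(\pm)}\left(\bigoplus_{j \ne i}\widetilde V_j\right)$. This is a three-summand configuration of total dimension $d - n_i$. If $n_i$ were odd, then $d - n_i$ would be odd and Proposition~\ref{prop_rge3_odd} would forbid such a spherical action, a contradiction. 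Hence every $n_i$ is even, so $n_i \ge 2$ for all~$i$.

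Next I would pin down the dimension vector. Deleting $\widetilde V_4$ in the same way (now $d - n_4$ is even, so Proposition~\ref{prop_so_aux}(b) applies) shows that $\SO(\widetilde V_1) \times \SO(\widetilde V_2) \times \SO(\widetilde V_3)$ acts spherically on $\SOGr_{\max}^\pm(\widetilde V_1 \oplus \widetilde V_2 \oplus \widetilde V_3)$; since all three dimensions are even and $\ge 2$, the dichotomy of Proposition~\ref{prop_so_even_3summands} forces two of $n_1,n_2,n_3$ to equal $2$, whence $n_2 = n_3 = n_4 = 2$. Thus $(n_1,n_2,n_3,n_4) = (n_1,2,2,2)$ with $n_1$ even. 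If $n_1 \ge 4$, I would instead merge $\widetilde V_2$ and $\widetilde V_3$: since $\SO(\widetilde V_2) \times \SO(\widetilde V_3) \subset \SO(\widetilde V_2 \oplus \widetilde V_3)$, the group $K$ sits inside the three-summand full product $\SO(\widetilde V_1) \times \SO(\widetilde V_2 \oplus \widetilde V_3) \times \SO(\widetilde V_4)$, of dimensions $\{n_1,4,2\}$; as its minimum is $2$ and only one entry equals $2$, Proposition~\ref{prop_so_even_3summands} rules out sphericity, again a contradiction.

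Therefore the only surviving case is $(n_1,\ldots,n_4) = (2,2,2,2)$, that is, $d = 8$ and $K = \SO_2 \times \SO_2 \times \SO_2 \times \SO_2$, which is a maximal torus of $\SO_8$ and hence equals its own Borel subgroup, of dimension $4$. Since $\dim \SOGr_{\max}^\pm(\FF^8) = 6 > 4$, the group $K$ cannot have a dense orbit on either component, so neither is $K$-spherical, completing the contradiction. This last dimension count is the crux: the main obstacle is that every coarser grouping of the four summands into two or three \emph{full} orthogonal factors is genuinely spherical (a two-summand full product is always spherical, and the relevant three-summand full products are either symmetric or of type $\SO_{2l} \times \SO_2 \times \SO_2$), so the obstruction materializes only at the full four-summand level; the role of the alternating deletion/merging steps is precisely to drive the configuration to the extreme point $(2,2,2,2)$, where the acting group collapses to a torus too small to act spherically. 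Assembling the four steps gives the claim.
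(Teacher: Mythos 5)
Your proof is correct and follows essentially the same route as the paper's: reduce to the full product $\prod_i \SO(\widetilde V_i)$, force all $n_i$ even via Proposition~\ref{prop_so_aux} combined with Proposition~\ref{prop_rge3_odd}, drive the dimension vector to $(2,2,2,2)$ via Proposition~\ref{prop_so_even_3summands} applied to coarser three-block groupings, and finish with the dimension count $\dim B_K = 4 < 6$. The only cosmetic difference is that you use one deletion step (Proposition~\ref{prop_so_aux}(\ref{prop_so_aux_b})) and one merging step where the paper merges twice; both variants are valid.
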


\begin{proof}
It suffices to prove the assertion for $H = \SO(\widetilde V_1) \times \SO(\widetilde V_2) \times \SO(\widetilde V_3) \times \SO(\widetilde V_4)$, which is assumed in what follows.
Then by Lemma~\ref{lemma_outer_aut}(\ref{lemma_outer_aut_a}) it suffices to consider $\SOGr_{\max}^+(V)$ only.
Assume that $\SOGr_{\max}^+(V)$ is $H$-spherical and put $n_i = \dim \widetilde V_i$ for $i = 1,2,3,4$.

If one of the numbers $n_i$ is odd, say $n_4$, then $H$ acts spherically on $\SOGr_{\max}(\widetilde V_1 \oplus \widetilde V_2 \oplus \widetilde V_3)$ by Proposition~\ref{prop_so_aux}(\ref{prop_so_aux_a}), which is impossible by Proposition~\ref{prop_rge3_odd}.
Thus $n_i$ are even for all $i=1,2,3,4$.

Clearly, the group
$
\SO(\widetilde V_1) \times \SO(\widetilde V_2) \times \SO(\widetilde V_3 \oplus \widetilde V_4)
$
also acts spherically on $\SOGr_{\max}^+(V)$.
As $n_3 + n_4 \ge 4$, Proposition~\ref{prop_so_even_3summands} yields $n_1 = n_2 = 2$.
Likewise, $n_3=n_4 = 2$.
But then $\dim B_H = 4 < 6 = \dim \SOGr_{\max}^+(V)$, hence $\SOGr_{\max}^+(V)$ is not $H$-spherical.
\end{proof}

\begin{corollary}
If $r \ge 4$ then both varieties $\SOGr_{\max}^\pm(V)$ are not $H$-spherical.
\end{corollary}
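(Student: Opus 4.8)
The plan is to deduce this corollary immediately from the preceding proposition, which treats the case of a decomposition of $V$ into exactly four pairwise orthogonal nonzero $H$-submodules. The only thing to arrange is a regrouping of the given summands so that the hypotheses of that proposition are met.

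First I would, assuming $r \ge 4$, set $\widetilde V_i = V_i$ for $i = 1,2,3$ and $\widetilde V_4 = V_4 \oplus \ldots \oplus V_r$. Since each $V_i$ is nonzero and $r \ge 4$, all four subspaces $\widetilde V_1, \widetilde V_2, \widetilde V_3, \widetilde V_4$ are nonzero $H$-submodules of $V$, and their direct sum is all of $V$. Because the original summands $V_1, \ldots, V_r$ are pairwise orthogonal with respect to~$\omega$, so are the four regrouped pieces: orthogonality is clearly preserved when one collects several mutually orthogonal summands into a single direct summand. Thus $V = \widetilde V_1 \oplus \widetilde V_2 \oplus \widetilde V_3 \oplus \widetilde V_4$ satisfies exactly the hypotheses of the preceding proposition.

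Then I would invoke that proposition to conclude that neither $\SOGr_{\max}^+(V)$ nor $\SOGr_{\max}^-(V)$ is $H$-spherical, which is precisely the assertion. There is no real obstacle here: the entire content of the argument is carried by the previous proposition, and the reduction requires only the elementary observation that grouping mutually orthogonal submodules yields a subspace still orthogonal to the remaining summands. In particular, I note that the fourth piece $\widetilde V_4$ need not be simple or weakly reducible, but this is irrelevant, since the preceding proposition imposes no such condition on its four summands.
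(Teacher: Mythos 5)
Your proof is correct and is exactly the intended argument: the paper states this as an immediate corollary (with no written proof), and the only content is the regrouping of the $r\ge 4$ pairwise orthogonal summands into four nonzero pairwise orthogonal $H$-submodules, after which the preceding proposition applies verbatim. Your remark that the fourth piece need not be simple or weakly reducible is also accurate, since the proposition's proof only uses that each $\widetilde V_i$ is a nonzero $H$-submodule.
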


\subsection{Spherical actions on \texorpdfstring{$\SOGr_2(V)$}{SOGr\_2(V)}}
\label{subsec_SOGr_2}

As $\SOGr_2(V) = \SOGr_{\max}^+(V) \cup \SOGr_{\max}^-(V)$ for $d = 4$ and $\SOGr_2(V) = \SOGr_{\max}(V)$ for $d = 5$, for classifying $H$-spherical actions on $\SOGr_2(V)$ it suffices to assume $d \ge 6$. (Although the result of Proposition~\ref{prop_so_Q2_V_irr} will be needed later for $d \ge 4$).
Recall from \S\,\ref{subsec_FV_via_compositions} that $\SOGr_2(V) \simeq X_I$ with $I = \lbrace 2 \rbrace$ for $d \ge 5$.

\begin{proposition} \label{prop_so_Q2_V_irr}
Suppose that $V$ is a simple $H$-module and $d \ge 4$.
Then
\begin{enumerate}[label=\textup{(\alph*)},ref=\textup{\alph*}]
\item \label{prop_so_Q2_V_irr_a}
if $d \ge 5$ then $\SOGr_2(V)$ is $H$-spherical if and only if the pair $(H,V)$ is BF-equivalent to one of $(\SO_d, \FF^d)$, $(\mathsf G_2, \FF^7)$, $(\Spin_7, \FF^8)$;

\item \label{prop_so_Q2_V_irr_b}
if $d = 4$ and $* \in \lbrace +, - \rbrace$ then $\SOGr_2^*(V)$ is $H$-spherical if and only if the pair $(H,V)$ is BF-equivalent to $(\SO_4, \FF^4)$.
\end{enumerate}
\end{proposition}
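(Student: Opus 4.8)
The plan is to combine the descent along the nil-order, which cuts the problem down to a short list of candidate pairs, with the effective sphericity criterion of Proposition~\ref{prop_sph_criterion_eff}, which decides each candidate individually. First, the reduction. Since $\SOGr_1(V)$ is a minimal element of $\mathscr F(G)/\!\sim$ by Propositions~\ref{prop_so_odd_Q1_is_minimal} and~\ref{prop_so_even_minimal}, one has $\bl \SOGr_1(V) \br \preccurlyeq \bl \SOGr_2(V) \br$; hence by Theorems~\ref{thm_nil-equiv} and~\ref{thm_sph_descent} the $H$-sphericity of $\SOGr_2(V)$ forces that of $\SOGr_1(V)$. Applying Proposition~\ref{prop_so_Q1_V_irr} with $V$ simple, the pair $(H,V)$ must then be BF-equivalent to one of $(\SO_d,\FF^d)$, $(\Sp_{2n}\times\SL_2,\FF^{2n}\otimes\FF^2)$ (with $d=4n$), $(\mathsf G_2,\FF^7)$, $(\Spin_7,\FF^8)$, or $(\Spin_9,\FF^{16})$. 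This list already contains the three sought-after pairs, so it remains to decide which of the five actually yield a spherical $\SOGr_2(V)$.

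For part~(a) ($d\ge5$), I would argue as follows. For $(\SO_d,\FF^d)$ the subgroup $H$ is the whole group and every flag variety is $H$-spherical by the remark at the end of \S\,\ref{subsec_so_prelim_remarks}. For $(\Spin_9,\FF^{16})$ I would exclude sphericity by the necessary bound $\dim B_H\ge\dim X_I$: here $\dim B_{\Spin_9}=20<25=\dim\SOGr_2(\FF^{16})$. The two candidates $(\mathsf G_2,\FF^7)$ and $(\Spin_7,\FF^8)$ I would confirm to be spherical by computing, via Proposition~\ref{prop_sph_criterion_eff} with $I=\{2\}$ and $B_H^-=B_G^-\cap H$, the Levi $M$ of $P_I^-\cap H$ together with the $M$-module $\mathfrak g/(\mathfrak p_I^-+\mathfrak h)$, and checking against the list of \cite[\S\,5]{Kn} (through \cite[Theorem~5.3]{AvP1}) that this module is spherical. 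It is worth noting that the usual shortcut ``$V$ is a spherical $(H\times\FF^\times)$-module, hence $\PP(V)$ is $H$-spherical, hence so is every $H$-stable subvariety by Theorem~\ref{thm_subvar_sph}'' is \emph{not} available here, since $\SOGr_2(V)$ is not contained in $\PP(V)$; moreover descent from the maximal isotropic Grassmannian fails too, because for $d=7,8$ the class of $\SOGr_{\max}$ lies strictly below $\bl\SOGr_2(V)\br$, the wrong direction for Theorem~\ref{thm_sph_descent}. For $d=5$ one may alternatively use $\SOGr_2(V)=\SOGr_{\max}(V)$ and invoke Proposition~\ref{prop_so_Qmax_odd_V_irr}, which leaves only $(\SO_5,\FF^5)$, consistent with the statement.

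The case $(\Sp_{2n}\times\SL_2,\FF^{2n}\otimes\FF^2)$ is the one I expect to be the main obstacle. Here $\dim\SOGr_2(\FF^{4n})=8n-7$ while $\dim B_H=n^2+n+2$, so the dimension bound excludes sphericity exactly in the range $2\le n\le5$; for $n\ge6$ the bound is satisfied and an honest computation is required. For those $n$ I would once more apply Proposition~\ref{prop_sph_criterion_eff}, determine $\mathfrak g/(\mathfrak p_{\{2\}}^-+\mathfrak h)$ as an $M$-module, and prove it non-spherical by isolating a saturated indecomposable summand that does not appear in the list of \cite[\S\,5]{Kn}. Carrying out this root-space bookkeeping uniformly in $n$, and identifying the offending summand, is the delicate part of the argument.

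Finally, for part~(b) ($d=4$) the variety $\SOGr_2^{*}(V)$ is a single ruling of the quadric, isomorphic to $\PP^1$, on which every nontrivial connected reductive action is spherical. Requiring $V=\FF^4$ to be a \emph{simple} $H$-module for a connected reductive subgroup $H\subset\Spin_4\simeq\SL_2\times\SL_2$ forces $H$ to surject onto both $\SL_2$-factors, so that $(H,V)$ is BF-equivalent to $(\SO_4,\FF^4)$; this pair is manifestly spherical on each ruling, completing the classification.
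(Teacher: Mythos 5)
Your overall route is the same as the paper's: descend from $\SOGr_2(V)$ to $\SOGr_1(V)$ via Propositions~\ref{prop_so_odd_Q1_is_minimal}, \ref{prop_so_even_minimal} and Theorem~\ref{thm_sph_descent}, reduce to the five candidates of Proposition~\ref{prop_so_Q1_V_irr}, and settle each one by the dimension bound or by Proposition~\ref{prop_sph_criterion_eff}. Your handling of $(\SO_d,\FF^d)$, $(\Spin_9,\FF^{16})$ and $(\mathsf G_2,\FF^7)$ matches the paper exactly (for $\mathsf G_2$ the paper finds $(M,\mathfrak g/(\mathfrak p_I^-+\mathfrak h))\simeq(\GL_2,\FF^2)$); for $(\Spin_7,\FF^8)$ the paper simply invokes the fact recorded in \S\,\ref{subsec_so_prelim_remarks} that every flag variety of $\Spin_8$ is spherical for this subgroup, whereas you propose to recompute it --- harmless. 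Your part~(\ref{prop_so_Q2_V_irr_b}) is a correct, more self-contained alternative to the paper's one-line reduction to Proposition~\ref{prop_so_Qmax_even_V_irr}: simplicity of $\FF^4$ over a connected reductive $H\subset\SL_2\times\SL_2$ does force $H_0=\SO_4$, and each ruling is a $\PP^1$ on which the action is nontrivial.

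The one genuine gap is the family $(\Sp_{2n}\times\SL_2,\FF^{2n}\otimes\FF^2)$. Your dimension count correctly eliminates $2\le n\le 5$, but for $n\ge 6$ you only describe the computation you would do and explicitly defer it; that deferred step is exactly what decides the case, so as written the family is not excluded. The computation is in fact no harder for general $n$ than for a single value and makes the dimension-count range superfluous: with the conventions of \S\,\ref{subsec_so_prelim_remarks} one finds, for all $n\ge 2$, that $(M,\mathfrak g/(\mathfrak p_{\lbrace 2\rbrace}^-+\mathfrak h))$ is equivalent to
\[
(\Sp_{2n-4}\times\SL_2\times\FF^\times\times\FF^\times,\
[\FF^{2n-4}\otimes\FF^2]_{\chi_1+2\chi_2}\oplus[\mathrm{S}^2\FF^2]_{2\chi_2}\oplus\FF^1_{2\chi_1}\oplus\FF^1_{2\chi_1+2\chi_2}).
\]
This module is not spherical: the submodule $[\mathrm{S}^2\FF^2]_{2\chi_2}\oplus\FF^1_{2\chi_1}\oplus\FF^1_{2\chi_1+2\chi_2}$, on which the factor $\Sp_{2n-4}$ acts trivially, has dimension $5$, exceeding the dimension $4$ of a Borel subgroup of $\SL_2\times\FF^\times\times\FF^\times$, and sphericity is inherited by submodules. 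A single uniform computation therefore closes the case; without it your argument leaves $n\ge 6$ open.
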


\begin{proof}
(\ref{prop_so_Q2_V_irr_a})
If $H$ acts spherically on $\SOGr_2(V)$ then it acts spherically on $\SOGr_1(V)$ by Propositions~\ref{prop_so_odd_Q1_is_minimal} and~\ref{prop_so_even_minimal} and Theorem~\ref{thm_sph_descent}, hence Proposition~\ref{prop_so_Q1_V_irr} leaves us with the following cases (up to BF-equivalence).

\setcounter{caseh}{0}

\textit{Case}~\casenoh: $H = \SO_n$, $V = \FF^n$, $n \ge 6$. In this case $\SOGr_2(V)$ is spherical.

\textit{Case}~\casenoh: $H = \Sp_{2n} \times \SL_2$, $V = \FF^{2n} \otimes \FF^2$, $n \ge 2$.
The pair $(M, \mathfrak g/(\mathfrak p_I^- + \mathfrak h))$ is equivalent to
\[
(\Sp_{2n-4} \times \SL_2 \times \FF^\times \times \FF^\times, [\mathop{\vphantom|\FF^{2n-4}} \limits_1{\!} \otimes \mathop{\vphantom|\FF^2} \limits_2{\!}]_{\chi_1 + 2\chi_2} \oplus [\mathop{\vphantom|\mathrm{S}^2 \FF^2} \limits_2{\!}]_{2\chi_2} \oplus \FF^1_{2\chi_1} \oplus \FF^1_{2\chi_1+2\chi_2}),
\]
the latter module being not spherical.

\textit{Case}~\casenoh: $H= \mathsf G_2$, $V = \FF^7$.
The pair $(M, \mathfrak g/(\mathfrak p_I^- + \mathfrak h))$ is equivalent to $(\GL_2, \FF^2)$, the latter module being spherical.

\textit{Case}~\casenoh: $H = \Spin_7$, $V = \FF^8$.
In this case $\SOGr_2(V)$ is spherical.

\textit{Case}~\casenoh: $H = \Spin_9$, $V = \FF^{16}$.
We have $\dim B_{H} = 20 < 25 = \dim \SOGr_2(V)$, hence $\SOGr_2(V)$ is not $H$-spherical.

(\ref{prop_so_Q2_V_irr_b})
This follows directly from Proposition~\ref{prop_so_Qmax_even_V_irr}.
\end{proof}

\begin{proposition} \label{prop_so_Q2_V_wr}
Suppose that $V$ is BF-equivalent to $\Omega(W)$ for a simple $H$-module~$W$ with $\dim W \ge 3$.
Then $\SOGr_2(V)$ is $H$-spherical if and only if the pair $(H,W)$ is equivalent to one of $(\GL_n, \FF^n)$ $(n \ge 3)$ or $(\SL_n, \FF^n)$ $(n \ge 3, n \ne 4)$.
\end{proposition}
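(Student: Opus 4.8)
The plan is to follow the same scheme as in the weakly reducible cases treated above. First I would reduce the list of possible pairs $(H,W)$ to a short one. Since $\dim W \ge 3$, the space $V = \Omega(W)$ has even dimension $d = 2\dim W \ge 6$, and $\SOGr_2(V)$ coincides neither with $\SOGr_1(V)$ nor with $\SOGr_{\max}^{\pm}(V)$; hence Proposition~\ref{prop_so_even_minimal} gives $\bl \SOGr_2(V) \br \succ \bl \SOGr_1(V) \br$, and Theorem~\ref{thm_sph_descent} shows that $H$-sphericity of $\SOGr_2(V)$ forces $H$-sphericity of $\SOGr_1(V)$. By Proposition~\ref{prop_so_Q1_V_wr} (using $\dim W \ge 3$) this leaves exactly three candidates for $(H,W)$ up to equivalence: $(\GL_n, \FF^n)$ with $n \ge 3$, $(\SL_n, \FF^n)$ with $n \ge 3$, and $(\Sp_{2m} \times \FF^\times, \FF^{2m})$ with $m \ge 2$. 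It then remains to decide sphericity of $\SOGr_2(V)$ in each of these three cases.

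For each candidate I would apply Proposition~\ref{prop_sph_criterion_eff}. Writing $\mathfrak g = \mathfrak{so}(V) = \mathfrak{gl}(W) \oplus \wedge^2 W \oplus \wedge^2 W^*$ and choosing the base point of $X_{\lbrace 2 \rbrace}$ to be a $2$-dimensional isotropic subspace $U$ lying in $W$ (legitimate up to the duality $W \leftrightarrow W^*$, under which $\Omega(W)$ is preserved), the closed $H$-orbit through $U$ is the Grassmannian of $2$-planes in $W$, with tangent space $U^* \otimes (W/U)$. Computing $T_U \SOGr_2(V) = \wedge^2 U^* \oplus \Hom(U, U^\perp/U)$ together with $U^\perp/U \cong (W/U) \oplus (W/U)^*$, one obtains
\[
\mathfrak g / (\mathfrak p_{\lbrace 2 \rbrace}^- + \mathfrak h) \cong \wedge^2 U^* \oplus \bigl(U^* \otimes (W/U)^*\bigr)
\]
as a module over the Levi subgroup $M$ of $P_{\lbrace 2 \rbrace}^- \cap H$. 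For $(\GL_n, \FF^n)$ one has $M = \GL_2 \times \GL_{n-2}$, and the module above is equivalent to $(\GL_2 \times \GL_{n-2}, \FF^2 \otimes \FF^{n-2} \oplus \wedge^2 \FF^2)$; this module is saturated and occurs in the list of \cite[\S\,5]{Kn}, so it is spherical for every $n \ge 3$, giving the first family in the statement.

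The two remaining cases are where the work concentrates, and the bookkeeping of central characters is the main point. For $(\SL_n, \FF^n)$ the Levi is $M = (\GL_2 \times \GL_{n-2}) \cap \SL_n$, whose centre is one-dimensional, so the module has the same two summands but is no longer saturated. Parametrising the central torus $(aI_2, bI_{n-2})$ by $a^2 b^{n-2} = 1$, I would compute that $\FF^2 \otimes \FF^{n-2}$ carries a central weight proportional to $n-4$, while $\wedge^2 \FF^2$ is trivial for the $\SL_2$-factor and carries a nonzero central weight. For $n = 4$ this central weight on the tensor summand vanishes, so the submodule $\FF^2 \otimes \FF^2$ is acted on only through $\SL_2 \times \SL_2$; this is the tautological $\SO_4$-module, which is not spherical, and since a submodule of a spherical module is spherical, the whole module fails to be spherical. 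For $n \ne 4$ the nonzero central twist makes the tensor summand spherical and, adjoining the character $\wedge^2 \FF^2$ with an independent weight, the criterion of \cite[Theorem~5.3]{AvP1} confirms sphericity; this yields the family $(\SL_n, \FF^n)$ with $n \ge 3$, $n \ne 4$. For $(\Sp_{2m} \times \FF^\times, \FF^{2m})$ the Levi is $M = \GL_2 \times \Sp_{2m-4} \times \FF^\times$ and the analogous module turns out not to be spherical; for small $m$ (roughly $m \le 5$) this already follows from the failure of the necessary condition $\dim B_H \ge \dim \SOGr_2(V)$, but for larger $m$ the inequality no longer decides the question, so the explicit module must be computed and compared with the classification. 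The main obstacle is thus precisely this central-character analysis in the non-saturated $\SL_n$ case that singles out $n = 4$, together with the explicit module computation needed to exclude the symplectic case uniformly in $m$.
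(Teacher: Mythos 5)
Your strategy coincides with the paper's: reduce to the three candidate pairs via Proposition~\ref{prop_so_even_minimal}, Theorem~\ref{thm_sph_descent} and Proposition~\ref{prop_so_Q1_V_wr}, then test each with Proposition~\ref{prop_sph_criterion_eff}. Your modules for $(\GL_n,\FF^n)$ and $(\SL_n,\FF^n)$, and the identification of $n=4$ as the exceptional case through the vanishing of the central character $(n-4)\chi$ on the tensor summand, agree with the paper's computations.

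The one genuine flaw sits exactly in the case you defer. Your displayed identification
\[
\mathfrak g / (\mathfrak p_{\lbrace 2 \rbrace}^- + \mathfrak h) \cong \wedge^2 U^* \oplus \bigl(U^* \otimes (W/U)^*\bigr)
\]
rests on the claim that the closed $H$-orbit through $U$ is all of $\Gr_2(W)$ with tangent space $U^*\otimes(W/U)$. That is true for $H=\GL_n$ and $H=\SL_n$, but false for $H=\Sp_{2m}\times\FF^\times$: there $\Sp_{2m}$ has two orbits on $\Gr_2(W)$ (according to the rank of the restricted symplectic form), a generic $2$-plane $U\subset W$ lies in the \emph{open} orbit, and the closed orbit is $\SpGr_2(W)\subsetneq\Gr_2(W)$, whose tangent space is a proper submodule of $U^*\otimes(W/U)$. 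Consequently the quotient module is strictly larger than your formula predicts: the paper obtains
\[
(\SL_2 \times \Sp_{2m-4} \times \FF^\times \times \FF^\times,\ [\FF^2 \otimes \FF^{2m-4}]_{\chi_1+2\chi_2} \oplus [\mathrm S^2 \FF^2]_{2\chi_2} \oplus \FF^1_{2\chi_1} \oplus \FF^1_{2\chi_2} \oplus \FF^1_{2\chi_1+2\chi_2}),
\]
of dimension $4m-2$, whereas your formula yields dimension $4m-3$. Non-sphericity is then immediate (for instance, the submodule $[\mathrm S^2\FF^2]_{2\chi_2}\oplus\FF^1_{2\chi_2}$ is acted on through $\SL_2\times\FF^\times$ only, has dimension $4>3=\dim B_{\SL_2\times\FF^\times}$, and a submodule of a spherical module is spherical), and your dimension-count remark ruling out $2\le m\le 5$ does check out; but as written the ``analogous module'' your formula would produce in this case is not the correct one, and this is precisely the computation you leave to the reader. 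With the closed orbit corrected to $\SpGr_2(W)$ the argument goes through exactly as in the paper.
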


\begin{proof}
If $\SOGr_2(V)$ is $H$-spherical then $\SOGr_1(V)$ is $H$-spherical by Proposition~\ref{prop_so_even_minimal} and Theorem~\ref{thm_sph_descent}.
Then Proposition~\ref{prop_so_Q1_V_wr} leaves us with the following cases (up to equivalence).

\setcounter{casei}{0}

\textit{Case}~\casenoi: $H = \GL_n$, $W = \FF^n$, $n \ge 3$.
The pair $(M, \mathfrak g/(\mathfrak p_I^- + \mathfrak h))$ is equivalent to
\[
(\GL_2 \times \GL_{n-2}, \mathop{\FF^2} \limits_1{\!} \otimes \mathop{\FF^{n-2}} \limits_2{\!} \oplus \mathop{\wedge^2 \FF^2} \limits_1{\!}),
\]
the latter module being spherical.

\textit{Case}~\casenoi: $H = \SL_n$, $W = \FF^n$, $n \ge 3$.
The pair $(M, \mathfrak g/(\mathfrak p_I^- + \mathfrak h))$ is equivalent to
\[
(\SL_2 \times \SL_{n-2} \times \FF^\times, [\mathop{\vphantom|\FF^2} \limits_1{\!} \otimes \mathop{\vphantom|\FF^{n-2}} \limits_2{\!}]_{(n-4)\chi} \oplus \FF^1_{(2n-4)\chi}),
\]
the latter module being spherical if and only if $n \ne 4$.

\textit{Case}~\casenoi: $H = \Sp_{2n} \times \FF^\times$, $W = [\FF^{2n}]_\chi$, $n \ge 2$.
The pair $(M, \mathfrak g/(\mathfrak p_I^- + \mathfrak h))$ is equivalent to
\[
(\SL_2 \times \Sp_{2n-4} \times \FF^\times \times \FF^\times, [\mathop{\vphantom|\FF^2} \limits_1{\!} \otimes \mathop{\vphantom|\FF^{2n-4}} \limits_2{\!}]_{\chi_1+2\chi_2} \oplus [\mathop{\vphantom|\mathrm S^2 \FF^2} \limits_1{\!}]_{2\chi_2} \oplus \FF^1_{2\chi_1} \oplus \FF^1_{2\chi_2} \oplus \FF^1_{2\chi_1+2\chi_2}),
\]
the latter module being not spherical.
\end{proof}

To proceed with the case $r \ge 2$, we shall need the following lemma.

\begin{lemma} \label{lemma_so_Gr2}
Suppose that $V = \widetilde V_1 \oplus \widetilde V_2$ for two pairwise orthogonal nonzero $H$-submodules $\widetilde V_1, \widetilde V_2 \subset V$, $\dim \widetilde V_1 \ge 4$, and $\SOGr_2(V)$ is $H$-spherical.
Then
\begin{enumerate}[label=\textup{(\alph*)},ref=\textup{\alph*}]
\item
if $\dim \widetilde V_1 \ge 5$ then $\SOGr_2(\widetilde V_1)$ is $H$-spherical;

\item
if $\dim \widetilde V_1 = 4$ then both varieties $\SOGr_2^\pm(\widetilde V_1)$ are $H$-spherical.
\end{enumerate}
\end{lemma}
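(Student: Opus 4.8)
The plan is to exhibit $\SOGr_2(\widetilde V_1)$ as an $H$-stable closed subvariety of~$\SOGr_2(V)$ and then to apply Theorem~\ref{thm_subvar_sph}; this is lighter than the closed-orbit argument used in Proposition~\ref{prop_so_aux}, and it is available here precisely because the hypothesis $\dim \widetilde V_1 \ge 4$ allows $2$-dimensional isotropic subspaces to lie entirely inside~$\widetilde V_1$. First I would put
\[
Y = \lbrace U \in \SOGr_2(V) \mid U \subseteq \widetilde V_1 \rbrace.
\]
Since the summands $\widetilde V_1, \widetilde V_2$ are orthogonal, the form $\omega$ is nondegenerate on~$\widetilde V_1$, so a $2$-dimensional subspace $U \subseteq \widetilde V_1$ is isotropic in~$V$ if and only if it is isotropic in~$\widetilde V_1$. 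Hence $Y$ is naturally identified with $\SOGr_2(\widetilde V_1)$, and this identification is an isomorphism of $H$-varieties, the action on~$\widetilde V_1$ factoring through the image $H_1 \subseteq \SO(\widetilde V_1)$. The defining condition $U \subseteq \widetilde V_1$ is closed and $H$ preserves~$\widetilde V_1$, so $Y$ is a closed $H$-stable subvariety of~$\SOGr_2(V)$.

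By hypothesis $\SOGr_2(V)$ is $H$-spherical, so Theorem~\ref{thm_subvar_sph} tells us that every $H$-stable irreducible subvariety of~$\SOGr_2(V)$ is $H$-spherical, and it then remains to analyse the irreducible components of~$Y$. For part~(a), when $\dim \widetilde V_1 \ge 5$ the isotropic Grassmannian $\SOGr_2(\widetilde V_1)$ is irreducible (for $\dim \widetilde V_1 = 5$ it coincides with the single-family variety $\SOGr_{\max}(\widetilde V_1)$, and for $\dim \widetilde V_1 \ge 6$ one has $2 < \dim \widetilde V_1 / 2$), so $Y \simeq \SOGr_2(\widetilde V_1)$ is itself irreducible and the theorem gives the claim at once. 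For part~(b), when $\dim \widetilde V_1 = 4$ one has $\SOGr_2(\widetilde V_1) = \SOGr_{\max}(\widetilde V_1) = \SOGr_2^+(\widetilde V_1) \sqcup \SOGr_2^-(\widetilde V_1)$, the two irreducible components corresponding to the two families of maximal isotropic planes. Since $H_1$ is connected and contained in~$\SO(\widetilde V_1)$, it preserves each family, so each of $\SOGr_2^\pm(\widetilde V_1)$, viewed inside $\SOGr_2(V)$ through~$Y$, is an $H$-stable irreducible subvariety and hence $H$-spherical.

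The argument is essentially formal once the embedding $Y \hookrightarrow \SOGr_2(V)$ is in place, so I do not expect any serious difficulty. The only genuine point to watch — the closest thing to an obstacle — is the component bookkeeping for $\SOGr_2(\widetilde V_1)$ in the borderline dimensions, namely distinguishing the single-family situation ($\dim \widetilde V_1$ odd or $\ge 6$) from the two-family situation in dimension~$4$, together with the remark that connectedness of~$H_1$ keeps each family $H$-stable so that the two components may be handled separately.
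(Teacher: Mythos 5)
Your proof is correct and follows essentially the same route as the paper: both exhibit $\SOGr_2(\widetilde V_1)$ (resp. each of $\SOGr_2^\pm(\widetilde V_1)$ when $\dim \widetilde V_1 = 4$) as a closed irreducible $H$-stable subvariety of $\SOGr_2(V)$ and conclude by Theorem~\ref{thm_subvar_sph}. The only cosmetic difference is that the paper realizes this subvariety as a closed $(\SO(\widetilde V_1)\times\SO(\widetilde V_2))$-orbit of a chosen isotropic plane $U\subset\widetilde V_1$, whereas you cut it out by the incidence condition $U\subseteq\widetilde V_1$ and verify irreducibility and $H$-stability of the components directly.
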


\begin{proof}
Put $Y = \SOGr_2(\widetilde V_1)$ if $\dim \widetilde V_1 \ge 5$ and choose $Y \in \lbrace \SOGr_2^+(\widetilde V_1), \SOGr_2^-(\widetilde V_1) \rbrace$ if $\dim \widetilde V_1 = 4$.
Choose a two-dimensional isotropic subspace $U \subset \widetilde V_1$ such that $U \in \nobreak Y$.
Clearly, the $(\SO(\widetilde V_1)\times \SO(\widetilde V_2))$-orbit of $U$ in $\SOGr_2(V)$ is closed and isomorphic to~$Y$.
Then $Y$ is $H$-spherical by Theorem~\ref{thm_subvar_sph}.
\end{proof}

\begin{proposition} \label{prop_so_Gr2_restr}
Suppose that $d \ge 5$, $V = \widetilde V_1 \oplus \widetilde V_2$ for two pairwise orthogonal nonzero $H$-submodules $\widetilde V_1, \widetilde V_2 \subset V$ of dimensions $n_1,n_2$, respectively, and $\SOGr_2(V)$ is $H$-spherical.
Then $\min(n_1,n_2) \le 2$.
\end{proposition}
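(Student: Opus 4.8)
The plan is to argue by contradiction: assume $\min(n_1,n_2)\ge 3$ together with the hypothesis that $\SOGr_2(V)$ is $H$-spherical, and derive a contradiction. First I would reduce to the largest possible acting group. Since $H$-sphericity coincides with $H_0$-sphericity (the kernel of $H\to H_0$ acts trivially on the $\SO(V)$-variety $\SOGr_2(V)$), and since $H_0\subseteq H_1\times H_2\subseteq\SO(\widetilde V_1)\times\SO(\widetilde V_2)$ with sphericity being inherited by overgroups (a dense Borel orbit for a subgroup remains a dense Borel orbit for any group containing it), it suffices to show that $\SOGr_2(V)$ is \emph{not} spherical for $H=\SO(\widetilde V_1)\times\SO(\widetilde V_2)\simeq\SO_{n_1}\times\SO_{n_2}$ with $n_1,n_2\ge3$.

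Next I would produce a convenient closed orbit and apply Proposition~\ref{prop_sphericity_criterion} rather than the effective criterion, because the standard base point of $X_{\lbrace 2\rbrace}$ lies inside a single summand and is not adapted to the splitting $V=\widetilde V_1\oplus\widetilde V_2$. Choose isotropic vectors $u_1\in\widetilde V_1$, $u_2\in\widetilde V_2$ and set $U=\langle u_1,u_2\rangle\in\SOGr_2(V)$. Since $\widetilde V_1\perp\widetilde V_2$ and $U\cap\widetilde V_i=\langle u_i\rangle$, the stabilizer $H_U$ equals $P_1\times P_2$, where $P_i\subset\SO(\widetilde V_i)$ is the maximal parabolic fixing the isotropic line $\langle u_i\rangle$; hence $H\cdot U\cong\SOGr_1(\widetilde V_1)\times\SOGr_1(\widetilde V_2)$ is closed (for $n_i\ge3$ each factor is a single $\SO_{n_i}$-orbit, being an irreducible quadric), and a Levi subgroup of $H_U$ is $M=\FF^\times\times\SO_{n_1-2}\times\FF^\times\times\SO_{n_2-2}$.

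The computational heart is then to identify the $M$-module $T_U\SOGr_2(V)/T_U(H\cdot U)$. Using $T_U\SOGr_2(V)\cong U^*\otimes(U^\perp/U)\oplus\wedge^2U^*$ together with $U^\perp/U\cong W_1\oplus W_2$, where $W_i$ is the tautological $\SO_{n_i-2}$-module, and subtracting the tangent space of the closed orbit, $T_U(H\cdot U)\cong(\langle u_1\rangle^*\otimes W_1)\oplus(\langle u_2\rangle^*\otimes W_2)$, I expect the quotient to be
\[
(M,\ [\FF^{n_1-2}]_{-\chi_2}\oplus[\FF^{n_2-2}]_{-\chi_1}\oplus\FF^1_{-\chi_1-\chi_2}),
\]
where $\chi_1,\chi_2$ are the characters of the two $\FF^\times$-factors (coming from $\langle u_1\rangle^*$ and $\langle u_2\rangle^*$). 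Keeping track of these $\FF^\times$-weights correctly is the main obstacle; the rest of the identification is formal.

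Finally I would show that this $M$-module is not spherical, which by Proposition~\ref{prop_sphericity_criterion} yields the desired contradiction. The cleanest argument is that $M$ has no dense orbit: let $q_1$ and $q_2$ denote the $\SO_{n_1-2}$- and $\SO_{n_2-2}$-invariant quadratic forms on the first two summands (both nonzero, as $n_i-2\ge1$) and let $s$ be the coordinate on $\FF^1_{-\chi_1-\chi_2}$. Then $q_1$ has $\FF^\times\times\FF^\times$-weight $-2\chi_2$, $q_2$ has weight $-2\chi_1$, and $s^2$ has weight $-2\chi_1-2\chi_2$, so $q_1q_2s^{-2}$ is a nonconstant $M$-invariant rational function. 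A nonconstant invariant precludes a dense $M$-orbit, hence a fortiori a dense Borel orbit, so the module is not spherical. (Equivalently one may check that the generic $M$-orbit has codimension one, or that the displayed module does not occur in Knop's list.) This completes the proof.
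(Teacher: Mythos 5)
Your proof is correct and follows essentially the same route as the paper: reduce to $H=\SO(\widetilde V_1)\times\SO(\widetilde V_2)$ with $n_1,n_2\ge 3$, pass to the closed orbit through a $2$-plane meeting each summand in an isotropic line, and show the resulting slice module $[\FF^{n_1-2}]\oplus[\FF^{n_2-2}]\oplus\FF^1$ (with the same torus weights up to an automorphism of the character lattice) is not spherical. The only differences are cosmetic — you invoke Proposition~\ref{prop_sphericity_criterion} at an explicitly chosen closed orbit where the paper applies Proposition~\ref{prop_sph_criterion_eff} after an adapted choice of realization, and you additionally spell out the non-sphericity via the invariant $q_1q_2s^{-2}$, which the paper merely asserts.
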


\begin{proof}
It suffices to prove that $\SOGr_2(V)$ is not $H$-spherical when $H_0 = \SO(\widetilde V_1) \times \SO(\widetilde V_2)$ and $n_1 \ge n_2 \ge 3$, which is assumed in what follows.

Choose realizations $\widetilde V_1 = \langle e_1, e_d \rangle \oplus W_1$, $\widetilde V_2  = \langle e_2, e_{d-1} \rangle \oplus W_2$ where $W_1,W_2$ are nondegenerate subspaces such that $W_1 \oplus W_2 = \langle e_3, e_4, \ldots, e_{d-2} \rangle$.
With these realizations, the pair $(M, \mathfrak g/(\mathfrak p_I^- + \mathfrak h))$ is equivalent to
\[
(\SO_{n_1-2} \times \SO_{n_2-2} \times \FF^\times \times \FF^\times, [\mathop{\vphantom|\FF^{n_1-2}} \limits_1{\!}]_{\chi_1} \oplus [\mathop{\vphantom|\FF^{n_2-2}} \limits_2{\!}]_{\chi_2} \oplus \FF^1_{\chi_1 + \chi_2}),
\]
the latter module being not spherical.
\end{proof}

\begin{proposition} \label{prop_so_Q2_V_sr}
Suppose that $r \ge 2$ and $d \ge 6$.
Then $\SOGr_2(V)$ is $H$-spherical if and only if $r = 2$ and the pair $(H,V)$ is BF-equivalent to one of $(\SO_n, \FF^n \oplus \FF^1)$ $(n \ge 5)$, $(\SO_n \times \FF^\times, \FF^n \oplus \Omega(\FF^1_\chi))$ $(n \ge 4)$, $(\Spin_7, \FF^8 \oplus \FF^1)$ or $(\Spin_7 \times \FF^\times, \FF^8 \oplus \Omega(\FF^1_\chi))$.
\end{proposition}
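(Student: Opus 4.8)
The plan is to run the usual descent-and-restriction machinery, using the $\SOGr_1$-classification (Proposition~\ref{prop_so_Q1_V_sr}) as the primary filter, then cutting down with the dimension bound of Proposition~\ref{prop_so_Gr2_restr} and the restriction Lemma~\ref{lemma_so_Gr2}, and finally verifying each surviving candidate on the full space $V$ via the effective sphericity criterion (Proposition~\ref{prop_sph_criterion_eff}).

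First I would descend: since $\SOGr_2(V)$ is $H$-spherical, Propositions~\ref{prop_so_odd_Q1_is_minimal} and~\ref{prop_so_even_minimal} (minimality of $\SOGr_1(V)$) together with Theorem~\ref{thm_sph_descent} show that $\SOGr_1(V)$ is $H$-spherical, and Corollary~\ref{crl_no_3_summands} then forces $r \le 2$, hence $r = 2$. Proposition~\ref{prop_so_Q1_V_sr} now gives the dichotomy: either $(H,V)$ is BF-equivalent to a pair in Table~\ref{table_so_Q1}, or $H_0 = H_1 \times H_2$ with each $(H_i,V_i)$ BF-equivalent to a pair in Table~\ref{table_orth_sph_mod}. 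Every entry of Table~\ref{table_so_Q1} has both orthogonal summands of dimension at least $4$, contradicting the bound $\min(\dim V_1,\dim V_2) \le 2$ of Proposition~\ref{prop_so_Gr2_restr}; so the first alternative is discarded, and I may assume $H_0 = H_1 \times H_2$ with both factors in Table~\ref{table_orth_sph_mod}. This is the technical convenience supplied by the $\SOGr_1$-filter: it rules out all ``gluing'' of the two factors, so no diagonal subgroups need be tracked later.

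Next I apply Proposition~\ref{prop_so_Gr2_restr} and assume $\dim V_2 \le 2$. Scanning the dimensions in Table~\ref{table_orth_sph_mod}, the only admissible summands of dimension $\le 2$ are the $n=1$ entry $(\SO_1,\FF^1)$ and the weakly reducible $n=1$ entry $(\GL_1,\Omega(\FF^1))$; thus $V_2$ is either $\FF^1$ or $\Omega(\FF^1_\chi)$, where the trivial-character possibility $\chi = 0$ is excluded because then $(H_2,V_2)$ is not BF-equivalent to any entry of Table~\ref{table_orth_sph_mod}, contrary to the previous step. For the large summand $V_1$ (of dimension $n_1 = d - \dim V_2 \ge 4$) I invoke Lemma~\ref{lemma_so_Gr2}: the variety $\SOGr_2(V_1)$, or both $\SOGr_2^\pm(V_1)$ when $n_1 = 4$, must be $H$-spherical. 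By Propositions~\ref{prop_so_Q2_V_irr} and~\ref{prop_so_Q2_V_wr} this leaves $(H_1,V_1)$ BF-equivalent to one of $(\SO_{n_1},\FF^{n_1})$, $(\mathsf G_2,\FF^7)$, $(\Spin_7,\FF^8)$, $(\GL_m,\Omega(\FF^m))$ $(m \ge 2)$, or $(\SL_m,\Omega(\FF^m))$ $(m \ge 3,\ m \ne 4)$, combined with the two options for $V_2$.

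Finally, for each resulting pair $(H,V)$ I would verify $H$-sphericity of $\SOGr_2(V) \simeq X_{\{2\}}$ directly through Proposition~\ref{prop_sph_criterion_eff}, computing the $M$-module $\mathfrak g/(\mathfrak p_I^- + \mathfrak h)$ and deciding its sphericity from the classification of spherical modules. The orthogonal irreducible cases $(\SO_n,\FF^n)$ and $(\Spin_7,\FF^8)$ yield spherical modules and produce precisely the four families in the statement, whereas $(\mathsf G_2,\FF^7)$ and the weakly reducible cases $(\GL_m,\Omega(\FF^m))$, $(\SL_m,\Omega(\FF^m))$ yield non-spherical modules and are eliminated. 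The main labor, and the only genuine obstacle, is this last batch of tangent-space computations: one must confirm that adjoining the small summand $V_2$ to the otherwise spherical $V_1 = \FF^7$ (for $\mathsf G_2$) or $V_1 = \Omega(\FF^m)$ destroys sphericity, even though each $V_1$ alone makes $\SOGr_2(V_1)$ spherical. By contrast, all the earlier reductions are essentially bookkeeping once Propositions~\ref{prop_so_Q1_V_sr}, \ref{prop_so_Gr2_restr} and Lemma~\ref{lemma_so_Gr2} are in hand.
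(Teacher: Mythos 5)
Your proposal is correct and follows essentially the same route as the paper: descent to $\SOGr_1(V)$ to force $r=2$, Proposition~\ref{prop_so_Gr2_restr} to force $\dim V_2 \le 2$, restriction to a closed orbit (Lemma~\ref{lemma_so_Gr2}) to narrow $V_1$, and final verification of each candidate via Proposition~\ref{prop_sph_criterion_eff}; your predicted outcomes of the tangent-space computations all match the paper's. The only differences are organizational: the paper eliminates weakly reducible $V_1$ up front by computing the two maximal cases $(\GL_n,\Omega(\FF^n)\oplus\FF^1)$ and $(\GL_n\times\FF^\times,\Omega(\FF^n)\oplus\Omega(\FF^1_\chi))$ rather than carrying those candidates to the end, and it does not need your appeal to Proposition~\ref{prop_so_Q1_V_sr} since the surviving groups $H_1$ are semisimple and $H_2$ is at most $\FF^\times$, so $H_0=H_1\times H_2$ is automatic.
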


\begin{proof}
If $H$ acts spherically on $\SOGr_2(V)$ then it acts spherically on $\SOGr_1(V)$ by Propositions~\ref{prop_so_odd_Q1_is_minimal} and~\ref{prop_so_even_minimal} and Theorem~\ref{thm_sph_descent}.
Then Corollary~\ref{crl_no_3_summands} yields $r = 2$.
By Proposition~\ref{prop_so_Gr2_restr}, in what follows we may assume $d_2 \le 2$.

We now show that the $H$-module $V_1$ cannot be weakly reducible.
To this end, it suffices to prove that $\SOGr_2(V)$ is not $H$-spherical if the pair $(H,V)$ is BF-equivalent to $(\GL_n, \Omega(\FF^n) \oplus \FF^1)$ ($n \ge 3$) or $(\GL_n \times \FF^\times, \Omega(\FF^n) \oplus \Omega(\FF^1_\chi))$ ($n \ge 2$).

Indeed, in the first case the pair $(M, \mathfrak g/(\mathfrak p_I^- + \mathfrak h))$ is equivalent to
\[
(\GL_2 \times \GL_{n-2}, \mathop{\FF^2} \limits_1{\!} \oplus \mathop{\FF^2} \limits_1{\!} \otimes \mathop{\FF^{n-2}} \limits_2{\!} \oplus \mathop{\wedge^2 \FF^2} \limits_1{\!}),
\]
the latter module being not spherical, and in the second case the pair $(M, \mathfrak g/(\mathfrak p_I^- + \mathfrak h))$ is equivalent to
\[
(\GL_2 \times \GL_{n-2} \times \FF^\times, \mathop{\vphantom|\FF^2} \limits_1{\!} \otimes \mathop{\vphantom|\FF^{n-2}} \limits_2{\!} \oplus [\mathop{\vphantom|\FF^2} \limits_1{\!}]_\chi \oplus [\mathop{\vphantom|\FF^2} \limits_1{\!}]_{-\chi} \oplus \mathop{\vphantom|\wedge^2 \FF^2} \limits_1{\!}),
\]
the latter module being not spherical.

Thus it remains to analyze the situation where $V_1$ is a simple $H$-module.
In this situation, Lemma~\ref{lemma_so_Gr2} and Proposition~\ref{prop_so_Q2_V_irr} imply that the pair $(H_1,V_1)$ can be BF-equivalent to one of $(\SO_{d_1}, \FF^{d_1})$, $(\Spin_7,
\FF^8)$, or $(\mathsf G_2, \FF^7)$.
Below we consider all the corresponding cases for the pair $(H,V)$ up to BF-equivalence.

\setcounter{casej}{0}

\textit{Case}~\casenoj: $(\SO_{d_1}, \FF^{d_1} \oplus \FF^1)$.
The pair $(M, \mathfrak g/(\mathfrak p_I^- + \mathfrak h))$ is equivalent to $(\GL_2, \FF^2)$, the latter module being spherical.

\textit{Case}~\casenoj: $(\SO_{d_1} \times \FF^\times, \FF^{d_1} \oplus \Omega(\FF^1_\chi))$.
Choose the realizations $V_2 = \langle e_1, e_d \rangle$ and $V_1 = \langle e_2,\ldots,e_{d-1} \rangle$.
Then the pair $(M, \mathfrak g/(\mathfrak p_I^- + \mathfrak h))$ is equivalent to
\[
(\SO_{d-4}, [\mathop{\vphantom|\FF^{d-4}} \limits_1{\!}]_{\chi_1} \oplus \FF^1_{\chi_1 + \chi_2}),
\]
the latter module being spherical.

\textit{Case}~\casenoj: $(\Spin_7, \FF^8 \oplus \FF^1)$.
By Lemma~\ref{lemma_outer_aut}(\ref{lemma_outer_aut_b}), it suffices to do the calculations for~$\Spin_7^+$.
Then the pair $(M, \mathfrak g/(\mathfrak p_I^- + \mathfrak h))$ is equivalent to
\[
(\SL_2 \times \SL_2 \times \FF^\times, [\mathop{\vphantom|\FF^2} \limits_1{\!}]_\chi \oplus [\mathop{\vphantom|\FF^2}\limits_2{\!}]_\chi),
\]
the latter module being spherical.

\textit{Case}~\casenoj: $(\Spin_7 \times \FF^\times, \FF^8 \oplus \Omega(\FF^1_\chi))$.
By Lemma~\ref{lemma_outer_aut}(\ref{lemma_outer_aut_b}), it suffices to do the calculations for~$\Spin_7^+$.
Then the pair $(M, \mathfrak g/(\mathfrak p_I^- + \mathfrak h))$ is equivalent to
\[
(\SL_2 \times \SL_2 \times \FF^\times \times \FF^\times, [\mathop{\vphantom|\FF^2} \limits_1{\!}]_{\chi_1 + \chi_2} \oplus [\mathop{\vphantom|\FF^2} \limits_1{\!}]_{\chi_1 - \chi_2} \oplus [\mathop{\vphantom|\FF^2} \limits_2{\!}]_{\chi_1}),
\]
the latter module being spherical.

\textit{Case}~\casenoj: $(\mathsf G_2, \FF^7 \oplus \FF^1)$.
The pair $(M, \mathfrak g/(\mathfrak p_I^- + \mathfrak h))$ is equivalent to
\[
(\SL_2 \times \FF^\times, [\mathop{\vphantom|\FF^2} \limits_1{\!}]_\chi \oplus [\mathop{\vphantom|\FF^2} \limits_1{\!}]_\chi),
\]
the latter module being not spherical.

\textit{Case}~\casenoj: $(\mathsf G_2 \times \FF^\times, \FF^7 \oplus \Omega(\FF^1_{\chi}))$.
The pair $(M, \mathfrak g/(\mathfrak p_I^- + \mathfrak h))$ is equivalent to
\[
(\SL_2 \times \FF^\times \times \FF^\times, [\mathop{\vphantom|\FF^2} \limits_1{\!}]_{\chi_1 + \chi_2} \oplus [\mathop{\vphantom|\FF^2} \limits_1{\!}]_{\chi_1 - \chi_2} \oplus \FF^1_{\chi_1} \oplus \FF^1_{\chi_1 + 2\chi_2}),
\]
the latter module being not spherical.
\end{proof}

\subsection{Completion of the classification}
\label{subsec_orth_case_end}

Suppose that $d \ge 5$ and $X$ is a nontrivial flag variety of~$G$ different from $\SOGr_1(V)$, $\SOGr_2(V)$, $\SOGr_{\max}(V)$ (for $d$ odd), and $\SOGr_{\max}^\pm(V)$ (for $d$ even).
By Propositions~\ref{prop_so_odd_SOGr_2},~\ref{prop_so_even_SOGr_2} and Theorem~\ref{thm_sph_descent}, $X$ being $H$-spherical implies that $\SOGr_2(V)$ is $H$-spherical, hence the pair $(H,V)$ is BF-equivalent to one of those listed in Propositions~\ref{prop_so_Q2_V_irr},~\ref{prop_so_Q2_V_wr}, and~\ref{prop_so_Q2_V_sr}.
Excluding the cases where $H$ is either a symmetric subgroup of $G$ or intermediate between a Levi subgroup of $G$ and its derived subgroup (see~\S\,\ref{subsec_Levi&symmetric}), we arrive at the pairs $(\mathsf G_2, \FF^7)$, $(\Spin_7, \FF^8 \oplus \FF^1)$, $(\Spin_7 \times \FF^\times, \FF^8 \oplus \Omega(\FF^1_\chi))$, which remain to be considered.

\begin{proposition}
Suppose that the pair $(H,V)$ is BF-equivalent to $(\mathsf G_2, \FF^7)$.
Then $X_I$ is $H$-spherical if and only if $|I| \le 2$.
\end{proposition}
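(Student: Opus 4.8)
The plan is to treat the three possible sizes of $I$ separately and to use the monotonicity of Reduction~1 (see~\S\ref{subsec_reductions}) to reduce to the extreme cases. Here $G = \Spin(V) \simeq \Spin_7$ is of type $\mathsf B_3$, so $S = \lbrace 1,2,3 \rbrace$, and the subsets with $|I| \le 2$ that are maximal with respect to inclusion are the three two-element subsets $\lbrace 1,2 \rbrace$, $\lbrace 1,3 \rbrace$, $\lbrace 2,3 \rbrace$. By Reduction~1, once these three give $H$-spherical varieties, every $X_I$ with $|I| \le 2$ is automatically $H$-spherical (note that each singleton, and the empty set, is contained in one of the three subsets above); conversely, the only $I$ with $|I| = 3$ is $I = S$. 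Thus the assertion reduces to proving that $X_{\lbrace 1,2 \rbrace}$, $X_{\lbrace 1,3 \rbrace}$, $X_{\lbrace 2,3 \rbrace}$ are $H$-spherical while $X_S$ is not.

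For the non-sphericity of $X_S$ I would use a dimension count, as elsewhere in the paper. Since $\lambda_S = \pi_1 + \pi_2 + \pi_3$ is regular, one has $P_S^- = B_G^-$ and hence $X_S = G/B_G^-$, so that $\dim X_S = \dim G - \dim B_G = 21 - 12 = 9$. On the other hand $H \simeq \mathsf G_2$ has $\dim B_H = 8$. As $\dim B_H \ge \dim X_I$ is a necessary condition for $X_I$ to be $H$-spherical (a dense $B_H$-orbit cannot have dimension exceeding $\dim B_H$), the inequality $8 < 9$ shows that $X_S$ is not $H$-spherical. This settles the implication that $X_I$ being $H$-spherical forces $|I| \le 2$.

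It remains to prove that the three two-element subsets give spherical varieties. For this I would apply the effective criterion of Proposition~\ref{prop_sph_criterion_eff} with $B_H^- = B_G^- \cap H$, using the explicit embedding $\mathfrak g_2 \subset \mathfrak{so}_7$ of Appendix~\ref{sect_g2&spin7}. For each $I \in \lbrace \lbrace 1,2 \rbrace, \lbrace 1,3 \rbrace, \lbrace 2,3 \rbrace \rbrace$ one computes a Levi subgroup $M$ of $P_I^- \cap H$ together with the $M$-module $\mathfrak g/(\mathfrak p_I^- + \mathfrak h)$, and then checks, against the classification of spherical modules (the list of \cite[\S 5]{Kn} and the criterion \cite[Theorem~5.3]{AvP1}), that each such module is spherical; formula~(\ref{eqn_rank_of_RBM}) then yields the rank data recorded in Table~\ref{table_result_so_odd}. (As a consistency check, the sphericity of $X_{\lbrace 1 \rbrace}$, $X_{\lbrace 2 \rbrace}$, $X_{\lbrace 3 \rbrace}$ for the pair $(\mathsf G_2, \FF^7)$ is already known from Propositions~\ref{prop_so_Q1_V_irr},~\ref{prop_so_Q2_V_irr}, and~\ref{prop_so_Qmax_odd_V_irr}.) The main obstacle is precisely this triple of explicit module computations: identifying the parabolic intersections $P_I^- \cap H$ and the quotients $\mathfrak g/(\mathfrak p_I^- + \mathfrak h)$ in the chosen coordinates is mechanical but delicate. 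By Reduction~1 the sphericity of these three varieties yields that of all $X_I$ with $|I| \le 2$, completing the proof.
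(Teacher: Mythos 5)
Your overall structure is sound and, for the sufficiency direction, coincides with the paper's: the paper proves the proposition exactly by computing the pair $(M, \mathfrak g/(\mathfrak p_I^- + \mathfrak h))$ of Proposition~\ref{prop_sph_criterion_eff} for each of $I = \lbrace 1,2 \rbrace$, $\lbrace 1,3 \rbrace$, $\lbrace 2,3 \rbrace$ and checking sphericity against the classification (the modules turn out to be $(\FF^\times \times \FF^\times, \FF^1_{\chi_1} \oplus \FF^1_{\chi_1+\chi_2})$ for $I = \lbrace 1,2 \rbrace$ and $\lbrace 2,3 \rbrace$, and $(\SL_2 \times \FF^\times, [\FF^2]_\chi \oplus \FF^1_{2\chi})$ for $I = \lbrace 1,3 \rbrace$). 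You correctly reduce to these three maximal subsets via Reduction~1, but you stop at announcing the computations rather than performing them; since those three computations \emph{are} the substance of the paper's argument, your proposal is a correct plan rather than a complete proof of the positive half. Where you genuinely diverge is the case $|I| = 3$: the paper again computes the module, obtaining $(\FF^\times \times \FF^\times, \FF^1_{\chi_1} \oplus \FF^1_{\chi_2} \oplus \FF^1_{\chi_1+\chi_2})$, which is not spherical, whereas you use the dimension count $\dim B_H = 8 < 9 = \dim G/B_G^-$. Your count is correct ($\mathsf B_3$ has $9$ positive roots, $\mathsf G_2$ has $6$, so $\dim B_H = 8$) and the criterion $\dim B_H \ge \dim X_I$ is the same necessary condition the paper invokes elsewhere; this is a legitimately simpler and entirely self-contained way to dispose of $I = S$, at the cost of not producing the rank data that the module computation would also yield.
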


\begin{proof}
First suppose that $|I| = 3$.
Then the pair $(M, \mathfrak g/(\mathfrak p_I^- + \mathfrak h))$ is equivalent to
\[
(\FF^\times \times \FF^\times, \FF^1_{\chi_1} \oplus \FF^1_{\chi_2} \oplus \FF^1_{\chi_1+\chi_2}),
\]
the latter module being not spherical.

Now suppose that $|I|=2$.

If $I = \lbrace 1,2 \rbrace$ then the pair $(M, \mathfrak g/(\mathfrak p_I^- + \mathfrak h))$ is equivalent to $(\FF^\times \times \FF^\times, \FF^1_{\chi_1} \oplus \FF^1_{\chi_1 + \chi_2})$, the latter module being spherical.

If $I = \lbrace 1,3 \rbrace$ then the pair $(M, \mathfrak g/(\mathfrak p_I^- + \mathfrak h))$ is equivalent to $(\SL_2 \times \FF^\times, [\FF^2]_\chi \oplus \FF^1_{2\chi})$, the latter module being spherical.

If $I = \lbrace 2,3 \rbrace$ then the pair $(M, \mathfrak g/(\mathfrak p_I^- + \mathfrak h))$ is equivalent to $(\FF^\times \times \FF^\times, \FF^1_{\chi_1} \oplus \FF^1_{\chi_1 + \chi_2})$, the latter module being spherical.
\end{proof}

\begin{proposition}
Suppose that the pair $(H,V)$ is BF-equivalent to $(\Spin_7, \FF^8 \oplus \FF^1)$.
Then $X_I$ is $H$-spherical if and only if either $|I| = 1$ or $I = \lbrace 1,2 \rbrace$.
\end{proposition}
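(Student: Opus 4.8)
Since $\dim V = 9$, the group $G = \Spin(V) \simeq \Spin_9$ has type $B_4$, so $S = \lbrace 1,2,3,4 \rbrace$, and the isotropic Grassmannians among the $X_I$ are $X_{\lbrace 1 \rbrace} = \SOGr_1(V)$, $X_{\lbrace 2 \rbrace} = \SOGr_2(V)$, and $X_{\lbrace 4 \rbrace} = \SOGr_{\max}(V)$. Note that $H \simeq \Spin_7$ satisfies $\dim B_H = 12$, since $\dim \Spin_7 = 21$ and $\rk \Spin_7 = 3$. The plan is to establish the five spherical cases $\lbrace 1 \rbrace, \lbrace 2 \rbrace, \lbrace 3 \rbrace, \lbrace 4 \rbrace, \lbrace 1,2 \rbrace$ and then rule out all remaining subsets at once by a dimension count combined with Reduction~1.

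For the spherical cases, three of the singletons follow from earlier results: $X_{\lbrace 1 \rbrace}$ is $H$-spherical by Proposition~\ref{prop_so_Q1_V_sr} (here $r = 2$, $H_0 = H_1 \times H_2$, and the pairs $(H_1,V_1) = (\Spin_7,\FF^8)$ and $(H_2,V_2) = (\SO_1,\FF^1)$ both occur in Table~\ref{table_orth_sph_mod}), $X_{\lbrace 2 \rbrace}$ by Proposition~\ref{prop_so_Q2_V_sr}, and $X_{\lbrace 4 \rbrace}$ by Proposition~\ref{prop_so_odd_Qmax_sr}. The two genuinely new spherical cases $I = \lbrace 3 \rbrace$ and $I = \lbrace 1,2 \rbrace$ I would handle directly through Proposition~\ref{prop_sph_criterion_eff} with $B_H^- = B_G^- \cap H$: using the spinor embedding $\Spin_7 \subset \Spin_9$ fixed in \S\,\ref{subsec_so_prelim_remarks} and Appendix~\ref{sect_g2&spin7}, one computes a Levi subgroup $M$ of $P_I^- \cap H$ and the $M$-module $\mathfrak g/(\mathfrak p_I^- + \mathfrak h)$, and verifies against the classification of spherical modules that this module is spherical for both $I$.

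For the non-spherical subsets I would use the necessary condition $\dim X_I \le \dim B_H = 12$. Writing $\dim X_I = |\Phi^+| - |\Phi^+_I|$ in the root system of $B_4$, where $|\Phi^+| = 16$ and $\Phi^+_I$ is the set of positive roots of the standard Levi subgroup with simple roots $S \setminus I$, one finds $\dim X_{\lbrace 1,4 \rbrace} = \dim X_{\lbrace 3,4 \rbrace} = 13$ (complementary Levi of type $A_2$) and $\dim X_{\lbrace 1,3 \rbrace} = \dim X_{\lbrace 2,3 \rbrace} = \dim X_{\lbrace 2,4 \rbrace} = 14$ (complementary Levi of type $A_1 \times A_1$), all strictly greater than $12$; hence none of the five two-element subsets distinct from $\lbrace 1,2 \rbrace$ is $H$-spherical. (By contrast the Levi complementary to $\lbrace 1,2 \rbrace$ has type $B_2$, giving $\dim X_{\lbrace 1,2 \rbrace} = 12$, consistent with sphericity.) Any $I$ that is neither $\varnothing$, a singleton, nor $\lbrace 1,2 \rbrace$ contains one of these five pairs, so $X_I$ is not $H$-spherical by Reduction~1.

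Together these steps show that $X_I$ is $H$-spherical precisely when $|I| = 1$ or $I = \lbrace 1,2 \rbrace$. The main obstacle is the explicit determination of the two $M$-modules in the spherical cases $\lbrace 3 \rbrace$ and $\lbrace 1,2 \rbrace$, which requires identifying the image of $P_I^- \cap H$ and decomposing the tangent quotient under the spinor embedding; I would carry this out with the realizations of \S\,\ref{subsec_so_prelim_remarks} and Appendix~\ref{sect_g2&spin7} and, if needed, the program LiE. As an alternative to the dimension count, one could instead check the single variety $X_{\lbrace 1,4 \rbrace}$ and then deduce non-sphericity of $X_{\lbrace 3,4 \rbrace}$ from nil-equivalence (both have Richardson class $[5,2^2]$) and of $X_{\lbrace 1,3 \rbrace}, X_{\lbrace 2,3 \rbrace}, X_{\lbrace 2,4 \rbrace}$ (all of class $[5,3,1] \succ [5,2^2]$) from the descent property of Theorem~\ref{thm_sph_descent}.
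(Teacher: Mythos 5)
Your proposal is correct and follows essentially the same route as the paper: the singleton cases are delegated to the earlier isotropic-Grassmannian propositions, the two remaining positive cases $I=\lbrace 3\rbrace$ and $I=\lbrace 1,2\rbrace$ are settled via Proposition~\ref{prop_sph_criterion_eff}, and every other two-element subset is killed by the same dimension count $\dim X_I \in \lbrace 13,14\rbrace > 12 = \dim B_H$ (your root-system computation of these dimensions checks out and matches the paper's numbers), with Reduction~1 finishing larger subsets. The one step you defer is the step the paper actually carries out: for $I=\lbrace 3\rbrace$ the pair $(M,\mathfrak g/(\mathfrak p_I^-+\mathfrak h))$ comes out as $(\SL_2\times\FF^\times\times\FF^\times, [\FF^2]_{\chi_1}\oplus\FF^1_{2\chi_1}\oplus\FF^1_{2\chi_1+\chi_2})$ and for $I=\lbrace 1,2\rbrace$ as $(\SL_2\times\FF^\times\times\FF^\times, [\FF^2]_{\chi_1+\chi_2}\oplus\FF^1_{\chi_1}\oplus\FF^1_{\chi_1+2\chi_2})$, both visibly spherical; you should also note (as the paper does via Lemma~\ref{lemma_outer_aut}) that it suffices to compute for one of the two conjugacy classes $\Spin_7^{\pm}\subset\SO_8$, since the BF-equivalence class of $(H,V)$ does not single one out. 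Your closing alternative via nil-equivalence ($[5,2^2]$ for $\lbrace 1,4\rbrace,\lbrace 3,4\rbrace$ and $[5,3,1]\succ[5,2^2]$ for the rest) is a valid substitute for the dimension count that the paper does not use here.
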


\begin{proof}
By Lemma~\ref{lemma_outer_aut}(\ref{lemma_outer_aut_b}), it suffices to do the calculations for~$\Spin_7^+$.

Case $I = \lbrace 3 \rbrace$.
The pair $(M, \mathfrak g/(\mathfrak p_I^- + \mathfrak h))$ is equivalent to
\[
(\SL_2 \times \FF^\times \times \FF^\times, [\FF^2]_{\chi_1} \oplus \FF^1_{2\chi_1} \oplus \FF^1_{2\chi_1 + \chi_2}),
\]
the latter module being spherical.

Case $I = \lbrace 1,2 \rbrace$.
The pair $(M, \mathfrak g/(\mathfrak p_I^- + \mathfrak h))$ is equivalent to
\[
(\SL_2 \times \FF^\times \times \FF^\times, [\FF^2]_{\chi_1 + \chi_2} \oplus \FF^1_{\chi_1} \oplus \FF^1_{\chi_1 + 2\chi_2}),
\]
the latter module being spherical.

Case $I = \lbrace 1,4 \rbrace$ or $\lbrace 3,4 \rbrace$.
As $\dim B_{H} = 12 < 13 = \dim X_I$, the variety $X_I$ is not spherical.

Case $I = \lbrace 1,3 \rbrace$ or $\lbrace 2,3 \rbrace$ or $\lbrace 2,4 \rbrace$.
As $\dim B_{H} = 12 < 14 = \dim X_I$, the variety $X_I$ is not spherical.
\end{proof}

\begin{proposition}
Suppose that $(H,V)$ is BF-equivalent to $(\Spin_7 \times \FF^\times, \FF^8 \oplus \Omega(\FF^1_\chi))$.
Then $X_I$ is $H$-spherical if and only if $I$ equals one of $\lbrace 1 \rbrace$, $\lbrace 2 \rbrace$, $\lbrace 4 \rbrace$, or $\lbrace 5 \rbrace$.
\end{proposition}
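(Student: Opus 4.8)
The plan is to exploit the fact that this is a completely concrete pair: since $\dim V = 8 + 2 = 10$ we have $G \simeq \Spin_{10}$, $n = 5$, and $S = \lbrace 1,2,3,4,5 \rbrace$. The whole argument will be driven by the elementary necessary condition $\dim B_H \ge \dim X_I$ for $H$-sphericity. As $H \simeq \Spin_7 \times \FF^\times$ has rank $4$ and $9$ positive roots, I compute $\dim B_H = 4 + 9 = 13$. The point is that this dimension bound, by itself, singles out exactly the four candidate subsets named in the statement, so that the ``only if'' direction becomes pure dimension counting and only the ``if'' direction requires invoking the classification already established.

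First I would record the dimensions of the relevant flag varieties of $\Spin_{10}$, all obtained either from the isotropic-Grassmannian description of \S\,\ref{subsec_FV_via_compositions} or from the standard fibration $X_{\lbrace i,j \rbrace} \to X_{\lbrace i \rbrace}$. For the singletons one has $\dim X_{\lbrace 1 \rbrace} = 8$, $\dim X_{\lbrace 2 \rbrace} = 13$, $\dim X_{\lbrace 3 \rbrace} = 15$, while $X_{\lbrace 5 \rbrace} \simeq \SOGr^+_{\max}(V)$ and $X_{\lbrace 4 \rbrace} \simeq \SOGr^-_{\max}(V)$ both have dimension $10$. A short check then shows that every two-element subset has $\dim X_I \ge 14$: any $I$ containing $3$ admits a surjection $X_I \to X_{\lbrace 3 \rbrace}$ and hence has dimension $\ge 15$, while for the six pairs contained in $\lbrace 1,2,4,5 \rbrace$ the fibration over a singleton gives dimensions $14,14,14,16,16,14$ (here $X_{\lbrace 4,5 \rbrace} \simeq \SOGr_4(V)$ by the convention of \S\,\ref{subsec_FV_via_compositions}). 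Since $\dim X_I$ is monotone under enlarging $I$, it follows that $\dim X_I \le 13 = \dim B_H$ holds only for $I \in \lbrace \varnothing, \lbrace 1 \rbrace, \lbrace 2 \rbrace, \lbrace 4 \rbrace, \lbrace 5 \rbrace \rbrace$. Consequently $X_I$ is not $H$-spherical for every nonempty $I$ outside $\lbrace \lbrace 1 \rbrace, \lbrace 2 \rbrace, \lbrace 4 \rbrace, \lbrace 5 \rbrace \rbrace$, which yields the ``only if'' direction.

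It then remains to confirm that the four surviving subsets are genuinely $H$-spherical, and here I would simply invoke earlier results. For $I = \lbrace 2 \rbrace$ the variety $\SOGr_2(V)$ is $H$-spherical by Proposition~\ref{prop_so_Q2_V_sr}, since $(H,V)$ is exactly the listed pair $(\Spin_7 \times \FF^\times, \FF^8 \oplus \Omega(\FF^1_\chi))$. For $I = \lbrace 1 \rbrace$ I would deduce $H$-sphericity of $\SOGr_1(V)$ from that of $\SOGr_2(V)$ via Theorem~\ref{thm_sph_descent}, using $\bl \SOGr_1(V) \br \prec \bl \SOGr_2(V) \br$ supplied by Proposition~\ref{prop_so_even_minimal}. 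For $I = \lbrace 4 \rbrace$ and $I = \lbrace 5 \rbrace$ I would use the BF-equivalence $(\FF^\times, \Omega(\FF^1_\chi)) \sim (\SO_2, \FF^2)$ from Table~\ref{table_equiv_modules} to identify $(H,V)$ with $(\Spin_7 \times \SO_2, \FF^8 \oplus \FF^2)$, which is Case~2 of Table~\ref{table_so_even_Qmax_V1+V2} with $l = 1$; Proposition~\ref{prop_so_even_Qmax_V1+V2} then gives that both $\SOGr^+_{\max}(V) \simeq X_{\lbrace 5 \rbrace}$ and $\SOGr^-_{\max}(V) \simeq X_{\lbrace 4 \rbrace}$ are $H$-spherical, with Lemma~\ref{lemma_outer_aut}(\ref{lemma_outer_aut_a}) reducing one component to the other.

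The computations here are routine; the only delicate point is the coincidence $\dim X_{\lbrace 2 \rbrace} = \dim B_H = 13$, which makes the dimension count sharp at $I = \lbrace 2 \rbrace$ and therefore unable by itself to decide sphericity there. This borderline case is precisely the one that must be settled by the genuine classification of Proposition~\ref{prop_so_Q2_V_sr} rather than by an estimate, and I expect this interplay, together with correctly matching $(H,V)$ to the right row of Table~\ref{table_so_even_Qmax_V1+V2} through the identification $\Omega(\FF^1_\chi) \simeq \FF^2$, to be the only thing requiring real care; everything else reduces to bookkeeping of flag-variety dimensions.
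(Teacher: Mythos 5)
Your proposal is correct and follows essentially the same route as the paper: the ``only if'' direction is exactly the paper's argument (the bound $\dim B_H = 13$ against $\dim X_{\lbrace 3 \rbrace} = 15$ and $\dim X_I \ge 14$ for every two-element subset, with larger subsets handled by monotonicity), and the ``if'' direction is delegated, as the paper implicitly does, to the earlier classifications of spherical actions on $\SOGr_1(V)$, $\SOGr_2(V)$, and $\SOGr^{\pm}_{\max}(V)$. Your dimension bookkeeping and the identification of $(H,V)$ with the row $(\Spin_7 \times \SO_{2l}, \FF^8 \oplus \FF^{2l})$, $l=1$, of Table~\ref{table_so_even_Qmax_V1+V2} are both accurate.
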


\begin{proof}
By Lemma~\ref{lemma_outer_aut}(\ref{lemma_outer_aut_b}), it suffices to do the calculations for~$\Spin_7^+$.

Case $I = \lbrace 3 \rbrace$.
As $\dim B_{H} = 13 < 15 = \dim X_I$, the variety $X_I$ is not $H$-spherical.

Case $I = \lbrace 1,2 \rbrace$ or $\lbrace 1,4 \rbrace$ or $\lbrace 1,5 \rbrace$ or $\lbrace 4,5 \rbrace$.
As $\dim B_{H} = 13 < 14 = \dim X_I$, the variety $X_I$ is not $H$-spherical.

Case $I = \lbrace 2,5 \rbrace$.
As $\dim B_H = 13 < 16 = \dim X_I$, the variety $X_I$ is not $H$-spherical.
\end{proof}

\section{Remarks on computing the restricted branching monoids}
\label{sect_RBM}

In this section we explain how to compute the restricted branching monoids corresponding to all spherical actions on flag varieties appearing in our theorems in \S\S\,\ref{subsec_sp_results},\,\ref{subsec_so_results}.

\subsection{Case~\texorpdfstring{$I = \lbrace 1 \rbrace$}{I = \{1\}}}
\label{subsec_RBM_I={1}}

\begin{proof}[Proof of Theorem~\textup{\ref{thm_RBM_sp_PV}}]
It is well known that for all $k \ge 0$ there is the $G$-module isomorphism $R_G(k\pi_1) \simeq \mathrm{S}^k V$.
Then it follows from definitions that the map
\[
\mathrm E_{H \times \FF^\times}(V^*) \to \Gamma_{\lbrace 1 \rbrace}(G,H), \quad \lambda \mapsto \overline \lambda,
\]
is an isomorphism, which immediately implies the required result.
\end{proof}

\begin{proof}[Proof of Theorem~\textup{\ref{thm_RBM_so_Q1}}]
We shall need the following well-known $G$-module isomorphism that holds for all $k \ge 0$:
\begin{equation} \label{eqn_dec_so}
\mathrm{S}^kV \simeq \bigoplus \limits_{l = 0}^{[k/2]} R_{G}((k-2l)\pi_1).
\end{equation}

(\ref{thm_RBM_so_Q1_a})
Take any $\lambda = \mu + k \delta \in \mathrm E^0_{H \times \FF^\times}(V^*) \setminus \lbrace 2\delta \rbrace$.
As $V^*$ is a spherical $(H \times \FF^\times)$-module, by~(\ref{eqn_dec_so}) there is a unique $l \in \lbrace 0, \ldots, [k/2]\rbrace$ such that $R_H(\mu)$ is a submodule of $\left. R_{G}((k-2l)\pi_1)\right|_H$.
If $l > 0$ then again by~(\ref{eqn_dec_so}) $R_H(\mu)$ is a submodule of $\left. \mathrm{S}^{k-2l} V \right|_H$ and hence $\lambda - 2l\delta \in \mathrm E_{H\times \FF^\times}(V^*)$, a contradiction.
Thus $l = 0$ and $\overline \lambda \in \Gamma_{\lbrace 1 \rbrace}(G,H)$.

Conversely, take any element $(k\pi_1; \mu) \in \Gamma_{\lbrace 1 \rbrace}(G,H)$ and put $\lambda = \mu + k \delta \in \Lambda^+(H \times \FF^\times)$ so that $(k\pi_1;\mu) = \overline \lambda$.
Then by~(\ref{eqn_dec_so}) $R_H(\mu)$ is a submodule of $\left. \mathrm{S}^k V \right|_H$, which yields $\lambda \in \mathrm E_{H \times \FF^\times}(V^*)$.
If $\lambda - 2\delta \in \mathrm E_{H \times \FF^\times}(V^*)$ then (\ref{eqn_dec_so}) implies that $R_H(\mu)$ is a submodule of $\left. \mathrm{S}^{k-2}V \right|_H$ and hence $\left. \mathrm S^k V \right|_H$ contains at least two copies of $R_H(\mu)$, which is impossible as $V^*$ is a spherical ($H \times \FF^\times$)-module.
Thus $\lambda$ is a linear combination of elements in $\mathrm E^0_{H \times \FF^\times}(V^*) \setminus \lbrace 2\delta \rbrace$.

(\ref{thm_RBM_so_Q1_b})
Take any $\lambda = \mu + k_1 \delta_1 + k_2\delta_2 \in \mathrm E^0_{H \times \FF^\times \times \FF^\times}(V^*)$.
If $\lambda = 2\delta_1$ or $\lambda = 2\delta_2$ then $\overline \lambda = (2\pi_1; 0)$.
As $\mathrm S^2 V \simeq R_G(2\pi_1) \oplus \FF^1$ as $G$-modules and each of $\mathrm S^2 V_1$ and $\mathrm S^2 V_2$ contains a trivial $G$-submodule, it follows that $R_G(2\pi_1)$ also contains a trivial submodule, hence $\overline \lambda \in \Gamma_{\lbrace 1 \rbrace}(G,H)$.
Now assume $\lambda \notin \lbrace 2\delta_1, 2\delta_2 \rbrace$.
Then $R_H(\mu)$ is a submodule of $\left.\mathrm S^{k_1+k_2} V \right|_H$.
By~(\ref{eqn_dec_so}), there is the minimal $l \in \lbrace 0, \ldots, [(k_1+k_2)/2]\rbrace$ such that $R_H(\mu)$ is a submodule of $\left. R_{G}((k_1+k_2-2l)\pi_1)\right|_H$.
If $l > 0$ then again by~(\ref{eqn_dec_so}) $R_H(\mu)$ is a submodule of $\left. \mathrm{S}^{k_1+k_2-2l} V \right|_H$ and hence $\lambda - s_1\delta_1 - s_2 \delta_2 \in \mathrm E_{H\times \FF^\times \times \FF^\times}(V^*)$ for some $s_1,s_2 \in \ZZ$ with $s_1+s_2 =2l$, which is impossible.
Thus $l = 0$ and $\overline \lambda \in \Gamma_{\lbrace 1 \rbrace}(G,H)$.

Conversely, take any element $(k\pi_1; \mu) \in \Gamma_{\lbrace 1 \rbrace}(G,H)$.
Then by~(\ref{eqn_dec_so}) $R_H(\mu)$ is a submodule of $\left. \mathrm S^k V \right|_H$, hence there are $k_1,k_2 \ge 0$ with $k_1+k_2 = k$ such that $\lambda = \mu + k_1\delta_1 + k_2 \delta_2 \in \mathrm E_{H \times \FF^\times \times \FF^\times}(V^*)$.
As $(k\pi_1;\mu) = \overline \lambda$, the proof is completed.
\end{proof}

\subsection{Case~\texorpdfstring{$I \ne \lbrace 1 \rbrace$}{I \textbackslash ne \{1\}}}
\label{subsec_RBM_Ine{1}}

For Case~\ref{sp_case_1.2} in Table~\ref{table_result_sympl} as well as for all cases appearing in Theorems~\ref{thm_result_so_odd} and~\ref{thm_result_so_even} the monoids $\Gamma_I(G,H)$ are computed as follows.

The rank of $\Gamma_I(G,H)$ is calculated using formula~(\ref{eqn_rank_of_RBM}).
For each triple $(G,H,I)$, the spherical module $(M,\mathfrak g / (\mathfrak p_I^- + \mathfrak h))$ is computed in the corresponding part of~\S\,\ref{sect_sympl_case} or~\S\,\ref{sect_orth_case} and
the rank of this module is calculated as described in~\S\,\ref{subsec_spherical_modules}.

Once the rank of $\Gamma_I(G,H)$ has been determined, the indecomposable elements of this monoid are found by using the following straightforward observation generalizing \cite[Propositions~4.9 and~4.10]{AvP2}.

\begin{proposition}
Let $I = \lbrace i_1,\ldots, i_k \rbrace \subset S$ and choose a nonzero tuple $(a_1,\ldots,a_k)$ of nonnegative integers.
Let $(\lambda_0; \mu_0) \in \Gamma_I(G,H)$ be an element such that $\lambda_0 = a_1 \pi_{i_1} + \ldots + a_k \pi_{i_k} \in \Lambda^+(G)$.
Let $J$ be the set of indecomposable elements of $\Gamma_I(G,H)$ having the form $(b_1\pi_{i_1} + \ldots + b_k \pi_{i_k}; *)$ for a nonzero tuple $(b_1,\ldots, b_k) \ne (a_1,\ldots, a_k)$ satisfying $b_1 \le a_1, \ldots, b_k \le a_k$.
Suppose that $(\lambda_0; \mu_0) \notin \ZZ^+ J$.
Then $(\lambda_0; \mu_0)$ is an indecomposable element of~$\Gamma_I(G,H)$.
\end{proposition}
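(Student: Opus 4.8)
The plan is to argue by contradiction, exploiting the freeness of $\Gamma_I(G,H)$ established in Theorem~\ref{thm_Gamma_is_free}. Suppose $(\lambda_0;\mu_0)$ were decomposable, say $(\lambda_0;\mu_0) = (\lambda_1;\mu_1) + (\lambda_2;\mu_2)$ with both summands nonzero elements of $\Gamma_I(G,H)$. Since $\lambda_0 = \sum_j a_j\pi_{i_j}$ with $(a_1,\ldots,a_k)$ nonzero, the element $(\lambda_0;\mu_0)$ is itself nonzero, so indecomposability is exactly the assertion that no such decomposition exists.

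First I would record the auxiliary observation that the only element of $\Gamma_I(G,H)$ with vanishing first coordinate is $(0;0)$: if $\lambda = 0$ then $R_G(\lambda)$ is the trivial module, so $R_H(\mu)$ can be a submodule of $\left.R_G(\lambda)\right|_H$ only for $\mu = 0$. Applying this to the two summands shows $\lambda_1,\lambda_2 \neq 0$; writing $\lambda_i = \sum_j b_j^{(i)}\pi_{i_j}$ one then has $b_j^{(i)} \ge 0$ and $b_j^{(1)} + b_j^{(2)} = a_j$ for every $j$, so each tuple $(b_j^{(i)})_j$ is nonzero and, because the complementary summand is nonzero, distinct from $(a_j)_j$.

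The heart of the argument is then to expand each $(\lambda_i;\mu_i)$ in terms of the indecomposable elements of the free commutative monoid $\Gamma_I(G,H)$ (uniqueness of factorization makes this legitimate) and to check that every generator that occurs lies in $J$. Passing to first coordinates is a monoid homomorphism into the free monoid $\ZZ^+ \lbrace \pi_i \mid i \in I \rbrace = \Lambda^+_I(G)$, so each indecomposable $g$ occurring with positive multiplicity in the expansion of $(\lambda_i;\mu_i)$ has first coordinate $\sum_j d_j\pi_{i_j}$ with $0 \le d_j \le b_j^{(i)} \le a_j$. By the auxiliary observation the tuple $(d_j)_j$ is nonzero, and it cannot equal $(a_j)_j$, for that would force $b_j^{(i)} = a_j$ for all $j$, i.e.\ $\lambda_i = \lambda_0$, contradicting $\lambda_{3-i} \neq 0$. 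Hence $g \in J$, so $(\lambda_i;\mu_i) \in \ZZ^+ J$ for $i = 1,2$, and therefore $(\lambda_0;\mu_0) \in \ZZ^+ J$, contrary to hypothesis.

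The only genuinely delicate point, and the one I would take most care over, is the bookkeeping with the componentwise order on coefficient tuples: one must verify simultaneously that no occurring generator has zero first coordinate (ruled out by the triviality observation) and that none has first coordinate exactly $\lambda_0$ (ruled out by the strict inequality coming from the nonvanishing of the complementary summand). Everything else is a direct consequence of unique factorization in a free commutative monoid.
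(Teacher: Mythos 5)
Your argument is correct. The paper offers no proof of this statement (it is introduced as a ``straightforward observation'' generalizing \cite[Propositions~4.9 and~4.10]{AvP2}), and your contrapositive argument --- decompose, expand each summand into indecomposables using the freeness of $\Gamma_I(G,H)$ from Theorem~\ref{thm_Gamma_is_free}, and check via the zero-first-coordinate observation and the componentwise bounds that every generator occurring lands in $J$ --- is exactly the natural verification. One small remark: the freeness you invoke is available only under the standing hypothesis that $X_I$ is $H$-spherical (which is the setting in which the proposition is applied); in fact you do not need uniqueness of factorization at all, only the existence of \emph{some} decomposition into indecomposables, and that already follows from your auxiliary observation, since any chain of proper decompositions strictly decreases the (finite) total degree $\sum_j b_j$ of the first coordinate and must terminate.
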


In the situation of the above proposition, one successively computes all indecomposable elements of $\Gamma_I(G,H)$ of the form $(a_1\pi_{i_1} + \ldots + a_k \pi_{i_k}; *)$ first with $a_1 + \ldots + a_k = 1$, then with $a_1+ \ldots + a_ k = 2$, and so on until the required number of indecomposable elements has been found.

To implement the above algorithm for computing the indecomposable elements of $\Gamma_I(G,H)$, one should be able to compute explicitly the restriction to $H$ of any given representation $R_G(\lambda)$ with $\lambda \in \Lambda^+_I(G)$.
For each of the cases in Tables~\ref{table_result_sympl},~\ref{table_result_so_odd}, and~\ref{table_result_so_even}, the inclusion $G \supset H$ fits into a chain $G = H_0 \supset H_1 \supset \ldots \supset H_k = H$ where for each $i = 1,\ldots, k$ one of the following possibilities holds:
\begin{itemize}
\item
$H_i$ is a symmetric subgroup of~$H_{i-1}$;

\item
$H_i$ is a Levi subgroup of $H_{i-1}$;

\item
$H_{i-1} = \SO_7 \times K$ for some group $K$ and $H_i = \mathsf G_2 \times K$.
\end{itemize}
In the former two cases, the restrictions are computed using the information in~\cite[\S\S\,5.2,\,5.3]{AvP2}.
In the latter case, the restrictions are computed either via~\cite[Theorem~8, part~3]{AkP} or directly by using the program LiE~\cite{LiE1}.

\appendix

\section{Explicit embeddings \texorpdfstring{$\mathfrak{g}_2 \subset \mathfrak{so}_7$}{g\_2 \textbackslash subset so\_7} and \texorpdfstring{$\mathfrak{spin}_7 \subset \mathfrak{so}_8$}{spin\_7 \textbackslash subset so\_8}}
\label{sect_g2&spin7}

In this appendix, we present explicit realizations of the algebra $\mathfrak g_2$ as a subalgebra of $\mathfrak{so}_7$ and also of the algebra $\mathfrak{spin}_7$ as a subalgebra of~$\mathfrak{so}_8$.
These realizations are widely used in~\S\,\ref{sect_orth_case} for explicit calculations.

The algebra $\mathfrak{g}_2$ is realized as the subalgebra of $\mathfrak{so}_7$ consisting of all matrices of the form
\[
\begin{pmatrix}
t_1+t_2 & x_{10} & x_{11} & \sqrt2 x_{21} & x_{31} & x_{32} & 0 \\
y_{10} & t_1 & x_{01} & -\sqrt2 x_{11} & x_{21} & 0 & -x_{32} \\
y_{11} & y_{01} & t_2 & \sqrt2 x_{10} & 0 & -x_{21} & -x_{31} \\
\sqrt2 y_{21} & -\sqrt2 y_{11} & \sqrt2 y_{10} & 0 & -\sqrt2 x_{10} & \sqrt2 x_{11} & -\sqrt2 x_{21} \\
y_{31} & y_{21} & 0 & -\sqrt2 y_{10} & -t_2 & -x_{01} & -x_{11} \\
y_{32} & 0 & -y_{21} & \sqrt2 y_{11} & -y_{01} & -t_1 & -x_{10} \\
0 & -y_{32} & -y_{31} & -\sqrt2 y_{21} & -y_{11} & -y_{10} & -t_1-t_2
\end{pmatrix}.
\]

The algebra $\mathfrak{spin}_7$ is realized as the subalgebra of $\mathfrak{so}_8$ consisting of all matrices of the form
\[
\begin{pmatrix}
t_1+t_2+t_3 & x_{001} & x_{011} & x_{111} & x_{012} & x_{112} & x_{122} & 0 \\
y_{001} & t_1 & x_{010} & x_{110} & -x_{011} & x_{111} & 0 & -x_{122} \\
y_{011} & y_{010} & t_2 & x_{100} & x_{001} & 0 & -x_{111} & -x_{112} \\
y_{111} & y_{110} & y_{100} & t_3 & 0 & -x_{001} & x_{011} & -x_{012} \\
y_{012} & -y_{011} & y_{001} & 0 & -t_3 & -x_{100} & -x_{110} & -x_{111} \\
y_{112} & y_{111} & 0 & -y_{001} & -y_{100} & -t_2 & -x_{010} & -x_{011} \\
y_{122} & 0 & -y_{111} & y_{011} & -y_{110} & -y_{010} & -t_1 & -x_{001} \\
0 & -y_{122} & -y_{112} & -y_{012} & -y_{111} & -y_{011} & -y_{001} & -t_1-t_2-t_3
\end{pmatrix}.
\]

As a crucial property of these realizations, in both cases $\mathfrak k = \mathfrak g_2 \subset \mathfrak {so}_7$ and $\mathfrak k = \mathfrak{spin}_7 \subset \mathfrak{so}_8$ the set $\mathfrak b^+$ of all upper-triangular (and also the set $\mathfrak b^-$ of all lower-triangular) matrices in $\mathfrak k$ is a Borel subalgebra of~$\mathfrak k$ and the set $\mathfrak t$ of all diagonal matrices in~$\mathfrak k$ is a Cartan subalgebra of~$\mathfrak k$.

In the case $\mathfrak k = \mathfrak g_2 \subset \mathfrak {so}_7$, if $\alpha_1, \alpha_2 \in \mathfrak t^*$ are the two simple roots with respect to $\mathfrak b^+$ then for every positive root $i\alpha_1 + j\alpha_2$ the corresponding root subspace in $\mathfrak k$ is spanned by the matrix for which $x_{ij} = 1$ and all the other coordinates equal~$0$.
Similarly, the root subspace in $\mathfrak k$ corresponding to the negative root $-(i\alpha_1+j\alpha_2)$ is spanned by the matrix for which $y_{ij} = 1$ and all the other coordinates equal~$0$.

In the case $\mathfrak k = \mathfrak{spin}_7 \subset \mathfrak {so}_8$, if $\alpha_1, \alpha_2, \alpha_3 \in \mathfrak t^*$ are the three simple roots with respect to $\mathfrak b^+$ then for every positive root $i\alpha_1 + j\alpha_2 +k\alpha_3$ the corresponding root subspace in $\mathfrak k$ is spanned by the matrix for which $x_{ijk} = 1$ and all the other coordinates equal~$0$.
Similarly, the root subspace in $\mathfrak k$ corresponding to the negative root $-(i\alpha_1+j\alpha_2+k\alpha_3)$ is spanned by the matrix for which $y_{ijk} = 1$ and all the other coordinates equal~$0$.

%%%%%%%%%%%%%%%%%%%%%%%%%%%%%%%%%%%%%%%%%%%%%%%%%%%%%%%%%%%%%%%%%%%%%%%

\end{document}